\newtheorem{thm}{Theorem}[section]
\newtheorem{prop}[thm]{Proposition}
\newtheorem{lem}[thm]{Lemma}
\newtheorem{cor}[thm]{Corollary}
\theoremstyle{definition}
\newtheorem{rem}[thm]{Remark}
\newtheorem{exmpl}[thm]{Example}
\newcommand{\bbR}{\mathbb{R}}
\newcommand{\bbZ}{\mathbb{Z}}
\newcommand{\bfE}{\mathbf{E}}
\newcommand{\bfP}{\mathbf{P}}
\newcommand{\bfX}{\mathbf{X}}
\newcommand{\bfY}{\mathbf{Y}}
\newcommand{\bfa}{\mathbf{a}}
\newcommand{\bfb}{\mathbf{b}}
\newcommand{\bfc}{\mathbf{c}}
\newcommand{\bfk}{\mathbf{k}}
\newcommand{\bfp}{\mathbf{p}}
\newcommand{\bfr}{\mathbf{r}}
\newcommand{\bfs}{\mathbf{s}}
\newcommand{\bfu}{\mathbf{u}}
\newcommand{\bfv}{\mathbf{v}}
\newcommand{\bfx}{\mathbf{x}}
\newcommand{\bfy}{\mathbf{y}}
\newcommand{\bfz}{\mathbf{z}}
\newcommand{\mfc}{\mathfrak{c}}
\newcommand{\rightset}{\mathcal E^\rightarrow}
\newcommand{\upset}{\mathcal E^\uparrow}
\newcommand{\sI}{\mathcal{I}}
\newcommand{\sJ}{\mathcal{J}}
\newcommand{\sS}{\mathcal{S}}
\newcommand{\sU}{\mathcal{U}}
\newcommand{\one}{\mathbf{1}}
\newcommand{\wt}[1]{\widetilde{#1}}
\newcommand{\wh}[1]{\widehat{#1}}
\newcommand{\wc}[1]{\widecheck{#1}}
\newcommand{\f}{\frac}
\newcommand{\deq}{\stackrel{\rm{dist.}}{=}} 
\newcommand{\Z}{\mathbb Z}
\newcommand{\R}{\mathbb R}
\newcommand{\I}{\mathrm{I}}
\newcommand{\J}{\mathrm{J}}
\newcommand{\B}{\mathrm{B}}
\newcommand{\dd}{\mathrm{d}}
\newcommand{\hor}{\mathrm{hor}}
\newcommand{\ver}{\mathrm{ver}}
\newcommand{\A}{\mathrm{A}}
\newcommand{\Bh}{\mathcal{I}}
\newcommand{\Bv}{\mathcal{J}}
\newcommand{\rec}{\mathrm{r}}
\newcommand{\gr}[1]{\grave{#1}}
\newcommand{\w}{\mathrm{w}}
\newcommand{\Lp}{\mathrm{L}}
\newcommand{\ul}[1]{\underline{#1}}
\newcommand{\ol}[1]{\overline{#1}}
\newcommand{\Id}{\mathrm{Id}}
\newcommand{\col}{\mathrm{col}}
\newcommand{\row}{\mathrm{row}}
\newcommand{\ii}{\mathbf{i}}
\newcommand{\qedex}{\hfill $\triangle$}
\newcommand{\ZZ}{\mathbb{Z}}
\newcommand{\tw}{\omega^*}
\newcommand{\tq}{\tilde{q}}
\newcommand{\tQ}{\tilde{Q}}
\newcommand{\Exp}{\operatorname{Exp}}
\newcommand{\cM}{\mathcal{M}}
\title[Permutation invariance of LPP and the Busemann process]{Permutation invariance in last-passage percolation \\ and the distribution of the Busemann process}
\author{Erik Bates}
\address{Erik Bates\\ North Carolina State University \\ Department of Mathematics \\ 2311 Stinson Drive \\ Raleigh, NC 27695-8205 \\ USA}
\email{ebates@ncsu.edu}
\urladdr{https://www.ewbates.com/}
\author{Elnur Emrah}
\address{Elnur Emrah\\ University College Dublin \\ School of Mathematics and Statistics \\ Dublin 4\\ Ireland}
\email{elnur.emrah@ucd.ie}
\urladdr{https://sites.google.com/view/elnur-emrah}
\author{James Martin}
\address{James Martin\\Department of Statistics\\
University of Oxford\\24-29 St Giles'\\Oxford OX1 3LB\\United Kingdom}
\email{martin@stats.ox.ac.uk}
\urladdr{https://www.stats.ox.ac.uk/~martin}
\author{Timo Sepp\"al\"ainen}
\address{Timo Sepp\"al\"ainen\\ University of Wisconsin--Madison\\  Mathematics Department\\ Van Vleck Hall\\ 480 Lincoln Dr.\\   Madison, WI 53706-1388\\ USA}
\email{seppalai@math.wisc.edu}
\urladdr{https://www.math.wisc.edu/~seppalai}\author{Evan Sorensen}
\address{Evan Sorensen \\ Columbia University \\ Department of Mathematics \\ Room 624, MC 4432, 2990 Broadway, New York, NY 10027, USA}
\email{evan.sorensen@columbia.edu}
\urladdr{https://sites.google.com/view/evan-sorensen}
\keywords{exponential last-passage percolation, Busemann functions, permutation invariance, Burke property}
\subjclass[2020]{60K35, 
60K37, 
60K25} 
\begin{document}

\begin{abstract}
In i.i.d.\ exponential last-passage percolation, we describe the joint distribution of Busemann functions, over all edges and over all directions, in terms of a joint last-passage problem in a finite inhomogeneous environment.
More specifically, the Busemann increments within a $k\times\ell$ grid, and associated to $d$ different directions, are equal in distribution to a particular collection of last-passage increments inside a $(k+d-1)\times(\ell+d-1)$ grid. 
The joint Busemann distribution was previously described  
along a horizontal line by Fan and the fourth author, using certain queueing maps. 
By contrast, our new description explicitly gives the joint distribution for any collection of edges (not just along a horizontal line) using only finitely many 
random variables. Our result thus provides an exact and accessible way to sample from the joint distribution. 
In the proof, we rely on one-directional marginal distributions of the inhomogeneous Busemann functions recently studied by Janjigian and the second and fourth authors. 
The second ingredient of our proof is a novel joint invariance of inhomogeneous last-passage times under permutations of the inhomogeneity parameters. Our proof of the invariance is different from earlier proofs of such results, using the Burke property
instead of the RSK correspondence,
and leading to an explicit coupling
of the weights before and after the permutation of the parameters.
\end{abstract}

\maketitle

\tableofcontents

\section{Introduction}
\subsection{Busemann functions and infinite geodesics} \label{Ss:history}

In the 1950s, Herbert Busemann \cite{Busemann-1955} developed what we now know as \textit{Busemann functions} to study the geometry of geodesics in non-Euclidean spaces.
Roughly speaking, the Busemann function associated to a direction $r$ is the limiting difference of distances from two fixed initial points $x$ and $y$ to a common terminal point that moves to 
infinity in direction $r$. 
This yields a quantity $B_{x,y}^r$ that depends on all three parameters, as well as the underlying space.
In recent decades, these functions have been immensely profitable in the study of random growth models, where the non-Euclidean structure arises from a random metric.

This was first seen in the influential works of Newman \cite{newman95} and Hoffman \cite{hoff-05,hoff-08} for first-passage percolation (FPP).
Since then, the construction and distribution of Busemann functions have
been a key focus in both FPP \cite{damr-hans-14,damr-hans-17} and its relatives in the Kardar--Parisi--Zhang (KPZ) universality class: discrete last-passage percolation (LPP) \cite{Emra_Janj_Sepp_25, geor-rass-sepp-lppbuse, geor-rass-sepp-17-geod, janj-rass-sepp-23}, Brownian LPP \cite{busa-sepp-sore-24, sepp-sore-23-aihp,sepp-sore-23-pmp}, and the directed landscape \cite{busa-sepp-sore-24, Rahman-Virag-21}.
Applications include the existence, uniqueness, and coalescence of semi-infinite geodesics, as well as nonexistence of bi-infinite geodesics.
There have been parallel developments for positive-temperature models, namely directed polymers \cite{bates_fan_seppalainen25, georgiou_rassoulagha_seppalainen16, geor-rass-sepp-yilm-15, Groa_Janj_Rass_25b, janj-rass-20-aop} and the KPZ equation  
\cite{janj-rass-sepp-22-arxiv}, where geodesics are replaced by Gibbs measures satisfying the Dobrushin--Lanford--Ruelle equations.

While some of the literature mentioned above considers general dimensions, the Busemann theory is best understood---and most powerful---in the planar case: one spatial dimension and one time dimension.
Two reasons for this stand out:
\begin{enumerate}[\normalfont (i)]

\item \label{reason1}
Planarity implies that geodesics moving in opposing directions must eventually intersect.  This leads to monotonicity of the function $r\mapsto B_{x,y}^r$ when $x$ and $y$ are separated by certain space-time trajectories (down-right paths, in our setting), which simplifies the construction of the Busemann process and makes many of its properties more accessible. 

\item \label{reason2}
For certain exactly solvable models, the invariant measures for the growth dynamics are of product form, e.g.~\cite{blpp_utah, aldo-diac95, bala-cato-sepp, Ferr_Spoh_06,sepp-12-aop-corr}.
This means that as $y$ is varied along those same space-time trajectories (down-right paths), the increments of the process $(B_{x,y}^r)_{y}$ are independent.
This enables, for instance, the derivation of fluctuation exponents for the temporal evolution of these solvable models, which has been carried out for both zero-temperature models \cite{bala-cato-sepp, Cato_Groe_06, emra-janj-sepp-23} and positive-temperature models \cite{Land_Noac_Soso_23, noac-soso-20-arxiv1,noac-soso-20-arxiv2, sepp-valk-10}.

\end{enumerate}
The combination of \eqref{reason1} and \eqref{reason2} yields effectively complete understanding of the one-direction marginals of the Busemann process.
But the joint distribution, which we discuss next and is the main focus of this paper, is much more complicated.

\subsection{Jointly invariant measures}
The invariant measures in \eqref{reason2} are parameterized by the direction variable $r$, and the full collection of Busemann functions $(B_{\bullet}^r)_{r}$ encodes the natural coupling of all these measures.
This joint process is also essential for understanding the geometry of geodesics across all directions simultaneously; for example, the \textit{non}-uniqueness of semi-infinite geodesics in random exceptional directions \cite{busa-sepp-sore-24, janj-rass-sepp-23,sepp-sore-23-pmp} is not apparent from the one-direction marginals alone.
Therefore, there is great interest in obtaining explicit descriptions of the joint distribution of Busemann functions in multiple directions.

For LPP with i.i.d.\ exponential weights (which is the setting of this paper), Fan and the fourth author \cite{Fan_Sepp_20} described this joint distribution on a horizontal line, i.e.\ when $x$ and $y$ vary only on a single level of the lattice. 
The description is in terms of queueing mappings of a sequence of independent exponential random walks with different rate parameters corresponding to different values of $r$. 
The motivation for this type of description came from the earlier work of Ferrari and the third author \cite{ferr-mart-06,ferr-mart-07,ferr-mart-09}, which gave queueing interpretations of the multi-type invariant measures for several particle systems, including the Hammersley process and the totally asymmetric simple exclusion process (TASEP), in both periodic and full-space settings.

The approach of these works is to couple the relevant joint evolution of the model with an auxiliary Markov process whose invariant measures are of product-form with respect to both spatial increments \text{and} direction. 
A suitable transformation (called the ``intertwining mapping'' in \cite{Fan_Sepp_20}) pushes forward the auxiliary Markov process to the desired joint evolution, whose invariant measures are then the image of the product measure under the transformation. 
This technique has now been adapted to several settings, including Brownian last-passage percolation and the directed landscape \cite{busa-sepp-sore-24, sepp-sore-23-pmp}, the inverse-gamma polymer \cite{bates_fan_seppalainen25}, the O'Connell--Yor polymer and KPZ equation \cite{groa-rass-sepp-sore-23+}, and the periodic O'Connell--Yor polymer and KPZ equation \cite{Corw_Gu_Sore_26}. 

In each of these cases, the description of the joint Busemann distribution is explicit only on a single horizontal line. On a path that crosses many levels,    the joint distribution can only be accessed implicitly by evolving the model from the stationary initial condition. 
Since each step of the evolution involves an additional infinite collection of random inputs, this implicit description is not conducive to many potentially useful calculations.
To address this difficulty, the main result of this article (Theorem \ref{T:BusMar}) gives a new and qualitatively different description of the joint distribution using only finitely many random variables.
Specifically, we prove that every finite-dimensional joint distribution of Busemann functions across both space and directions is identical to a joint distribution of point-to-point last-passage increments in an inhomogeneous environment on a finite grid. A positive temperature analogue of the inhomogeneous environment in our construction appeared earlier in Chaumont's PhD thesis \cite{Chaumont-thesis} and in a work of Barraquand and Le Doussal \cite{Barraquand-LeDoussal-2023} in a different context, namely to describe the stationary multi-path partition functions for the inverse-gamma polymer (see Remark \ref{R:StCoup}). 

\subsection{Permutation invariance and inhomogeneous Busemann functions}

Our strategy for accessing the joint distribution of Busemann functions is fundamentally different from that of previous works.
Instead of looking for invariant measures on the infinite space, we make a direct comparison to a finite-space process.
Namely, we compare the joint CDF of the Busemann functions to the joint CDF of last-passage increments in the inhomogeneous environment. 
The proof utilizes two crucial inputs: 
\begin{itemize} \itemsep=3pt
\item 
The first input is a seemingly new permutation invariance in LPP, presented as Theorem \ref{T:LppInv} and proved in Section~\ref{S:PfInv}. 
It states that the joint law of last-passage times in inhomogeneous exponential LPP is preserved by certain permutations of the inhomogeneity parameters.
Given that the invariance is of the entire last-passage process and not just about Busemann functions, we anticipate that this result will be useful beyond this paper.

\item The second input is the existence and properties of Busemann functions  
in the inhomogeneous exponential LPP, proved recently by the second and fourth authors with Janjigian  \cite{Emra_Janj_Sepp_25}.
These results are reviewed in Section~\ref{S:iBusFn}.
\end{itemize}

The proof proceeds by a series of steps that involves shifting the  
terminal points of last-passage increments, introducing boundary columns and rows with auxiliary rates, and permuting the rates of columns and rows. 
We provide a detailed sketch of the proof in Section~\ref{Ss:BMdiscuss} for the basic case of two directions. Although we start with i.i.d.\ exponential last-passage percolation, the inhomogeneous model naturally appears in the argument.  
When only a single direction is considered, the inhomogeneity only appears on the boundary, and the familiar stationary model from \eqref{reason2} in Section~\ref{Ss:history} is recovered (Remark~\ref{R:OneDir}).
For multiple directions, the boundary becomes multi-layered, and the resulting inhomogeneous environment provides a coupling of these stationary models (Remark~\ref{R:StCoup}).
In this way, our argument is built on a coupling of the entire LPP process rather than of just the Busemann functions. 

Our invariance result generalizes the well-known invariances of last-passage times that are visible from exact distributional formulas, for example, in \cite{Boro_Pech_08}. These invariances originate from the properties of Schur measures \cite{Okou_01} and Schur processes \cite{Okou_Resh_03}. A recent work \cite{Dauv_22} of Dauvergne identified and studied a large family of invariances for various integrable LPP and directed polymer models. These combine and generalize the aforementioned invariances with the shift-invariance property discovered earlier by Borodin, Gorin, and Wheeler \cite{Boro_Gori_Whee_22}. In Remark \ref{R:DauvInv} ahead, we will indicate that a somewhat weaker version of our result is a corollary of \cite[Theorem 1.5]{Dauv_22}. 
However, as far as we can tell, the complete result is not a straightforward consequence of \cite{Dauv_22}. 
Our method of proof is also 
quite different from the previous works, which rely on the properties of the Schur functions and the RSK correspondence. Instead, we utilize an explicit coupling along with 
the Burke property of the exponential LPP. 

\subsection{Organization of the paper}
The main result on the joint distribution of Busemann functions is Theorem \ref{T:BusMar} in Section \ref{Ss:BusMar}. Some immediate consequences follow. In particular, in Section \ref{S:edgeBus} we demonstrate that a complete characterization of the Busemann process on a lattice edge follows relatively effortlessly from Theorem \ref{T:BusMar}. Furthermore, in Section \ref{sec:Shen_indep}, we demonstrate a  
short proof of a special case of an independence property for Busemann functions proved previously by Shen \cite{shen25}.  Section \ref{Ss:BMdiscuss} gives an outline of the proof of Theorem \ref{T:BusMar} in the simplest nontrivial case of two directions. To assist the exposition, Figures \ref{F:CDFBd}--\ref{F:TBusDisId2} present serial pictorial representations of the main steps.  

The result on the permutation invariance of the inhomogeneous exponential LPP is 
Theorem \ref{T:LppInv} in Section \ref{Ss:LppInv}. Its proof follows in Section \ref{S:PfInv}. 

The proof of Theorem \ref{T:BusMar} occupies Section \ref{S:PfBuse}. As preparation for the proof, 
Section \ref{S:iBusFn} reviews properties of the Busemann functions of inhomogeneous exponential LPP.

\subsection{Notation and conventions}

Let $[n] =\{i \in \bbZ_{>0}: i \le n\}$, and let $\sS_n$ denote the set of all permutations of $[n]$. 
A random variable $X$ has the exponential distribution with  rate  $a>0$ if $\bfP(X>t)=e^{-at}$ for $t\ge0$, abbreviated $X\sim\Exp\{a\}$. The statement $X\sim\Exp\{0\}$ means $X=\infty$ almost surely.
  
  Throughout the paper, $\w$ represents deterministic real weights, whereas $\omega$ 
  represents exponentially distributed random weights (always independent, but not necessarily identically distributed). We also use $\eta$ for several auxiliary random weights, for example, to describe the inhomogeneous environment in Theorem \ref{T:BusMar}.

\subsection{Acknowledgments and funding}
 E.B. was partially supported by National Science Foundation grant DMS-2412473.
 E.E.\ was supported by the EPSRC grant EP/W032112/1. E.S.\ was partially supported by the Fernholz foundation and by Ivan Corwin's Simons Investigator Grant 929852. This work was initiated in May 2024 while E.S.\ was visiting the University of Bristol and the University of Oxford, which he thanks for their hospitality. The travel of E.S. during that trip was supported by AMS-Simons travel grant AMMS CU23-2401.
T.S.\ was partially supported by  National Science Foundation grants  DMS-2152362 and DMS-2448375, by Simons Foundation grant 1019133, and by  the Wisconsin Alumni Research Foundation.
The authors would also like to thank Xiao Shen for helpful discussions, and the referees for helpful comments and corrections.

\subsection{Publication note}
This version of the article has been accepted for publication, after peer review (when applicable) but is not the Version of Record and does not reflect post-acceptance improvements, or any corrections. The Version of Record is available online at: \hyperref[http://dx.doi.org/10.1007/s00440-026-01516-7]{http://dx.doi.org/10.1007/s00440-026-01516-7}. 


\section{Main results for the Busemann process}
\label{S:BMain}

\subsection{Last-passage times and increments}
\label{Ss:LppInc}

Let $\le$ denote the coordinatewise partial order on $\bbZ^2$, meaning $\bfu = (u_1, u_2) \le \bfv = (v_1, v_2)$ if both $u_1 \le v_1$ and $u_2 \le v_2$. 
We write $\bfu < \bfv$ if $\bfu \le \bfv$ and $\bfu \neq \bfv$.
For $\bfu, \bfv \in \bbZ^2$ with $\bfu \le \bfv$, let $[\bfu, \bfv] = \{\bfp \in \bbZ^2: \bfu \le \bfp \le \bfv\}$. A nonempty, finite, totally ordered subset $\pi \subset \bbZ^2$ is called an \emph{up-right path} if $\pi$ contains $\bfp + (1, 0)$ or $\bfp + (0, 1)$ for each $\bfp \in \pi \smallsetminus \{\max \pi\}$. Let $\Pi_{\bfu, \bfv}$ denote the set of all up-right paths $\pi$ with $\min \pi = \bfu$ and $\max \pi = \bfv$. Given 
a collection of real weights $\w = \{\w_\bfp: \bfp \in \bbZ^2\} \in \bbR^{\bbZ^2}$, define the associated last-passage time by 
\begin{align}
\label{E:Lpp}
\Lp_{\bfu, \bfv}[\w] = \max_{\pi \in \Pi_{\bfu, \bfv}} \sum_{\bfp \in \pi} \w_\bfp, 
\end{align}
which depends only on the restriction $\w|_{[\bfu, \bfv]}$ of the weights to the rectangular grid $[\bfu, \bfv]$.
When the inequality $\bfu \le \bfv$ fails, we set
\begin{align}
\label{E:LppInft}
\Lp_{\bfu, \bfv}[\w] = -\infty. 
\end{align}

Define the last-passage increments with respect to the initial point by 
\begin{align}
\label{E:IncInit}
\begin{split}
\ul{\I}_{\bfu, \bfv}[\w] &= \Lp_{\bfu, \bfv}[\w]-\Lp_{\bfu+(1, 0), \bfv}[\w], \\ 
\ul{\J}_{\bfu, \bfv}[\w] &= \Lp_{\bfu, \bfv}[\w]-\Lp_{\bfu+(0, 1), \bfv}[\w], 
\end{split}
\end{align}
and with respect to the terminal point by 
\begin{align}
\label{E:IncTerm}
\begin{split}
\ol{\I}_{\bfu, \bfv}[\w] &= \Lp_{\bfu, \bfv}[\w]-\Lp_{\bfu, \bfv-(1, 0)}[\w], \\ 
\ol{\J}_{\bfu, \bfv}[\w] &= \Lp_{\bfu, \bfv}[\w]-\Lp_{\bfu, \bfv-(0, 1)}[\w].  
\end{split}
\end{align}
Definitions \eqref{E:IncInit} and \eqref{E:IncTerm} imply that the last-passage increments satisfy the recursions 
\begin{align}
\label{E:IncRec}
\begin{split}
\ul{\I}_{\bfx, \bfy}[\w] &= \w_\bfx + \max \{\ul{\I}_{\bfx + (0, 1), \bfy}[\w]- \ul{\J}_{\bfx + (1, 0), \bfy}[\w], 0\}, \\
\ul{\J}_{\bfx, \bfy}[\w] &= \w_\bfx + \max \{\ul{\J}_{\bfx + (1, 0), \bfy}[\w]-\ul{\I}_{\bfx + (0, 1), \bfy}[\w], 0\}, \\
\ol{\I}_{\bfx, \bfy}[\w] &= \w_\bfy + \max \{\ol{\I}_{\bfx, \bfy-(0, 1)}[\w]- \ol{\J}_{\bfx, \bfy-(1, 0)}[\w], 0\}, \\
\ol{\J}_{\bfx, \bfy}[\w] &= \w_\bfy + \max \{\ol{\J}_{\bfx, \bfy-(1, 0)}[\w]-\ol{\I}_{\bfx, \bfy-(0, 1)}[\w], 0\}
\end{split}
\end{align}
for $\bfx, \bfy \in [\bfu, \bfv]$ with $\bfx + (1, 1) \le \bfy$. One also has the identities  
\begin{align}
\label{E:wRec}
\min \{\ul{\I}_{\bfx, \bfy}, \ul{\J}_{\bfx, \bfy}\}  = \w_\bfx \quad \text{ and } \quad \min \{\ol{\I}_{\bfx, \bfy}, \ol{\J}_{\bfx, \bfy}\} = \w_\bfy \quad \text{ for } \bfx < \bfy,  
\end{align}
which recover the weights from the increments. 

The following lemma recalls the monotonicity of the increments $\ul{\I}$ and $\ul{\J}$ with respect to the terminal vertices. 
\begin{lem}
[{\cite[Lemma 6.2]{Rass_18} and \cite[Lemma 4.6]{sepp-cgm-18}}]
\label{L:Comp}
The following inequalities hold for $\bfx, \bfy \in [\bfu, \bfv]$ with $\bfx \le \bfy$.
\begin{enumerate}[\normalfont (a)]
\item \label{L:Comp_a}
If $\bfy + (1, 0) \le \bfv$ then $\ul{\I}_{\bfx, \bfy}[\w] \ge \ul{\I}_{\bfx, \bfy + (1, 0)}[\w]$. 
\item \label{L:Comp_b}
If $\bfy + (0, 1) \le \bfv$ then $\ul{\I}_{\bfx, \bfy}[\w] \le \ul{\I}_{\bfx, \bfy + (0, 1)}[\w]$.
\item \label{L:Comp_c}
If $\bfy + (1, 0) \le \bfv$ then $\ul{\J}_{\bfx, \bfy}[\w] \le \ul{\J}_{\bfx, \bfy + (1, 0)}[\w]$. 
\item \label{L:Comp_d}
If $\bfy + (0, 1) \le \bfv$ then $\ul{\J}_{\bfx, \bfy}[\w] \ge \ul{\J}_{\bfx, \bfy + (0, 1)}[\w]$.
\end{enumerate}
\end{lem}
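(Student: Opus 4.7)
The plan is to rewrite each of (a)--(d) as a supermodularity-type (Monge) inequality for $\Lp$, and then establish those via the classical path-crossing and swap argument. Because the model is invariant under the reflection that swaps the two coordinate axes (and thereby interchanges $\ul\I$ with $\ul\J$), it suffices to prove (a) and (b); parts (d) and (c) follow by the same symmetry. Throughout one may restrict to the non-degenerate regime in which all four last-passage values in sight are finite, since otherwise \eqref{E:LppInft} makes the claim vacuous.

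For (a), subtracting the definitions in \eqref{E:IncInit} reduces the claim to
\begin{equation*}
\Lp_{\bfx, \bfy}[\w] + \Lp_{\bfx+(1,0), \bfy+(1,0)}[\w] \;\ge\; \Lp_{\bfx, \bfy+(1,0)}[\w] + \Lp_{\bfx+(1,0), \bfy}[\w].
\end{equation*}
Assuming $x_1 < y_1$ so the right-hand side is finite, I would pick optimal paths $\pi_1 \in \Pi_{\bfx, \bfy+(1,0)}$ and $\pi_2 \in \Pi_{\bfx+(1,0), \bfy}$ achieving that right-hand side. Both paths traverse every column in $\{x_1+1, \dots, y_1\}$. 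At the leftmost such column, $\pi_1$ enters at some row $\ge x_2$ while $\pi_2$ starts at row $x_2$, so $\pi_1$ is weakly above $\pi_2$ there; at the rightmost, $\pi_1$ exits to column $y_1+1$ from some row $\le y_2$ while $\pi_2$ terminates at row $y_2$, so $\pi_1$ is weakly below $\pi_2$ there. Planarity and the up-right constraint then force at least one shared lattice vertex $\bfp \in \pi_1 \cap \pi_2$. Splitting each path at $\bfp$ into an initial and a final segment and swapping the final segments produces a path in $\Pi_{\bfx, \bfy}$ and a path in $\Pi_{\bfx+(1,0), \bfy+(1,0)}$ whose combined weight equals $\Lp_{\bfx, \bfy+(1,0)}[\w] + \Lp_{\bfx+(1,0), \bfy}[\w]$; comparing with the two suprema defining the left-hand side yields the inequality.

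For (b), the analogous reduction gives
\begin{equation*}
\Lp_{\bfx, \bfy+(0,1)}[\w] + \Lp_{\bfx+(1,0), \bfy}[\w] \;\ge\; \Lp_{\bfx, \bfy}[\w] + \Lp_{\bfx+(1,0), \bfy+(0,1)}[\w],
\end{equation*}
and I would repeat the argument with optimal $\pi_1 \in \Pi_{\bfx, \bfy}$ and $\pi_2 \in \Pi_{\bfx+(1,0), \bfy+(0,1)}$. At column $x_1+1$, $\pi_2$ starts at row $x_2$ while $\pi_1$ is at row $\ge x_2$; at column $y_1$, $\pi_1$ ends at row $y_2$ while $\pi_2$ must eventually sit at row $y_2+1$. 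So $\pi_1$ is weakly above $\pi_2$ at the leftmost shared column and weakly below it at the rightmost, forcing a shared vertex as before, and the same splitting-swapping trick converts the pair into paths in $\Pi_{\bfx, \bfy+(0,1)}$ and $\Pi_{\bfx+(1,0), \bfy}$ of equal total weight. The main technical care is needed in the mixed-shift case (b), where the endpoints are displaced in perpendicular directions: one must argue cleanly that the row intervals occupied by $\pi_1$ and $\pi_2$ genuinely overlap somewhere in the column range $[x_1+1, y_1]$, rather than merely flanking each other. This can be made rigorous by a minimum-column-of-crossing argument, using the fact that the row occupied by an up-right path can increase by at most one per column step.
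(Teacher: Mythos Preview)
The paper does not supply its own proof of this lemma; it simply cites \cite[Lemma 6.2]{Rass_18} and \cite[Lemma 4.6]{sepp-cgm-18}. Your path-crossing/swap argument is exactly the standard proof found in those references (reduce to a Monge-type inequality for $\Lp$, choose geodesics for the right-hand side, force an intersection by planarity, and swap segments). So the approach is correct and matches the literature.

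One genuine slip to fix: your closing sentence in (b) claims that ``the row occupied by an up-right path can increase by at most one per column step.'' This is false---an up-right path may take arbitrarily many vertical steps within a single column, so the row index can jump by any amount between consecutive columns. The crossing argument is still valid, but the correct justification is different. The clean way is to argue row-by-row rather than column-by-column: let $R_k(j)$ denote the rightmost column that $\pi_k$ visits in row $j$. Then $R_1(x_2)=x_1<x_1+1=L_2(x_2)$ (the leftmost column of $\pi_2$ in row $x_2$), so $\pi_1$ is strictly to the left of $\pi_2$ in row $x_2$; whereas in row $y_2$ one has $R_1(y_2)=y_1\ge R_2(y_2)$, so $\pi_1$ is not strictly left of $\pi_2$ there. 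Since for an up-right path the column intervals $[L_k(j),R_k(j)]$ and $[L_k(j+1),R_k(j+1)]$ share the endpoint $R_k(j)=L_k(j+1)$, the relation ``$\pi_1$ strictly left of $\pi_2$'' can only be lost at a row where the two column intervals overlap, which produces a shared vertex. Alternatively, if you prefer to stay column-by-column, the relevant continuity is that the \emph{exit} row of a path from column $c$ equals its \emph{entry} row into column $c+1$; this, not a bound on the row increment, is what makes the ``first column of crossing'' well-defined. With that correction your sketch is complete.
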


\subsection{Joint distribution of Busemann process in exponential LPP}
\label{Ss:BusMar}
We now consider random weights, specifically independent $\Exp\{1\}$ random variables $\omega = \{\omega_{\bfv}: \bfv \in \bbZ_{>0}^2\}$. Proposition \ref{P:BusFn} below recalls the limits of the increments $\ul{\I}_{\bfu, \bfv}[\omega]$ and $\ul{\J}_{\bfu, \bfv}[\omega]$ when the endpoint $\bfv$ tends to infinity in a given  
direction represented here by an inverse-slope parameter  
$r \in [0, \infty]$. As discussed in the introduction, the limits $\sI_{\bfu}^r[\omega]$ and $\sJ_{\bfu}^r[\omega]$ are the values of the Busemann function in the direction $r$ for the edges $\{\bfu, \bfu+(1, 0)\}$ and $\{\bfu, \bfu+(0, 1)\}$, respectively. Since the Busemann function is entirely determined from these values through additivity, there is no loss in restricting attention to pairs of nearest-neighbor vertices. In part \eqref{P:BusFn_b} of the proposition, the exponential rates are expressed in terms of the function
\begin{align}
\label{E:zeta}
\zeta(r) = \frac{\sqrt{r}}{1+\sqrt{r}},
\end{align}
extended continuously to $[0, \infty]$ so that $\zeta(0) = 0$ and $\zeta(\infty) = 1$. Concerning part \eqref{P:BusFn_c}, call a set $\nu \subset \bbZ^2$ a \emph{down-right path} if its reflection $\{(i, -j): (i, j) \in \nu\}$ across the horizontal axis is an up-right path. 

\begin{prop} 
\label{P:BusFn}
Fix $r  \in [0, \infty]$ and $\bfu \in \bbZ^2_{>0}$. The following statements hold. 
\begin{enumerate}[\normalfont (a)]\itemsep=4pt 
\item \textup{(Existence of Busemann functions.)} \label{P:BusFn_a} 
There exist random variables $\sI_{\bfu}^r[\omega]$ and $\sJ_{\bfu}^r[\omega]$ such that the   limits  
\begin{align} \label{E:def_Bus}
\lim_{k \to \infty} \ul{\I}_{\bfu, \bfv_k}[\omega] = \sI^r_\bfu[\omega] \quad \text{ and } \quad \lim_{k \to \infty} \ul{\J}_{\bfu, \bfv_k}[\omega] = \sJ^r_\bfu[\omega]
\end{align}
hold almost surely, simultaneously for all sequences $\bfv_k = (m_k, n_k) \in \bbZ^2$ such that \linebreak  $\min \{m_k, n_k\} \to \infty$ and $\dfrac{m_k}{n_k} \to r$ as $k \to \infty$. 


\item \textup{(Marginal distributions.)} \label{P:BusFn_b}
$\sI_{\bfu}^r[\omega] \sim \Exp\{\zeta(r)\}$, and $\sJ_{\bfu}^r[\omega] \sim \Exp\{1-\zeta(r)\}$. 


\item \textup{(Independence along down-right paths.)} \label{P:BusFn_c}
For any down-right path $\nu \subset \bbZ_{>0}^2$, the collection 
\begin{align*}
\{\sI_{\bfu}^r[\omega]: \bfu, \bfu + (1, 0) \in \nu\} \cup \{\sJ_{\bfu}^r[\omega]: \bfu, \bfu + (0, 1) \in \nu\}
\end{align*}
is independent. 
\end{enumerate}
\end{prop}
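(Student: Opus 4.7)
All three parts can be handled simultaneously via the classical stationary coupling for exponential LPP. Fix $r \in (0, \infty)$ and set $\zeta = \zeta(r) \in (0, 1)$. Augment the environment onto $\bbZ^2_{\ge 0}$ by keeping the bulk weights $\omega$ on $\bbZ^2_{>0}$, placing independent $\Exp\{\zeta\}$ weights along the horizontal axis $\{(i, 0) : i \ge 1\}$ and independent $\Exp\{1-\zeta\}$ weights along the vertical axis $\{(0, j) : j \ge 1\}$, and zero weight at the origin. Call this augmented environment $\omega^*$. The key distributional input is a Burke-type property for the associated last-passage time $\Lp_{(0,0), \bfv}[\omega^*]$: on any down-right path inside $\bbZ^2_{\ge 0}$, the terminal-point increments $\ol{\I}[\omega^*]$ and $\ol{\J}[\omega^*]$ are jointly independent with marginals $\Exp\{\zeta\}$ and $\Exp\{1-\zeta\}$, respectively. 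This is the counterpart of the product-form invariant measure mentioned in the introduction and can be obtained by an inductive argument built on the recursive identity \eqref{E:wRec}.

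Next, a stationary-to-bulk coupling transfers these properties to the Busemann limits. Decomposing $\Lp_{(0,0), \bfv_k}[\omega^*]$ as a maximum over exit points from the two axes of a boundary sum plus a bulk last-passage time, one shows that when $\bfv_k = (m_k, n_k)$ satisfies $m_k/n_k \to r$, the optimal exit point lies within $o(|\bfv_k|)$ of the origin almost surely, precisely because $\zeta$ has been tuned so that the stationary characteristic matches the direction $r$. Consequently, for any fixed $\bfu \in \bbZ^2_{>0}$ and all sufficiently large $k$, the bulk increment $\ul{\I}_{\bfu, \bfv_k}[\omega]$ coincides with a corresponding increment of $\Lp[\omega^*]$ that, by the Burke property above, has distribution $\Exp\{\zeta\}$ and the independence structure required by \eqref{P:BusFn_c}. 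Defining $\sI^r_\bfu[\omega]$ and $\sJ^r_\bfu[\omega]$ as the almost-sure limits then yields parts \eqref{P:BusFn_b} and \eqref{P:BusFn_c}. For part \eqref{P:BusFn_a} --- existence of the \emph{same} limit along every sequence with $m_k/n_k \to r$, not just one chosen in advance --- I would invoke Lemma \ref{L:Comp} to sandwich an arbitrary such sequence between two axis-monotone sequences whose limits agree by the argument just described. The degenerate cases $r \in \{0, \infty\}$ follow either directly or as monotone limits $\zeta \to 0$ or $\zeta \to 1$, producing $\sI_\bfu^0[\omega] = \infty$ and $\sJ_\bfu^\infty[\omega] = \infty$ almost surely.

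The principal obstacle is the quantitative boundary-exit estimate at the heart of the stationary coupling. Ruling out linear-scale excursions of the stationary geodesic along the boundary requires concentration or fluctuation bounds for $\Lp[\omega^*]$ that go beyond the marginal Burke identities, and it is exactly at this step that the matching between the rate $\zeta(r)$ and the direction $r$ via \eqref{E:zeta} is forced. Once that sublinear exit bound is in hand, the remaining ingredients --- the Burke property, the monotone sandwich via Lemma \ref{L:Comp}, and the transfer of distributional identities from $\omega^*$ to $\omega$ --- are essentially routine.
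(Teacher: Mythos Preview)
The paper does not prove Proposition~\ref{P:BusFn}; it is quoted from the literature (see Remark~\ref{R:BusFn_HV}, which attributes the main case to \cite{sepp-cgm-18} and earlier works), and Remark~\ref{R:OneDir} explains how parts~\eqref{P:BusFn_b}--\eqref{P:BusFn_c} are recovered as the $d=1$ case of the paper's Theorem~\ref{T:BusMar}. Your overall plan---stationary boundary plus Burke plus exit-point control---is indeed the classical route, so in spirit you are aligned with what the cited references do.

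That said, your sketch has an orientation mismatch that is not cosmetic. You place the stationary boundary at the \emph{southwest} origin and invoke Burke for the \emph{terminal}-point increments $\ol{\I}_{(0,0),\cdot}[\omega^*]$, $\ol{\J}_{(0,0),\cdot}[\omega^*]$. But the Busemann functions in \eqref{E:def_Bus} are limits of \emph{initial}-point increments $\ul{\I}_{\bfu,\bfv_k}[\omega]$, $\ul{\J}_{\bfv,\bfv_k}[\omega]$, where $\bfu,\bfv$ are fixed and $\bfv_k\to\infty$. Your sentence ``for all sufficiently large $k$, the bulk increment $\ul{\I}_{\bfu,\bfv_k}[\omega]$ coincides with a corresponding increment of $\Lp[\omega^*]$'' does not follow from sublinear exit of the stationary geodesic out of the southwest boundary: exit-point control tells you $\Lp_{(0,0),\bfv_k}[\omega^*]$ is close to a bulk passage time from near the origin, but it says nothing about the dependence of $\Lp_{\cdot,\bfv_k}[\omega]$ on the \emph{initial} point $\bfu$, which is what $\ul{\I}_{\bfu,\bfv_k}$ measures. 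There is no increment of $\Lp_{(0,0),\cdot}[\omega^*]$ that equals $\ul{\I}_{\bfu,\bfv_k}[\omega]$ in general.

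Two standard fixes. First, reverse the boundary to the \emph{northeast}: put independent $\Exp\{\zeta\}$ weights on the top row and $\Exp\{1-\zeta\}$ weights on the right column of a large box $[1,M]\times[1,N]$ with $M/N\to r$, so that Burke (now for the reversed dynamics) gives the law of the initial-point increments $\ul{\I}_{\bfu,(M,N)}$, $\ul{\J}_{\bfv,(M,N)}$ directly along any down-right path; this is exactly the $d=1$ picture in Remark~\ref{R:OneDir} and Figure~\ref{fig:d=1case}. Second, keep the southwest boundary but run a genuine two-parameter sandwich: couple two stationary models with rates $\zeta^-<\zeta(r)<\zeta^+$, use Lemma~\ref{L:Comp} together with exit-point control to trap $\ul{\I}_{\bfu,\bfv_k}[\omega]$ between the corresponding stationary initial-point increments, and then send $\zeta^\pm\to\zeta(r)$. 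You gesture at Lemma~\ref{L:Comp} only for the ``simultaneous sequences'' upgrade in part~\eqref{P:BusFn_a}, but it is actually needed earlier, at the coupling step itself, if you insist on a southwest boundary.
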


\begin{rem}[Justification of Proposition~\ref{P:BusFn}]
\label{R:BusFn_HV}   
The main case $r \in (0, \infty)$ of the proposition is contained in \cite[Theorem~4.2]{sepp-cgm-18}, with the edge cases $r \in \{0, \infty\}$ being an easy extension. In fact, for 
directions $r \in \{0, \infty\}$, the corresponding Busemann functions are trivial in the sense that 
$\sI_{\bfu}^0[\omega] = \infty = \sJ_{\bfu}^{\infty}[\omega]$ and $\sI_{\bfu}^\infty[\omega] = \omega_{\bfu} = \sJ_{\bfu}^{0}[\omega]$. 

For bibliographic completeness, we point out that parts \eqref{P:BusFn_b} and \eqref{P:BusFn_c} 
were previously recorded within \cite[Lemma 3.3]{Cato_Pime_13}. The article \cite{Cato_Pime_13} attributed part \eqref{P:BusFn_a} to \cite{coup-11, Ferr_Pime_05} because the existence of $r$-directed Busemann functions follows from the existence, uniqueness, and coalescence properties of $r$-directed semi-infinite geodesics established in those works. 
For instance, an argument to this effect appeared earlier in the setting of Poissonian LPP \cite[Theorem 1.1]{Wuth_02}.

Finally, we point out that \eqref{E:def_Bus} fails for a random, countably infinite, dense set of directions $r$ depending on $\omega$.
This is related to the non-uniqueness of semi-infinite geodesics in exceptional directions.
Since we are concerned with \textit{finite}-dimensional distributions of the Busemann process (defined below), such exceptional behaviors will not play a role in this paper.
For precise results, see \cite[Theorems 3.10 and 3.11]{janj-rass-sepp-23}.
\qedex 
\end{rem}

Our goal is to dramatically extend parts \eqref{P:BusFn_b} and \eqref{P:BusFn_c} of Proposition \ref{P:BusFn} by describing all finite-dimensional distributions of the \emph{Busemann process} 
\begin{align}
\label{E:BusPr}
\{\sI_{\bfu}^r[\omega], \sJ_{\bfu}^r[\omega]: \bfu \in \bbZ^2_{>0} \text{ and } r\in\R_{>0}\}. 
\end{align}
Allowing the axis directions $\{0, \infty\}$ in \eqref{E:BusPr} would involve a simple extension of our description as explained in Remark \ref{R:BusMarHV} below. 
By translation invariance of the i.i.d.\ environment $\omega$, it suffices to consider vertices belonging to a finite grid $[k]\times[\ell]$ whose lower left corner is $(1,1)$ and upper right corner is $(k,\ell)$. When discussing horizontal increments such as $\ul{\I}_{\bfu, \bfv}$ or $\sI_\bfu^r$, we demand that both $\bfu$ and $\bfu+(1,0)$ belong to this grid, meaning $\bfu$ belongs to 
\begin{align} \label{E:rightset}
\rightset_{k,\ell} = [k-1]\times[\ell].
\end{align} 
For vertical increments, we will use $\bfv$ instead of $\bfu$, and require 
that $\bfv$ belongs to
\begin{align} \label{E:upset}
\upset_{k,\ell} = [k]\times[\ell-1].
\end{align}
Finally, fix a finite list of directions $\bfr = (r_1<r_2<\cdots<r_d)\in \bbR^d_{>0}$.
Our aim is to describe the joint distribution of
\begin{align}\label{E:BuseBox}
\Bigl(\Bh^{r_p}_{\bfu}[\omega],\Bv^{r_p}_{\bfv}[\omega]: \bfu\in \rightset_{k,\ell},\,\bfv\in\upset_{k,\ell}, \,p \in [d]\Bigr).
\end{align}

The theorem below states that the collection \eqref{E:BuseBox} is distributionally identical to the increments of an inhomogeneous last-passage process on the slightly larger grid $[k+d-1]\times[\ell+d-1]$.
The setup is illustrated in Figure~\ref{F:eta} and defined as follows.
First define parameter sequences $(a_i)_{i \in [k+d - 1]}$ and $(b_j)_{j \in [\ell + d - 1]}$ by 
\begin{subequations} \label{E:def_eta}
\begin{align}
a_i &= \begin{cases}
0 &1 \le i  < k \\
-\zeta(r_{i - k + 1}) &k \le i \le k+d - 1,
\end{cases} \label{E:def_eta_a}\\
b_j &= \begin{cases}
1 &1 \le j < \ell \\
\rlap{$\zeta(r_{\ell + d - j})$}\hphantom{-\zeta(r_{i - k + 1})} &\ell \le j \le \ell + d - 1.
\end{cases}
\label{E:def_eta_b}
\end{align}
Then consider a collection of independent weights $\eta = \bigl\{\eta_{(i, j)}: i \in [k+d-1], j \in [\ell+d-1]\bigr\}$ (depending on $d$, $k$, $\ell$ and $\bfr$) such that 
\begin{equation}\label{E:auxw}
\begin{aligned}
    \eta_{(i,j)} &\sim \Exp\{a_i + b_j\} \quad\text{if}\quad a_i + b_j > 0, \\
    \eta_{(i,j)} &= 0 \qquad\qquad\qquad\text{otherwise.}
\end{aligned}
\end{equation} 
\end{subequations}
Figures \ref{F:eta} and \ref{F:BusMar}   illustrate the next theorem. 

\begin{figure}[h]
    \centering
    \includegraphics[width=0.61\linewidth]{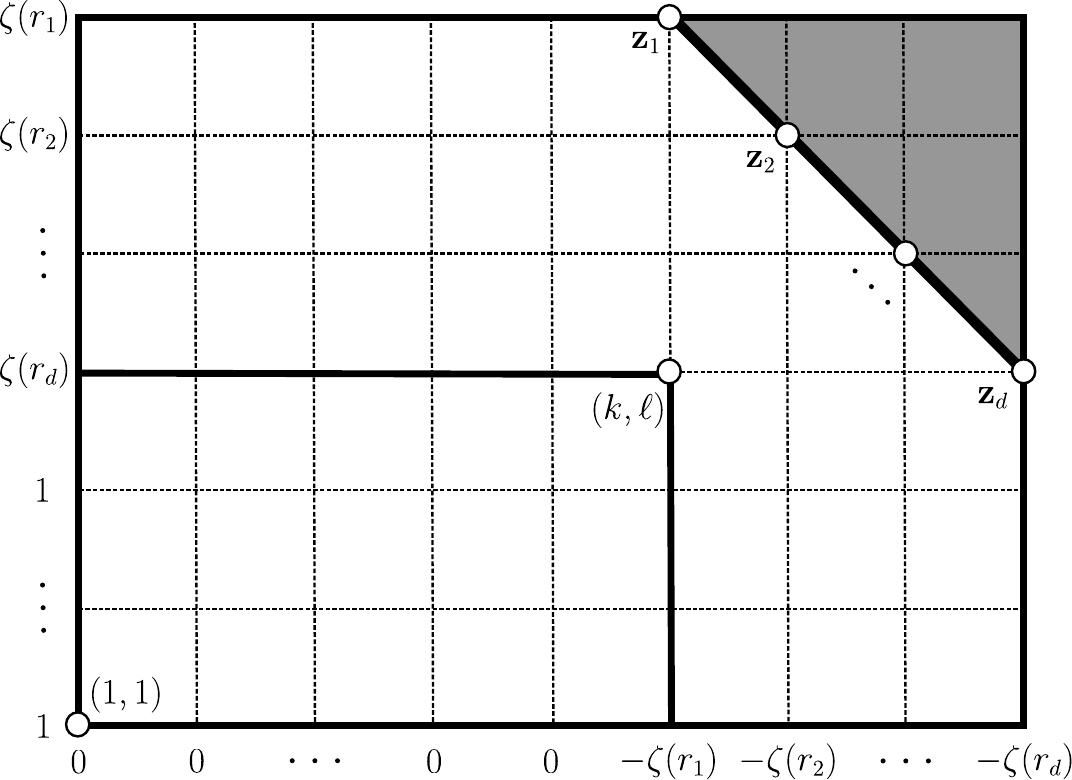}
    \caption{\small  Illustration of the geometric arrangement in Theorem \ref{T:BusMar} with $d=4$, $k = 6$, and $\ell = 4$. 
    Theorem \ref{T:BusMar} gives the distribution of the Busemann functions in the $[k] \times [\ell]$ grid with lower left corner at  $(1,1)$ and upper right corner at $(k,\ell)$, by considering inhomogeneous exponential LPP terminating at the vertices $\bfz_1,\dots,\bfz_d$.
    Outside of the shaded region, the weight at vertex $(i,j)$ is exponential with rate $a_i + b_j$, where the value of $a_i$ is shown at the bottom of column $i\in[k+d-1]$, and the value of $b_j$ is shown to the left of row $j\in[\ell+d-1]$.
    Inside the shaded region (including its boundary), we have $a_i+b_j\le0$, so weights are instead set to $0$.}
    \label{F:eta}
\end{figure}

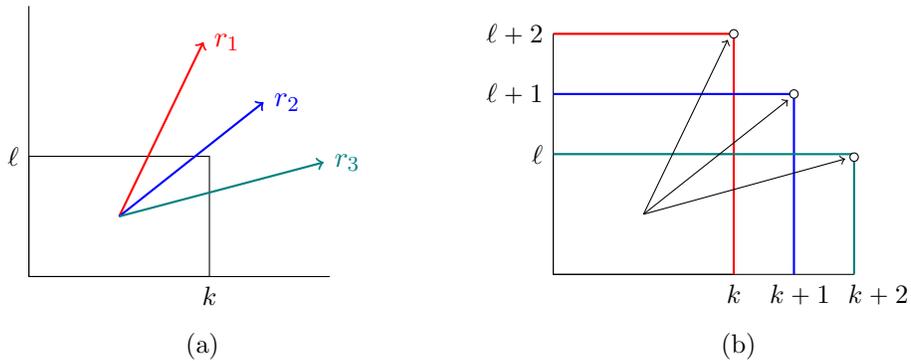
\begin{figure}[h]
\begin{subfigure}[t]{0.45\textwidth}
\hspace*{0.25in}
\begin{tikzpicture}[scale = 0.8]
\draw[](0, 0)--(0, 4.5);
\draw[](0, 0)--(5, 0);
\draw[](0, 0)rectangle(3, 2);
\draw[red, thick, ->](1.5, 1)--(3.5, 5)node[right]{$r_1$};
\draw[blue, thick, ->](1.5, 1)--(4, 4.5)node[right]{$r_2$};
\draw[teal, thick, ->](1.5, 1)--(5.5, 1.5)node[right]{$r_3$};
\draw(3, 0)node[below]{\small $k$};
\draw(0, 2)node[left]{\small $\ell$};
\end{tikzpicture}
\subcaption{}
\end{subfigure}
\begin{subfigure}[t]{0.45\textwidth}
\begin{tikzpicture}[scale = 0.8]
\draw[](0, 0)--(0, 4);
\draw[](0, 0)--(5, 0);
\draw[](0, 0)rectangle(3, 2);
\draw[](0, 4)--(3, 4);
\draw[thick, red](3, 0)--(3, 4);
\draw[thick, red](0, 4)--(3, 4);
\draw[thick, blue](4, 0)--(4, 3);
\draw[thick, blue](0, 3)--(4, 3);
\draw[thick, teal](5, 0)--(5, 2);
\draw[thick, teal](0, 2)--(5, 2);
\draw[fill=white](3, 4)circle(0.07);
\draw[fill=white](4, 3)circle(0.07);
\draw[fill=white](5, 2-0.05)circle(0.07);
\draw[->](1.5, 1)--(3-0.1, 4-0.1);
\draw[->](1.5, 1)--(4-0.1, 3-0.1);
\draw[->](1.5, 1)--(5-0.15, 2-0.08);
\draw(3, 0)node[below]{\small $k$};
\draw(4.1, 0)node[below]{\small $k+1$};
\draw(5.4, 0)node[below]{\small $k+2$};
\draw(0, 2)node[left]{\small $\ell$};
\draw(0, 3)node[left]{\small $\ell+1$};
\draw(0, 4)node[left]{\small $\ell+2$};
\end{tikzpicture}
\subcaption{}
\end{subfigure}
\caption{\small Illustration of the conclusion of Theorem \ref{T:BusMar} with $d = 3$.  The following two collections of random variables are equal in distribution. (a) The Busemann increments within $[k] \times [\ell]$ for directions 
$r_1 < r_2 < r_3$ (red, blue, and teal 
arrows).  These increments depend on infinitely many variables in the i.i.d.\ environment $\omega$. (b) The last-passage increments (with respect to initial points within $[k] \times [\ell]$) to the terminal points $(k, \ell+2)$, $(k+1, \ell+1)$, and $(k+2, \ell)$, computed with $\eta$-weights. These increments depend on only finitely many variables in the inhomogeneous environment $\eta$. Zero weights are marked with small circles, and all other weights are exponentially distributed with rate parameters computed as in Figure~\ref{F:eta}. 
}
\label{F:BusMar}
\end{figure}

\begin{thm}[Joint distribution of Busemann functions]
\label{T:BusMar}
Let $d, k, \ell \in \bbZ_{>0}$, and let $(r_p)_{p \in [d]} \in \bbR^d_{>0}$ be a strictly  increasing sequence. For $p \in [d]$, define $\bfz_p = (k+p-1,\ell + d-p)$ so that $\bfz_1,\ldots,\bfz_d$ are the points along the antidiagonal segment
\[
\{(i,j): i+j = k+\ell + d- 1\text{ and }k \le i \le k+d-1\},
\]
ordered from upper left to lower right. 
Then, we have the following distributional identity:   \begin{align*}
&\Bigl(\Bh^{r_p}_{\bfu}[\omega],\Bv^{r_p}_{\bfv}[\omega]: \bfu\in \rightset_{k,\ell},\,\bfv\in\upset_{k,\ell}, \,p \in [d]\Bigr)\deq \Bigl(\ul{\I}_{\bfu, \mathbf z_p}[\eta],\ul{\J}_{\bfv, \mathbf z_p}[\eta]: \bfu\in \rightset_{k,\ell},\,\bfv\in\upset_{k,\ell}, \,p \in [d] \Bigr).
\end{align*}
That is, the Busemann increments in the i.i.d.\ environment $\omega$ are equal in joint law to the last-passage increments in the inhomogeneous environment $\eta$ from \eqref{E:def_eta} with terminal points $(\mathbf z_p)_{p\in[d]}$. 
\end{thm}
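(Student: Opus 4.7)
The plan is to derive the identity in three stages: (i) handle a single direction ($d=1$) via the Burke property for stationary exponential LPP, (ii) construct a coupling of all $d$ direction-$r_p$ Busemann processes by iterating the inhomogeneous Busemann theory of \cite{Emra_Janj_Sepp_24}, and (iii) use the permutation invariance of Theorem~\ref{T:LppInv} to convert the resulting multi-layer stationary picture into the compact antidiagonal configuration of Theorem~\ref{T:BusMar}.

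For $d=1$, I would note that $\bfz_1=(k,\ell)$ and the $\eta$-environment is the standard stationary exponential LPP read from the upper-right corner: the bulk $[k-1]\times[\ell-1]$ carries i.i.d.\ rate-$1$ weights, the column $i=k$ has rate $1-\zeta(r_1)$, the row $j=\ell$ has rate $\zeta(r_1)$, and the corner $(k,\ell)$ is a zero weight. After the reversal $(i,j)\mapsto(k+1-i,\ell+1-j)$, the last-passage increments $\ul{\I}_{\bfu,\bfz_1}[\eta]$ and $\ul{\J}_{\bfv,\bfz_1}[\eta]$ become boundary increments of a classical stationary model, which the Burke property identifies in joint law with the Busemann increments $\Bh^{r_1}_{\bfu}[\omega]$ and $\Bv^{r_1}_{\bfv}[\omega]$ of the infinite i.i.d.\ model (Proposition~\ref{P:BusFn}). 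For $d\ge 2$, I would build a single inhomogeneous environment carrying $d$ boundary layers by iterating this construction: starting from an environment with $d-1$ boundary layers whose increments realize the Busemann process in directions $r_2,\ldots,r_d$, I would attach a new boundary layer corresponding to $r_1$ and use the inhomogeneous Busemann theory of \cite{Emra_Janj_Sepp_24} to extract a direction-$r_1$ limit of last-passage increments inside the enlarged environment. This limit realizes the direction-$r_1$ Busemann increment while, thanks to the Burke-type product structure at each stationary boundary, preserving the joint law of the other $d-1$ direction Busemann processes already constructed.

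The main obstacle will be the passage from the stacked multi-layer configuration to the compact antidiagonal configuration of Theorem~\ref{T:BusMar}. This is where Theorem~\ref{T:LppInv} becomes essential: by permuting the rates of the $d$ boundary columns among themselves and, independently, those of the $d$ boundary rows, one can reorder the layers without altering the joint law of last-passage increments. The task will be to identify a specific sequence of adjacent transpositions that both collapses the $d$ separate asymptotic directions onto the single antidiagonal $(\bfz_p)_{p\in[d]}$ and produces exactly the rate structure \eqref{E:def_eta}. I anticipate that the shaded zero-weight region of Figure~\ref{F:eta} emerges automatically from this reorganization, because the pairwise difference $\zeta(r_q)-\zeta(r_p)$ is a valid exponential rate only when $p<q$; entries on and above the antidiagonal within the corner region are therefore forced to be deterministic zeros, matching \eqref{E:auxw}. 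The delicate part will be the bookkeeping that matches each post-permutation last-passage increment $\ul{\I}_{\bfu,\bfz_p}[\eta]$ with the intended Busemann increment $\Bh^{r_p}_{\bfu}[\omega]$, and verifying that these matchings hold simultaneously across all $\bfu\in\rightset_{k,\ell}$, $\bfv\in\upset_{k,\ell}$, and $p\in[d]$ rather than merely in their one-dimensional marginals; the monotonicity of Lemma~\ref{L:Comp} and the sliding of terminal points along the antidiagonal should be the main tools for controlling this simultaneous identification.
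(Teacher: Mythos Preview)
Your outline has a genuine gap in step~(ii). The inductive leap ``attach a new boundary layer for $r_1$, take a direction-$r_1$ limit inside the enlarged environment, and conclude that this realizes $\Bh^{r_1}_{\bfu}[\omega]$ \emph{jointly} with the already-constructed $\Bh^{r_2}_{\bfu}[\omega],\dots,\Bh^{r_d}_{\bfu}[\omega]$'' assumes the very thing Theorem~\ref{T:BusMar} asserts. The Burke/product structure you invoke is a \emph{one-direction} statement: it tells you that in each fixed layer the increments along a down-right path are independent with the right marginals. It says nothing about how the direction-$r_1$ Busemann function of the original i.i.d.\ model is coupled to the direction-$r_2,\dots,r_d$ Busemann functions, which is precisely the unknown joint law. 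Your enlarged environment does produce \emph{some} object with the correct $r_1$-marginal, but you have no mechanism to identify its joint law with $(\Bh^{r_1}_{\bfu}[\omega],\dots,\Bh^{r_d}_{\bfu}[\omega])$. The inhomogeneous results of \cite{Emra_Janj_Sepp_24} that you cite are again marginal: Propositions~\ref{P:ThinBuse}--\ref{P:iBusFlat} give single-direction distributions and limits, not cross-direction couplings.

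The paper's proof avoids this circularity by a squeeze argument on joint CDFs rather than by constructing a coupling. One starts from the prelimit $\bfP\{\ul{\I}_{\bfu,(\bfk_n(p),n)}[\omega]>x_{\bfu,p},\ \ul{\J}_{\bfv,(\bfk_n(p),n)}[\omega]<y_{\bfv,p}\ \forall\,\bfu,\bfv,p\}$, introduces inhomogeneous columns \emph{outside} the region of dependence, and then uses the permutation invariance of Theorem~\ref{T:LppInv} together with the monotonicity of Lemma~\ref{L:Comp} to slide those columns inward, producing a one-sided inequality (Lemma~\ref{L:CDFBd}). The matching opposite inequality comes from doing the same with rows. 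Both bounds are then passed through limits and shown to equal the $\eta$-side (Lemmas~\ref{L:CDFLim} and~\ref{L:TBusDisId}). The point is that permutation invariance and monotonicity are used to manufacture inequalities, not a direct coupling; this is what lets the argument access the joint law without already knowing it. Your step~(iii), reorganizing layers via Theorem~\ref{T:LppInv}, is in the right spirit and is indeed the engine of Lemma~\ref{L:TBusDisId}, but it only becomes applicable after the squeeze has reduced the problem to thin Busemann functions in a fixed finite-column inhomogeneous environment.
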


Before proceeding to some examples, we make a few remarks about Theorem \ref{T:BusMar}.  

\begin{rem}[Direction convention]
\label{R:DirConv}
The Busemann functions in \eqref{E:def_Bus} are defined by sending terminal points to the infinite northeast. 
The opposite convention is taken in \cite{Fan_Sepp_20}, where initial points are sent to the infinite southwest. The i.i.d.\ weights give rise to reflection invariance, so the two scenarios are equivalent.
\qedex
\end{rem}

\begin{rem}[Including the axis directions]
\label{R:BusMarHV}
One can obtain a version of Theorem \ref{T:BusMar} that includes the axis directions $\{0, \infty\}$ as follows. First, apply the theorem with the grid $[k+1] \times [\ell+1]$ to obtain 
\begin{align}
\label{E:155}
\begin{split}
\Bigl(\Bh^{r_p}_{\bfu}[\omega],\Bv^{r_p}_{\bfv}[\omega]\Bigr)_{\bfu, \bfv, p} \deq \Bigl(\ul{\I}_{\bfu, \mathbf z_p}[\eta],\ul{\J}_{\bfv, \mathbf z_p}[\eta]\Bigr)_{\bfu, \bfv, p},    
\end{split}
\end{align}
where the indices take values $\bfu \in \rightset_{k+1,\ell+1}$, $\bfv\in\upset_{k+1,\ell+1}$, $p \in [d]$, and the $\eta$-weights are defined on the grid $[k+d] \times [\ell+d]$ using the given directions $r_1 < \dotsc < r_d$. By the weight-recovery property \eqref{E:wRec}, we have
\begin{align}
\label{E:153}
\begin{split}
\omega_{\bfx} = \min \{\Bh^{r_p}_{\bfx}[\omega],\Bv^{r_p}_{\bfx}[\omega]\} \quad \text{ and } \quad \eta_\bfx = \min \{\ul{\I}_{\bfx, \mathbf z_p}[\eta],\ul{\J}_{\bfx, \mathbf z_p}[\eta]\}
\end{split}    
\end{align}
for each $\bfx \in \rightset_{k+1, \ell+1} \cap \upset_{k+1, \ell+1} = [k] \times [\ell]$. Therefore, \eqref{E:155} implies 
\begin{align}
\label{E:154}
\begin{split}
&\Bigl(\Bh^{r_p}_{\bfu}[\omega],\Bv^{r_p}_{\bfv}[\omega], \omega_\bfx\Bigr)_{\bfu, \bfv, \bfx, p} \deq \Bigl(\ul{\I}_{\bfu, \mathbf z_p}[\eta],\ul{\J}_{\bfv, \mathbf z_p}[\eta], \eta_\bfx\Bigr)_{\bfu, \bfv, \bfx, p},
\end{split}
\end{align}
where the additional index $\bfx$ takes values in the restricted grid $[k] \times [\ell]$.
As noted in Remark~\ref{R:BusFn_HV}, we have $\Bh_{\bfu}^\infty[\omega] = \omega_{\bfu} = \Bv_{\bfu}^{0}[\omega]$. 
Using this with \eqref{E:154} leads to   
\begin{align}
\label{E:156}
\begin{split}
&\Bigl(\Bh^{r_p}_{\bfu}[\omega],\Bv^{r_p}_{\bfv}[\omega], \Bh^{\infty}_{\bfu'}[\omega],\Bv^{0}_{\bfv'}[\omega]\Bigr)_{\bfu, \bfv, \bfu', \bfv', p} \deq \Bigl( \ul{\I}_{\bfu, \mathbf z_p}[\eta],\ul{\J}_{\bfv, \mathbf z_p}[\eta], \eta_{\bfu'},\eta_{\bfv'}\Bigr)_{\bfu, \bfv, \bfu', \bfv', p},
\end{split}
\end{align}
where $\bfu \in \rightset_{k+1, \ell+1}$, $\bfv\in\upset_{k+1,\ell+1}$, $\bfu'\in \rightset_{k, \ell}$, $\bfv' \in \upset_{k, \ell}$, and $p \in [d]$. The preceding identity can be viewed as an analogue of Theorem \ref{T:BusMar} that allows axis directions. The left-hand side of \eqref{E:156} excludes $\Bh^{0}_{\bfu'}[\omega]$ and $\Bv^{\infty}_{\bfv'}[\omega]$ because these are $\infty$-valued. \qedex
\end{rem}

\begin{rem}[Recovery  
of Proposition \ref{P:BusFn}\eqref{P:BusFn_b} and \eqref{P:BusFn_c}]
\label{R:OneDir}
In the case $d = 1$, we have $\mathbf z_1 = (k,\ell)$ (see Figure \ref{fig:d=1case}). 
On the north and east boundaries, the LPP increments to $\bfz_1$ are simply the weights:
\begin{align*}
\ul \I_{(i,\ell), (k, \ell)}[\eta] &= \Lp_{(i,\ell),(k,\ell)}[\eta] - \Lp_{(i+1,\ell),(k,\ell)}[\eta] = \eta_{(i,\ell)}\sim\Exp\{\zeta(r_1)\} \quad\text{for $i\in[k-1]$},\\
\ul \J_{(k,j), (k, \ell)}[\eta] &= \Lp_{(k,j),(k,\ell)}[\eta] - \Lp_{(k,j+1),(k,\ell)}[\eta] = \eta_{(k,j)}\sim\Exp\{1 - \zeta(r_1)\} \quad\text{for $j\in[\ell-1]$}. 
\end{align*}
Therefore, Theorem~\ref{T:BusMar} implies Proposition \ref{P:BusFn}\eqref{P:BusFn_b}.
Similarly to the proof of \cite[Lemma 4.2]{bala-cato-sepp}, part~\eqref{P:BusFn_c} follows inductively from part~\eqref{P:BusFn_b} by corner-flipping, where the base case is provided by the independence of the increments on the boundary.
\qedex

\begin{figure}[h]
    \centering
    \includegraphics[width=0.5\linewidth]{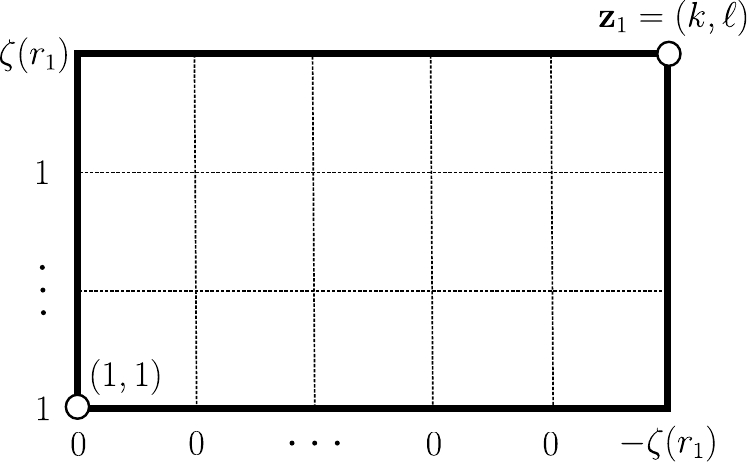}
    \caption{\small Collapsing of Figure \ref{F:eta} when $d = 1$.}
    \label{fig:d=1case}
\end{figure}
\end{rem}

\begin{rem}[Coupling of stationary models] 
\label{R:StCoup}
The $\eta$-weights on or below the anti-diagonal $i + j = k+\ell+d-1$ form $d$ coupled stationary inhomogeneous last-passage models. Indeed, for each $p \in [d]$, the $\eta$-weights on the grid $[(1, 1), \bfz_p]$ can be viewed as the environment of a stationary inhomogeneous exponential LPP considered in \cite[Section 4.2]{Emra_Janj_Sepp_25} 
by interpreting $\zeta(r_p)$ as the boundary parameter $z$. See Figure \ref{fig:LPP_coupling}.  The positive-temperature analogue of this coupling previously appeared in \cite{Chaumont-thesis, Barraquand-LeDoussal-2023}, and was shown there to give a stationary measure for the multi-path inverse-gamma polymer. See, specifically, Figure 20 and Lemma 4.1 in \cite{Chaumont-thesis} and Figure 4 and Proposition 3.5 in \cite{Barraquand-LeDoussal-2023}. The joint distribution of Busemann functions for multiple directions was not discussed in those works, although such a connection is conjectured around Equation (17) in \cite{Barraquand-LeDoussal-2023}.  
\qedex
\end{rem}

\begin{figure}
    \centering
    \includegraphics[width=0.5\linewidth]{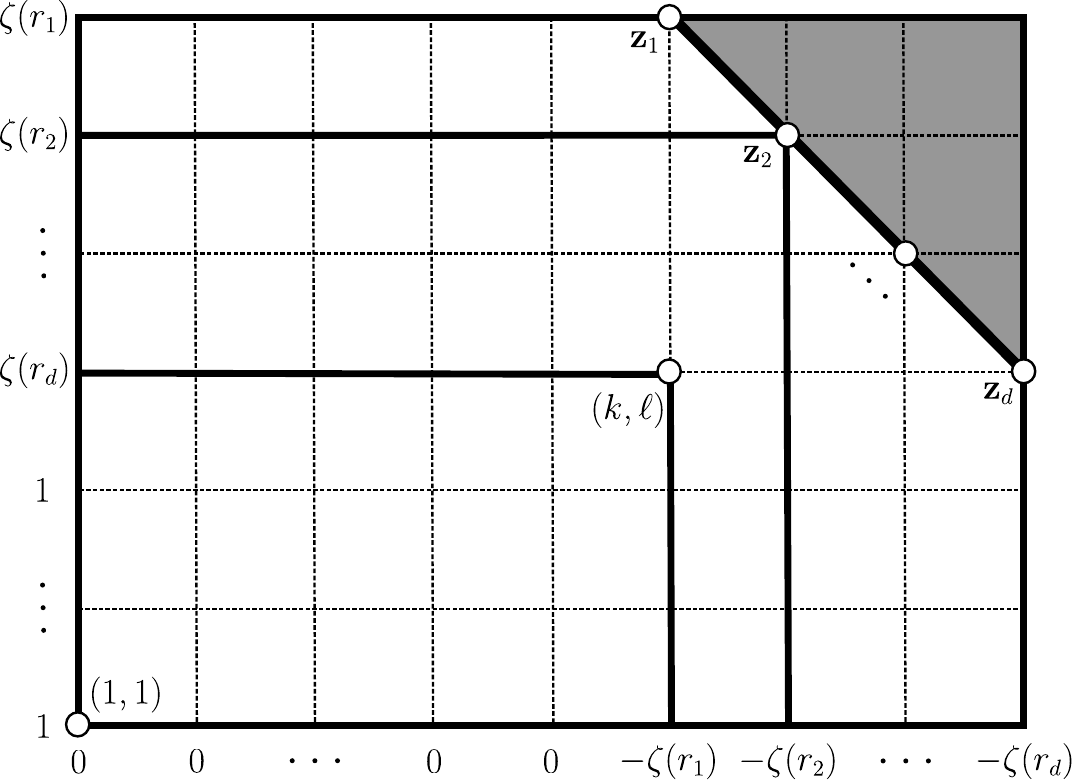}
    \caption{\small The $\eta$-weights on the grid $[(1, 1), \bfz_p]$ form a stationary inhomogeneous LPP model for each $p \in [d]$. In the picture, $p = 2$. 
    }
    \label{fig:LPP_coupling}
\end{figure}

\begin{rem}[Monotonicity of Busemann functions]
As mentioned in Section~\ref{Ss:history}, a key feature of Busemann functions is monotonicity with respect to direction: $\sI_{\bfu}^{r_1}[\omega] \ge \sI_{\bfu}^{r_2}[\omega]$ and $\sJ_{\bfu}^{r_1}[\omega] \le \sJ_{\bfu}^{r_2}[\omega]$ for $r_1 \le r_2$.
This monotonicity is implied by Lemma~\ref{L:Comp}, and is also visible from the $\eta$-side of the distributional identity in Theorem \ref{T:BusMar}. 
Indeed, an application of Lemma~\ref{L:Comp}\eqref{L:Comp_a} and then Lemma~\ref{L:Comp}\eqref{L:Comp_b} shows that 
\begin{align*}
\ul{\I}_{\bfu, \bfz_1}[\eta] 
\ge \ul{\I}_{\bfu, \bfz_1+(1,0)}[\eta] 
= \ul{\I}_{\bfu, \bfz_2+(0,1)}[\eta]
\ge \ul{\I}_{\bfu, \bfz_2}[\eta] \quad \text{for any $\bfu\in\upset_{k, \ell}$}.
\end{align*}
The analogous logic for vertical increments uses parts~\eqref{L:Comp_c} and \eqref{L:Comp_d} of Lemma~\ref{L:Comp}.
\qedex
\end{rem}

We next illustrate Theorem \ref{T:BusMar} with two quick examples in the two-direction case.

\begin{exmpl}[Upper-right corner]
Here we examine 
one of the simplest cases of our result, yet a case that is more difficult to explicitly treat 
using the previous description in \cite{Fan_Sepp_20}. 
Namely, we consider the joint distribution of the collection
\[
\Bigl(\sI_{(1,2)}^{r_1}[\omega],\sI_{(1,2)}^{r_2}[\omega],\sJ_{(2,1)}^{r_1}[\omega],\sJ_{(2,1)}^{r_2}[\omega]\Bigr) \deq \Bigl(\ul{\I}_{(1,2),\mathbf z_1}[\eta],\ul{\I}_{(1,2),\mathbf z_2}[\eta], \ul{\J}_{(2,1),\mathbf z_1}[\eta],\ul{\J}_{(2,1),\mathbf z_2}[\eta]  \Bigr).
\]
In this case, $\ell = k = d = 2$, so $\mathbf z_1 = (2,3)$ and $\mathbf z_2 = (3,2)$. 
It is already known from Proposition~\ref{P:BusFn}\eqref{P:BusFn_c} that $\sI_{(1,2)}^{r_1}[\omega]$ and $\sJ_{(2,1)}^{r_1}[\omega]$ are independent, and that $\sI_{(1,2)}^{r_2}[\omega]$ and $\sJ_{(2,1)}^{r_2}[\omega]$ are independent.
In fact, it is also known that $\sI_{(1,2)}^{r_2}[\omega]$ and $\sJ_{(2,1)}^{r_1}[\omega]$ are independent, by \cite[Theorem~2.1]{shen25}.
We can now immediately infer all of these properties by writing the LPP increments as explicit functions of five independent exponential variables.
Specifically, by comparing paths in Figure~\ref{fig:2by2} we obtain
\begin{align*}
\ul \I_{(1,2),\mathbf z_1}[\eta]
 &= \eta_{(1,2)} + \max \{\eta_{(1,3)} - \eta_{(2,2)}, 0\}, &
 \ul \I_{(1,2),\mathbf z_2}[\eta] &= \eta_{(1,2)}, \\
 \ul \J_{(2,1),\mathbf z_1}[\eta] &= \eta_{(2,1)}, &
 \ul \J_{(2,1),\mathbf z_2}[\eta] &= \eta_{(2,1)} + \max \{\eta_{(3,1)} - \eta_{(2,2)}, 0\},
 \end{align*}
 where 
 \begin{align*}
 \eta_{(1,2)} &\sim \Exp\{\zeta(r_2)\}, & \eta_{(1,3)} &\sim \Exp\{\zeta(r_1)\}, \\
 \eta_{(2,1)} &\sim \Exp\{1 - \zeta(r_1)\}, & \eta_{(3,1)} &\sim \Exp\{1 - \zeta(r_2)\},  \\
 \eta_{(2,2)} &\sim \Exp\{\zeta(r_2) - \zeta(r_1)\}, & \eta_{(2,3)} &= \eta_{(3,2)} = 0.
 \end{align*}
\qedex
\begin{figure}[h]
    \centering
    \includegraphics[width=0.3\linewidth]{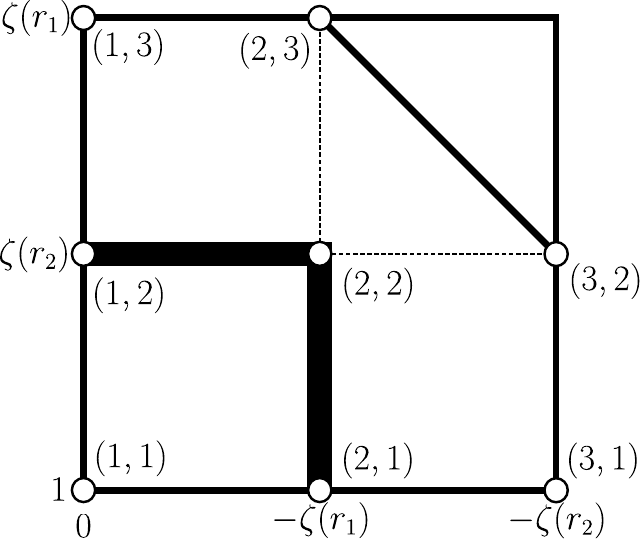}
    \caption{\small Special case of Figure \ref{F:eta} where $k = \ell = d = 2$. We are interested in the joint distribution of the Busemann functions in directions $r_1$ and $r_2$ on the two edges marked by thick lines.}
    \label{fig:2by2}
\end{figure}
\end{exmpl}

 \begin{exmpl}[Adjacent horizontal increments]
 We now investigate the joint distribution of
 \[
 \Bigl(\sI_{(1,1)}^{r_1}[\omega],\sI_{(1,1)}^{r_2}[\omega],\sI_{(2,1)}^{r_1}[\omega],\sI_{(2,1)}^{r_2}[\omega]  \Bigr) \deq \Bigl(\ul \I_{(1,1),\mathbf z_1}[\eta],\ul \I_{(1,1),\mathbf z_2}[\eta],\ul \I_{(2,1),\mathbf z_1}[\eta],\ul \I_{(2,1),\mathbf z_2}[\eta]\Bigr).
 \]
 In this case, $k = 3$, $\ell = 1$, $d = 2$, so $\mathbf z_1 = (3,2)$ and $\mathbf z_2 = (4,1)$. 
 It is already known from Proposition~\ref{P:BusFn}\eqref{P:BusFn_c} that $\sI_{(1,1)}^{r_1}[\omega]$ and $\sI_{(2,1)}^{r_1}[\omega]$ are independent, and that $\sI_{(1,1)}^{r_2}[\omega]$ and $\sI_{(2,1)}^{r_2}[\omega]$ are independent.
In fact, is also known that $\sI_{\bfu}^{r_2}[\omega]$ and $\sI_{\bfu}^{r_1}[\omega]-\sI_{\bfu}^{r_2}[\omega]$ are independent for any $\bfu\in\Z^2$; see Corollary~\ref{C:FS_IndInc}.
We can immediately infer all of these properties by writing the LPP increments as explicit functions of five independent exponential variables.
Specifically, by comparing paths in Figure~\ref{fig:3by1} (the recursions in \eqref{E:IncRec} simplify the calculation), we obtain
 \begin{align*}
 \ul \I_{(1,1),\mathbf z_1}[\eta] 
 &= \eta_{(1,1)} + \max \{\eta_{(1,2)} - \ul{\J}_{(2, 1), \bfz_1}[\eta], 0\}, &
 \ul \I_{(1,1),\mathbf z_2}[\eta] &= \eta_{(1,1)}, \\
 \ul \I_{(2,1),\mathbf z_1}[\eta] &= \eta_{(2,1)} +\max\{\eta_{(2,2)} - \eta_{(3,1)}, 0\}, &
 \ul \I_{(2,1),\mathbf z_2}[\eta] &= \eta_{(2,1)},
 \end{align*}
 where $\ul{\J}_{(2, 1), \bfz_1}[\eta] = \eta_{(2,1)} + \max\{\eta_{(3,1)} - \eta_{(2,2)}, 0\}$, and
\begin{align*}
\eta_{(1,1)} &\sim \Exp\{\zeta(r_2)\}, & \eta_{(1,2)} &\sim \Exp\{\zeta(r_1)\}, \\
\eta_{(2,1)} &\sim \Exp\{\zeta(r_2)\}, & \eta_{(2,2)} &\sim \Exp\{\zeta(r_1)\}, \\
\eta_{(3,1)} &\sim \Exp\{\zeta(r_2) - \zeta(r_1)\}, & \eta_{(3,2)} &= \eta_{(4,1)} = 0.
\end{align*}
Also, it follows from 
\cite[Lemma 4.1]{bala-cato-sepp} that 
$\ul{\J}_{(2, 1), \bfz_1}[\eta]$ and $\ul \I_{(2,1),\mathbf z_1}[\eta]$ 
are independent exponential random variables with rates $\zeta(r_2) - \zeta(r_1)$ and $\zeta(r_1)$, respectively. This increment pair 
is independent of $(\eta_{(1,1)},\eta_{(1,2)})$, but not independent of $\eta_{(2,1)}$. \qedex    

 \begin{figure}[h]
     \centering
     \includegraphics[width=0.5\linewidth]{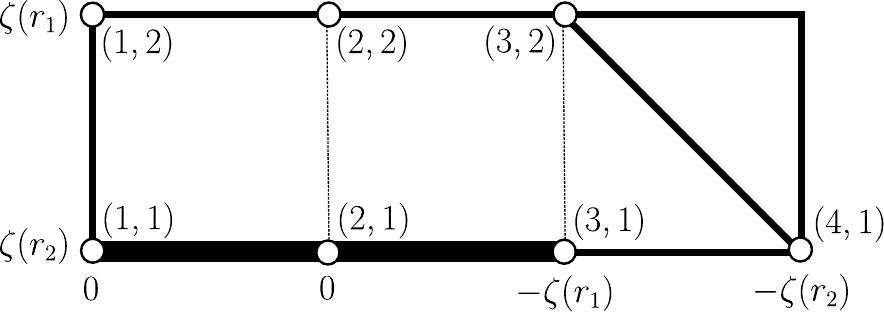}
     \caption{\small Special case of Figure \ref{F:eta} where $k = 3$, $\ell =1$, and  $d = 2$. We are interested in the joint distribution of the Busemann functions along the two thick horizontal edges.}
     \label{fig:3by1}
 \end{figure}
 \end{exmpl}
 
\subsection{Busemann process on a lattice edge} \label{S:edgeBus}
We demonstrate the utility of Theorem \ref{T:BusMar} by recovering the distribution of the Busemann process as a function of direction, on a single horizontal lattice edge.
We present below two corollaries that together give a complete description of the process.
In \cite{janj-rass-sepp-23} this description led to the complete characterization of the existence, uniqueness, and coalescence of semi-infinite geodesics in exponential LPP.

The following results were previously realized in \cite{Fan_Sepp_20} by utilizing the queueing formulation of the Busemann process, as well as an auxiliary triangular array of bi-infinite sequences of random variables. 
By contrast, our proofs will use only a couple finite-dimensional calculations and a straightforward induction on the number of directions.

\begin{cor}[Distribution of a single increment]
\label{C:FS_IncCDF}
Let $r, s \in \bbR_{>0}$ with $r < s$. Then 
\begin{align*}
\bfP\{\sI_{(1, 1)}^{r}[\omega]-\sI_{(1, 1)}^{s}[\omega] \ge x\} = \one_{\{x = 0\}} \cdot \frac{\zeta(r)}{\zeta(s)} + \bigg(1-\frac{\zeta(r)}{\zeta(s)}\bigg) \cdot \exp\{-x \zeta(r)\} \quad \text{ for } x \in \bbR_{\ge 0}. 
\end{align*}
\end{cor}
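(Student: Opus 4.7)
My plan is to apply Theorem \ref{T:BusMar} in the smallest nontrivial configuration that sees two distinct directions, namely $d=2$, $k=2$, $\ell=1$, and $(r_1,r_2)=(r,s)$. The terminal points are then $\bfz_1=(2,2)$ and $\bfz_2=(3,1)$ inside the grid $[3]\times[2]$, and the theorem supplies the distributional identity
\[
\bigl(\sI^r_{(1,1)}[\omega],\,\sI^s_{(1,1)}[\omega]\bigr) \deq \bigl(\ul{\I}_{(1,1),\bfz_1}[\eta],\,\ul{\I}_{(1,1),\bfz_2}[\eta]\bigr).
\]
From the recipe \eqref{E:def_eta} I would read off the rates: the only nonzero weights in the $\eta$-environment are $\eta_{(1,1)}\sim\Exp\{\zeta(s)\}$, $\eta_{(1,2)}\sim\Exp\{\zeta(r)\}$, and $\eta_{(2,1)}\sim\Exp\{\zeta(s)-\zeta(r)\}$, while $\eta_{(2,2)}=\eta_{(3,1)}=\eta_{(3,2)}=0$.

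Next I would compute the two LPP increments by direct path enumeration on this $3\times 2$ grid. Since $\eta_{(3,1)}=0$ and only one up-right path reaches $(3,1)$ from $(1,1)$, we get $\ul{\I}_{(1,1),(3,1)}[\eta]=\eta_{(1,1)}$. For the terminal $(2,2)$ there are two paths, and using $\eta_{(2,2)}=0$ one obtains $\ul{\I}_{(1,1),(2,2)}[\eta]=\eta_{(1,1)}+\bigl(\eta_{(1,2)}-\eta_{(2,1)}\bigr)^+$. Subtracting yields
\[
\sI^r_{(1,1)}[\omega] - \sI^s_{(1,1)}[\omega] \deq \bigl(\eta_{(1,2)}-\eta_{(2,1)}\bigr)^+.
\]

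To finish, I reduce to an elementary computation for two independent exponentials $X\sim\Exp\{\alpha\}$ and $Y\sim\Exp\{\beta\}$ with $\alpha=\zeta(r)$ and $\beta=\zeta(s)-\zeta(r)$, noting $\alpha+\beta=\zeta(s)$. The point mass at $0$ equals $\bfP\{X\le Y\}=\alpha/(\alpha+\beta)=\zeta(r)/\zeta(s)$, which accounts for the $\one_{\{x=0\}}$ term. For $x>0$, conditioning on $Y$ gives $\bfP\{X-Y\ge x\}=\bigl(\beta/(\alpha+\beta)\bigr)e^{-\alpha x}=(1-\zeta(r)/\zeta(s))\,e^{-x\zeta(r)}$, matching the continuous part of the claimed tail. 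No serious conceptual obstacle arises here; the only step demanding care is translating the recipe \eqref{E:def_eta} into the correct rates on this small grid, after which everything is routine.
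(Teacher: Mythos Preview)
Your proposal is correct and follows essentially the same route as the paper: apply Theorem~\ref{T:BusMar} with $d=2$, $k=2$, $\ell=1$, read off the three nonzero $\eta$-weights, compute the two increments explicitly to obtain $\sI^r_{(1,1)}-\sI^s_{(1,1)}\deq(\eta_{(1,2)}-\eta_{(2,1)})^+$, and finish with the elementary exponential computation. The only cosmetic difference is that the paper carries out the $x>0$ tail via a double integral, whereas you invoke the standard identities for the difference of independent exponentials; the content is identical.
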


\begin{cor}[Independent increments with respect to direction]
\label{C:FS_IndInc} For any $d \in \bbZ_{>0}$ and increasing sequence $(r_p)_{p \in [d]} \in \bbR^d_{>0}$, 
the following random variables are independent:
\begin{align*}
\sI_{(1, 1)}^{r_1}[\omega]-\sI_{(1, 1)}^{r_2}[\omega], \quad
\sI_{(1, 1)}^{r_2}[\omega]-\sI_{(1, 1)}^{r_3}[\omega], \quad \ldots \quad
\sI_{(1, 1)}^{r_{d-1}}[\omega]-\sI_{(1, 1)}^{r_d}[\omega], \quad
\sI_{(1, 1)}^{r_d}[\omega].
\end{align*}
\end{cor}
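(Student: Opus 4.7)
The plan is to proceed by induction on $d$, using Theorem \ref{T:BusMar} to translate from the infinite i.i.d.\ environment to a finite inhomogeneous one where the structure is transparent. The base case $d = 1$ is trivial.

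For the inductive step, assume the corollary for $d-1$ directions, and apply Theorem \ref{T:BusMar} with $k = 2$, $\ell = 1$ to the $d$ directions $r_1 < \cdots < r_d$. This realizes
\[
\bigl(\sI_{(1,1)}^{r_p}[\omega]\bigr)_{p \in [d]} \deq (Y_p)_{p \in [d]}, \qquad Y_p := \ul{\I}_{(1,1),\,\bfz_p}[\eta],
\]
on the grid $[d+1]\times[d]$, with $\bfz_p = (p+1,\,d+1-p)$. Two structural observations drive everything. First, because $\bfz_d = (d+1,1)$ sits on the bottom row, the only paths from $(1,1)$ and from $(2,1)$ to $\bfz_d$ are horizontal, so $Y_d = \eta_{(1,1)}$. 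Second, iterating the recursion \eqref{E:IncRec} yields $Y_p = \eta_{(1,1)} + R_p$ for each $p \in [d]$, where $R_p$ is measurable with respect to $\{\eta_{(i,j)}:(i,j)\ne(1,1)\}$ and $R_d = 0$. In particular, each difference $Y_p - Y_{p+1}$, $p \in [d-1]$, does not involve $\eta_{(1,1)}$.

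Next, since the first $d-1$ coordinates of Theorem \ref{T:BusMar}'s identity give $(\sI_{(1,1)}^{r_p})_{p \in [d-1]} \deq (Y_p)_{p \in [d-1]}$, the inductive hypothesis applied to $r_1, \ldots, r_{d-1}$ translates into the mutual independence of $Y_1 - Y_2,\,\ldots,\,Y_{d-2} - Y_{d-1},\,Y_{d-1}$ inside our $d$-direction environment. The remaining task is to ``split'' the final entry $Y_{d-1} = R_{d-1} + \eta_{(1,1)}$ into the pair $(Y_{d-1} - Y_d,\,Y_d) = (R_{d-1},\,\eta_{(1,1)})$ while preserving the full independence. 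Writing $X_p := Y_p - Y_{p+1}$ for $p \in [d-2]$, a short Laplace transform computation---combining $\eta_{(1,1)} \perp (X_1, \ldots, X_{d-2}, R_{d-1})$ (disjoint support) with $(X_1, \ldots, X_{d-2}) \perp Y_{d-1} = \eta_{(1,1)} + R_{d-1}$ (from the inductive hypothesis)---upgrades to $(X_1, \ldots, X_{d-2}) \perp R_{d-1}$. Combining this with the inductive mutual independence of $X_1, \ldots, X_{d-2}$ and with the joint independence of $\eta_{(1,1)}$ from $(X_1, \ldots, X_{d-2}, R_{d-1})$ assembles the full mutual independence of $X_1, \ldots, X_{d-2},\, R_{d-1},\, \eta_{(1,1)}$, which is exactly the desired conclusion for $Y_1 - Y_2,\,\ldots,\,Y_{d-1} - Y_d,\,Y_d$.

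I expect the main (mild) subtlety to lie in this final factorization step, since pairwise independences alone would not suffice; one must invoke that $\eta_{(1,1)}$ is \emph{jointly} independent of the remaining $\eta$-weights, which is built into the construction \eqref{E:auxw}. With that in hand the Laplace argument is routine, and the entire inductive mechanism is powered by the clean identification $Y_d = \eta_{(1,1)}$ together with the additive factorization $Y_p = \eta_{(1,1)} + R_p$.
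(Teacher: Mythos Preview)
Your proof is correct and follows the same inductive framework as the paper's: apply Theorem~\ref{T:BusMar} with $k=2$, $\ell=1$, identify $Y_d=\eta_{(1,1)}$, observe that all differences $Y_p-Y_{p+1}$ are functions of $\{\eta_{(i,j)}:(i,j)\neq(1,1)\}$, and then close with a characteristic-function computation that divides out the (nowhere-vanishing) transform of the exponential variable $\eta_{(1,1)}$.

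The one notable difference is how you invoke the inductive hypothesis. The paper rescales the $d$-direction $\eta$-weights to produce a new environment $\eta^1=\zeta(r_d)\cdot\eta$ whose restriction to $[d]\times[d]$ matches the $(d-1)$-direction $\eta$-environment for a new set of directions $r'_1<\cdots<r'_{d-1}$, then applies Theorem~\ref{T:BusMar} a second time to translate back and invoke the inductive hypothesis for the $r'_p$. You shortcut this: simply marginalize the $d$-direction distributional identity to $p\in[d-1]$, obtaining $(Y_p)_{p\in[d-1]}\deq(\sI_{(1,1)}^{r_p}[\omega])_{p\in[d-1]}$, and apply the inductive hypothesis directly to the original directions $r_1,\ldots,r_{d-1}$. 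This is cleaner and bypasses the rescaling machinery entirely; the paper's route has the minor advantage of making visible that the $(d-1)$-direction $\eta$-environment literally sits inside the $d$-direction one after scaling, but your marginalization achieves the same conclusion with less work.
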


The two corollaries together give a complete characterization of the Busemann process on a single lattice edge (here the edge is $\{(1,1),(2,1)\}$).
This characterization was reinterpreted in \cite[Theorem 3.4]{Fan_Sepp_20} to express $(\sI_{\bfu}^r)_{r\in\R_{>0}}$ as a marked Poisson point process.
It remains an open problem to give the joint distribution of these point processes across multiple $\bfu$.

\begin{proof}[Proof of Corollary \ref{C:FS_IncCDF}]
Taking $d = 2$, $k = 2$, $\ell = 1$, $r_1 = r$ and $r_2 = s$ in Theorem \ref{T:BusMar}, one obtains that 
\begin{align}
\label{E:82}
(\sI_{(1, 1)}^{r}[\omega], \sI_{(1, 1)}^{s}[\omega]) \deq (\ul{\I}_{(1, 1), (2, 2)}[\eta], \ul{\I}_{(1, 1), (3, 1)}[\eta]). 
\end{align}
By \eqref{E:auxw}, the $\eta$-weights are defined on the grid $[3] \times [2]$ and are independent such that 
\begin{align}
\label{E:83}
\eta_{(1, 1)} \sim \Exp\{\zeta(s)\}, \quad \eta_{(2, 1)} \sim \Exp\{\zeta(s)-\zeta(r)\}, \quad \eta_{(1, 2)} \sim \Exp\{\zeta(r)\}, 
\end{align}
and the remaining $\eta$-weights are zero. The increments on the right-hand side of \eqref{E:82} can be written as 
\begin{align}
\label{E:84}
\begin{split}
\ul{\I}_{(1, 1), (2, 2)}[\eta] &= \eta_{(1, 1)} + \max\{\eta_{(1, 2)}-\eta_{(2, 1)}, 0\}, \\ 
\ul{\I}_{(1, 1), (3, 1)}[\eta] &= \eta_{(1, 1)}. 
\end{split}
\end{align}
As a consequence of \eqref{E:82} and \eqref{E:84}, 
\begin{align}
\label{E:85}
\begin{split}
\sI_{(1, 1)}^{r}[\omega] - \sI_{(1, 1)}^{s}[\omega] &\deq \ul{\I}_{(1, 1), (2, 2)}[\eta] - \ul{\I}_{(1, 1), (3, 1)}[\eta] 
= \max\{\eta_{(1, 2)}-\eta_{(2, 1)}, 0\}. 
\end{split}
\end{align}
For any $x \in \bbR_{>0}$, it follows from \eqref{E:83} and \eqref{E:85} that 
\begin{align*}
&\bfP\{\sI_{(1, 1)}^{r}[\omega] - \sI_{(1, 1)}^{s}[\omega] \ge x\} = \bfP\{\eta_{(1, 2)}-\eta_{(2, 1)} \ge x\}, \\ 
&= (\zeta(s)-\zeta(r)) \int_{0}^\infty \dd u \exp\{-(\zeta(s)-\zeta(r))u\}\int_{u+x}^\infty \dd v \zeta(r) \exp\{-\zeta(r) v\} \\ 
&= (\zeta(s)-\zeta(r)) \exp\{-\zeta(r) x\} \int_0^\infty \dd u \exp\{-\zeta(s) u\} 
= \bigg(1-\frac{\zeta(r)}{\zeta(s)}\bigg) \exp\{-\zeta(r) x\}, 
\end{align*}
which implies the claim. 
\end{proof}

\begin{proof}[Proof of Corollary \ref{C:FS_IndInc}]
Arguing inductively, let $d > 1$ and assume that the claimed independence holds for $d-1$ directions. The base case $d = 1$ is trivial. To verify the claim for $d$ directions, pick any increasing sequence $(r_p)_{p \in [d]} \in \bbR_{>0}^d$. It follows from Theorem \ref{T:BusMar} applied with $k = 2$ and $\ell = 1$ that 
\begin{align}
\label{E:87}
\begin{split}
&\{\sI_{(1, 1)}^{r_p}[\omega]-\one_{\{p < d\}}\sI_{(1, 1)}^{r_{p+1}}[\omega]: p \in [d]\} \\
&\deq \{\ul{\I}_{(1, 1), (p+1, d+1-p)}[\eta] - \one_{\{p < d\}}\ul{\I}_{(1, 1), (p+2, d-p)}[\eta]: p \in [d]\}
\end{split}
\end{align}
where $\eta$ denotes independent weights on $[d+1] \times [d]$ with the marginals 
\begin{align}
\label{E:106}
\begin{split}
\eta_{(i, j)} &\sim \Exp\{\zeta(r_{d+1-j})\} \qquad \quad \hspace{0.85cm} \text{ if } i = 1, \\ 
\eta_{(i, j)} &\sim \Exp\{\zeta(r_{d+1-j})-\zeta(r_{i-1})\} \quad \text{ if } i > 1 \text{ and } i+j < d+2, \\ 
\eta_{(i, j)} &= 0 \qquad \qquad \qquad \qquad \hspace{1.3cm} \text{ otherwise. }
\end{split}
\end{align}
Hence, it suffices to prove the independence of the latter collection in \eqref{E:87}. 

Definition \eqref{E:IncInit} implies that the $p = d$ term on the second line of \eqref{E:87} is
\begin{align}
\label{E:91}
\ul{\I}_{(1, 1), (d+1, 1)}[\eta] = \eta_{(1, 1)}.  
\end{align}
Next consider the $\eta^0$-weights on $[d+1] \times [d]$ given by 
\begin{align}
\label{E:92}
\eta^0_{(i, j)} 
= \begin{cases}
0 \quad &\text{ if } i = j = 1, \\ 
\eta_{(i, j)} \quad &\text{ otherwise. }
\end{cases}
\end{align}
Using definition \eqref{E:IncInit} again, one can write    
\begin{align}
\label{E:88}
\begin{split}
&\ul{\I}_{(1, 1), (p+1, d+1-p)}[\eta] - \ul{\I}_{(1, 1), (p+2, d-p)}[\eta] \\ 
&= (\ul{\I}_{(1, 1), (p+1, d+1-p)}[\eta]-\eta_{(1, 1)}) - (\ul{\I}_{(1, 1), (p+2, d-p)}[\eta]-\eta_{(1, 1)}) \\ 
&= \ul{\I}_{(1, 1), (p+1, d+1-p)}[\eta^0] - \ul{\I}_{(1, 1), (p+2, d-p)}[\eta^0]
\end{split}
\end{align}
for each $p \in [d-1]$. Because the last expression does not use the weight $\eta_{(1, 1)}$, the quantity in \eqref{E:91} is independent of the collection of $d-1$ differences given by 
\begin{align}
\label{E:102}    
\{\ul{\I}_{(1, 1), (p+1, d+1-p)}[\eta] - \ul{\I}_{(1, 1), (p+2, d-p)}[\eta]: p \in [d-1]\}.
\end{align}
Hence, it remains to verify that the preceding collection is also independent. 

Introduce new weights $\eta^1$ on $[d+1] \times [d]$ 
by setting 
\begin{align}
\label{E:103}
\eta^1_{(i, j)} = \zeta(r_d) \cdot \eta_{(i, j)} \quad \text{ for } i \in [d+1] \text{ and } j \in [d]. 
\end{align}
Recall from definition \eqref{E:zeta} that $\zeta:(0, \infty) \to (0, 1)$ is an increasing bijection. Since also the sequence $(r_p)_{p \in [d]}$ is increasing, 
\begin{align}
\label{E:104}
\frac{\zeta(r_p)}{\zeta(r_{d})} = \zeta(r_p'), \quad \text{ for } p \in [d-1]
\end{align}
for some (unique) increasing sequence $(r_p')_{p \in [d-1]} \in \bbR_{>0}^{d-1}$. It follows from \eqref{E:106}, \eqref{E:103}, and \eqref{E:104} that the $\eta^1$-weights are independent with the marginals 
\begin{align}
\label{E:105}
\begin{split}
\eta^1_{(i, j)} &\sim \Exp\{1\} \qquad \qquad \hspace{2.25cm} \text{ if } i = 1 \text{ and } j = 1, \\ 
\eta^1_{(i, j)} &\sim \Exp\{\zeta(r_{d+1-j}')\} \qquad \hspace{1.6cm} \text{ if } i = 1 \text{ and } j > 1, \\ 
\eta^1_{(i, j)} &\sim \Exp\{1-\zeta(r_{i-1}')\} \qquad \hspace{1.4cm} \text{ if } i > 1 \text{ and } j = 1, \\ 
\eta^1_{(i, j)} &\sim \Exp\{\zeta(r_{d+1-j}')-\zeta(r_{i-1}')\} \qquad \text{ if } i > 1, j > 1 \text{ and } i + j < d+2, \\ 
\eta^1_{(i, j)} &= 0 \qquad \qquad \qquad  \hspace{2.5cm} \text{ otherwise. }
\end{split}
\end{align}
The $\eta^1$-weights restricted to $[d] \times [d]$ are distributionally identical to the $\eta$-weights defined with $k = 2$, $\ell = 2$ and $d-1$ directions $(r_p')_{p \in [d-1]}$. This is seen by comparing \eqref{E:105} with \eqref{E:auxw}. 

Now, with another appeal to Theorem \ref{T:BusMar}, one obtains that 
\begin{align}
\label{E:100}
\begin{split}
&\{\ul{\I}_{(1, 1), (p+1, d+1-p)}[\eta^1]: p \in [d-1]\} \deq \{\sI_{(1, 1)}^{r_p'}[\omega]: p \in [d-1]\}. 
\end{split}
\end{align} 
Then it follows from the induction hypothesis for $d-1$ directions that the collection 
\begin{align}
\label{E:101}
\begin{split}
&\{\ul{\I}_{(1, 1), (p+1, d+1-p)}[\eta^1]-\one_{\{p < d-1\}} \ul{\I}_{(1, 1), (p+2, d-p)}[\eta^1] : p \in [d-1]\} \\ 
&\deq \{\sI_{(1, 1)}^{r_p'}[\omega]-\one_{\{p < d-1\}}\sI_{(1, 1)}^{r_{p+1}'}[\omega]: p \in [d-1]\}
\end{split}
\end{align}
is independent. It is clear from definitions \eqref{E:IncInit} and \eqref{E:103} that 
\begin{align}
\label{E:107}
\ul{\I}_{(1, 1), (p+1, d+1-p)}[\eta] = \frac{1}{\zeta(r_d)} \cdot \ul{\I}_{(1, 1), (p+1, d+1-p)}[\eta^1] \quad \text{ for } p \in [d-1]. 
\end{align}
Consequently, the independence of the first collection in \eqref{E:101} implies the same for
\begin{align}
\label{E:108}
\{\ul{\I}_{(1, 1), (p+1, d+1-p)}[\eta]-\one_{\{p < d-1\}} \ul{\I}_{(1, 1), (p+2, d-p)}[\eta] : p \in [d-1]\}. 
\end{align}

One can now compute the joint characteristic function for the collection \eqref{E:102} as follows. For any $(s_p)_{p \in [d-1]} \in \bbR^{d-1}$, 
\begin{align}
\label{E:100-2}
\begin{split}
&\bfE\bigg[\prod_{p=1}^{d-1}\exp\{\ii s_p \cdot (\ul{\I}_{(1, 1), (p+1, d+1-p)}[\eta] - \ul{\I}_{(1, 1), (p+2, d-p)}[\eta])\}\bigg] \\
&= \frac{\bfE\bigg[\prod \limits_{p=1}^{d-1}\exp\{\ii s_p \cdot (\ul{\I}_{(1, 1), (p+1, d+1-p)}[\eta] - \ul{\I}_{(1, 1), (p+2, d-p)}[\eta])\} \cdot \exp\{\ii s_{d-1} \eta_{(1, 1)}\}\bigg]}{\bfE[\exp\{\ii s_{d-1} \eta_{(1, 1)}\}]} \\ 
&= \frac{\bfE\bigg[\prod \limits_{p=1}^{d-1}\exp\{\ii s_p \cdot (\ul{\I}_{(1, 1), (p+1, d+1-p)}[\eta] - \one_{\{p < d-1\}}\ul{\I}_{(1, 1), (p+2, d-p)}[\eta])\}\bigg]}{\bfE[\exp\{\ii s_{d-1} \eta_{(1, 1)}\}]} \\ 
&= \frac{\prod \limits_{p=1}^{d-1}\bfE[\exp\{\ii s_p \cdot (\ul{\I}_{(1, 1), (p+1, d+1-p)}[\eta] - \one_{\{p < d-1\}}\ul{\I}_{(1, 1), (p+2, d-p)}[\eta])\}]}{\bfE[\exp\{\ii s_{d-1} \eta_{(1, 1)}\}]} \\ 
&= \prod \limits_{p=1}^{d-1} \bfE[\exp\{\ii s_p \cdot (\ul{\I}_{(1, 1), (p+1, d+1-p)}[\eta] - \ul{\I}_{(1, 1), (p+2, d-p)}[\eta])\}]. 
\end{split}
\end{align}
The first equality in \eqref{E:100-2} 
follows because the collection given by \eqref{E:102} is independent of $\eta_{(1, 1)}$. The second equality is due to \eqref{E:91}. The subsequent equality comes from the independence of the collection in \eqref{E:108}. The last step in \eqref{E:100-2} uses \eqref{E:91} again and the independence of $\eta_{(1, 1)}$ and the $(d-1)$th term 
\begin{align*}
\ul{\I}_{(1, 1), (d, 2)}[\eta] - \ul{\I}_{(1, 1), (d+1, 1)}[\eta] = \ul{\I}_{(1, 1), (d, 2)}[\eta] - \eta_{(1, 1)}
\end{align*}
in \eqref{E:108}. 

On account of \eqref{E:100-2}, the collection in \eqref{E:102} is independent. This completes the inductive step and thereby establishes the result for $d$ directions. 
\end{proof}

\subsection{Shen's independence theorem} \label{sec:Shen_indep}

The independence of Busemann functions along a down-right path is stated in Proposition~\ref{P:BusFn}\eqref{P:BusFn_c} for a fixed direction $r$.
It was discovered in \cite{shen25} that the direction can be varied in a monotone fashion as one proceeds along the down-right path, and independence still holds.
Here is a special case that is easily obtainable from Theorem~\ref{T:BusMar}.

\begin{cor} \label{C:shen}
For any $s_1\ge \cdots \ge s_n$ in $\R_{>0}$, the random variables
\[
\sI_{(1,1)}^{s_1}[\omega],\quad
\sI_{(2,1)}^{s_2}[\omega],\quad\ldots\quad
\sI_{(n,1)}^{s_n}[\omega]
\]
are independent.
\end{cor}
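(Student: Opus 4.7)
The plan is to induct on the number $d$ of distinct values in the non-increasing sequence $s_1\ge\cdots\ge s_n$. The base case $d=1$ is immediate from Proposition~\ref{P:BusFn}\eqref{P:BusFn_c}, since $(1,1),(2,1),\dots,(n+1,1)$ forms a down-right path along which the Busemann increments for a single direction are already known to be independent.

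For $d\ge 2$, set $j^* := \max\{i: s_i = s_1\}$, so that $s_1=\cdots=s_{j^*}$ strictly exceed the tail $(s_{j^*+1},\dots,s_n)$, which has exactly $d-1$ distinct values (in particular $j^*<n$). I would then apply Theorem~\ref{T:BusMar} with $k=n+1$, $\ell=1$, and with $r_1<\cdots<r_d$ the distinct values of the $s_i$, so that $r_d=s_1$. Writing $p(i)\in[d]$ for the index with $s_i=r_{p(i)}$, the theorem supplies
\begin{align*}
\bigl(\sI_{(i,1)}^{s_i}[\omega]\bigr)_{i\in[n]} \deq \bigl(\ul{\I}_{(i,1),\bfz_{p(i)}}[\eta]\bigr)_{i\in[n]},
\end{align*}
with $p(i)=d$ on $I_1:=[1,j^*]$ and $p(i)\le d-1$ on $I_2:=[j^*+1,n]$. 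The problem reduces to proving that the right-hand collection is jointly independent.

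I would exploit two features of the $\eta$-side. First, for $i\in I_1$ the terminal $\bfz_d=(n+d,1)$ lies in row~$1$, so every admissible up-right path is horizontal and the increment collapses to the single weight $\eta_{(i,1)}$. Second, for $i\in I_2$ the increment $\ul{\I}_{(i,1),\bfz_{p(i)}}[\eta]$ depends only on $\eta$-weights with column index at least $i\ge j^*+1$. These two remarks imply that the subfamilies $\{\eta_{(i,1)}\}_{i\in I_1}$ and $\{\ul{\I}_{(i,1),\bfz_{p(i)}}[\eta]\}_{i\in I_2}$ are functions of disjoint subsets of the independent family $\eta$ (columns $\le j^*$ versus columns $\ge j^*+1$), hence mutually independent. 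The first subfamily is trivially independent within itself. For the second, marginalizing the displayed identity to $i\in I_2$ and using translation invariance of the i.i.d.\ environment $\omega$ match it to the Busemann collection for the tail sequence $(s_{j^*+1},\dots,s_n)$, which is independent by the inductive hypothesis.

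The main obstacle I anticipate is the bookkeeping in the last step: one must invoke the inductive hypothesis by \emph{marginalizing} the joint identity already produced by Theorem~\ref{T:BusMar}, rather than reapplying the theorem to the tail problem. A separate application would draw a fresh inhomogeneous environment, thereby destroying the joint coupling with $I_1$ and invalidating the disjoint-support argument.
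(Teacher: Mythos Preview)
Your proposal is correct and follows essentially the same approach as the paper: induct on the number of distinct directions, apply Theorem~\ref{T:BusMar} with $k=n+1$, $\ell=1$, peel off the block $I_1$ of indices with the largest direction using the fact that $\bfz_d$ lies in row~$1$ (so those increments are just the weights $\eta_{(i,1)}$), observe disjoint column-support to get independence between the $I_1$ and $I_2$ blocks, and then invoke the inductive hypothesis together with shift-invariance for the tail block $I_2$.

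Your closing caveat is overstated. The inductive hypothesis is a statement about the \emph{Busemann} variables $\sI_{(i,1)}^{s_i}[\omega]$, not about any particular $\eta$-representation. Once the single application of Theorem~\ref{T:BusMar} establishes that the $I_1$ and $I_2$ blocks are independent (on the $\eta$-side, hence on the $\omega$-side), the internal independence of $I_2$ is purely a distributional fact about the $\omega$-side collection $(\sI_{(i,1)}^{s_i}[\omega])_{i\in I_2}$; you are free to prove it by any means, including a fresh application of the theorem, without disturbing the already-established block independence. So there is no genuine obstacle here---the paper handles this step in one line.
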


\begin{proof}
We induct on the number of distinct values among the list $s_1\ge\cdots\ge s_n$.
The base case $s_1=\cdots=s_d$ is covered by Proposition~\ref{P:BusFn}\eqref{P:BusFn_c}.
Suppose there are $d$ distinct values, and call them $r_d > r_{d-1} > \cdots > r_1$. 
Define integers $m_1 < m_2 < \cdots < m_{d-1} < m_d = n$ by
\begin{align*}
s_1 = \cdots = s_{m_1} &= r_d, \\
s_{m_1+1} = \cdots = s_{m_2} &= r_{d-1}, \\
&\hspace{1.3ex}\vdots \\
s_{m_{d-1}+1} = \cdots = s_{n} &= r_1.
\end{align*}
Now apply Theorem~\ref{T:BusMar} with $k=n+1$ and $\ell=1$, to see that the random vector
\begin{align*}
&\Big(\big(\sI_{(1,1)}^{r_d}[\omega],\ldots,\sI_{(m_1,1)}^{r_d}[\omega]\big),
\big(\sI_{(m_1+1,1)}^{r_{d-1}}[\omega],\ldots,\sI_{(m_2,1)}^{r_{d-1}}[\omega],\ldots,\sI_{(m_{d-1}+1,1)}^{r_1}[\omega],\ldots,\sI_{(n,1)}^{r_1}[\omega]\big)\Big)
\end{align*}
has the same distribution as $(\bfX,\bfY)$, where
\begin{align*}
\bfX &= \big(\ul{\I}_{(1,1),\mathbf z_d}[\eta],\ldots,\ul{\I}_{(m_1,1),\mathbf z_d}[\eta]\big), \\
\bfY &= \big(\ul{\I}_{(m_1+1,1),\mathbf z_{d-1}}[\eta],\ldots,\ul{\I}_{(m_2,1),\mathbf z_{d-1}}[\eta],\ldots,
\ul{\I}_{(m_{d-1}+1,1),\mathbf z_1}[\eta],\ldots,\ul{\I}_{(n,1),\mathbf z_1}[\eta]\big).
\end{align*}
Since $\bfz_d = (n+d,1)$ is on the same row as the vertices of interest, the increments in $\bfX$ are trivial to calculate: $\ul{\I}_{(i,1),\mathbf z_d}[\eta] = \eta_{(i,1)}$ for each $i$.
In particular, $\bfX$ is measurable with respect to $\{\eta_{(i,1)}:i\in[m_1]\}$.
Meanwhile, $\bfY$ is measurable with respect to $\{\eta_{(i,j)}:i\in[n+d]\setminus[m_1],\, j\in[d]\}$.
Since the $\eta$ variables are independent, it follows that $\bfX$ and $\bfY$ are independent, and that the individual coordinates of $\bfX$ are independent.
By our distributional equality, we conclude that the following $m_1+1$ quantities are independent:
\begin{align} \label{E:1peel}
\sI_{(1,1)}^{s_1}[\omega],\quad 
\sI_{(2,1)}^{s_2}[\omega],\quad\ldots\quad
\sI_{(m_1,1)}^{s_{m_1}}[\omega],\quad
\big(\sI_{(m_1+1,1)}^{s_{m_1+1}}[\omega],\cdots,
\sI_{(n,1)}^{s_n}[\omega]\big).
\end{align}
There are $d-1$ distinct values among $s_{m_1+1},\dots,s_n$, so induction (together with shift-invariance) implies that the coordinates of the last quantity in \eqref{E:1peel} are independent.
\end{proof}

\subsection{Proof sketch} 
\label{Ss:BMdiscuss}
For clarity, we now present 
our argument for Theorem \ref{T:BusMar}  
in the case of $d = 2$ directions. The complete proof is presented in Section \ref{S:PfBuse}. It follows the general structure presented here, with several inductive steps to generalize to an arbitrary number of directions. 
For notational simplicity, we focus here on the values of the Busemann functions along horizontal edges; those along vertical edges are treated analogously. 
Thus, our aim is to sketch a derivation of the distributional identity 
\begin{align}
\label{E:114}    
(\sI_{\bfu}^{r_1}[\omega], \sI_{\bfu}^{r_2}[\omega]: \bfu \in \rightset_{k, \ell}) \deq (\ul{\I}_{\bfu, (k, \ell+1)}[\eta], \ul{\I}_{\bfu, (k+1, \ell)}[\eta]: \bfu \in \rightset_{k, \ell})
\end{align}
for any two directions $0 < r_1 < r_2 < \infty$ and grid side lengths $k, \ell \in \bbZ_{>0}$ with $k > 1$ (so that the preceding collections are nonempty). Note that \eqref{E:114} is exactly the conclusion of Theorem \ref{T:BusMar} for $d = 2$ if one restricts to the set $\rightset_{k, \ell}$ indexing the horizontal edges within the grid $[k] \times [\ell]$. 

In the proof, it suffices to show that 
\begin{align}
\label{E:115}    
\begin{split}
&\bfP\{\sI_{\bfu}^{r_1}[\omega] > x_{\bfu, 1} \text{ and } \sI_{\bfu}^{r_2}[\omega] > x_{\bfu, 2} \text{ for } \bfu \in \rightset_{k, \ell} \} \\ 
&= \bfP\{\ul\I_{\bfu, (k, \ell+1)}[\eta] > x_{\bfu, 1} \text{ and } \ul\I_{\bfu, (k+1, \ell)}[\eta] > x_{\bfu, 2} \text{ for } \bfu \in \rightset_{k, \ell}\}
\end{split}
\end{align}
for any $x_{\bfu, 1}, x_{\bfu, 2} \in \bbR$ for $\bfu \in \rightset_{k, \ell}$. Our approach is to bound the prelimit version of the left-hand side in \eqref{E:115} from above and below with probabilities that will eventually converge to the right-hand side.
Here we sketch only the lower bound, which involves manipulations of rate parameters assigned to the columns of $\bbZ_{>0}^2$.
The upper bound is obtained by analogous manipulations on rows (see, for example, Lemma \ref{L:CDFBd}).

\subsubsection*{Prelimit lower bound} Pick positive integers $k_n^2 > k_n^1 \ge k$ for each $n \in \bbZ_{\ge \ell}$ such that $k_n^p/n \to r_p$ for $p \in \{1, 2\}$. It follows from Proposition \ref{P:BusFn}\eqref{P:BusFn_a} (and the continuity of the exponential distributions in part \eqref{P:BusFn_b}) that 
\begin{align}
\label{E:116}    
\begin{split}
&\bfP\{\sI_{\bfu}^{r_1}[\omega] > x_{\bfu, 1} \text{ and } \sI_{\bfu}^{r_2}[\omega] > x_{\bfu, 2} \text{ for } \bfu \in \rightset_{k, \ell}\} \\ 
&= \lim_{n \to \infty} \bfP\{\ul \I_{\bfu, (k_n^1, n)}[\omega] > x_{\bfu, 1} \text{ and } \ul \I_{\bfu, (k_n^2, n)}[\omega] > x_{\bfu, 2} \text{ for } \bfu \in \rightset_{k, \ell}\}. 
\end{split}
\end{align}
Recalling the function $\zeta$ from \eqref{E:zeta}, for any pair of distinct $c_1, c_2 \in \bbZ_{>0}$, let $\omega^{\col, c_1, c_2}$ denote independent exponential weights on $\bbZ_{>0}^2$ with rate $1-\zeta(r_1)$ on column $c_1$, rate $1-\zeta(r_2)$ on column $c_2$, and rate $1$ elsewhere. The prelimit probability in \eqref{E:116} can be bounded from below as follows. See Figure \ref{F:CDFBd} for an illustration. 
\begin{align}
\label{E:117}
\begin{split}
&\bfP\{\ul \I_{\bfu, (k_n^1, n)}[\omega] > x_{\bfu, 1} \text{ and } \ul \I_{\bfu, (k_n^2, n)}[\omega] > x_{\bfu, 2} \text{ for } \bfu \in \rightset_{k, \ell}\} \\ 
&= \bfP\{\ul \I_{\bfu, (k_n^1, n)}[\omega^{\col, k_n^2+1, k_n^2+2}] > x_{\bfu, 1} \text{ and } \ul \I_{\bfu, (k_n^2, n)}[\omega^{\col, k_n^2+1, k_n^2+2}] > x_{\bfu, 2} \text{ for } \bfu \in \rightset_{k, \ell}\} \\ 
&\ge \bfP\{\ul \I_{\bfu, (k_n^1, n)}[\omega^{\col, k_n^2+1, k_n^2+2}] > x_{\bfu, 1} \text{ and } \ul \I_{\bfu, (k_n^2+2, n)}[\omega^{\col, k_n^2+1, k_n^2+2}] > x_{\bfu, 2} \text{ for } \bfu \in \rightset_{k, \ell}\} \\ 
&= \bfP\{\ul \I_{\bfu, (k_n^1, n)}[\omega^{\col, k_n^1+1, k_n^2+2}] > x_{\bfu, 1} \text{ and } \ul \I_{\bfu, (k_n^2+2, n)}[\omega^{\col, k_n^1+1, k_n^2+2}] > x_{\bfu, 2} \text{ for } \bfu \in \rightset_{k, \ell}\} \\ 
&\ge \bfP\{\ul \I_{\bfu, (k_n^1+1, n)}[\omega^{\col, k_n^1+1, k_n^2+2}] > x_{\bfu, 1} \text{ and } \ul \I_{\bfu, (k_n^2+2, n)}[\omega^{\col, k_n^1+1, k_n^2+2}] > x_{\bfu, 2} \text{ for } \bfu \in \rightset_{k, \ell}\}. 
\end{split}
\end{align}
The first equality in \eqref{E:117} holds because the environment is modified only along columns $k_n^2+1$ and $k_n^2 + 2$, which are outside the grid $[(1, 1), (k_n^2, n)]$. The subsequent step shifts the terminal point of the second increment from $(k_n^2, n)$ to $(k_n^2+2, n)$ and invokes the standard increment-comparison lemma (Lemma \ref{L:Comp}). The third step interchanges the rates $1-\zeta(r_1)$ on column $k_n^2+1$ with the rates $1$ on column $k_n^1+1$, which preserves the joint distribution of the increments. This invariance is a special case of our second main result, Theorem \ref{T:LppInv} ahead. The last step in \eqref{E:117} shifts the terminal point of the first increment from $(k_n^1, n)$ to $(k_n^1+1, n)$ and applies the increment-comparison lemma again. The proof of Lemma \ref{L:CDFBd} will generalize the argument in \eqref{E:117} to an arbitrary number of directions. 


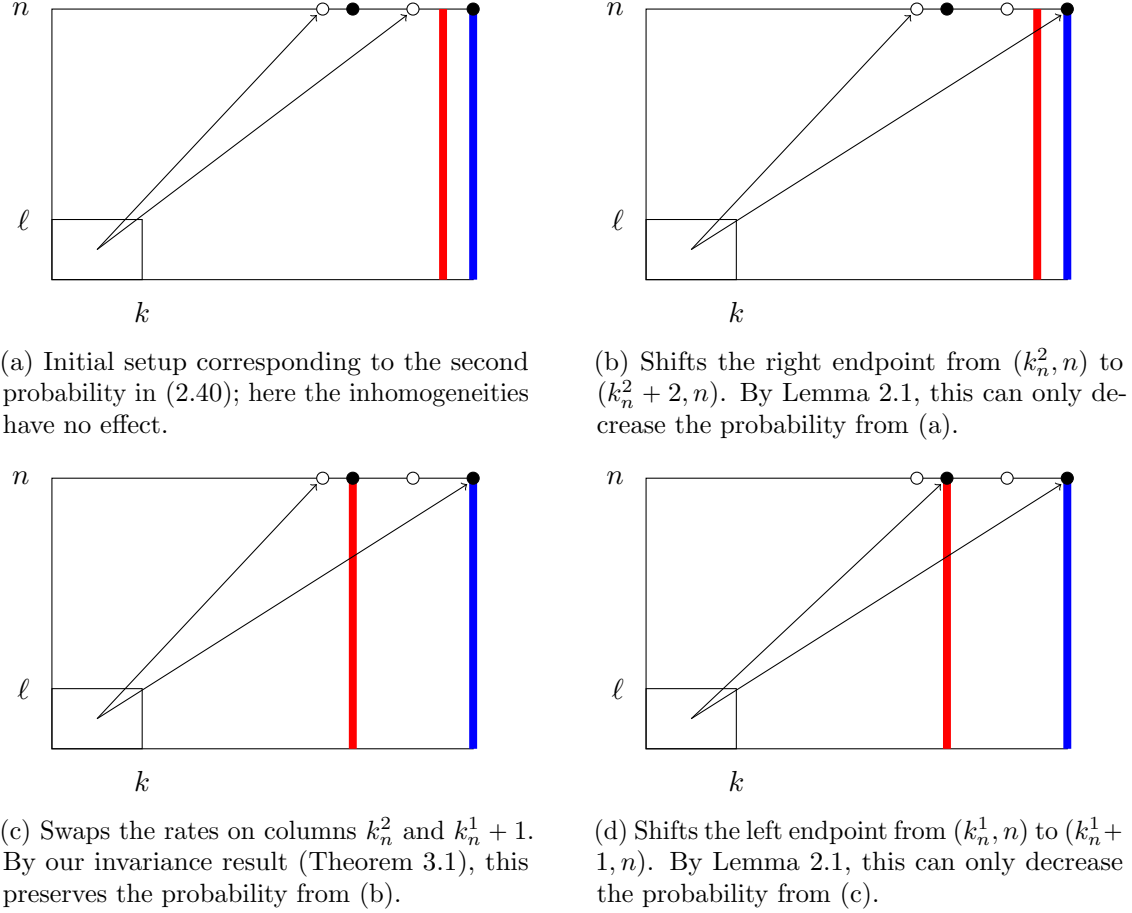
\begin{figure}[h!]
%
\vspace{0.2in}
\begin{subfigure}{0.45\textwidth}
\begin{tikzpicture}[scale = 0.8]
\draw[](0, 0)rectangle(7, 4.5);
\draw[](0, 0)rectangle(1.5, 1);
\draw[line width = 3, red](6.5, 0)--(6.5, 4.5);
\draw[line width = 3, blue](7, 0)--(7, 4.5);
\draw[fill=white](4.5, 4.5)circle(0.10);
\draw[->](0.75, 0.5)--(4.5-0.1, 4.5-0.1);
\draw[->](0.75, 0.5)--(6-0.1, 4.5-0.1);
\draw[fill=white](6, 4.5)circle(0.10);
\draw[fill=black](5, 4.5)circle(0.10);
\draw[fill=black](7, 4.5)circle(0.10);
\draw(1.5, 0-0.2)node[below]{$k$};
\draw(0-0.2, 1)node[left]{$\ell$};
\draw(0-0.2, 4.5)node[left]{$n$};
\end{tikzpicture}
\subcaption{Initial setup corresponding to the second probability in \eqref{E:117}; here the inhomogeneities have no effect.
}
\end{subfigure} \qquad
\begin{subfigure}{0.45\textwidth}
\begin{tikzpicture}[scale = 0.8]
\draw[](0, 0)rectangle(7, 4.5);
\draw[](0, 0)rectangle(1.5, 1);
\draw[line width = 3, red](6.5, 0)--(6.5, 4.5);
\draw[line width = 3, blue](7, 0)--(7, 4.5);
\draw[fill=white](4.5, 4.5)circle(0.10);
\draw[->](0.75, 0.5)--(4.5-0.1, 4.5-0.1);
\draw[->](0.75, 0.5)--(7-0.1, 4.5-0.1);
\draw[fill=white](6, 4.5)circle(0.10);
\draw[fill=black](5, 4.5)circle(0.10);
\draw[fill=black](7, 4.5)circle(0.10);
\draw(1.5, 0-0.2)node[below]{$k$};
\draw(0-0.2, 1)node[left]{$\ell$};
\draw(0-0.2, 4.5)node[left]{$n$};
\end{tikzpicture}
\subcaption{Shift the right endpoint from $(k_n^2, n)$ to $(k_n^2+2, n)$.
By Lemma \ref{L:Comp}, this can only decrease the probability from (a).
}
\end{subfigure} \\[0.5\baselineskip]
\begin{subfigure}{0.45\textwidth}
\begin{tikzpicture}[scale = 0.8]
\draw[](0, 0)rectangle(7, 4.5);
\draw[](0, 0)rectangle(1.5, 1);
\draw[line width = 3, red](5, 0)--(5, 4.5);
\draw[line width = 3, blue](7, 0)--(7, 4.5);
\draw[fill=white](4.5, 4.5)circle(0.10);
\draw[->](0.75, 0.5)--(4.5-0.1, 4.5-0.1);
\draw[->](0.75, 0.5)--(7-0.1, 4.5-0.1);
\draw[fill=white](6, 4.5)circle(0.10);
\draw[fill=black](5, 4.5)circle(0.10);
\draw[fill=black](7, 4.5)circle(0.10);
\draw(1.5, 0-0.2)node[below]{$k$};
\draw(0-0.2, 1)node[left]{$\ell$};
\draw(0-0.2, 4.5)node[left]{$n$};
\end{tikzpicture}
\subcaption{Swap the rates on columns $k_n^2$ and $k_n^1+1$.
By our invariance result (Theorem~\ref{T:LppInv}), this preserves the probability from (b).}
\end{subfigure} \qquad
%
\begin{subfigure}{0.45\textwidth}
\begin{tikzpicture}[scale = 0.8]
\draw[](0, 0)rectangle(7, 4.5);
\draw[](0, 0)rectangle(1.5, 1);
\draw[line width = 3, red](5, 0)--(5, 4.5);
\draw[line width = 3, blue](7, 0)--(7, 4.5);
\draw[fill=white](4.5, 4.5)circle(0.10);
\draw[->](0.75, 0.5)--(5-0.1, 4.5-0.1);
\draw[->](0.75, 0.5)--(7-0.1, 4.5-0.1);
\draw[fill=white](6, 4.5)circle(0.10);
\draw[fill=black](5, 4.5)circle(0.10);
\draw[fill=black](7, 4.5)circle(0.10);
\draw(1.5, 0-0.2)node[below]{$k$};
\draw(0-0.2, 1)node[left]{$\ell$};
\draw(0-0.2, 4.5)node[left]{$n$};
\end{tikzpicture}
\subcaption{Shift the left endpoint from $(k_n^1, n)$ to \linebreak $(k_n^1+1, n)$. 
By Lemma \ref{L:Comp}, this can only decrease the probability from (c). 
}
\end{subfigure}
\caption{\small Illustrates the argument in \eqref{E:117}.
The goal is to introduce inhomogeneities into the environment.
The white vertices are located at $(k_n^1,n)$ and $(k_n^2,n)$, and have limiting directions $r_1 < r_2$. 
The black vertices are located at $(k_n^1+1,n)$ and $(k_n^2+2,n)$. 
Arrows represent increments of last-passage times from the vertices in $[k] \times [\ell]$ to the indicated terminal vertices. The weights are independent exponential with rate $1$ everywhere except the two columns indicated by thick vertical stripes, where instead the rates are $1-\zeta(r_1)$ for the red/left column, and $1-\zeta(r_2)$ for the blue/right column.
}
\label{F:CDFBd}
\end{figure}

\subsubsection*{Limiting lower bound} Our next step is to pass to the limit in \eqref{E:117} as $n \to \infty$ to produce a limiting lower bound. To achieve this, we will take advantage of some recent results from \cite{Emra_Janj_Sepp_25} concerning the Busemann functions of inhomogeneous exponential LPP, which are reviewed in Section \ref{S:iBusFn}. 
See Figure~\ref{F:CDFLim} for an illustration.

Through another appeal to our permutation invariance result, the last probability in \eqref{E:117} is
\begin{align}
\label{E:118}
\begin{split}
&\bfP\{\ul \I_{\bfu, (k_n^1+1, n)}[\omega^{\col, k_n^1+1, k_n^2+2}] > x_{\bfu, 1} \text{ and } \ul \I_{\bfu, (k_n^2+2, n)}[\omega^{\col, k_n^1+1, k_n^2+2}] > x_{\bfu, 2} \text{ for } \bfu \in \rightset_{k, \ell}\} \\ 
&= \bfP\{\ul \I_{\bfu, (k_n^1+1, n)}[\omega^{\col, k, k_n^2+2}] > x_{\bfu, 1} \text{ and } \ul \I_{\bfu, (k_n^2+2, n)}[\omega^{\col, k, k_n^2+2}] > x_{\bfu, 2} \text{ for } \bfu \in \rightset_{k, \ell}\},
\end{split}
\end{align}
where we interchanged the rates $1-\zeta(r_1)$ on column $k_n^1+1$ with the rates $1$ on column $k$. Now, for any column $c \in \bbZ_{\ge k}$ and arbitrary weights $\w$ on $\bbZ_{>0}^2$, one can define the \emph{thin} Busemann function
\begin{align}
\label{E:119}
\sI^{c, \uparrow}_{\bfu}[\w] = \sup_{n \in \bbZ_{\ge \ell}} \ul{\I}_{\bfu, (c, n)}[\w] = \lim_{n \to \infty} \ul{\I}_{\bfu, (c, n)}[\w] \quad \text{ for } \bfu \in \rightset_{k, \ell}. 
\end{align}
The second equality in \eqref{E:119} is a consequence of the standard increment-comparison in Lemma \ref{L:Comp}. Since $(k_n^1+1)/n \to r_1$ as $n \to \infty$ and due to the marginal structure of the $\omega^{\col, k, k_n^2+2}$-weights on the grid $[(1, 1), (k_n^1+1, n)]$, the direction $r_1$ falls in (in fact, is the endpoint of) a closed interval of directions for which the Busemann functions coincide with certain thin Busemann functions. More specifically, it follows from Corollary \ref{C:ThinBuse} and Proposition \ref{P:iBusFlat} that 
\begin{align}
\label{E:120}
\lim_{n \to \infty} \{\ul \I_{\bfu, (k_n^1+1, n)}[\omega^{\col, k, k_n^2+2}] - \sI^{k, \uparrow}_{\bfu}[\omega^{\col, k, k_n^2+2}]\} = 0 \quad \text{a.s.\ for } \bfu \in \rightset_{k, \ell}. 
\end{align}
The limits in \eqref{E:120} and union bounds imply that the last probability in \eqref{E:118} is greater than or equal to 
\begin{align}
\label{E:121}
\begin{split}
&\bfP\{\sI_{\bfu}^{k, \uparrow}[\omega^{\col, k, k_n^2+2}] > x_{\bfu, 1} + \epsilon \text{ and } \ul \I_{\bfu, (k_n^2+2, n)}[\omega^{\col, k, k_n^2+2}] > x_{\bfu, 2} \text{ for } \bfu \in \rightset_{k, \ell}\} - o_\epsilon(1)
\end{split}
\end{align} 
for any $\epsilon > 0$, where $o_\epsilon(1)$ is an $\epsilon$-dependent term that vanishes as $n \to \infty$. Importantly for the subsequent step, the thin Busemann function in \eqref{E:121} depends only on the first $k$ columns. 

\begin{figure}[h!]
%
\begin{subfigure}[t]{0.45\textwidth}
\begin{tikzpicture}[scale = 0.8]
\draw[](0, 0)rectangle(7, 4.5);
\draw[](0, 0)rectangle(1.5, 1);
\draw[line width = 3, red](1.5, 0)--(1.5, 4.5);
\draw[line width = 3, blue](7, 0)--(7, 4.5);
\draw[->](0.75, 0.5)--(5-0.1, 4.5-0.1);
\draw[->](0.75, 0.5)--(7-0.1, 4.5-0.1);
\draw[fill=black](5, 4.5)circle(0.10);
\draw[fill=black](7, 4.5)circle(0.10);
\draw(1.5, 0-0.2)node[below]{$k$};
\draw(0-0.2, 1)node[left]{$\ell$};
\draw(0-0.2, 4.5)node[left]{$n$};
\end{tikzpicture}
\subcaption{From Figure \ref{F:CDFBd}(d), swap the rates on columns $k_n^1+1$ and $k$.
By our invariance result (Theorem~\ref{T:LppInv}), this preserves the probability under consideration, resulting in \eqref{E:118}.}
\end{subfigure} \qquad
\begin{subfigure}[t]{0.45\textwidth}
\begin{tikzpicture}[scale = 0.8]
\draw[](0, 6)--(0, 0)--(7, 0);
\draw[](1.5, 4.5)--(7, 4.5);
\draw[](0, 0)rectangle(1.5, 1);
\draw[](0, 4.5)--(0, 6);
\draw[line width = 3, red](1.5, 4.5)--(1.5, 6);
\draw[line width = 3, red](1.5, 0)--(1.5, 4.5);
\draw[line width = 3, blue](7, 0)--(7, 4.5);
\draw[->](0.75, 0.5)--(1.5-0.1, 6-0.1);
\draw[->](0.75, 0.5)--(7-0.1, 4.5-0.1);
\draw[fill=black](7, 4.5)circle(0.10);
\draw(1.5, 0-0.2)node[below]{$k$};
\draw(0-0.2, 1)node[left]{$\ell$};
\draw(0-0.2, 4.5)node[left]{$n$};
\draw(0-0.12, 5.8)node[left]{$\infty$};
\end{tikzpicture}
\subcaption{Replace all the increments from $[k] \times [\ell]$ to $(k_n^1+1, n)$ with their a.s.\ limits at the cost of a vanishing error; see \eqref{E:120} and \eqref{E:121}. 
By Section \ref{S:iBusFn}, the limits are thin Busemann functions along column $k$.}
\end{subfigure} \\[0.75\baselineskip]
%
\begin{subfigure}{0.45\textwidth}
\begin{tikzpicture}[scale = 0.8]
\draw[](0, 6)--(0, 0)--(7, 0)--(7, 4.5);
\draw[](1.5, 4.5)--(7, 4.5);
\draw[](0, 0)rectangle(1.5, 1);
\draw[](0, 4.5)--(0, 6);
\draw[line width = 3, red](1.5, 4.5)--(1.5, 6);
\draw[line width = 3, red](1.5, 0)--(1.5, 4.5);
\draw[line width = 3, blue](2, 0)--(2, 4.5);
\draw[->](0.75, 0.5)--(1.5-0.1, 6-0.1);
\draw[->](0.75, 0.5)--(7-0.1, 4.5-0.1);
\draw[fill=black](7, 4.5)circle(0.10);
\draw(1.5, 0-0.2)node[below]{$k$};
\draw(0-0.2, 1)node[left]{$\ell$};
\draw(0-0.2, 4.5)node[left]{$n$};
\draw(0-0.12, 5.8)node[left]{$\infty$};
\end{tikzpicture}
\subcaption{Swap the rates on columns $k_n^2+2$ and $k+1$, resulting in weights $\omega^{1}=\omega^{\col, k, k+1}$. 
Once again by invariance, this preserves the probability from (b), resulting in \eqref{E:122}. 
}
\end{subfigure} \qquad
\begin{subfigure}{0.45\textwidth}
\begin{tikzpicture}[scale = 0.8]
\draw[](0, 6)--(0, 0)--(7, 0)--(7, 4.5);
\draw[](2, 4.5)--(7, 4.5);
\draw[](0, 0)rectangle(1.5, 1);
\draw[](0, 4.5)--(0, 6);
\draw[line width = 3, red](1.5, 4.5)--(1.5, 6);
\draw[line width = 3, red](1.5, 0)--(1.5, 4.5);
\draw[line width = 3, blue](2, 0)--(2, 6);
\draw[->](0.75, 0.5)--(1.5-0.1, 6-0.1);
\draw[->](0.75, 0.5)--(2-0.1, 6-0.1);
\draw(1.5, 0-0.2)node[below]{$k$};
\draw(0-0.2, 1)node[left]{$\ell$};
\draw(0-0.12, 5.8)node[left]{$\infty$};
\end{tikzpicture}
\subcaption{Replace all the increments from $[k] \times [\ell]$ to $(k_n^2+2, n)$ with their a.s.\ limits (thin Busemann functions along column $k+1$) at the cost of a vanishing error; see \eqref{E:123}.}
\end{subfigure}
\caption{\small Illustrates the steps from \eqref{E:118} to \eqref{E:125}. The goal is to replace the increments from Figure~\ref{F:CDFBd}(d) with thin Busemann functions (defined in \eqref{E:119}).
}
\label{F:CDFLim}
\end{figure}

We can now interchange the rate $1-\zeta(r_2)$ presently on column $k_n^2+2$ with the rate $1$ on column $k+1$. As a consequence of our invariance result again, the probability in \eqref{E:121} equals 
\begin{align}
\label{E:122}    
&\bfP\{\sI_{\bfu}^{k, \uparrow}[\omega^{\col, k, k+1}] > x_{\bfu, 1} + \epsilon \text{ and } \ul \I_{\bfu, (k_n^2+2, n)}[\omega^{\col, k, k+1}] > x_{\bfu, 2} \text{ for } \bfu \in \rightset_{k, \ell}\} 
\end{align}
computed with the new environment $\omega^{\col, k, k+1}$. Similarly to \eqref{E:121}, it follows from the marginal structure of the $\omega^{\col, k, k+1}$-weights, the convergence $(k_n^2+2)/n \to r_2$ as $n \to \infty$, and the properties of the inhomogeneous Busemann functions that the probability in \eqref{E:122} is greater than or equal to
\begin{align}
\label{E:123}    
&\bfP\{\sI_{\bfu}^{k, \uparrow}[\omega^{\col, k, k+1}] > x_{\bfu, 1} + \epsilon \text{ and } \sI_{\bfu}^{k+1, \uparrow}[\omega^{\col, k, k+1}] > x_{\bfu, 2}+\epsilon \text{ for } \bfu \in \rightset_{k, \ell}\} - o_\epsilon(1). 
\end{align}

In summary, the steps from \eqref{E:117} to \eqref{E:123} have shown that 
\begin{align}
\label{E:124}
\begin{split}
&\bfP\{\ul \I_{\bfu, (k_n^1, n)}[\omega] > x_{\bfu, 1} \text{ and } \ul \I_{\bfu, (k_n^2, n)}[\omega] > x_{\bfu, 2} \text{ for } \bfu \in \rightset_{k, \ell}\}\\
&\ge \bfP\{\sI_{\bfu}^{k, \uparrow}[\omega^{1}] > x_{\bfu, 1} + \epsilon \text{ and } \sI_{\bfu}^{k+1, \uparrow}[\omega^{1}] > x_{\bfu, 2}+\epsilon \text{ for } \bfu \in \rightset_{k, \ell}\} - o_\epsilon(1),
\end{split}
\end{align}
where $\omega^1 = \omega^{\col, k, k+1}$. 
Letting $n \to \infty$ and then $\epsilon \to 0$ in \eqref{E:124} yields 
\begin{align}
\label{E:125}
\begin{split}
&\bfP\{\sI_{\bfu}^{r_1}[\omega] > x_{\bfu, 1} \text{ and } \sI_{\bfu}^{r_2}[\omega] > x_{\bfu, 2} \text{ for } \bfu \in \rightset_{k, \ell} \} \\ 
&\ge \bfP\{\sI_{\bfu}^{k, \uparrow}[\omega^{1}] > x_{\bfu, 1} \text{ and } \sI_{\bfu}^{k+1, \uparrow}[\omega^{1}] > x_{\bfu, 2}\text{ for } \bfu \in \rightset_{k, \ell}\}. 
\end{split}
\end{align}
The generalization of \eqref{E:125} to multiple directions appears in Lemma \ref{L:CDFLim}. 

\subsubsection*{Expressing the lower bound via finitely many independent variables} The last stage of our derivation is to express the lower bound in \eqref{E:125} in terms of the $\eta$-weights given by \eqref{E:def_eta}. This involves a sequence of technical steps that are illustrated in the accompanying Figures \ref{F:TBusDisId} and \ref{F:TBusDisId2} below. We will extend this argument to more than $2$ directions in the proof of Lemma \ref{L:BusDisId}. 

Key to our proof of this part is the notion of \textit{induced weights}. 
Specifically, let $\w = \{\w_{(i,j)}:i,j \ge 1\}$ be a collection of weights, and let $\bfy,\bfz$ be points in $\Z^2$ such that $\bfy \le \bfz$ coordinatewise. Then, for any $\bfx \le \bfy$, the passage time $\Lp_{\bfx,\bfz}[\w]$ can be computed as $\Lp_{\bfx,\bfy}[\wt \w^{\bfy,\bfz}] + \Lp_{\bfy,\bfz}[\w]$, where $\wt \w^{\bfy,\bfz}$ are the induced weights given in Equation \eqref{E:wInd}. 
These induced weights are equal to the original weights everywhere except directly west or directly south of $\bfy$, where they are instead defined as increments of passage times to $\bfz$.  The increment at $\bfx$ depends in a somewhat complicated way on the weights in the rectangular grid $[\bfx, \bfz]$, but
the advantage of this construction is that for \textit{any} points $\bfx_1,\bfx_2 \le \bfy$, we have
\[
\Lp_{\bfx_1,\bfz}[\w] - \Lp_{\bfx_2,\bfz}[\w] = \Lp_{\bfx_1,\bfy}[\wt \w^{\bfy,\bfz}] - \Lp_{\bfx_2,\bfy}[\wt \w^{\bfy,\bfz}].
\]
That is, the increment $\Lp_{\bfx_1,\bfz}[\w] - \Lp_{\bfx_2,\bfz}[\w]$ can be computed as an increment of last-passage times in a smaller (potentially much smaller) grid, albeit with different weights. In particular, we can send the point $\bfz \to \infty$ and represent thin Busemann functions via last-passage increments in a finite grid. From here, one can heuristically see why we can represent the joint distribution of Busemann functions via a finite LPP problem. What follows is a technical description of several steps, using facts about inhomogeneous exponential LPP and permutation invariance that describe how to get the appropriate vertex weight distributions.

\begin{figure}[h!]
\begin{subfigure}[t]{0.47\textwidth}
\centering
\begin{tikzpicture}[scale = 0.8]
\draw[](0, 0)--(0, 4.5);
\draw[](0, 0)--(4, 0);
\draw[](0, 0)rectangle(3, 2);
\draw[](0, 4.5)--(0, 6);
\draw[line width = 3, red](3, 4.5)--(3, 6);
\draw[line width = 3, red](3, 0)--(3, 4.5);
\draw[line width = 3, blue](4, 0)--(4, 6);
\draw[->](1.5, 1)--(3-0.1, 6-0.1);
\draw[->](1.5, 1)--(4-0.1, 6-0.1);
\draw(3, 0-0.2)node[below]{$k$};
\draw(4, 0-0.2)node[below]{$k+1$};
\draw(0-0.2, 2)node[left]{$\ell$};
\draw(0-0.12, 6)node[left]{$\infty$};
\end{tikzpicture}
\subcaption{Initial setup as in Figure \ref{F:CDFLim}(d) where the weights are $\omega^1$. 
Only the first $k+1$ columns are shown. The thin Busemann functions $\sI_{\bfu}^{k+p-1, \uparrow}[\omega^1]$ for $p \in \{1, 2\}$ and $\bfu \in \rightset_{k, \ell}$ are indicated.  The key identity to be used next is \eqref{E:126}.}
\end{subfigure}
\qquad 
\begin{subfigure}[t]{0.47\textwidth}
\centering
\begin{tikzpicture}[scale = 0.8]
\draw[](0, 0)--(0, 4.5);
\draw[](0, 0)--(4, 0);
\draw[](0, 0)rectangle(3, 2);
\draw[](0, 4.5)--(0, 6);
\draw[](0, 2)--(4, 2);
\draw[line width = 3, red](3, 0)--(3, 6);
\draw[line width = 3, blue](4, 0)--(4, 6);
\draw[->, dashed](1.5, 1)--(3-0.1, 2-0.1);
\draw[->, dashed](1.5, 1)--(4-0.1, 2-0.1);
\draw[->](1.5, 2)--(3-0.1, 6-0.1);
\draw[->](2, 2)--(4-0.1, 6-0.1);
\draw(3, 0-0.2)node[below]{$k$};
\draw(4, 0-0.2)node[below]{$k+1$};
\draw(0-0.2, 2)node[left]{$\ell$};
\draw(0-0.12, 6)node[left]{$\infty$};
\end{tikzpicture}
\subcaption{
Each thin Busemann function in environment $\omega^1$ is equal to an increment in an induced environment $\wt{\omega}^p$ given by \eqref{E:127-2}.
The resulting equality is \eqref{E:128}.
The induced weights depend on the increment endpoint ($(k,\ell)$ for $p=1$, or $(k+1,\ell)$ for $p=2$) but differ from $\omega^1$ only on row $\ell$.
}
\end{subfigure} \\[\baselineskip]
\begin{subfigure}[t]{0.47\textwidth}
\centering
\begin{tikzpicture}[scale = 0.8]
\draw[](0, 0)--(0, 4.5);
\draw[](0, 0)--(4, 0);
\draw[](0, 0)rectangle(3, 2);
\draw[](0, 4.5)--(0, 6);
\draw[](0, 2)--(4, 2);
\draw[line width = 3, red](3, 0)--(3, 6);
\draw[line width = 3, blue](4, 0)--(4, 6);
\draw[line width = 3, dashed, blue](0, 6)--(4, 6);
\draw[fill=gray](3, 6)circle(0.10);
\draw[fill=gray](4, 6)circle(0.10);
\draw[->, dashed](1.5, 1)--(3-0.1, 2-0.1);
\draw[->, dashed](1.5, 1)--(4-0.1, 2-0.1);
\draw[->](1.5, 2)--(3-0.1, 6-0.1);
\draw[->](2, 2)--(4-0.1, 6-0.1);
\draw(3, 0-0.2)node[below]{$k$};
\draw(4, 0-0.2)node[below]{$k+1$};
\draw(0-0.2, 2)node[left]{$\ell$};
\draw(0-0.2, 6)node[left]{$n$};
\end{tikzpicture}
\subcaption{Modify the rates on row $n$ to produce new weights  $\wh{\omega}^{n, \epsilon}$ as in \eqref{E:130}.
The modified rates are indicated with dashed blue. 
When $n$ is large, the modification has negligible impact on the prelimits of thin Busemann functions; see \eqref{E:133}.
Therefore, in (b), these prelimits can be used instead of thin Busemann functions, resulting in \eqref{E:135}. 
The corresponding induced weights are denoted by $\wh{\eta}^{p, n, \epsilon}$ as in \eqref{E:134}, for $p\in\{1,2\}$.
}
\end{subfigure}
\qquad 
\begin{subfigure}[t]{0.47\textwidth}
\centering
\begin{tikzpicture}[scale = 0.8]
\draw[](0, 0)--(0, 4.5);
\draw[](0, 0)--(4, 0);
\draw[](0, 6)--(4, 6);
\draw[](0, 4.5)--(0, 6);
\draw[line width = 3, red](3, 0)--(3, 6);
\draw[line width = 3, blue](4, 0)--(4, 6);
\draw[line width = 3, dashed, blue](0, 2)--(4, 2);
\draw[fill=gray](3, 6)circle(0.10);
\draw[fill=gray](4, 6)circle(0.10);
\draw[->, dashed](1.5, 1)--(3-0.1, 2-0.1);
\draw[->, dashed](1.5, 1)--(4-0.1, 2-0.1);
\draw[->](1.5, 2)--(3-0.1, 6-0.1);
\draw[->](2, 2)--(4-0.1, 6-0.1);
\draw(3, 0-0.2)node[below]{$k$};
\draw(4, 0-0.2)node[below]{$k+1$};
\draw(0-0.2, 2)node[left]{$\ell$};
\draw(0-0.2, 6)node[left]{$n$};
\end{tikzpicture}
\subcaption{Swap the rates on rows $\ell$ and $n$ to produce new weights $\wc{\omega}^\epsilon$.
The associated induced weights are denoted by $\wc{\eta}^{p, n, \epsilon}$ as in \eqref{E:137} and are equal in distribution to $\wh{\eta}^{p, n, \epsilon}$ by our invariance result (Theorem~\ref{T:LppInv}).
Therefore, the increments shown here are equal in distribution to those in (c); see \eqref{E:138}.
As $n\to\infty$,  $\wc{\eta}^{p, n, \epsilon}$ converges to the weights $\wc{\eta}^{p, \epsilon}$ induced by a thin Busemann function in environment $\wc{\omega}^\epsilon$, resulting in \eqref{E:140}.
}
\end{subfigure}
\caption{\small Illustrates \eqref{E:126}--\eqref{E:140}. 
The goal is to identify thin Busemann functions/their prelimits (solid arrows) with finite grid LPP increments (dashed arrows). 
}
\label{F:TBusDisId}
\end{figure}
As an application of this fact, for any column $c \in \bbZ_{\ge k}$ and row $r \in \bbZ_{\ge \ell}$, the following identity holds for arbitrary weights $\w$: (see \eqref{E:39}--\eqref{E:40} in the proof of Lemma \ref{L:BusDisId} for a complete proof) 
\begin{align}
\label{E:126}    
\sI_{\bfu}^{c, \uparrow}[\w] = \ul{\I}_{\bfu, (c, r)}[\wt{\w}^{c, r}] \quad \text{ for } \bfu \in \rightset_{k, \ell},
\end{align}
where $\wt{\w}^{c, r}$ denotes the induced weights on the grid $[(1, 1), (c, r)]$ that coincide with the $\w$-weights on rows $1, \dotsc, r-1$, and are given as follows on row $r$:
\begin{align}
\label{E:127}
\wt{\w}^{c, r}_{(i, r)} = \one_{\{i < c\}} \cdot \sI_{(i, r)}^{c, \uparrow}[\w] \quad \text{ for } i \in [c]
\end{align}
For $p \in \{1, 2\}$, we generate the induced weights $\wt{\omega}^p$ by applying the map $\wt{(\cdot)}^{k+p-1, \ell}$ to the weights $\omega^1 = \omega^{\col, k, k+1}$. Thus, $\wt{\omega}^p$ is the same as $\omega^1$ on $[k+p-1] \times [\ell-1]$, and 
\begin{align}
\label{E:127-2}    
\wt{\omega}^{p}_{(i, \ell)} = \one_{\{i < k+p-1\}} \cdot \sI_{(i, \ell)}^{k+p-1, \uparrow}[\omega^1] \quad \text{ for } i \in [k+p-1]. 
\end{align}
Through \eqref{E:126}, 
one can rewrite the probability on the right-hand side of \eqref{E:125} as 
\begin{align}
\label{E:128}
\begin{split}
&\bfP\{\sI_{\bfu}^{k, \uparrow}[\omega^{1}] > x_{\bfu, 1} \text{ and } \sI_{\bfu}^{k+1, \uparrow}[\omega^{1}] > x_{\bfu, 2}\text{ for } \bfu \in \rightset_{k, \ell}\} \\ 
&= \bfP\{\ul{\I}_{\bfu, (k, \ell)}[\wt{\omega}^1] > x_{\bfu, 1} \text{ and } \ul{\I}_{\bfu, (k+1, \ell)}[\wt{\omega}^2] > x_{\bfu, 2}\text{ for } \bfu \in \rightset_{k, \ell}\}. 
\end{split}
\end{align}

Next, pick $n \in \bbZ_{>\ell}$ and a small $\epsilon > 0$ such that $\zeta(r_1) + \epsilon < \zeta(r_2)$. Let $\wh{\omega}^{n, \epsilon}$ be independent weights on $[k+1] \times [n]$ that equal 
the $\omega^1$-weights on rows $1, \dotsc, n-1$, and have the following marginal distributions on row $n$: 
\begin{align}
\label{E:130}
\begin{split}
&\wh{\omega}^{n, \epsilon}_{(i, n)} \sim \Exp\{\zeta(r_2)+\epsilon\} \quad \text{ for } i \in [k-1], \\ 
&\wh{\omega}^{n, \epsilon}_{(k, n)} \sim \Exp\{\zeta(r_2)+\epsilon-\zeta(r_1)\} \quad \text{ and } \quad \wh{\omega}_{(k+1, n)}^{n, \epsilon} \sim \Exp\{\epsilon\}. 
\end{split}
\end{align}
Recall that the means of the $\omega^1$-weights on columns $k+1$ and $k$ are $\frac{1}{1-\zeta(r_2)}$ and $\frac{1}{1-\zeta(r_1)}$,  respectively, and the means of the $\omega^1$ weights on all columns less than $k$ are $1$. We show via a 
law of large numbers 
for inhomogeneous exponential LPP in  Lemma \ref{L:ThinShp} that this ordering of rates on the columns has the following implication: 
For $p \in \{1,2\}$ and $n \ge N_0$ for some sufficiently large random $N_0 \in \bbZ_{>0}$, 
geodesics to the point $(k+p-1,n)$ will travel through 
$(k+p-1,n-1)$, regardless of the weights on the top row $n$.  
Specifically, for $p \in \{1,2\}, i \in [k+p-1]$ and $n \ge N_0$, 
\begin{align*}
\Lp_{(i, \ell), (k+p-1, n)}[\wh{\omega}^{n, \epsilon}] = \Lp_{(i, \ell), (k+p-1, n-1)}[\omega^1] + \wh{\omega}^{n, \epsilon}_{(k+p-1, n)}.
\end{align*}
Taking differences above gives us  
\begin{align}
\label{E:132}
\ul{\I}_{(i, \ell), (k+p-1, n)}[\wh{\omega}^{n, \epsilon}] = \ul{\I}_{(i, \ell), (k+p-1, n-1)}[\omega^1],
\end{align}
for $p \in \{1, 2\}$, $i \in [k+p-2]$ and $n \ge N_0$. 
In the $n \to \infty$ limit of \eqref{E:132}, using \eqref{E:119}, one obtains that  
\begin{align}
\label{E:133}
\lim_{n \to \infty} \ul{\I}_{(i, \ell), (k+p-1, n)}[\wh{\omega}^{n, \epsilon}] \stackrel{\rm{a.s.}}{=} \sI^{k+p-1, \uparrow}_{(i, \ell)}[\omega^1].
\end{align}
Now, for each $p \in \{1, 2\}$, consider the weights $\wh{\eta}^{p, n, \epsilon}$ on $[k+p-1] \times [\ell]$ that agree with $\omega^1 = \wt{\omega}^p$ on rows $1, \dotsc, \ell-1$ and are given on row $\ell$ by 
\begin{align}
\label{E:134}
\begin{split}
\wh{\eta}^{p, n, \epsilon}_{(i, \ell)} &= \one_{\{i < k+p-1\}} \cdot \ul{\I}_{(i, \ell), (k+p-1, n)}[\wh{\omega}^{n, \epsilon}] \quad \text{ for } i \in [k+p-1].
\end{split}
\end{align}
By \eqref{E:127-2} and \eqref{E:133}, one has $\wh{\eta}^{p, n, \epsilon}_{(i, \ell)} \to \wt{\omega}^p_{(i, \ell)}$ a.s.\ as $n \to \infty$. Consequently and through the continuity of the last-passage times as a function of the weights, one can express the second probability in \eqref{E:128} as the following limit:   
\begin{align}
\label{E:135}    
\begin{split}
&\bfP\{\ul{\I}_{\bfu, (k, \ell)}[\wt{\omega}^1] > x_{\bfu, 1} \text{ and } \ul{\I}_{\bfu, (k+1, \ell)}[\wt{\omega}^2] > x_{\bfu, 2}\text{ for } \bfu \in \rightset_{k, \ell}\} \\
&= \lim_{n \to \infty}\bfP\{\ul{\I}_{\bfu, (k, \ell)}[\wh{\eta}^{1, n, \epsilon}] > x_{\bfu, 1} \text{ and } \ul{\I}_{\bfu, (k+1, \ell)}[\wh{\eta}^{2, n, \epsilon}] > x_{\bfu, 2}\text{ for } \bfu \in \rightset_{k, \ell}\}. 
\end{split}
\end{align}
Next, we let $\wc{\omega}^{\epsilon}$ denote the independent weights on $[k+1] \times \bbZ_{>0}$ that coincide with $\omega^1$ except on row $\ell$, where the marginals are the same as those in \eqref{E:130}. In other words, the weights $\wc{\omega}^{\epsilon}$ restricted to $[k+1] \times [n]$ are obtained from $\wh{\omega}^{n, \epsilon}$ by interchanging the exponential rates on rows $\ell$ and $n$. 

For each $p \in \{1, 2\}$, analogously to $\wh{\eta}^{p, n, \epsilon}$, consider the weights $\wc{\eta}^{p, n, \epsilon}$ on $[k+p-1] \times [\ell]$ that equal $\omega^1 = \wt{\omega}^p$ on rows $1, \dotsc, \ell-1$ and are given on row $\ell$ by 
\begin{align}
\label{E:137}
\begin{split}
\wc{\eta}^{p, n, \epsilon}_{(i, \ell)} &= \one_{\{i < k+p-1\}} \cdot \ul{\I}_{(i, \ell), (k+p-1, n)}[\wc{\omega}^{\epsilon}] \quad \text{ for } i \in [k+p-1].
\end{split}
\end{align}
Since we obtained the weights $\wc \omega^{\epsilon}$ from the weights $\wh \omega^{\epsilon}$ by exchanging the exponential rates on rows $\ell$ and $n$, it is a consequence of our invariance result (Theorem \ref{T:LppInv}) that the weights in \eqref{E:137} have the same joint distribution as the weights in \eqref{E:134} (as collections indexed by $i$ and $p$). Since both of these collections are also independent of the $\omega^1$-weights on rows $1, \dotsc, \ell-1$, the prelimit probability in \eqref{E:135} is
\begin{align}
\label{E:138}
\begin{split}
&\bfP\{\ul{\I}_{\bfu, (k, \ell)}[\wh{\eta}^{1, n, \epsilon}] > x_{\bfu, 1} \text{ and } \ul{\I}_{\bfu, (k+1, \ell)}[\wh{\eta}^{2, n, \epsilon}] > x_{\bfu, 2}\text{ for } \bfu \in \rightset_{k, \ell}\} \\ 
&= \bfP\{\ul{\I}_{\bfu, (k, \ell)}[\wc{\eta}^{1, n, \epsilon}] > x_{\bfu, 1} \text{ and } \ul{\I}_{\bfu, (k+1, \ell)}[\wc{\eta}^{2, n, \epsilon}] > x_{\bfu, 2}\text{ for } \bfu \in \rightset_{k, \ell}\}. 
\end{split}
\end{align}
As $n \to \infty$, the $\wc{\eta}^{n, p, \epsilon}$-weights converge a.s.\ to the $\wc{\eta}^{p, \epsilon}$-weights on $[k+p-1] \times [\ell]$ that equal $\omega^1$ on rows $1, \dotsc, \ell-1$, and are given as follows on row $\ell$: 
\begin{align}
\label{E:139}
\begin{split}
\wc{\eta}^{p, \epsilon}_{(i, \ell)} &= \one_{\{i < k+p-1\}} \cdot \sI_{(i, \ell)}^{k+p-1, \uparrow}[\wc{\omega}^{\epsilon}] \quad \text{ for } i \in [k+p-1].
\end{split}
\end{align}
Hence, letting $n \to \infty$ in \eqref{E:138} and using \eqref{E:135} yields 
\begin{align}
\label{E:140}    
\begin{split}
&\bfP\{\ul{\I}_{\bfu, (k, \ell)}[\wt{\omega}^1] > x_{\bfu, 1} \text{ and } \ul{\I}_{\bfu, (k+1, \ell)}[\wt{\omega}^2] > x_{\bfu, 2}\text{ for } \bfu \in \rightset_{k, \ell}\} \\
&= \bfP\{\ul{\I}_{\bfu, (k, \ell)}[\wc{\eta}^{1, \epsilon}] > x_{\bfu, 1} \text{ and } \ul{\I}_{\bfu, (k+1, \ell)}[\wc{\eta}^{2, \epsilon}] > x_{\bfu, 2}\text{ for } \bfu \in \rightset_{k, \ell}\}. 
\end{split}
\end{align}

\begin{figure}
\begin{subfigure}[t]{0.45\textwidth}
\centering
\begin{tikzpicture}[scale = 0.8]
\draw[](0, 0)--(0, 4.5);
\draw[](0, 0)--(4, 0);
\draw[](0, 0)rectangle(3, 2);
\draw[](0, 4.5)--(0, 6);
\draw[](0, 2)--(4, 2);
\draw[line width = 3, red](3, 0)--(3, 6);
\draw[line width = 3, blue](4, 0)--(4, 6);
\draw[fill=black](4, 2)circle(0.10);
\draw[->, dashed](1.5, 1)--(3-0.1, 2-0.1);
\draw[->, dashed](1.5, 1)--(4-0.1, 2-0.1);
\draw[->](1.5, 2)--(3-0.1, 6-0.1);
\draw[->](2.5, 2.1)--(4-0.15, 2.1)--(4-0.15, 6-0.1);
\draw(3, 0-0.2)node[below]{$k$};
\draw(4, 0-0.2)node[below]{$k+1$};
\draw(0-0.2, 2)node[left]{$\ell$};
\draw(0-0.12, 6)node[left]{$\infty$};
\end{tikzpicture}
\subcaption{
Everywhere except row $\ell$, the weights $\wc{\omega}^\epsilon$ agree with $\omega^1$.
On row $\ell$, there is a single blowup point $\wc{\omega}^\epsilon_{(k+1, \ell)}\to\infty$ as $\epsilon\to0$, but otherwise $\wc{\omega}^\epsilon$ converges to $\wc{\omega}$ defined in \eqref{E:143}.
The blowup forces all the geodesics from row $\ell$ to column $k+1$ to pass through $(k+1, \ell)$ as indicated.
These facts together allow one to compute the limits of the induced weights $\wc{\eta}^{1, \epsilon}$ and $\wc{\eta}^{2, \epsilon}$ as in \eqref{E:146}.
}
\end{subfigure}
\quad 
\begin{subfigure}[t]{0.45\textwidth}
\centering
\begin{tikzpicture}[scale = 0.8]
\draw[](0, 0)--(0, 4.5);
\draw[](0, 0)--(4, 0);
\draw[](0, 0)rectangle(3, 2);
\draw[](0, 4.5)--(0, 6);
\draw[](0, 2)--(4, 2);
\draw[line width = 3, red](3, 0)--(3, 6);
\draw[line width = 3, blue](4, 0)--(4, 2);
\draw[line width = 3, blue](0, 2)--(4, 2);
\draw[fill=white](4, 2)circle(0.10);
\draw[->, dashed](1.5, 1)--(3-0.1, 2-0.1);
\draw[->](1.5, 1)--(4-0.1, 2-0.1);
\draw[->](1.5, 2)--(3-0.1, 6-0.1);
\draw(3, 0-0.2)node[below]{$k$};
\draw(4, 0-0.2)node[below]{$k+1$};
\draw(0-0.2, 2)node[left]{$\ell$};
\draw(0-0.12, 6)node[left]{$\infty$};
\end{tikzpicture}
\subcaption{For $p=2$, $\wc{\eta}^{2, \epsilon}\to\wc{\omega}$ implies the convergence of increments: $\ul{\I}_{\bfu, (k+1, \ell)}[\wc{\eta}^{2, \epsilon}]\to\ul{\I}_{\bfu, (k+1, \ell)}[\wc{\omega}]$ as in \eqref{E:147}.
It is essential to note that on the grid $[k+1]\times[\ell]$ (thick blue sides), the weights $\wc{\omega}$ are equal in distribution to the desired environment $\eta$.
}
\end{subfigure} \\[0.5\baselineskip]
\begin{subfigure}[t]{0.45\textwidth}
\centering
\begin{tikzpicture}[scale = 0.8]
\draw[](0, 0)--(0, 4.5);
\draw[](0, 0)--(4, 0);
\draw[](0, 0)rectangle(3, 2);
\draw[](0, 4.5)--(0, 6);
\draw[](0, 3)--(3, 3);
\draw[line width = 3, red](3, 0)--(3, 6);
\draw[line width = 3, blue](4, 0)--(4, 2);
\draw[line width = 3, blue](0, 2)--(4, 2);
\draw[fill=white](4, 2)circle(0.10);
\draw[->, dashed](1.5, 1)--(3-0.1, 3-0.1);
\draw[->](1.5, 1)--(4-0.1, 2-0.1);
\draw[->](1.5, 3)--(3-0.1, 6-0.1);
\draw(3, 0-0.2)node[below]{$k$};
\draw(4, 0-0.2)node[below]{$k+1$};
\draw(0-0.2, 2)node[left]{$\ell$};
\draw(0-0.2, 3)node[left]{$\ell+1$};
\draw(0-0.12, 6)node[left]{$\infty$};
\end{tikzpicture}
\subcaption{For $p=1$, $\wc{\eta}^{1,\epsilon}_{(i,\ell)}\to\one_{\{i < k\}} \cdot \sI_{(i, \ell)}^{k, \uparrow}[\wc{\omega}]$ implies the convergence of increments to thin Busemann functions:
$\ul{\I}_{\bfu, (k, \ell)}[\wc{\eta}^{1, \epsilon}]\to\sI_{\bfu}^{k, \uparrow}[\wc{\omega}]$ as in \eqref{E:148}. 
In turn, $\sI_{\bfu}^{k, \uparrow}[\wc{\omega}]$ is equal to the increment (dashed arrow) $\ul{\I}_{\bfu, (k, \ell+1)}[\wt{\omega}^3]$ with induced weights $\wt{\omega}^3$ that agree with $\wc{\omega}$ on $[k]\times[\ell]$ and are defined on row $\ell+1$ as in \eqref{E:150}.}
\end{subfigure}
\qquad 
\begin{subfigure}[t]{0.45\textwidth}
\centering
\begin{tikzpicture}[scale = 0.8]
\draw[](0, 0)--(0, 3);
\draw[](0, 0)--(4, 0);
\draw[](0, 0)rectangle(3, 2);
\draw[line width = 3, red](0, 3)--(3, 3);
\draw[line width = 3, red](3, 0)--(3, 3);
\draw[line width = 3, blue](4, 0)--(4, 2);
\draw[line width = 3, blue](0, 2)--(4, 2);
\draw[fill=white](4, 2)circle(0.10);
\draw[fill=white](3, 3)circle(0.10);
\draw[->](1.5, 1)--(3-0.1, 3-0.1);
\draw[->](1.5, 1)--(4-0.1, 2-0.1);
\draw(3, 0-0.2)node[below]{$k$};
\draw(4, 0-0.2)node[below]{$k+1$};
\draw(0-0.2, 2)node[left]{$\ell$};
\draw(0-0.2, 3)node[left]{$\ell+1$};
\end{tikzpicture}
\subcaption{
By properties of thin Busemann functions on row $\ell+1$ (Section~\ref{S:iBusFn}), the restriction of $\wt{\omega}^3$ to the grid $[k]\times[\ell+1]$ (thick red sides) is equal in distribution to $\eta$.
We thus have $(\ul{\I}_{\bfu, (k, \ell+1)}[\wt{\omega}^3], \ul{\I}_{\bfu, (k+1, \ell)}[\wc{\omega}])_\bfu \deq (\ul{\I}_{\bfu, (k, \ell+1)}[\eta], \ul{\I}_{\bfu, (k+1, \ell)}[\eta])_{\bfu}$ as in \eqref{E:151}. 
Now use (b) and (c) to complete the proof.
}
\end{subfigure}
\caption{\small Illustrates the rest of the proof after \eqref{E:140}.
The goal is to send $\epsilon\to0$.
A black dot indicates an $\Exp\{\epsilon\}$-weight (tending to $\infty$), while a white dot indicates a zero weight.}
\label{F:TBusDisId2}
\end{figure}

The next step involves letting $\epsilon \to 0$ in \eqref{E:140}. 
See Figure~\ref{F:TBusDisId2} for an illustration. To compute the limit of the weights on row $\ell$ as $\epsilon \to 0$, we note that, through the  increment recursions noted in \eqref{E:IncRec}, the weight $\wc{\eta}^{1, \epsilon}_{(i, \ell)}$ for $i \in [k]$ (which is equal to the thin Busemann function $\sI_{(i, \ell)}^{k, \uparrow}[\wc{\omega}^{\epsilon}]$)  can be obtained via a fixed continuous function of the thin Busemann functions on row $\ell + 1$, $(\sI_{(i, \ell+1)}^{k, \uparrow}[\wc{\omega}^{\epsilon}])_{i \in [k-1]}$ as well as the weights on row $\ell$, $(\wc{\omega}^{\epsilon}_{(i, \ell)})_{i \in [k]}$ (see the discussion around \eqref{I1}--\eqref{E:65} for the precise proof). But $\sI_{(i, \ell+1)}^{k, \uparrow}[\wc{\omega}^{\epsilon}] = \sI_{(i, \ell+1)}^{k, \uparrow}[\omega^1]$ since this Busemann function only sees the weights on rows $\ell + 1$ and above. Hence, $\wc{\eta}^{1, \epsilon}_{(i, \ell)}$ can be written as a fixed continuous function of  $(\wc{\omega}^{\epsilon}_{(i, \ell)})_{i \in [k]}$ and a quantity that does not depend on $\epsilon$.

Since these weights $\wc{\omega}^{\epsilon}_{(i, \ell)}$ have the marginal distributions given in \eqref{E:130}, it follows that, in a suitable coupling,
\begin{align}
\label{E:142}
\wc{\omega}^{\epsilon}_{(i, \ell)} \stackrel{\rm{a.s.}}{\to} \wc{\omega}_{(i, \ell)} \quad \text{ as } \epsilon \to 0 \quad \text{ for } i \in [k], \quad\text{and }\quad\wc{\omega}^{\epsilon}_{(k+1, \ell)} \to \infty,
\end{align}
where the limit weights $\wc{\omega}$ on $[k] \times \{\ell\}$ are independent, with the marginal distributions 
\begin{align}
\label{E:143}
\wc{\omega}_{(i, \ell)} \sim \Exp\{\zeta(r_2)\} \quad \text{ for } i \in [k-1], \quad\text{and}\quad
\wc{\omega}_{(k, \ell)} \sim \Exp\{\zeta(r_2)-\zeta(r_1)\}.  
\end{align}
We extend $\wc{\omega}$ to an independent collection on $[k+1] \times \bbZ_{>0}$ by setting $\wc{\omega}_{(k+1, \ell)} = 0$ and $\wc{\omega} = \omega^1$ on all rows except row $\ell$. 
Then, by \eqref{E:139} and our discussion below \eqref{E:140},
\begin{align}
\label{E:144}
\wc{\eta}^{1}_{(i, \ell)}  \coloneqq \lim_{\epsilon \to 0} \wc{\eta}^{1, \epsilon}_{(i, \ell)} \stackrel{\rm{a.s.}}{=} \one_{\{i < k\}} \cdot \sI_{(i, \ell)}^{k, \uparrow}[\wc{\omega}] \quad \text{ for } i \in [k].
\end{align}
To identify the limits of $\wc{\eta}^{2, \epsilon}$ on row $\ell$, one needs to proceed differently because the weight $\wc{\omega}_{(k+1, \ell)}^{\epsilon} \sim \Exp\{\epsilon\}$ becomes infinite as $\epsilon \to 0$. In this case, it becomes advantageous to use the weight $\wc \omega_{(k+1,\ell)}^\epsilon$ for sufficiently small $\epsilon$. This implies that $\sI_{(i, \ell)}^{k+1, \uparrow}[\wc{\omega}^{\epsilon}] \stackrel{\rm{a.s.}}{\to} \wc{\omega}_{(i, \ell)}$ for $i \in [k]$ as $\epsilon \to 0$ (made precise in \eqref{eq:rec_lim}--\eqref{E:68}). Hence, from \eqref{E:139},
\begin{align}
\label{E:145}
\lim_{\epsilon \to 0} \wc{\eta}^{2, \epsilon}_{(i, \ell)} \stackrel{\rm{a.s.}}{=} \one_{\{i < k+1\}} \cdot \wc{\omega}_{(i, \ell)}  = \wc{\omega}_{(i, \ell)} \quad \text{ for } i \in [k+1].
\end{align}
One then concludes from \eqref{E:144} and \eqref{E:145} that, almost surely, as $\epsilon \to 0$, 
\begin{align}
\label{E:146}    
\begin{split}
&\wc{\eta}^{1, \epsilon}_{(i, j)} \to \one_{\{j < \ell\}} \cdot \wc{\omega}_{(i, j)} + \one_{\{j = \ell\}} \cdot \one_{\{i < k\}} \cdot \sI_{(i, \ell)}^{k, \uparrow}[\wc{\omega}] \quad \text{ for } (i, j) \in [k] \times [\ell], \text{ and} \\
&\wc{\eta}^{2, \epsilon}_{(i, j)} \to \wc{\omega}_{(i, j)} \quad \text{ for } (i, j) \in [k+1] \times [\ell].
\end{split}
\end{align}
 The second limit in \eqref{E:146} implies that 
\begin{align}
\label{E:147}
\ul{\I}_{\bfu, (k+1, \ell)}[\wc{\eta}^{2, \epsilon}] \stackrel{\rm{a.s.}}{\to} \ul{\I}_{\bfu, (k+1, \ell)}[\wc{\omega}] \quad \text{ as } \epsilon \to 0 \text{ for } \bfu \in \rightset_{k, \ell}.   
\end{align}
One can also recognize from \eqref{E:127} that the first limit in \eqref{E:146} is the induced weights obtained by applying the map $\wt{(\cdot)}^{k, \ell}$ to $\wc{\omega}$. Therefore, using the identity \eqref{E:126} twice, one obtains that 
\begin{align}
\label{E:148}    
\ul{\I}_{\bfu, (k, \ell)}[\wc{\eta}^{1, \epsilon}] \stackrel{\rm{a.s.}}{\to} \sI_{\bfu}^{k, \uparrow}[\wc{\omega}] = \ul{\I}_{\bfu, (k, \ell+1)}[\wt{\omega}^{3}],
\end{align}
where $\wt{\omega}^3$ is the image of $\wc{\omega}$ under the map $\wt{(\cdot)}^{k, \ell+1}$. Now, sending $\epsilon \to 0$ at \eqref{E:140} and using \eqref{E:147} and \eqref{E:148} yields the identity 
\begin{align}
\label{E:149}
\begin{split}
&\bfP\{\ul{\I}_{\bfu, (k, \ell)}[\wt{\omega}^1] > x_{\bfu, 1} \text{ and } \ul{\I}_{\bfu, (k+1, \ell)}[\wt{\omega}^2] > x_{\bfu, 2}\text{ for } \bfu \in \rightset_{k, \ell}\} \\
&= \bfP\{\ul{\I}_{\bfu, (k, \ell+1)}[\wt{\omega}^{3}] > x_{\bfu, 1} \text{ and } \ul{\I}_{\bfu, (k+1, \ell)}[\wc{\omega}] > x_{\bfu, 2}\text{ for } \bfu \in \rightset_{k, \ell}\}. 
\end{split}
\end{align}

Since $d = 2$, the $\eta$-weights are defined on the grid $[k+1] \times [\ell+1]$. Using the distribution of the $\eta$-weights given in \eqref{E:105} and comparing to \eqref{E:143}, one can see that $\eta$ matches in distribution with $\wc{\omega}$ on $[k+1] \times [\ell]$. Also, the induced weights $\wt{\omega}^3$ are independent, agree with $\wc{\omega}$ on $[k] \times [\ell]$, and, by Proposition \ref{P:ThinBuse}, have the following marginal distributions on row $\ell+1$: 
\begin{align}
\label{E:150}
\begin{split}
\wt{\omega}^3_{(i, \ell+1)} &= \sI_{(i, \ell+1)}^{k, \uparrow}[\wc{\omega}] = \sI_{(i, \ell+1)}^{k, \uparrow}[\omega] \sim \Exp\{\zeta(r_1)\} \text{ for } i \in [k-1], \text{ and }\\ 
\wt{\omega}^3_{(k, \ell+1)} &= 0. 
\end{split}
\end{align}
The second equality in \eqref{E:150} holds because $\wc{\omega}$ and $\omega$ are the same strictly above row $\ell$. The marginal distributions and independence of the weights $\wt{\omega}^3$ on row $\ell+1$ invoke the properties of thin Busemann functions, which are stated in Propositions \ref{P:ThinBuse} and \ref{P:TBuseInd}\footnote{Proposition \ref{P:TBuseInd}, which states an independence property of thin Busemann functions, is employed here to shorten the exposition for two directions by one iteration. Our iterative argument in Section \ref{S:PfBuse} for an arbitrary number of directions does not appeal to this proposition.} ahead. It follows that the weights $\eta$ and $\wt{\omega}^3$ are identical in distribution on $[k] \times [\ell+1]$. Combining the preceding observations leads to the identity 
\begin{align}
\label{E:151}
\begin{split}
&\bfP\{\ul{\I}_{\bfu, (k, \ell+1)}[\wt{\omega}^{3}] > x_{\bfu, 1} \text{ and } \ul{\I}_{\bfu, (k+1, \ell)}[\wc{\omega}] > x_{\bfu, 2}\text{ for } \bfu \in \rightset_{k, \ell}\} \\ 
&= \bfP\{\ul{\I}_{\bfu, (k, \ell+1)}[\eta] > x_{\bfu, 1} \text{ and } \ul{\I}_{\bfu, (k+1, \ell)}[\eta] > x_{\bfu, 2}\text{ for } \bfu \in \rightset_{k, \ell}\}. 
\end{split}
\end{align}
Now, revisiting \eqref{E:125} and following the chain of identities \eqref{E:128}, \eqref{E:149}, and \eqref{E:151} yields the desired lower bound 
\begin{align}
\label{E:152}
\begin{split}
&\bfP\{\sI_{\bfu}^{r_1}[\omega] > x_{\bfu, 1} \text{ and } \sI_{\bfu}^{r_2}[\omega] > x_{\bfu, 2} \text{ for } \bfu \in \rightset_{k, \ell} \} \\ 
&\ge \bfP\{\ul{\I}_{\bfu, (k, \ell+1)}[\eta] > x_{\bfu, 1} \text{ and } \ul{\I}_{\bfu, (k+1, \ell)}[\eta] > x_{\bfu, 2}\text{ for } \bfu \in \rightset_{k, \ell}\}. 
\end{split}
\end{align}
As previously mentioned, the opposite inequality can be obtained through similar steps. Hence, the claim \eqref{E:114} holds.  

\section{Invariance of inhomogeneous exponential LPP}
\label{Ss:LppInv}

As indicated in the introduction and Section \ref{Ss:BMdiscuss}, our proof of Theorem \ref{T:BusMar} naturally proceeds through a well-known integrable generalization of the exponential LPP model \cite{Baik_BenA_Pech_05, Boro_Pech_08, Joha_00b, Joha_08}, which can be defined on the lattice $\bbZ^2$ as follows. Given inhomogeneity parameters $\bfa = (a_i)_{i \in \bbZ} \in \bbR^{\bbZ}$ and $\bfb = (b_j)_{j \in \bbZ} \in \bbR^{\bbZ}$
such that 
\begin{align}
\label{E:Minab}
a_i + b_j > 0 \quad \text{ for } i, j \in \bbZ, 
\end{align} 
consider independent weights $\omega^{\bfa, \bfb} = \{\omega^{\bfa, \bfb}_{(i, j)}: i, j \in \bbZ\}$
with the marginals 
\begin{align}
\label{E:iw}
\omega^{\bfa, \bfb}_{(i, j)} \sim \Exp\{a_i+b_j\} \quad \text{ for } i, j \in \bbZ. 
\end{align}

Our argument relies extensively on a remarkable joint invariance property of the inhomogeneous LPP process $\Lp[\omega^{\bfa, \bfb}]$ under certain permutations of columns and rows. To formulate the property precisely, let $\sigma$ and $\tau$ be finite permutations of $\bbZ$, meaning that $\sigma(i) = \tau(i) = i$ for all but finitely many $i \in \bbZ$.  Let $\sU_{\sigma, \tau}$ denote the set of all $(\bfx, \bfy) \in \bbZ^4$ with $\bfx = (x_1, x_2) \le (y_1, y_2) = \bfy$ subject to the following conditions.  
\begin{align}
\label{E:U}
\begin{split}
&\sigma(\bbZ_{<x_1}) = \bbZ_{<x_1} \quad \text{ and } \quad \sigma(\bbZ_{>y_1}) = \bbZ_{>y_1}, \\
&\tau(\bbZ_{<x_2}) = \bbZ_{<x_2} \quad \text{ and } \quad \tau(\bbZ_{>y_2}) = \bbZ_{>y_2}. 
\end{split}
\end{align}
In other words, $\sigma$ restricts to permutations of both $\bbZ_{<x_1}$ and $\bbZ_{>y_1}$, and a similar remark applies to $\tau$.  
See Figure \ref{fig:U-paths} below for an illustration of the set $\sU_{\sigma, \tau}$. A basic yet important special case is when $\sigma$ is a transposition and $\tau$ is the identity permutation, or vice versa. Then the set $\sU_{\sigma, \tau}$ can be described more simply as in Remark \ref{R:1Swap} below. 

The following invariance theorem shows that the joint distribution of the last-passage times between pairs of vertices in $\sU_{\sigma, \tau}$ is preserved after permuting the columns by $\sigma$ and rows by $\tau$. 

\begin{thm}
\label{T:LppInv}
Let $\bfa = (a_i)_{i \in \bbZ} \in \bbR^\bbZ$ and $\bfb = (b_j)_{j \in \bbZ} \in \bbR^\bbZ$ be subject to \eqref{E:Minab}, and $\sigma$ and $\tau$ be finite permutations of $\bbZ$. Then 
\begin{align*}
\{\Lp_{\bfx, \bfy}[\omega^{\bfa, \bfb}]: (\bfx, \bfy) \in \sU_{\sigma, \tau}\} \deq \{\Lp_{\bfx, \bfy}[\omega^{\bfa_\sigma, \bfb_\tau}]: (\bfx, \bfy) \in \sU_{\sigma, \tau}\},
\end{align*}
where $\bfa_\sigma = (a_{\sigma(i)})_{i \in \bbZ}$ and $\bfb_\tau = (b_{\tau(j)})_{j \in \bbZ}$.
\end{thm}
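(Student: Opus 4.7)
The plan is to build an explicit coupling via the Burke property after reducing to a single adjacent transposition. First, any finite permutation of $\bbZ$ decomposes as a product of transpositions $(k,k+1)$, and the locality conditions in \eqref{E:U} allow one to choose a decomposition of $\sigma$ (respectively $\tau$) in which every adjacent transposition acts inside $[x_1, y_1]$ (respectively $[x_2, y_2]$) for every $(\bfx, \bfy) \in \sU_{\sigma, \tau}$ of interest. Applying these swaps one at a time, and observing that the intermediate sets $\sU$ only shrink monotonically as we compose, reduces the problem to the case where $\sigma = (i, i+1)$ is a single adjacent transposition and $\tau$ is the identity (the case of a single row swap is symmetric).

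Fix $\sigma = (i, i+1)$. The goal is to construct a measurable map $\Phi \colon \omega \mapsto \tilde{\omega}$ that (a) modifies only the two columns $i$ and $i+1$, (b) pushes the law of $\omega^{\bfa, \bfb}$ forward to the law of $\omega^{\bfa_\sigma, \bfb}$, and (c) satisfies $\Lp_{\bfx, \bfy}[\tilde \omega] = \Lp_{\bfx, \bfy}[\omega]$ for every $(\bfx, \bfy) \in \sU_{\sigma, \mathrm{id}}$. The crucial geometric observation is that every up-right path from such an $\bfx$ to such a $\bfy$ crosses the vertical strip $\{i, i+1\} \times \bbZ$ by entering at some $(i, s)$ and exiting at some $(i+1, t)$ with $x_2 \le s \le t \le y_2$. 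Consequently, invariance of $\Lp_{\bfx, \bfy}$ for all admissible endpoint pairs reduces to the joint invariance of the two-column transit times
\begin{equation*}
L^{\mathrm{str}}_{s,t}[\omega] \;=\; \max_{s \le r \le t}\Bigl[\,\sum_{j=s}^{r} \omega_{(i, j)} + \sum_{j=r}^{t} \omega_{(i+1, j)}\Bigr] \qquad \text{for all } s \le t.
\end{equation*}

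The map $\Phi$ on the two-column strip is built row by row using a Burke-type exchange. Introduce an auxiliary "queue-length" process $\{Q_j\}$ obeying a Lindley-type recursion driven by the pairs $(\omega_{(i, j)}, \omega_{(i+1, j)})$, and initialized at a far-below row via the stationary/Loynes version whose existence is guaranteed by the positivity of the rates in \eqref{E:Minab}. The transformed weights on columns $i$ and $i+1$ are then defined by explicit formulas in which $\tilde \omega_{(i+1, j)}$ plays the role of the "departure" at row $j$ and $\tilde \omega_{(i, j)}$ plays the role of the residual "service", so that the pair $(\tilde\omega_{(i,j)}, \tilde\omega_{(i+1,j)})$ is obtained from $(\omega_{(i,j)}, \omega_{(i+1,j)})$ together with the telescoping corrections $Q_{j-1}, Q_j$. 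The (inhomogeneous) Burke property combined with the memoryless property of the exponential distribution yields the distributional claim (b), while claim (c) is an algebraic identity obtained by matching the argmaxes in $L^{\mathrm{str}}_{s,t}[\omega]$ and $L^{\mathrm{str}}_{s,t}[\tilde \omega]$: the queue corrections cancel pairwise along any maximizer, so both expressions take the same value.

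The main obstacle will be that a single coupling $\Phi$ is required to preserve $L^{\mathrm{str}}_{s,t}$ \emph{simultaneously} for all $s \le t$ -- a rigidity that is much stronger than preserving any one transit time and that rules out endpoint-specific constructions. This is what forces the row-by-row, $(\bfx,\bfy)$-agnostic recursion above, and verifying it requires a careful induction on $t-s$ together with a monotonicity argument for the optimizing switch-row $r^*$. Once the single adjacent column swap is established with this explicit coupling, the row case follows by the analogous construction across two adjacent rows, and the full theorem follows by composing these elementary swaps.
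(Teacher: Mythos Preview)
Your plan is essentially the paper's: reduce to a single adjacent transposition, build an explicit Burke/queueing coupling on the two affected lines (the paper does rows, you do columns---symmetric), prove the distributional swap via an inhomogeneous Burke theorem, and prove deterministic preservation of all two-line transit times simultaneously. Two points need correction. First, the claim that ``queue corrections cancel pairwise along any maximizer'' is false: writing $\tilde\omega = \omega \pm U$ and computing the weight of the path switching at $r$ gives $K^r[\tilde\omega]-K^r[\omega]=\sum_{r\le t\le j}U_t-\sum_{i\le t\le r}U_t$, which is not zero; the paper instead inducts on the far endpoint (with the near endpoint fixed, not on $t-s$) and splits into cases according to the sign of $U$ at the new endpoint and whether one has yet passed the first index where $U>0$. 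Second, positivity of the rates in \eqref{E:Minab} does \emph{not} by itself guarantee that the Loynes supremum defining the stationary queue is finite; the paper first assumes the parameters orthogonal to the swap are eventually constant (so the strong law gives stability), and then removes that assumption at the end by restricting to an arbitrary finite box.
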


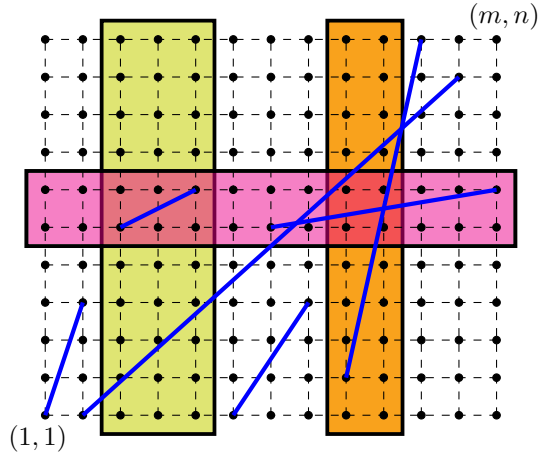
\begin{figure}[h]
\centering
\begin{tikzpicture}[scale = 1]
\draw[color=black, fill=GreenYellow,line width = 0.5mm](1-0.25, -0.25)rectangle(2+0.25, 5.25);
\draw[color=black, fill=YellowOrange,line width = 0.5mm](4-0.25, -0.25)rectangle(4.5+0.25, 5.25);
\draw[color=black, fill=Magenta, opacity = 0.5,line width = 0.5mm](-0.25, 2.5-0.25)rectangle(6.25, 3+0.25);
\draw[color=black, line width = 0.5mm](-0.25, 2.5-0.25)rectangle(6.25, 3+0.25);
\draw[step=0.5, thin, dashed](0, 0)grid(6, 5);
\foreach \x in {0, 0.5, ..., 6} 
\foreach \y in {0, 0.5, ..., 5}
\filldraw[color=black](\x, \y)circle(0.05);
\draw[-, color=blue, ultra thick](0, 0)--(0.5, 1.5);
\draw[-, color=blue, ultra thick](0.5, 0)--(5.5, 4.5);
\draw[-, color=blue, ultra thick](4, 0.5)--(5, 5);
\draw[-, color=blue, ultra thick](1, 2.5)--(2, 3);
\draw[-, color=blue, ultra thick](3, 2.5)--(6, 3);
\draw[-, color=blue, ultra thick](2.5, 0)--(3.5, 1.5);
\draw(-0.1+0, 0)node[below]{\small$(1, 1)$};
\draw(6+0.1, 5)node[above]{\small$(m, n)$};
\end{tikzpicture}
\caption{\small Illustrates an application of Theorem \ref{T:LppInv} on the finite grid $[m] \times [n]$ with $m = 13$ and $n = 11$. 
By the action of $\sigma$, the parameters of columns $9$ and $10$ (orange) are interchanged, and the parameters of columns $3$, $4$, and $5$ (green) are rearranged in some way.
By the action of $\tau$, the parameters of rows $6$ and $7$ (magenta) are interchanged.
Each solid blue line segment represents a last-passage time between its endpoints. 
Since each endpoint pair satisfies condition \eqref{E:U}, the associated last-passage times have the same joint distribution before and after the actions of $\sigma$ and $\tau$.
}
\label{fig:U-paths}
\end{figure}

In the subsequent remarks, we consider some implications and special cases of Theorem \ref{T:LppInv}. 
\begin{rem}[Recovery of some known invariances]
\label{R:SchInv}    
Let $m, n \in \bbZ_{>0}$, and assume that $\sigma(i) = i$ for $i \in \bbZ \smallsetminus [m]$, and $\tau(j) = j$ for $j \in \bbZ \smallsetminus [n]$. Then $\sU_{\sigma, \tau}$ contains the point $((1, 1), (m, n))$, which satisfies the condition \eqref{E:U}. Therefore,  Theorem \ref{T:LppInv} implies that
\begin{align}
\label{E:LppInv1}
\Lp_{(1, 1), (m, n)}[\omega^{\bfa, \bfb}] \deq \Lp_{(1, 1), (m, n)}[\omega^{\bfa_\sigma, \bfb_\tau}].
\end{align}
The preceding identity is a well-known invariance property 
and follows, for example, from the explicit formula \cite[Equation (12)]{Boro_Pech_08} for the CDF of last-passage times. 

Assume further that $\tau = \Id$, the identity permutation on $\bbZ$. As can be seen from \eqref{E:U}, one now has $((1, 1), (m, j)) \in \sU_{\sigma, \tau}$ for each $j \in [n]$.  Hence, by Theorem \ref{T:LppInv}, 
\begin{align}
\label{E:LppInv2}
\{\Lp_{(1, 1), (m, j)}[\omega^{\bfa, \bfb}]: j \in [n]\} \deq 
\{\Lp_{(1, 1), (m, j)}[\omega^{\bfa_\sigma, \bfb}]: j \in [n]\}.  
\end{align}
This generalization of \eqref{E:LppInv1} can also be seen, for example, by expressing the joint CDFs for each side of \eqref{E:LppInv2} in terms of the explicit correlation kernel in \cite[Theorem 3]{Boro_Pech_08}. See also \cite[Theorem 2.3]{Dauv_22} for a multi-path extension of \eqref{E:LppInv2}. 
\qedex
\end{rem}
\begin{rem}
[Interchanging the parameters of two columns or two rows]
\label{R:1Swap}
In the present work, Theorem \ref{T:LppInv} will be invoked only with the sets of the form $\sU_{\sigma, \Id}$ and $\sU_{\Id, \sigma}$ where $\sigma$ is a transposition. This means that $\sigma(k) = \ell$ and $\sigma(\ell) = k$ for some $k, \ell \in \bbZ$ with $k < \ell$, and $\sigma(i) = i$ for $i \in \bbZ \smallsetminus \{k, \ell\}$. Then \eqref{E:U} shows that the set $\sU_{\sigma, \Id}$ consists of all pairs $(\bfx, \bfy) \in \bbZ^4$ with $\bfx = (x_1, x_2) \le (y_1, y_2) = \bfy$ such that    
\begin{align}
\label{E:U1Swp}
x_1 > \ell, \quad \text{ or } \quad y_1 < k, \quad \text{ or } \quad x_1 \le k \text{ and } y_1 \ge \ell. 
\end{align}
Similarly, the set $\sU_{\Id, \sigma}$ is characterized with the condition  
\begin{align}
\label{E:U2Swp}
x_2 > \ell, \quad \text{ or } \quad y_2 < k, \quad \text{ or } \quad x_2 \le k \text{ and } y_2 \ge \ell    
\end{align}
in place of \eqref{E:U1Swp}. 
\qedex
\end{rem}

\begin{rem}
[Connection to an invariance result of Dauvergne]
\label{R:DauvInv}
Consider again a transposition $\sigma$. Then, the set $\sU_{\sigma, \Id}$ is the disjoint union of three subsets $U_1$, $U_2$, and $U_3$ corresponding, respectively, to the three conditions in \eqref{E:U1Swp}. Theorem \ref{T:LppInv} implies that   
\begin{align}
\label{E:IdUi}
\{\Lp_{\bfx, \bfy}[\omega^{\bfa, \bfb}]: (\bfx, \bfy) \in U_i\} \deq \{\Lp_{\bfx, \bfy}[\omega^{\bfa_\sigma, \bfb}]: (\bfx, \bfy) \in U_i\}  
\end{align}
for each $i \in \{1, 2, 3\}$. Note that \eqref{E:IdUi} is trivial for $i \in \{1, 2\}$ because, in these cases, the action of $\sigma$ is not visible to the last-passage times on $U_i$. In the nontrivial case where $i = 3$, a somewhat weaker version of the identity \eqref{E:IdUi} can also be derived from \cite{Dauv_22} as follows. 

Fix $j \in \bbZ$ and assume that $\sigma$ interchanges $k, \ell \in \bbZ$ with $k < \ell$. Consider the set 
\begin{align}
\label{E:DauvU}
U = \bbZ_{\le k} \times  \bbZ_{\le j} \times \bbZ_{\ge \ell} \times \bbZ_{\ge j+1}, 
\end{align}
which is properly contained in $U_3$. For any $p^- \in \bbZ_{\le k}$ and $p^+ \in \bbZ_{\ge \ell}$, 
\begin{align}
\label{E:DauvCond}
\begin{split}
(a_{\sigma(p^-)}, \dotsc, a_{\sigma(p^+)}) \text{ is a permutation of } (a_{p^-}, \dotsc, a_{p^+}), 
\end{split}    
\end{align}
which is immediate from the definitions of $\sigma$ and $U_3$. It follows from \eqref{E:DauvCond} that the first part of the rearrangement condition stated in \cite[p.\ 25]{Dauv_22} holds. (In the notation of \cite{Dauv_22}, we consider a single set $U_i$ called $U$ here and choose the corresponding shift variable as $c_{i, 1} = 0$). The second part of the same condition trivially holds since the rows are not being permuted. We can then conclude via \cite[Theorem 1.5]{Dauv_22} that 
\[
\{\Lp_{\bfx, \bfy}[\omega^{\bfa, \bfb}]: (\bfx, \bfy) \in U\} \deq \{\Lp_{\bfx, \bfy}[\omega^{\bfa_\sigma, \bfb}]: (\bfx, \bfy) \in U\}.   
\]
It is plausible that one can use the methods of \cite{Dauv_22} to derive the stronger versions of \eqref{E:IdUi} with $U_3$ or even $\sU^{\sigma, \Id} = U_1 \cup U_2 \cup U_3$ in place of $U$. However, it is presently unclear to us how to do so. \qedex
\end{rem}

\section{Proof of invariance}
\label{S:PfInv}

We turn to the proof of Theorem \ref{T:LppInv}. In the present section, let $\omega = \omega^{\bfa, \bfb}$ denote the independent exponential weights with the marginals as in \eqref{E:iw}. 
For convenience, assume that 
\begin{equation}\label{constant-a}
a_i=a>0 \text{ for }|i|\text{ sufficiently large}.
\end{equation}
This suffices for the proof of Theorem \ref{T:LppInv} because the joint distribution of passage times is determined by its finite-dimensional distributions.  

To begin, we consider just the rows $j=1,2$. Assume $b_2>b_1$ 
(so that the weights on row $1$ tend to be larger than those on row $2$). For $t\in\ZZ$, let
\begin{equation}\label{cdef}
c_t=\omega_{(t+1,1)}-\omega_{(t,2)}
\end{equation}
(the difference between two diagonally adjacent weights). 
Now, define 
\begin{equation}\label{Udef}
U_t=U_t[\omega]=\left(
\inf_{s<t}
\sum_{i=s}^{t-1}
c_i
\right)_{\!+},
\end{equation}
where $x_+ = \max\{x,0\}$.
Then, let
\begin{equation}\label{wtilde-def}
\begin{split}
\tw_{(t,1)}&=\omega_{(t,1)}-U_t, 
\\
\tw_{(t,2)}&=\omega_{(t,2)}+U_t. 
\end{split}
\end{equation}

The following proposition is a deterministic statement that holds for arbitrary weights, as long as the infimum in \eqref{Udef} is achieved at some finite $s < t$ for all $t \in \Z$, and for each $T \in \Z$, there exists $t> T$ such that $U_t > 0$. This holds under assumption \eqref{constant-a}; see Lemma~\ref{lemma:inf-attained} and the paragraph above it. 
\begin{prop}\label{prop:passage times-preserved}
For all $i\leq j$,
\[
\Lp_{(i,1),(j,2)}[\omega]=\Lp_{(i,1),(j,2)}[\tw].
\]
\end{prop}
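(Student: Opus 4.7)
The plan is to parametrize every up-right path from $(i,1)$ to $(j,2)$ by the column at which it switches from row $1$ to row $2$, and to reduce the claimed identity to a short statement about the running maximum of the walk with increments $c_t$. For $i\le t\le j$, let
\[
L_t := \sum_{k=i}^t \omega_{(k,1)} + \sum_{k=t}^j \omega_{(k,2)},
\]
so that $\Lp_{(i,1),(j,2)}[\omega]=\max_{i\le t\le j}L_t$. A telescoping computation using \eqref{cdef} gives $L_t-L_{t-1}=c_{t-1}$, hence $L_t=L_i+S_t$ with $S_t:=\sum_{k=i}^{t-1}c_k$ (and $S_i:=0$). The same reasoning applied to $\tw$, together with the identity $\tw_{(t+1,1)}-\tw_{(t,2)}=c_t-U_t-U_{t+1}$ that follows from \eqref{wtilde-def}, yields $\tilde L_t=L_t+F(t)$, where
\[
F(t):=\sum_{k=t+1}^j U_k-\sum_{k=i}^{t-1}U_k
\]
and $\tilde L_t$ is defined analogously to $L_t$ but for $\tw$.

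Next I would identify $U_t$ with an increment of a running maximum of $S$. Extending $S$ to a two-sided sequence on $\Z$ via $S_{t+1}-S_t=c_t$, the definition \eqref{Udef} becomes $U_t=(S_t-\sup_{s<t}S_s)_+$. Setting $M_t:=\sup_{s\le t}S_s$ (finite by the hypothesis that the infimum in \eqref{Udef} is achieved), the relation $M_t=\max(M_{t-1},S_t)$ gives
\[
U_t=(S_t-M_{t-1})_+ = M_t-M_{t-1}.
\]
Telescoping then produces $\sum_{k=a+1}^b U_k=M_b-M_a$, so
\[
F(t)=(M_j-M_t)-(M_{t-1}-M_{i-1})=M_{i-1}+M_j-M_t-M_{t-1}.
\]

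The proposition is thus equivalent to $\max_{i\le t\le j}[S_t+F(t)]=\max_{i\le t\le j}S_t$, which I plan to verify in two cases. If $M_j=M_{i-1}$, then $U_t=0$ for every $t\in[i,j]$, so $\tw$ coincides with $\omega$ on $[i,j]\times\{1,2\}$ and the equality is immediate. Otherwise $M_j>M_{i-1}$; let $t^*\in[i,j]$ be the smallest index with $U_{t^*}>0$, so that $M_{t^*-1}=M_{i-1}$ and $S_{t^*}=M_{t^*}$. A direct substitution gives $S_{t^*}+F(t^*)=M_j$, while for arbitrary $t\in[i,j]$ the bounds $S_t\le M_t$ and $M_{t-1}\ge M_{i-1}$ imply
\[
S_t+F(t)\le M_{i-1}+M_j-M_{t-1}\le M_j.
\]
Since $M_j>M_{i-1}$ forces the running supremum of $S$ up to time $j$ to be attained inside $[i,j]$, both sides of the reduced identity equal $M_j$.

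The main conceptual step is recognizing $U_t=M_t-M_{t-1}$, which converts the queueing-style definition into a statement about the running maximum of a two-sided walk; once this is in hand, the remaining case analysis is essentially automatic, so I anticipate no real obstacle.
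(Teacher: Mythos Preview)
Your argument is correct. The key step---recognizing that $U_t=M_t-M_{t-1}$ for the two-sided running maximum $M_t=\sup_{s\le t}S_s$---is valid and reduces the identity to a short maximization over a finite window, which you handle cleanly.

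This is a genuinely different route from the paper. The paper fixes $i$ and proceeds by induction on $j$, splitting into cases according to whether $j$ lies below, at, or above the first index $x_0\ge i$ with $U_{x_0}>0$, and within the last case according to whether $U_j$ vanishes; it never explicitly introduces the running maximum $M$, working instead directly with path weights $K^x_{i,j}$ and an auxiliary lemma about where the infimum in \eqref{Udef} is attained. Your approach replaces the induction entirely: once $U_t=M_t-M_{t-1}$ is in hand, the correction $F(t)$ telescopes to $M_{i-1}+M_j-M_t-M_{t-1}$, and the two-case analysis (according to whether $M_j=M_{i-1}$ or $M_j>M_{i-1}$) is immediate. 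The two arguments ultimately hinge on the same structural fact---that the index $t^*$ (your notation) or $x_0$ (the paper's) where $U$ first becomes positive is where the maxima align---but your packaging via the running maximum is more direct and avoids the inductive bookkeeping. The paper's version, on the other hand, stays closer to the queueing picture and may be easier to adapt if one wants to track which path actually achieves the maximum.
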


\begin{prop}\label{prop:rates-exchanged}
The weights $\bigl\{\tw_{(i,j)}: i\in\ZZ, j\in\{1,2\}\bigr\}$ are independent with 
$\tw_{(i,j)}\sim\Exp\{a_i+b^*_j\}$, where $b^*_1=b_2$, $b^*_2=b_1$. 
\end{prop}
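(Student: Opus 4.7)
The plan is to interpret the map $\omega \mapsto \tw$ as the input-output relation of a stationary discrete-time $M/M/1$ queue, with $\omega_{(t,1)} = X_t$ as inter-arrival times (rate $\alpha := a+b_1$) and $\omega_{(t,2)} = Y_t$ as service times (rate $\beta := a+b_2$), and to combine Burke's theorem with a single case-split change of variables. First I would rewrite \eqref{Udef} in terms of the running maximum of the walk: setting $e_t = \sum_{i \le t} c_i$, $M_t = \sup_{u \le t} e_u$, and $R_t = M_t - e_t \ge 0$, a short calculation gives the Lindley recursion $R_t = (R_{t-1} - c_t)_+$ and the identity $U_t = (c_{t-1} - R_{t-2})_+$. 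Under assumption \eqref{constant-a}, the drift $\EE c_i = (a+b_1)^{-1} - (a+b_2)^{-1}$ is positive for $|i|$ large, so the infimum in \eqref{Udef} is almost surely attained, and $R_t$ has the classical stationary distribution $(1 - \alpha/\beta)\delta_0 + (\alpha/\beta)\,\Exp\{\beta - \alpha\}$.

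Next I would carry out the case split on the sign of $c_{t-1} - R_{t-2}$, i.e.\ on whether customer $t$ arrives after (Case~1) or before (Case~2) customer $t-1$ departs. This identifies $\tw_{(t,2)} = \omega_{(t,2)} + U_t$ with the inter-departure time $D_t - D_{t-1}$, and gives $\tw_{(t,1)} = \omega_{(t,1)} - U_t$ equal to either the sojourn time $Y_{t-1} + R_{t-2}$ of customer $t-1$ (Case~1) or the inter-arrival time $X_t$ (Case~2). A direct change of variables in each case, combined with the standard stationary identity $Y_{t-1} + R_{t-2} \sim \Exp\{\beta - \alpha\}$, shows that on each branch the joint density of $(\tw_{(t,1)}, \tw_{(t,2)})$ is a piece of $\alpha\beta\, e^{-\beta u - \alpha v}$; summing the two branches cleanly collapses to $\alpha\beta\,e^{-\beta u - \alpha v}$ on all of $\R_{>0}^2$. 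This single computation simultaneously gives the two marginals $\tw_{(t,1)} \sim \Exp\{\beta\} = \Exp\{a+b_2\}$ and $\tw_{(t,2)} \sim \Exp\{\alpha\} = \Exp\{a+b_1\}$, as well as their independence.

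Iterating this one-step identity over $t \in \ZZ$ then yields the full joint independence and the claimed marginals for $\{\tw_{(i,j)} : i \in \ZZ, j \in \{1,2\}\}$. The main obstacle will be closing this iteration: one must know that the joint density above, obtained by conditioning on the past, is in fact the \emph{unconditional} one. I would establish this via the strong Burke property for the stationary $M/M/1$ queue (past departures are independent of the current queue state $R_{t-2}$) together with the memorylessness of $X_t$ and $Y_t$, so that $R_{t-2}$ enters with the correct stationary distribution and the fresh exponentials $X_t, Y_t$ are independent of the past. Finally, the simplifying hypothesis \eqref{constant-a} will be removed separately by a continuity/tightness argument, since any finite-dimensional marginal of $\tw$ depends only on $(a_i)$ restricted to a finite window (together with the tail behaviour controlling the attainment of the infimum in \eqref{Udef}).
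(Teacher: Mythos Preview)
Your queuing approach with a case-split density computation is essentially the same idea as the paper's, which frames the map as a discrete-time fluid queue (service $S_t=\omega_{(t,1)}$, arrival $A_t=\omega_{(t,2)}$, queue length $Q_t$) and carries out the same two-case change of variables. Your sojourn time $Y_{t-1}+R_{t-2}$ coincides with the paper's $Q_t$, and your identification $\tw_{(t,1)}=\min(X_t,\,Y_{t-1}+R_{t-2})$ is exactly the paper's departure $D_t=\min(S_t,Q_t)$.

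The gap is that you carry out the argument only for a single column rate $a$, setting $\alpha=a+b_1$, $\beta=a+b_2$ and appealing to the classical stationary $M/M/1$ Burke property. But the proposition is stated for arbitrary $(a_i)_{i\in\ZZ}$: assumption \eqref{constant-a} allows any values inside a finite window, so your proposed continuity removal of \eqref{constant-a} does not touch this inhomogeneity at all. The paper handles it by performing the one-step density computation at each time $t$ with its own parameters $\theta=a_t+b_2$, $\kappa=a_t+b_1$, and observing that the invariant law of $Q_t$ is $\Exp\{\theta-\kappa\}=\Exp\{b_2-b_1\}$ for \emph{every} $t$, independently of $a_t$; this shared stationary distribution is precisely what makes the time-inhomogeneous Burke argument close up when iterated over $t$. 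In your $M/M/1$ parametrization this point is obscured, because your Lindley step $R_t=(R_{t-1}+Y_t-X_{t+1})_+$ mixes the two column parameters $a_t$ and $a_{t+1}$, so $R_t$ itself has a $t$-dependent law in the inhomogeneous setting; to salvage the argument you must track $Y_{t-1}+R_{t-2}$ (equivalently $Q_t$) as the state variable rather than $R_t$.
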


Together, these propositions will tell us that the transformation in (\ref{wtilde-def}) interchanges the rates of rows $1$ and $2$, while preserving all passage times which cross both row $1$ and row $2$. 

We will prove Proposition 4.2 by demonstrating a version of Burke’s theorem for a time-inhomogeneous queue whose arrival and service data correspond to the weights $\omega$ on rows 1 and 2 respectively. In the queueing context, the quantity $U_t$ will represent unused service at time $t$. Interchangeability properties for queues date back to Weber \cite{Weber1979}, who showed that the law of the departure process from two $\cdot/M/1$ queues in series is unchanged when the rates are swapped. An alternative proof of Weber's result by Tsoucas and Walrand \cite{TsoWal1987} introduced a coupling idea involving unused service (which is generalised by \eqref{wtilde-def}). Related constructions have more recently been deployed in a wide variety of contexts in percolation and polymer models and in interacting particle systems. A positive temperature version of this invariance was previously seen in the work of Noumi and Yamada \cite{noum-yama-04}. A continuous version of the invariance of passage times in Proposition \ref{prop:passage times-preserved} was used in \cite[Lemma 4.12]{Dauv_Ortm_Vira_22} to construct the directed landscape -- see also \cite{corwin21,Dauv_Nica_Vira_22, dauvergne-virag-24} for related results. 

We prove Propositions \ref{prop:passage times-preserved} and \ref{prop:rates-exchanged} in the subsequent sections. First, we use them to prove Theorem \ref{T:LppInv}. 

\begin{proof}[Proof of Theorem \ref{T:LppInv}]
Consider two neighboring rows $r$ and $r+1$, and consider the joint distribution of all passage times with lower-left endpoint on row $r$ and upper-right endpoint on row $r+1$. Proposition \ref{prop:passage times-preserved} and Proposition \ref{prop:rates-exchanged} tell us that the joint distribution
of this collection 
\begin{equation}\label{Lp-collection}
(\Lp_{(s_1,r), (s_2, r+1)}[\omega]: s_1\leq s_2)
\end{equation}
is invariant under interchange of the parameters $b_r$ and $b_{r+1}$ associated to the two rows. In fact, these two results give an explicit coupling under which all these passage times are the same before and after the interchange. 

Note that in fact, many other passage times are also preserved by the same operation. Passage times with lower-left endpoint above row $r+1$, or with upper-right endpoint below row $r$ do not use the weights in rows $r$ and $r+1$ and are unaffected. Meanwhile, all passage times whose lower-left endpoint is on or below row $r$, and whose upper-right endpoint is on or above row $r+1$, depend on the weights in rows $r$ and $r+1$ only through the collection of passage times in (\ref{Lp-collection}). For example, let $x_1\leq y_1$ and $x_2<r, y_2>r+1$. Then, 
\begin{equation}
\begin{split}\label{eq:2-to-many}
&\Lp_{(x_1,x_2), (y_1, y_2)}[\omega]
\\
&\,\,\,\,\,\,\,\,=
\max_{x_1\leq s_1\leq s_2\leq y_1}
\bigl[\Lp_{(x_1, x_2), (s_1, r-1)}[\omega]
+\Lp_{(s_1, r), (s_2, r+1)}[\omega]
+\Lp_{(s_2, r+2), (y_1, y_2)}[\omega]\bigr].
\end{split}
\end{equation}
The cases where $x_2=r$ or $y_2=r+1$ are similar. 
The only passage times not preserved are those which have a lower-left endpoint in row $r+1$ or an upper-right endpoint in row $r$. 

Now consider finite permutations $\sigma$ and $\tau$ which permute the column parameters and row parameters respectively. The permutation $\tau$ can be represented as a product of nearest-neighbor transpositions $(r_1,r_1+1), \dots, (r_k, r_k+1)$. These transpositions can be chosen such that 
for all endpoint pairs $((x_1, x_2), (y_1, y_2))$
in $\sU_{\sigma, \tau}$, we have 
$x_2\notin \{r_1, \dots, r_k\}$ and  
$y_2\notin \{r_1+1, \dots, r_k+1\}$
(since for all such $(x_1, x_2)$ and $(y_1, y_2)$, 
the permutation $\tau$ preserves $\ZZ_{<x_2}$ and $\ZZ_{>y_2}$).
Consequently, by applying the transpositions in sequence, we can permute the row parameters according to $\tau$ while preserving all passage times 
in $\sU_{\sigma, \tau}$.

Analogously, we can permute the column parameters according to $\sigma$ 
while preserving all passage times 
in $\sU_{\sigma, \tau}$. Since the joint distribution is determined by its finite-dimensional distributions, we can consider the distribution in an arbitrarily large box and remove the assumption \eqref{constant-a}.  
\end{proof}

\begin{rem}
\label{rem:multi-point-extension}
Following exactly the same method, one can extend 
the result of Theorem 
\ref{T:LppInv} to the case of 
\textit{multi-point} last-passage times, 
as considered for example in \cite{Dauv_22}. 
For a set of lower-left endpoints 
$\{\bfx^{(1)}, \dots, \bfx^{(k)}\}$,
and a set of upper-right endpoints $\{\bfy^{(1)}, \dots, \bfy^{(k)}\}$ of the same size $k$, the multipoint last-passage
time is the maximum total weight of $k$ 
vertex-disjoint up-right paths whose set of 
lower left endpoints is $\{\bfx^{(1)}, \dots, \bfx^{(k)}\}$
and whose set of lower right endpoints is $\{\bfy^{(1)}, \dots, \bfy^{(k)}\}$.

Restricted to just two rows, any such multi-point passage time can be re-expressed as a sum of simple last-passage times, so that Proposition \ref{prop:passage times-preserved} immediately extends to that context. 
Then one can argue as at (\ref{eq:2-to-many})
that if no lower-left endpoint lies in row $r+1$, and no 
upper-right endpoint lies in row $r$, then
any multi-point passage-time depends on the weights 
in rows $r$ and $r+1$ only through the multi-point passage times with all lower-left endpoints in row $r$ and all 
upper right endpoints in row $r+1$. 

We could then consider a set $\cM_{\sigma, \tau}$ of 
allowable collections of endpoints 
$\big(\{\bfx^{(1)}, \dots, \bfx^{(k)}\},$
$\{\bfy^{(1)}, \dots, \bfy^{(k)}\}\big)$; 
similarly to (\ref{E:U}), the required
condition is that $\sigma$ preserves each $\ZZ_{<x^{(i)}_1}$ and 
each $\ZZ_{>y^{(i)}_i}$, and $\tau$ preserves each $\ZZ_{<x^{(i)}_2}$ and each $\ZZ_{>y^{(i)}_2}$.
Then the joint distribution of all multi-point last-passage
times with endpoint sets in $\cM_{\sigma,\tau}$
has the same distribution for the weights $\omega^{\bfa, \bfb}$
as for the weights $\omega^{\bfa_\sigma, \bfb_\tau}$.
\qedex
\end{rem}




\subsection{Path properties and proof of Proposition \ref{prop:passage times-preserved}}
Let $i\leq j$. For $x\in[i,j]$, write $K^x_{i,j}[\omega]$ for the $\omega$-weight
of the path which starts at $(i,1)$, passes through $(x,1)$ and $(x,2)$, and finishes at $(j,2)$. Then we get
\begin{equation}\label{Lp2b}
\Lp_{(i,1),(j,2)}[\omega]=
\max_{x\in[i,j]} K^x_{i,j}[\omega].
\end{equation}
Note that, for $x<x'$, 
\begin{equation}\label{Kdiff2}
K^{x'}_{i,j}[\omega]-K^{x}_{i,j}[\omega]
=
\sum_{s=x}^{x'-1}c_s,
\end{equation}
where $c_s$ is defined in \eqref{cdef}. See Figure \ref{fig:passage times2}.
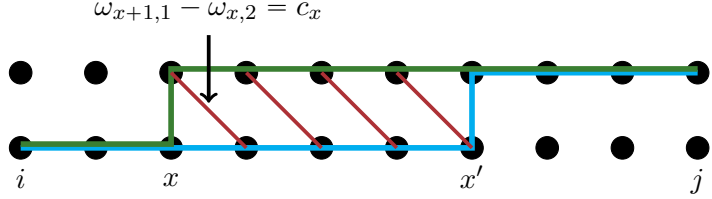
\begin{figure}[h]
\centering
\begin{tikzpicture}[scale = 1]
\foreach \x in {0, 1, ..., 9} 
\foreach \y in {0, 1}
\filldraw[color=black](\x, \y)circle(0.15);
\draw[cyan, line width=0.075cm](0,0)--(6, 0)--(6, 1)--(9, 1);
\draw[OliveGreen, line width=0.075cm](0,0.05)--(2, 0.05)--(2, 1.05)--(9, 1.05);
\draw(0, -0.15)node[below]{$i$};
\draw(2, -0.2)node[below]{$x$};
\draw(6, -0.1)node[below]{$x'$};
\draw(9, -0.15)node[below]{$j$};
\draw[Maroon, line width=0.05cm](2, 1)--(3, 0);
\draw[Maroon, line width=0.05cm](3, 1)--(4, 0);
\draw[Maroon, line width=0.05cm](4, 1)--(5, 0);
\draw[Maroon, line width=0.05cm](5, 1)--(6, 0);
\draw[black, line width=0.05cm, ->](2.5, 1.5)node[above]{$\omega_{x+1, 1}-\omega_{x, 2} = c_x$}--(2.5, 0.6);
\end{tikzpicture}
\caption{\small The green (upper) path has weight $K_{i,j}^x$ and the blue (lower) path has weight $K_{i,j}^{x'}$. The difference in their weights is the sum of the difference terms $c_s$ (indicated in red). 
}
\label{fig:passage times2}
\end{figure}

We will fix $i$ and establish the conclusion of Proposition \ref{prop:passage times-preserved} by induction on $j$. 

Let $x_0=\min\{x\geq i: U_x>0\}$. The set over which the minimum is taken is nonempty almost surely by the definition of $U_t$ \eqref{Udef} and the strong law of large numbers since the difference $\omega_{(t+1,1)} - \omega_{(t,2)}$ has positive mean (here, to apply the law of large numbers, we are using the assumption \eqref{constant-a}). The following property will be useful:
\begin{lem}\label{lemma:inf-attained}
For all $t>x_0$, the infimum in definition (\ref{Udef}), namely
\[
U_t=\left(\inf_{s<t} \sum_{r=s}^{t-1} c_r\right)_+,
\]
is attained at some $s\geq x_0$. 
\end{lem}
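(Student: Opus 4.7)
The plan is to combine two simple deterministic facts: a positivity statement for partial sums ending at $x_0$, and a telescoping decomposition that makes any candidate minimizer $s<x_0$ strictly larger than the value at $s=x_0$.

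Write $S(s,t)=\sum_{r=s}^{t-1}c_r$, so that $U_t=\bigl(\inf_{s<t}S(s,t)\bigr)_+$. The hypothesis $U_{x_0}>0$ unpacks to $\inf_{s<x_0}S(s,x_0)>0$, which in particular implies $S(s,x_0)>0$ for every $s<x_0$, with a uniform positive gap $\delta := \inf_{s<x_0}S(s,x_0) > 0$.

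For any $t>x_0$ and $s<x_0$, splitting the sum at $x_0$ gives the additive identity $S(s,t)=S(s,x_0)+S(x_0,t)$, and therefore $S(s,t)\ge S(x_0,t)+\delta>S(x_0,t)$. Taking the infimum over $s<x_0$ yields $\inf_{s<x_0}S(s,t)\ge S(x_0,t)+\delta>S(x_0,t)$. Thus no $s<x_0$ can realize $\inf_{s<t}S(s,t)$, and in particular
\[
\inf_{s<t}S(s,t)=\min_{x_0\le s\le t-1}S(s,t),
\]
a minimum over a finite set of integers that is automatically attained. This gives attainment at some $s\in[x_0,t-1]$, as claimed.

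The argument is purely deterministic and requires no tail control on the walk $\bigl(S(s,t)\bigr)_s$; the one step to notice is that the strict inequality in $U_{x_0}>0$ (rather than merely $U_{x_0}\ge 0$) is what furnishes the uniform positive gap $\delta$, and without this strictness the conclusion could fail at the endpoint. I do not foresee any real obstacle beyond keeping track of this gap.
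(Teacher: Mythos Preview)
Your proof is correct and essentially identical to the paper's: both split $S(s,t)=S(s,x_0)+S(x_0,t)$ and use that $U_{x_0}>0$ forces $S(s,x_0)>0$ uniformly for $s<x_0$, so the infimum over $s<t$ is attained on the finite range $x_0\le s\le t-1$. Your $\delta$ is precisely the paper's $U_{x_0}$.
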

\begin{proof}
Let $t>x_0$. Because $U_{x_0}>0$, 
we have that for all $s<x_0$,
\begin{align*}
\sum_{r=s}^{t-1} c_r &= \sum_{r=s}^{x_0-1} c_r + \sum_{r=x_0}^{t-1} c_r\\
&\geq \inf_{s'<x_0} \sum_{r=s'}^{x_0-1} c_r + \sum_{r=x_0}^{t-1} c_r\\
&= U_{x_0} + \sum_{r=x_0}^{t-1} c_r
> \sum_{r=x_0}^{t-1} c_r. \qedhere
\end{align*}
\end{proof}

\begin{proof}[Proof of Proposition \ref{prop:passage times-preserved}]
Fix $i \in \Z$. We will show by induction on $j$ that for all $j\geq i$,
\[
\Lp_{(i,1),(j,2)}[\omega]=\Lp_{(i,1),(j,2)}[\tw].
\]
The base case is
\[
\Lp_{(i,1),(i,2)}[\omega] = \omega_{(i,1)} + \omega_{(i,2)} = \tw_{(i,1)} + \tw_{(i,2)} = \Lp_{(i,1),(i,2)}[\tw],
\]
where the middle equality simply holds by definition \eqref{wtilde-def}. Now, assume the statement holds for some $j-1 \ge i$. We split up into cases, according to whether $j<x_0$, $j=x_0$, $j>x_0$.

\medskip\noindent\textbf{Case 1}: $j< x_0$. 

If $j< x_0$, then $U_x=0$ for all $x\in[i,j]$, and so also $\omega_{(x,1)}=\tw_{(x,1)}$ and 
$\omega_{(x,2)}=\tw_{(x,2)}$. Since all the relevant $\omega$-weights and $\tw$-weights coincide, 
we immediately have $\Lp_{(i,1),(j,2)}[\omega]=\Lp_{(i,1),(j,2)}[\tw]$.

\medskip\noindent\textbf{Case 2}: $j=x_0$.

By the definition of $x_0$, we have $U_j>0$. This implies that the optimal path from $(i,1)$ to $(j,2)$ for the $\omega$-weights passes through $(j,1)$. To see this, note that by \eqref{Kdiff2}, for any $x$ with $i\leq x<j$, 
\begin{equation}\label{Keq}
K^{j}_{i,j}[\omega]-K^{x}_{i,j}[\omega]
=
\sum_{s=x}^{j-1}c_s \geq U_j >0.
\end{equation}

Now consider the optimal $\tw$-path. Because the $\omega$-weights and $\tw$-weights strictly to the left of $x_0$ are the same (see Case 1), and $\tw_{(x_0,1)}+\tw_{(x_0,2)}=\omega_{(x_0,1)}+\omega_{(x_0,2)}$, the path that passes through $(j,1)$ has $\tw$-weight equal to its $\omega$-weight. 
This gives $\Lp_{(i,1),(j,2)}[\tw]\geq\Lp_{(i,1),(j,2)}[\omega]=K_{i,j}^j[\omega]$.

Any other path passes instead through, say, $(x,1)$ and $(x,2)$, with 
weight $K_{i,j}^x[\tw]=K_{i,j}^x[\omega]+U_j$. But as in (\ref{Keq}),
\begin{align*}
K_{i,j}^j[\omega]-\left(K_{i,j}^x[\omega]+U_j\right)
&\ge 0.
\end{align*}
Thus, any path from $(i,1)$ to $(j,2)$ that does not pass through $(j,1)$ has  $\tw$-weight no greater than $K_{i,j}^j[\tw] = K_{i,j}^j[\omega]$, and so $\Lp_{(i,1),(j,2)}[\tw]=\Lp_{(i,1),(j,2)}[\omega]$, as desired. 

\medskip\noindent\textbf{Case 3a}: $j>x_0$, $U_j=0$.

Because $U_j=0$, Lemma \ref{lemma:inf-attained} and (\ref{Kdiff2}) tell us that there
is some $s$ with $x_0\leq s<j$ such that $K_{i,j}^s[\omega]\geq K_{i,j}^j[\omega]$. So
some optimal path for the $\omega$-weights passes through $(j-1,2)$ rather than $(j,1)$. 

Furthermore, $U_j = 0$ implies that $\tw_{(j,2)}=\omega_{(j,2)}$. By the induction hypothesis, we also have that 
the passage time to $(j-1,2)$ is the same for the $\omega$ and $\tw$ weights. Thus, 
the optimal path passing through $(j-1,2)$ achieves a $\tw$-weight equal to the optimal $\omega$-weight. 

On the other hand, the path passing through $(j,1)$ has $\tw$-weight no larger
than its $\omega$-weight (because all points in row $1$ have $\tw$-weight no larger than their $\omega$-weight). This gives $\Lp_{(i,1),(j,2)}[\tw]=\Lp_{(i,1),(j,2)}[\omega]$.

\medskip\noindent\textbf{Case 3b}: $j>x_0$, $U_j>0$.

As in Case 2, the optimal $\omega$-path goes through $(j,1)$. As in Case 3a, the 
$\tw$-weight of this path is no greater (in fact, strictly smaller) than its $\omega$-weight. Hence, it remains to consider the optimal path which passes through $(j-1,2)$. Using the induction hypothesis, its 
$\tw$-weight is
\begin{align} \label{eq:Wwstar}
W\coloneqq
\tw_{(j,2)} + \Lp_{(i,1), (j-1,2)}[\tw]
=
\omega_{(j,2)} + U_j + \Lp_{(i,1), (j-1,2)}[\omega].
\end{align}
On the other hand,  
\begin{equation} \label{eq:Lwdif}
\Lp_{(i,1), (j,1)}[\omega]
-
\Lp_{(i,1), (j-1,2)}[\omega]
=
\inf_{i\leq s < j} \sum_{r=s}^{j-1} c_s = U_j,
\end{equation}
where the last step follows by Lemma \ref{lemma:inf-attained}. 
By combining Equations \eqref{eq:Wwstar} and \eqref{eq:Lwdif}, we have $W=\Lp_{(i,1), (j,1)}[\omega)]+ \omega_{(j,2)}$, so that the optimal 
$\omega$-weight and optimal $\tw$-weight coincide, as desired. 
\end{proof}

\subsection{Queueing interpretation and proof of Proposition \ref{prop:rates-exchanged}}
We will interpret the definitions in (\ref{Udef}) and (\ref{wtilde-def})
in terms of the operation of a queue in discrete time. 

Write $S_i=\omega_{(i,1)}$ and $A_i=\omega_{(i,2)}$. We consider a queue in
which, at time-step $i$, first an amount $S_i$ of service is available, 
and then an amount $A_i$ of work arrives. Define
\begin{equation}\label{Qdef}
Q_t:=\sup_{s<t}\left(\sum_{i=s}^{t-1}A_i - \sum_{i=s+1}^{t-1} S_i\right)=\sup_{s<t}\left(\sum_{i=s}^{t-1}\omega_{(i,2)} - \sum_{i=s+1}^{t-1} \omega_{(1,i)}\right).
\end{equation}
Since $b_2>b_1$, and (\ref{constant-a})
holds, the strong law of large numbers gives that this sup is finite with probability $1$.
Then, $Q_t$ has the interpretation as the queue length just before time step $t$, i.e.\ after the arrival $A_{t-1}$ and before the service $S_t$. From the definition (\ref{Qdef}) we get the following recursion:
\begin{equation}\label{Qrecursion}
Q_{t+1}=\left(Q_t-S_t\right)_+ + A_t.
\end{equation}

Finally, we define $D_t=\min(S_t, Q_t)$, with the interpretation that
$D_t$ is the amount of departure from the queue at time $t$. We can then observe that $U_t$ defined at (\ref{Udef}) satisfies $U_t=S_t-D_t=\left(S_t-Q_t\right)_+$, with the interpretation that $U_t$ is the amount of 
unused service at time $t$. From this and (\ref{wtilde-def}) we can then identify
\begin{equation} \label{eq:DAU}
\tw_{(t,1)}=D_t, \quad\text{and}\quad \tw_{(t,2)}=A_t + U_t.
\end{equation}
Since $(S_t, A_t)$ are independent for different $t$, and since 
$Q_t$ is a function of $(S_i, A_i, i<t)$, recursion (\ref{Qrecursion})
shows that the process $Q = (Q_t)_{t\in\bbZ}$ is a Markov chain. Although the chain
is inhomogeneous in time, we see in the following lemma that the dynamics at different times all share the same stationary distribution.
Part~\eqref{lemma:QBurke_c} gives a version of Burke's theorem for the inhomogeneous queue.

\begin{lem}\label{lemma:QBurke}
\leavevmode
\begin{enumerate}[\normalfont (a)]
\item \label{lemma:QBurke_a} 
$Q_t\sim \Exp\{b_2-b_1\}$ for all $t$.

\item \label{lemma:QBurke_b} For all $t$, 
\[
(Q_t, S_t, A_t) \deq (Q_{t+1}, A_t+U_t, D_t).
\]

\item \label{lemma:QBurke_c}
For each $t$, $A_t+U_t\sim\Exp\{a_t+b_1\}$
and $D_t\sim\Exp\{a_t+b_2\}$, and all these
quantities are independent (also across $t$).
\end{enumerate}
\end{lem}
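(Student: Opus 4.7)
My plan is to prove all three parts in one sweep via a single induction step that carries both a marginal identity and an independence statement. The core claim is the following: \emph{if at time $t$ the random variables $Q_t, S_t, A_t$ are mutually independent with $Q_t\sim\Exp\{\alpha\}$, $S_t\sim\Exp\{\beta\}$, $A_t\sim\Exp\{\gamma\}$, where $\alpha = b_2 - b_1$, $\beta = a_t + b_1$, $\gamma = a_t + b_2$ (so $\alpha + \beta = \gamma$), then $(Q_{t+1},\, A_t + U_t,\, D_t)$ is again mutually independent with marginals $\Exp\{\alpha\}, \Exp\{\beta\}, \Exp\{\gamma\}$.} This directly delivers (b) and (c). Moreover, since $(S_{t+1}, A_{t+1})$ is independent of the past by construction and has the correct marginals, the hypothesis of the claim propagates from $t$ to $t+1$, yielding (a) by forward induction.

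I would establish the core claim by a density computation, partitioning on the disjoint events $\{Q_t > S_t\}$ (of probability $\beta/\gamma$) and $\{Q_t < S_t\}$ (of probability $\alpha/\gamma$), which together have full measure. On $\{Q_t > S_t\}$, we have $U_t = 0$ and $D_t = S_t$, and the memoryless property gives that $V := Q_t - S_t$ and $S_t$ are conditionally independent with $V \sim \Exp\{\alpha\}$ and $S_t \sim \Exp\{\gamma\}$. Changing variables to $(Q_{t+1}, A_t + U_t, D_t) = (V + A_t,\, A_t,\, S_t)$ with $A_t \sim \Exp\{\gamma\}$ independent yields an unconditional contribution $\alpha\beta\gamma\, e^{-\alpha q - \beta y - \gamma d}$ to the joint density on $\{q > y > 0,\, d > 0\}$. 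A symmetric computation on $\{Q_t < S_t\}$ (where $V = 0$, $W := S_t - Q_t \sim \Exp\{\beta\}$, and $D_t = Q_t \sim \Exp\{\gamma\}$ conditionally) yields the same density $\alpha\beta\gamma\, e^{-\alpha q - \beta y - \gamma d}$ on $\{0 < q < y,\, d > 0\}$. These regions cover $(0,\infty)^3$ up to a null set, and the resulting joint density is precisely that of the product $\Exp\{\alpha\}\otimes\Exp\{\beta\}\otimes\Exp\{\gamma\}$.

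To initialize the induction for (a), I would invoke assumption \eqref{constant-a}: for $t \le -N-1$ with $N$ sufficiently large, the inputs $(S_i, A_i)_{i < t}$ appearing in \eqref{Qdef} are i.i.d.\ with rates $a + b_1$ and $a + b_2$. The Lindley recursion \eqref{Qrecursion} is then a time-homogeneous Markov chain with negative drift $\frac{1}{a+b_2} - \frac{1}{a+b_1} < 0$. The core claim, applied in this homogeneous setting, verifies $\Exp\{b_2 - b_1\}$ as a fixed point of the transition kernel, and standard positive recurrence identifies it as the unique stationary law. Since \eqref{Qdef} realises $Q_t$ as the Loynes supremum of the homogeneous walk, $Q_t \sim \Exp\{b_2 - b_1\}$, providing the base case from which forward induction propagates (a) to all $t$.

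The main obstacle is the joint-density bookkeeping in the core claim: one must align the two regional contributions into a single clean product density, and this relies crucially on the algebraic relation $\alpha + \beta = \gamma$, i.e., $(b_2 - b_1) + (a_t + b_1) = a_t + b_2$. This exact matching of rates is what makes the exponents in the two regions coincide and reflects the discrete-time Burke-type interchange of arrival and service encoded in \eqref{wtilde-def}. Everything else in the proof is routine memorylessness and standard stationarity.
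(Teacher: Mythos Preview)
Your approach is essentially the same as the paper's: a single-step density computation showing that if $(Q_t,S_t,A_t)$ has the product exponential law, then so does $(Q_{t+1},A_t+U_t,D_t)$, followed by the same initialization via assumption~\eqref{constant-a} and forward induction. The paper carries out the density computation by writing the Jacobian of the linear map $(Q,S,A)\mapsto(\tilde Q,Z,D)$ in each of the two cases; you instead phrase it through memorylessness of the exponential, but the calculations are identical in content, both hinging on the rate identity $\alpha+\beta=\gamma$.

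One point you pass over too quickly: the core claim gives independence of $(Q_{t+1},A_t+U_t,D_t)$ at a \emph{fixed} $t$, whereas part~(c) asserts independence of the entire family $\{(A_t+U_t,D_t):t\in\bbZ\}$. Your sentence ``this directly delivers (b) and (c)'' does not cover this. The paper supplies the missing link: from the core claim, $Q_{t+1}$ is independent of $(A_t+U_t,D_t)$; since $(S_j,A_j)_{j>t}$ is independent of everything up to time $t$, and $(A_j+U_j,D_j)_{j>t}$ is a function of $Q_{t+1}$ and $(S_j,A_j)_{j>t}$, one concludes $(A_t+U_t,D_t)\perp(A_j+U_j,D_j)_{j>t}$ for every $t$, hence full independence. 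This is routine once stated, but it is a genuine additional step you should include.
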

This Burke property can already be deduced
from results given in 
\cite[Proposition 4.7]{Emra_Janj_Sepp_25} in 
the last-passage-percolation setting.
In the queueing context, 
``extended’’ Burke’s theorems, involving more quantities than just arrival and departure, have been proved for a variety of models --
see for example \cite{OCon_Yor_02, KOR2002}, and \cite{Martinbatch, DMO} 
for further results and relevant discussion. 
Such results have generally been written for queues which are homogeneous in time -- however, 
even in an inhomogeneous context, the case of a single queue with exponential data has a very straightforward proof, which we include here for the purpose of a short and self-contained argument. 

\begin{proof}[Proof of Lemma \ref{lemma:QBurke}]
We consider a single time-step of the queue, 
starting with queue-length $Q$, receiving service $S$ and then arrival $A$. 
Suppose $Q$, $S$ and $A$ are independent with $Q\sim\Exp\{\theta-\kappa\}$, 
$S\sim\Exp\{\kappa\}$, and 
$A\sim\Exp\{\theta\}$ (where $\theta>\kappa>0$). 
The density of $(Q, S, A)$ is then
\[
f_{Q, S, A}(q,s,a)=
C\exp[-\left((\theta-\kappa)q+\kappa s+\theta a\right)],
\]
where $C$ is the constant $(\theta-\kappa)\theta\kappa$. 

In line with our definitions above, 
let $D=\min(Q, S)$, $U=S-D$, 
and $\tQ=Q-D+A$, and write also $Z=A+U$. 

\begin{figure}[hbt]
\begin{center}
\includegraphics[width=0.7\textwidth]{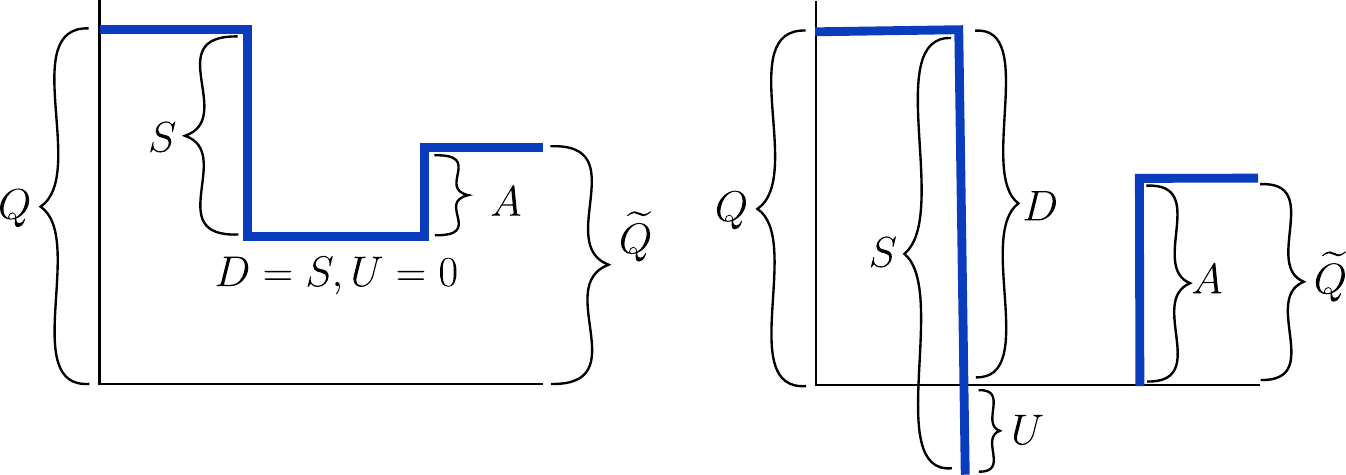}
\caption{ \label{fig:queue-step} \small
Illustration of two cases of a single time-step of the queue. 
The initial queue-length is $Q$. Service $S$ is available, followed by an arrival $A$. The queue-length at the end of the step is $\tQ$. On the left, the case $S\leq Q$, where $U=0$ and $D=S$. On the right, the case $S>Q$, where $U>0$, $D=Q$ and $\tQ=A$. 
}
\end{center}
\end{figure}

The left side of 
Figure \ref{fig:queue-step} shows the case $S\leq Q$,
i.e.\ $U=0$, 
in which case 
\[
Z=A, \,\,\, D=S, \,\,\, \tQ=Q-S+A.
\]
The right side of 
Figure \ref{fig:queue-step}
shows the alternative case $S>Q$, i.e.\ $U>0$, in which case
\[
Z=A+S-Q, \,\,\, D=Q, \,\,\, \tQ=A.
\]
In each of the two cases, the transformation from $(Q, S, A)$ to $(Q, Z, D)$ is a linear map whose Jacobian is $1$. 
In the first case, we then get the density 
\begin{align}
\nonumber
f_{\tQ, Z, D}(\tq, z, d)
&=
c\exp[
-\left(\theta-\kappa)(\tq+d-z)
+\kappa d
+\theta z\right)]
\\
\label{density}
&=
c\exp[-\left((\theta-\kappa)\tq+\kappa z+\theta d\right)].
\end{align}
Similarly in the second case,
\[
f_{\tQ, Z, D}(\tq, z, d)
=
c\exp[
-\left(\theta-\kappa)a
+\kappa(z-\tq+d)
+\theta \tq\right)],
\]
which again simplifies to (\ref{density}).
Hence
\[
\tQ\sim\Exp\{\theta-\kappa\},\,\, 
Z\sim\Exp\{\kappa\},\,\, 
D\sim\Exp\{\theta\},
\]
independently.

Now we apply the above in the case $\theta=a_t+b_2$, $\kappa=a_t+b_1$, corresponding to the operation of the queue at time $t$. First, noticing that $Q$ and $\tQ$ have the same distribution, namely $\Exp\{b_2-b_1\}$, we see that this distribution is invariant for every step. From (\ref{constant-a}), for $t$ below some $t_0$, we have that the queue-length process forms a stationary and time-homogeneous Markov chain. It is easy to show that the invariant distribution $\Exp\{b_2-b_1\}$
is unique. This gives $Q_t\sim\Exp\{b_2-b_1\}$
for all sufficiently small $t$, and by induction this extends to all $t$. 

This gives part \eqref{lemma:QBurke_a}, and then part \eqref{lemma:QBurke_b} immediately follows from the form of the density in (\ref{density}). We already have the distributions of $A_t+U_t$ and of $D_t$ in part \eqref{lemma:QBurke_c}, and the independence between them. It remains to show the independence as $t$ varies. 
Note the following facts:
\begin{itemize}
\item[(1)]
$Q_{t+1}$ is independent of $(A_t+U_t, D_t)$
(again from part \eqref{lemma:QBurke_b}). 
\item[(2)] $(S_j, A_j, j>t)$ is independent of $Q_{t+1}$ and $(A_t+U_t, D_t)$ (because those are functions of $(S_i, A_i, i\leq t)$, and the $(S_i, A_i, i\in \ZZ)$ are an independent sequence).
\item[(3)] $(A_j+U_j, D_j, j>t)$ are functions of $Q_{t+1}$ and $(S_j, A_j, j>t)$. 
\end{itemize}
Combining, we get that $(A_t+U_t, D_t)$ is independent of $(A_j+U_j, D_j, j>t)$. Since this holds for all $t$, we have that in fact $(A_i+U_i, D_i, i\in\ZZ)$ are an independent sequence, as required.
\end{proof}

\begin{proof}[Proof of Proposition \ref{prop:rates-exchanged}]
This follows immediately from Lemma \ref{lemma:QBurke} and \eqref{eq:DAU}.  
\end{proof}

\section{Inhomogeneous Busemann functions}
\label{S:iBusFn}

Another group of results that will prominently enter the proof of Theorem \ref{T:BusMar} concerns the Busemann functions of the inhomogeneous exponential LPP. These objects have been recently introduced and studied in \cite{Emra_Janj_Sepp_25}. We now recall from that work  
a few properties of the inhomogeneous Busemann functions. 

Consider two real parameter sequences $\bfa = (a_i)_{i \in \bbZ_{>0}}$ and $\bfb = (b_j)_{j \in \bbZ_{>0}}$ subject to 
\begin{align}
\label{A:Inf}
\inf \bfa + \inf \bfb > 0. 
\end{align}
Let $\omega^{\bfa, \bfb} = \{\omega^{\bfa, \bfb}_\bfv: \bfv \in \Z^2_{>0}\}$ be independent weights with the marginals as in \eqref{E:iw}. Thus, $\Lp[\omega^{\bfa, \bfb}]$ is now an inhomogeneous LPP process on the quadrant $\bbZ_{>0}^2$. As required for the results to be imported from \cite{Emra_Janj_Sepp_25}, assume further that 
\begin{align}
\label{A:VagConv}
\f{1}{n}\sum_{i = 1}^n \delta_{a_i} \to \alpha \quad\text{ and } \quad \f{1}{n}\sum_{j = 1}^n \delta_{b_j} \to \beta \quad \text{ in the vague topology }
\end{align}
for some nonzero subprobability measures $\alpha$ and $\beta$ on $\R$. Here, vague convergence means the convergence of integrals against continuous functions on $\bbR$ that vanish at infinity. This differs from weak convergence where the integration is instead against bounded continuous functions on $\bbR$. Assumption \eqref{A:VagConv} is natural because it necessarily holds along a subsequence, although the limits may be zero measures. Thus, the real restriction in \eqref{A:VagConv} is the nontriviality of the limits. Due to potential loss of mass, $\alpha$ and $\beta$ need not be probability measures. The special case where $\alpha$ and $\beta$ are probabilities is equivalent to the tightness of the prelimit empirical measures. Then the vague convergence in \eqref{A:VagConv} coincides with weak convergence.  

The Busemann functions associated with the $\omega^{\bfa, \bfb}$-weights come in several types depending on the limit direction. We first discuss \emph{thin} Busemann functions arising from limits along a fixed column or row. Working with arbitrary real weights $\w = \{\w_\bfp: \bfp \in \bbZ_{>0}^2\}$ for the moment, define 
\begin{align}
\label{E:ThinBuse}
\begin{split}
\sI_{(i,j)}^{k, \uparrow}[\w] &= \sup_{n \in \Z_{\ge j}} \ul{\I}_{(i,j), (k, n)}[\w] = \lim_{n \to \infty} \ul{\I}_{(i,j), (k, n)}[\w], \\ 
\sJ_{(i,j)}^{k, \uparrow}[\w] &= \inf_{n \in \Z_{\ge j}} \ul{\J}_{(i,j), (k, n)}[\w] = \lim_{n \to \infty} \ul{\J}_{(i,j), (k, n)}[\w], \\
\sI_{(i,j)}^{\ell, \rightarrow}[\w] &= \inf_{m \in \Z_{\ge i}} \ul{\I}_{(i,j), (m, \ell)}[\w] = \lim_{m \to \infty} \ul{\I}_{(i,j), (m, \ell)}[\w], \\
\sJ_{(i,j)}^{\ell, \rightarrow}[\w] &= \sup_{m \in \Z_{\ge i}} \ul{\J}_{(i,j), (m, \ell)}[\w] = \lim_{m \to \infty} \ul{\J}_{(i,j), (m, \ell)}[\w]
\end{split}
\end{align}
for $i, j, k, \ell \in \Z_{>0}$ with $i \le k$ and $j \le \ell$. See Figure \ref{F:ThinBuse}. The second equality in each line of \eqref{E:ThinBuse} is a consequence of 
increment monotonicity due to Lemma \ref{L:Comp}. 
\begin{figure}[h]
\centering
\begin{tikzpicture}[scale = 1]
\draw[->](0, 0)--(4, 0);
\draw[->](0, 0)--(0, 4);
\draw(0, 0)node[below]{$1$};
\filldraw[color=black](0, 0)circle(0.10);
\draw(0.5, 0)node[below]{$2$};
\filldraw[color=black](0.5, 0)circle(0.10);
\draw(2, 0)node[below]{$k$};
\draw[->](1.9,3.1)--(1.9, 4);
\draw(2, 3)node[right]{$(k, n)$};
\filldraw[color=black](2, 3)circle(0.10);
\draw(0, 1.5)node[left]{$\ell$};
\draw(3.5, 1.5)node[above]{$(m, \ell)$};
\draw[->](3.6, 1.4)--(4, 1.4);
\filldraw[color=black](3.5, 1.5)circle(0.10);
\draw[-](2, 0)--(2, 4);
\draw[-](0, 1.5)--(4, 1.5);
\draw[->, color=red, thick](0, 0)--(2-0.1, 3-0.1);
\draw[->, color=red, thick](0.5, 0)--(2-0.1, 3-0.1);
\draw[->, color=blue, thick](0, 0)--(3.5-0.1, 1.5-0.1);
\draw[->, color=blue, thick](0.5, 0)--(3.5-0.1, 1.5-0.1);
\end{tikzpicture}
\caption{\small Illustrates the thin Busemann functions $\sI_{(1, 1)}^{k, \uparrow}[\w]$ and $\sI_{(1, 1)}^{\ell, \rightarrow}[\w]$. The last-passage times $\Lp_{(1, 1), (k, n)}[\w]$ and $\Lp_{(2, 1), (k, n)}[\w]$ are indicated with red arrows. The last-passage times $\Lp_{(1, 1), (m, \ell)}[\w]$ and $\Lp_{(2, 1), (m, \ell)}[\w]$ are indicated with blue arrows.}
\label{F:ThinBuse}
\end{figure}
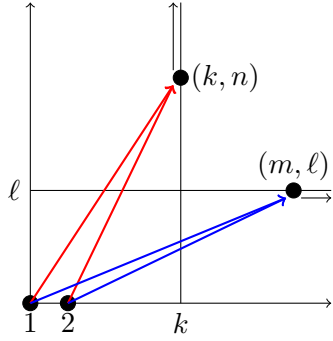

The next proposition describes the marginals of the thin Busemann functions for the $\omega^{\bfa, \bfb}$-weights. In the statement, $\bfa_{i:k}$ and $\bfb_{j:\ell}$ 
denote the sequences $a_{i}, \dotsc, a_{k}$ and $b_j, \dotsc, b_\ell$, respectively. The same notation will be used later also with $k = \ell = \infty$. 
\begin{prop}[{\cite[Theorem 3.1(l)]{Emra_Janj_Sepp_25}}]
\label{P:ThinBuse}
Assume \eqref{A:Inf} and \eqref{A:VagConv}. For $i, j, k, \ell \in \Z_{>0}$ with $i \le k$ and $j \le \ell$, 
\begin{align*}
\sI_{(i, j)}^{k, \uparrow}[\omega^{\bfa, \bfb}] &\sim \Exp\{a_i-\min \bfa_{i:k}\},  \\ 
\sJ_{(i, j)}^{k, \uparrow}[\omega^{\bfa, \bfb}] &\sim \Exp\{b_j + \min \bfa_{i:k}\}, \\
\sI_{(i, j)}^{\ell, \rightarrow}[\omega^{\bfa, \bfb}] &\sim \Exp\{a_i+\min \bfb_{j:\ell}\}, \\
\sJ_{(i, j)}^{\ell, \rightarrow}[\omega^{\bfa, \bfb}] &\sim \Exp\{b_j-\min \bfb_{j:\ell}\}. 
\end{align*}
\end{prop}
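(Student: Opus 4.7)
By the reflection symmetry that interchanges rows with columns together with $\bfa\leftrightarrow\bfb$, the last two distributional identities reduce to the first two; I focus on $\sI^{k,\uparrow}$ and $\sJ^{k,\uparrow}$. Shift-invariance of the environment reduces to $i=j=1$. Since $\sI^{k,\uparrow}_{(1,1)}$ and $\sJ^{k,\uparrow}_{(1,1)}$ depend only on weights in the strip $[k]\times\bbZ_{>0}$, I restrict attention to this strip and proceed by induction on $k$.

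The base case $k=1$ is immediate: since $\Lp_{(2,1),(1,n)}=-\infty$, we have $\sI^{1,\uparrow}_{(1,1)}=\infty\sim\Exp\{0\}$ (matching $a_1-a_1$), while $\sJ^{1,\uparrow}_{(1,1)}=\omega_{(1,1)}\sim\Exp\{a_1+b_1\}$. For the inductive step, the passage times involving the two rightmost columns yield the explicit formula
\[
\sI^{k,\uparrow}_{(k-1,j)}=\omega_{(k-1,j)}+\sup_{m\ge j}\sum_{\ell=j}^{m-1}\bigl(\omega_{(k-1,\ell+1)}-\omega_{(k,\ell)}\bigr),
\]
which expresses $\sI^{k,\uparrow}_{(k-1,j)}$ as the sojourn time of a time-inhomogeneous two-row queue with arrivals drawn from column $k-1$ and services drawn from column $k$. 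In the stable regime $a_{k-1}>a_k$, the Burke-type output result (Lemma~\ref{lemma:QBurke}, in its $a_t$-inhomogeneous form) identifies the marginal as $\Exp\{a_{k-1}-a_k\}=\Exp\{a_{k-1}-\min\{a_{k-1},a_k\}\}$. On the boundary column, $\sJ^{k,\uparrow}_{(k,j)}=\omega_{(k,j)}\sim\Exp\{a_k+b_j\}$ matches $b_j+\min\bfa_{k:k}$. To reach interior columns $i<k-1$, the analogous representation
\[
\sI^{k,\uparrow}_{(i,j)}=\omega_{(i,j)}+\sup_{m\ge j}\Bigl[\sum_{\ell=j+1}^{m}\omega_{(i,\ell)}-\sum_{\ell=j}^{m-1}\sJ^{k,\uparrow}_{(i+1,\ell)}\Bigr]
\]
runs a queue whose ``services'' are the already-identified vertical Busemanns on the adjacent column. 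Iterating from right to left across the strip, and applying Burke at each step, delivers all marginals.

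The principal technical obstacle is the degenerate case $a_i\le\min\bfa_{(i+1):k}$, where the queue is unstable, the claimed rate $a_i-\min\bfa_{i:k}$ vanishes, and $\sI^{k,\uparrow}_{(i,j)}=\infty$ almost surely. Here the Burke-type identification must be replaced by a direct LLN comparison of $\Lp_{(i,j),(k,n)}$ and $\Lp_{(i+1,j),(k,n)}$ that forces divergence. A secondary concern is that the iterative argument requires the service process $\bigl(\sJ^{k,\uparrow}_{(i+1,\ell)}\bigr)_{\ell\ge j}$ to be independent across $\ell$ and independent of the column-$i$ weights; this independence must itself be maintained through the induction, via an inhomogeneous analog of Proposition~\ref{P:BusFn}\eqref{P:BusFn_c} for thin Busemann functions along down-right paths. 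Finally, one must justify interchanging the $n\to\infty$ limit with the supremum in passing from the finite-$n$ formulas to the displayed identities, which follows from the monotone convergence built into \eqref{E:ThinBuse}.
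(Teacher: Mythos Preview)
The paper does not prove this proposition: it is quoted verbatim from \cite[Theorem~3.1(l)]{Emra_Janj_Sepp_24} and stated without proof in Section~\ref{S:iBusFn} as part of the review of inhomogeneous Busemann functions. So there is no ``paper's proof'' to compare against.

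That said, your sketch is a reasonable outline of a direct argument and is in the spirit of how such results are typically established. A few comments on the gaps you already flag, and one you do not:

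\emph{On invoking Lemma~\ref{lemma:QBurke}.} The Burke lemma as proved in this paper is set up for two \emph{rows} with the column index as time, and relies on assumption~\eqref{constant-a} (parameters eventually constant) to pin down the stationary distribution of the queue-length chain. Your application transposes this to two columns with the row index as time, so the role of \eqref{constant-a} is taken over by some tail regularity of $(b_j)_{j\ge1}$. Assumption~\eqref{A:VagConv} is what \cite{Emra_Janj_Sepp_24} uses, but you would need to argue separately that it suffices to guarantee $Q_t\sim\Exp\{a_{k-1}-a_k\}$ for all $t$; the proof in Section~\ref{S:PfInv} does not give you this for free.

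\emph{On the independence hypothesis.} You correctly identify that the iterative step requires $\bigl(\sJ^{k,\uparrow}_{(i+1,\ell)}\bigr)_{\ell}$ to be independent across $\ell$ and independent of column $i$. This is exactly Proposition~\ref{P:TBuseInd} (also cited from \cite{Emra_Janj_Sepp_24}), so to make the argument self-contained you would have to prove that simultaneously---effectively reproducing the content of both propositions together. This is doable via the same Burke mechanism (Lemma~\ref{lemma:QBurke}\eqref{lemma:QBurke_c} gives joint independence of departures and overshoots), but it is more than a side remark.

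\emph{On the unstable case.} When $a_i=\min\bfa_{i:k}$, you say an LLN comparison forces $\sI^{k,\uparrow}_{(i,j)}=\infty$. This is right in spirit, but note that Proposition~\ref{P:ThinLLN} (the relevant LLN here) is itself imported from \cite{Emra_Janj_Sepp_21}, so again you are leaning on external input rather than closing the loop internally.

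In short: the strategy is sound and is essentially how the cited reference proceeds, but what you have written is a plan rather than a proof, and the pieces you would need (time-inhomogeneous Burke without \eqref{constant-a}, the down-right-path independence, and the thin LLN) are precisely the content of the results being cited.
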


Proposition \ref{P:ThinBuse} shows that the distribution of $\sI_{(i, j)}^{k, \uparrow}[\omega^{\bfa, \bfb}]$, for example, depends on $k$ only through $\min \bfa_{i:k}$. Since also $\sI_{(i, j)}^{k, \uparrow}$ is nonincreasing in $k$ (due to Lemma \ref{L:Comp} again), one finds that $\sI_{(i, j)}^{k, \uparrow}[\omega^{\bfa, \bfb}]$ decreases in $k$ exactly when $\min \bfa_{i:k}$ changes. More precisely, writing
\begin{align}
\rec^{\bfa}(i:k) = \min\{k' \in \Z_{\ge i}: \min \bfa_{i:k'} = \min \bfa_{i:k}\}, \label{E:Rec}
\end{align}
for the minimal index where $\min \bfa_{i, k}$ is attained, one has the following corollary. 
\begin{cor}[{\cite[Theorem 3.1(j)]{Emra_Janj_Sepp_25}}]
\label{C:ThinBuse}
Assume \eqref{A:Inf} and \eqref{A:VagConv}. For $i, j, k, \ell \in \Z_{>0}$ with $i \le k$ and $j \le \ell$, a.s., 
\begin{align*}
\sI_{(i, j)}^{k, \uparrow}[\omega^{\bfa, \bfb}] &= \sI_{(i, j)}^{\rec^{\bfa}(i: k), \uparrow}[\omega^{\bfa, \bfb}], \\
\sJ_{(i, j)}^{k, \uparrow}[\omega^{\bfa, \bfb}] &= \sJ_{(i, j)}^{\rec^{\bfa}(i: k), \uparrow}[\omega^{\bfa, \bfb}], \\
\sI_{(i, j)}^{\ell, \rightarrow}[\omega^{\bfa, \bfb}] &= \sI_{(i, j)}^{\rec^{\bfb}(j:\ell), \rightarrow}[\omega^{\bfa, \bfb}], \\
\sJ_{(i, j)}^{\ell, \rightarrow}[\omega^{\bfa, \bfb}] &= \sJ_{(i, j)}^{\rec^{\bfb}(j:\ell), \rightarrow}[\omega^{\bfa, \bfb}]. 
\end{align*}
\end{cor}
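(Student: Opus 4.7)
The plan is to combine two inputs: the monotonicity of LPP increments in the terminal vertex (Lemma~\ref{L:Comp}), and the explicit one-point marginals from Proposition~\ref{P:ThinBuse}. The key observation is elementary: if two random variables are ordered almost surely and share a common distribution, they must be equal almost surely. Both ingredients are already available, so the proof will be short, with no step presenting a genuine obstacle.

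To verify the first identity, let $k' = \rec^{\bfa}(i:k)$, so $i \le k' \le k$ and, by definition of $\rec^{\bfa}$, one has $\min\bfa_{i:k'} = \min\bfa_{i:k}$. Lemma~\ref{L:Comp}\eqref{L:Comp_a} applied to the prelimits in \eqref{E:ThinBuse} shows that the map $k \mapsto \sI^{k,\uparrow}_{(i,j)}[\omega^{\bfa,\bfb}]$ is non-increasing, so
\begin{align*}
\sI^{k,\uparrow}_{(i,j)}[\omega^{\bfa,\bfb}] \le \sI^{k',\uparrow}_{(i,j)}[\omega^{\bfa,\bfb}] \quad \text{a.s.}
\end{align*}
Meanwhile, Proposition~\ref{P:ThinBuse} gives $\sI^{k,\uparrow}_{(i,j)}[\omega^{\bfa,\bfb}] \sim \Exp\{a_i - \min\bfa_{i:k}\}$ and $\sI^{k',\uparrow}_{(i,j)}[\omega^{\bfa,\bfb}] \sim \Exp\{a_i - \min\bfa_{i:k'}\}$, and these coincide since $\min\bfa_{i:k'} = \min\bfa_{i:k}$. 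Two non-negative random variables with the same distribution and an a.s.\ inequality are a.s.\ equal (with the degenerate case $a_i = \min\bfa_{i:k}$, where both sides are $\infty$ almost surely, handled separately and trivially). This yields the claimed equality.

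The other three identities follow by the same two-step recipe, merely substituting the appropriate monotonicity statement from Lemma~\ref{L:Comp} and the appropriate marginal from Proposition~\ref{P:ThinBuse}. Specifically: for $\sJ^{k,\uparrow}$ use Lemma~\ref{L:Comp}\eqref{L:Comp_c} (so the monotonicity direction reverses, but so does the common rate formula, with $b_j + \min\bfa_{i:k'} = b_j + \min\bfa_{i:k}$); for the horizontal Busemann functions $\sI^{\ell,\rightarrow}$ and $\sJ^{\ell,\rightarrow}$, use parts~\eqref{L:Comp_b} and \eqref{L:Comp_d} of Lemma~\ref{L:Comp} together with the $\bfb$-sided marginals. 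In every case the structure is identical: Lemma~\ref{L:Comp} provides a one-sided a.s.\ comparison, Proposition~\ref{P:ThinBuse} matches the marginals via the defining property of $\rec^{\bfa}$ or $\rec^{\bfb}$, and equality in distribution upgrades the a.s.\ inequality to a.s.\ equality.
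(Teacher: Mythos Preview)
Your proof is correct and matches the paper's own justification: immediately before stating the corollary, the paper explains that by Proposition~\ref{P:ThinBuse} the distribution of $\sI_{(i,j)}^{k,\uparrow}$ depends on $k$ only through $\min\bfa_{i:k}$, while Lemma~\ref{L:Comp} gives monotonicity in $k$, so the thin Busemann function changes exactly when $\min\bfa_{i:k}$ does. You have simply made this sketch explicit, including the standard step that an a.s.\ inequality between identically distributed random variables forces a.s.\ equality.
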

For example, the first equality above reveals the useful fact that $\Bh_{(i, j)}^{k, \uparrow}[\omega^{\bfa, \bfb}]$ depends on the $\omega^{\bfa, \bfb}$-weights only through columns $[\rec^{\bfa}(i:k)] \smallsetminus [i-1]$; in particular, there is no dependence on columns $[k] \smallsetminus [\rec^{\bfa}(i:k)]$ contrary to what definition \eqref{E:ThinBuse} initially suggests. It is also worth noting that the thin Busemann functions are trivial for the homogeneous exponential LPP. As can be seen from Corollary \ref{C:ThinBuse}, a.s.,   
\begin{align}
\label{E:ThinBuseHom}
\begin{split}
&\sI_{(i, j)}^{k, \uparrow}[\omega] = \sI_{(i, j)}^{i, \uparrow}[\omega] = \infty = \sJ_{(i, j)}^{j, \rightarrow}[\omega] = \sJ_{(i, j)}^{\ell, \rightarrow}[\omega], \\
&\sJ_{(i, j)}^{k, \uparrow}[\omega] = \sJ_{(i, j)}^{i, \uparrow}[\omega] = \omega_{(i, j)} =  \sI_{(i, j)}^{j, \rightarrow}[\omega] = \sI_{(i, j)}^{\ell, \rightarrow}[\omega]. 
\end{split}
\end{align}
Hence, this notion carries meaningful content only in the case of inhomogeneity. 

One part of our exposition in Section \ref{Ss:BMdiscuss} is streamlined with an appeal to the independence of each of the collections 
\begin{align}
\label{E:TBuseInd}
\{\sI_{(i, j)}^{k, \uparrow}[\omega^{\bfa, \bfb}]: i \in [k-1]\} \quad \text{ and } \quad \{\sJ_{(i, j)}^{\ell, \rightarrow}[\omega^{\bfa, \bfb}]: j \in [\ell-1]\}. 
\end{align}
These properties are contained in the following proposition. As previously noted, this independence property is not needed in the proof of Theorem \ref{T:BusMar}. 

\begin{prop}[{\cite[Theorem 3.1(m)]{Emra_Janj_Sepp_25}}]
\label{P:TBuseInd}    
Assume \eqref{A:Inf} and \eqref{A:VagConv}. Let $i, j, k, \ell \in \Z_{>0}$ with $i \le k$ and $j \le \ell$, and $\nu$ be a down-right path that contains $(i, \ell)$ and $(k, j)$. Then the following statements hold. 
\begin{enumerate}[\normalfont (a)]
\item 
If $\rec^\bfa(i:k) = k$ then the collection 
\begin{align*}
\{\sI_{\bfv}^{k, \uparrow}[\omega^{\bfa, \bfb}]: \bfv, \bfv + (1, 0) \in \nu\} \cup \{\sJ_{\bfv}^{k, \uparrow}[\omega^{\bfa, \bfb}]: \bfv, \bfv + (0, 1) \in \nu\}    
\end{align*} is independent. 
\item 
If $\rec^\bfb(j:\ell) = \ell$ then the collection 
\begin{align*}
\{\sI_{\bfv}^{\ell, \rightarrow}[\omega^{\bfa, \bfb}]: \bfv, \bfv + (1, 0) \in \nu\} \cup \{\sJ_{\bfv}^{\ell, \rightarrow}[\omega^{\bfa, \bfb}]: \bfv, \bfv + (0, 1) \in \nu\}    
\end{align*} is independent. 
\end{enumerate}
\end{prop}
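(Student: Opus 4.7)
By the reflection $(i', j') \mapsto (j', i')$ (which swaps $\bfa \leftrightarrow \bfb$ and the limits $\uparrow \leftrightarrow \rightarrow$), part (b) reduces to part (a), so I focus on (a). The hypothesis $\rec^\bfa(i:k) = k$ yields $a_{i'} > a_k$ for every $i' \in [i, k-1]$, and by Corollary \ref{C:ThinBuse} the values $\sI^{k,\uparrow}_\bfv, \sJ^{k,\uparrow}_\bfv$ for $\bfv \in \nu$ are measurable with respect to the weights in the semi-infinite strip $R = [k] \times \bbZ_{\ge j}$, so we may work entirely within $R$.

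The core idea is to realize the thin Busemann process on $R$ as the boundary-increment process of a stationary inhomogeneous exponential LPP, for which a Burke-type property delivers independence of increments along down-right paths. Set $\tilde a_{i'} = a_{i'} - a_k$ for $i' \in [k-1]$ and $\tilde b_{j'} = b_{j'} + a_k$ for $j' \ge j$. The hypothesis gives $\tilde a_{i'} > 0$, while $\tilde a_{i'} + \tilde b_{j'} = a_{i'} + b_{j'}$, so the bulk weights of the stationary model match those of $\omega^{\bfa, \bfb}$ on $R$. The stationary LPP is seeded with independent exponential boundary increments of rates $\tilde a_{i'}$ (horizontal) and $\tilde b_{j'}$ (vertical) along a southwest L-shaped boundary anchored at $(i,j)$, and by Proposition~\ref{P:ThinBuse} these boundary rates coincide with the marginals of $\sI^{k,\uparrow}_{(\cdot, j)}$ and $\sJ^{k,\uparrow}_{(i, \cdot)}$.

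For this stationary model, the Burke property asserts that the $\sI$- and $\sJ$-increments along any down-right path in the interior form a mutually independent collection of exponentials with the expected marginals. The standard proof is a corner-flipping argument: starting from the L-boundary (where independence holds by construction), each outer corner is rewired using the elementary two-variable exponential Burke identity, which preserves the product structure at each step. This is an inhomogeneous analogue of the argument underlying \cite[Lemma~4.2]{bala-cato-sepp}, closely related to the queuing-theoretic construction used in Section~\ref{S:PfInv}.

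The last step is the distributional identification of the thin Busemann process with the stationary LPP's interior increment process. Both satisfy the LPP recursion \eqref{E:IncRec} (for the thin Busemann case, passed to the $n \to \infty$ limit), so once they agree in joint law on the L-boundary, they agree throughout $R$ by induction along anti-diagonals. The principal obstacle is the L-boundary identification itself: Proposition~\ref{P:ThinBuse} provides the correct one-point marginals, but one must additionally verify the joint independence structure of the thin Busemann functions along a single row (resp.\ column). This is established via a shape-function estimate that controls the stabilization of $\ul\I_{\bfv, (k, n)}[\omega^{\bfa, \bfb}]$ as $n \to \infty$ and forces the limit variables along the boundary to factorize. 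The strict positivity $\tilde a_{i'} > 0$ furnished by $\rec^\bfa(i:k) = k$ is essential here, both to make the stationary model well-defined and to ensure rapid enough stabilization. Once the L-boundary identification is secured, the independence along $\nu$ transfers from the Burke step to $(\sI^{k,\uparrow}_\bfv, \sJ^{k,\uparrow}_\bfv)_{\bfv \in \nu}$.
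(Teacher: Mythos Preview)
The paper does not prove this proposition; it is quoted from \cite[Theorem~3.1(m)]{Emra_Janj_Sepp_24} and used as a black box. So there is no in-paper argument to compare against, and your plan has to be judged on its own merits.

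Your overall framework---realize the thin Busemann process as the increment process of a stationary inhomogeneous LPP and invoke a Burke/corner-flipping argument---is the right one, and the parameter shift $\tilde a_{i'}=a_{i'}-a_k$, $\tilde b_{j'}=b_{j'}+a_k$ is exactly what makes the single-cell Burke identity fire (the hypothesis $\rec^{\bfa}(i{:}k)=k$ is what guarantees $\tilde a_{i'}>0$). Two points, however, are not in order.

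First, the orientation of your boundary is off. The thin Busemann increments satisfy the first two lines of \eqref{E:IncRec} in the $n\to\infty$ limit, i.e.\ the value at $\bfx$ is determined by the values at $\bfx+(1,0)$ and $\bfx+(0,1)$; this recursion propagates from northeast to southwest. Your stationary model seeded on a \emph{southwest} L-boundary anchored at $(i,j)$ produces terminal-point increments $\ol\I,\ol\J$ obeying the last two lines of \eqref{E:IncRec}, which propagate the opposite way. So ``both satisfy \eqref{E:IncRec}, hence matching on the L-boundary propagates by induction along anti-diagonals'' is not correct as stated: the two processes run in opposite directions and agreement on the SW boundary does not determine agreement in the interior. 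The natural seed for the thin Busemann is on the northeast: column $k$, where $\sJ^{k,\uparrow}_{(k,j')}=\omega^{\bfa,\bfb}_{(k,j')}$ are trivially independent weights, together with a horizontal segment that one must push to row $+\infty$.

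Second---and this is the gap you yourself flag---replacing ``row $+\infty$'' by any concrete horizontal segment already requires the independence of $\{\sI^{k,\uparrow}_{(i',N)}:i'\in[i,k-1]\}$, which is a special case of the very statement you want. You assert this follows from ``a shape-function estimate that controls the stabilization of $\ul\I_{\bfv,(k,n)}$ and forces the limit variables along the boundary to factorize,'' but stabilization (almost-sure convergence) does not by itself produce factorization of the limit law; no mechanism is given. What actually closes the loop is an invariance/uniqueness argument: the one-step map sending row-$(j'{+}1)$ horizontal increments to row-$j'$ horizontal increments (driven by the weights on row $j'$ together with $\omega^{\bfa,\bfb}_{(k,j')}$ on the east side) preserves the product law $\bigotimes_{i'}\Exp\{a_{i'}-a_k\}$ by the single-cell Burke identity, the thin Busemann row-marginal is carried by this map for every $j'$, and a coupling/ergodicity argument (monotonicity of the queueing map, as in tandem-queue results) shows that any trajectory of the map is attracted to the product law. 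Without this or an equivalent device, the L-boundary identification is unproven and the plan is circular.
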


We next move on to the Busemann functions corresponding to limit directions strictly into the quadrant. To describe the marginals, one needs to first introduce the appropriate generalization of the function $\zeta$ given by \eqref{E:zeta}. Recalling the limit measures $\alpha$ and $\beta$ from \eqref{A:VagConv}, define 
\begin{align}
\label{E:ABInt}
\A^\alpha(z) = \int_\bbR \frac{\alpha(\dd a)}{a+z} \quad \text{ for } z > -\ul{\alpha} \qquad \text{ and } \qquad \B^\beta(z) = \int_\bbR \frac{\beta(\dd b)}{b-z} \quad \text{ for } z < \ul{\beta},
\end{align}
where $\ul{\mu}$ denotes the infimum of the support of a Borel measure $\mu$ on $\bbR$.  The functions $\A^\alpha$ and $\B^\beta$ are both analytic and their derivatives can be computed via differentiation under the integral sign. Assumption \eqref{A:VagConv} implies that $\ul{\alpha} \ge \inf \bfa_{i:\infty}$ and $\ul{\beta} \ge \inf \bfb_{j:\infty}$ for each $i, j \in \bbZ_{>0}$. Consequently and by \eqref{A:Inf}, the interval $(-\ul{\alpha}, \ul{\beta})$ is nonempty. Thus, one can define 
\begin{align}
\label{E:rho}
\rho^{\alpha, \beta}(z) = -\frac{(\B^\beta)'(z)}{(\A^{\alpha})'(z)} = \int_\bbR \frac{\beta(\dd b)}{(b-z)^2}  \cdot \bigg(\int_\bbR \frac{\alpha(\dd a)}{(a+z)^2}\bigg)^{-1}\quad \text{ for } z \in (-\underline{\alpha}, \underline{\beta}). 
\end{align}
The function $\rho^{\alpha, \beta}$ is continuous and (strictly) increasing, and maps the interval \\ $(-\inf \bfa_{i:\infty}, \inf \bfb_{j:\infty}) \subset (-\underline{\alpha}, \underline{\beta})$ onto the interval $(\mfc^{\ver}_i, \mfc^{\hor}_j) \subset (0, \infty)$ with the endpoints 
\begin{align}
\label{E:crit}
\begin{split}
\mfc^\ver_i &= \mfc^{\ver, \alpha, \beta, \inf \bfa_{i:\infty}} = \rho^{\alpha, \beta}(-\inf \bfa_{i:\infty}) = \int \frac{\beta(\dd b)}{(b+\inf \bfa_{i:\infty})^2}  \cdot \bigg(\int \frac{\alpha(\dd a)}{(a-\inf \bfa_{i:\infty})^2}\bigg)^{-1}, \\
\mfc^{\hor}_j &= \mfc^{\hor, \alpha, \beta, \inf \bfb_{j:\infty}} = \rho^{\alpha, \beta}(\inf \bfb_{j:\infty}) = \int \frac{\beta(\dd b)}{(b-\inf \bfb_{j:\infty})^2}  \cdot \bigg(\int \frac{\alpha(\dd a)}{(a+\inf \bfb_{j:\infty})^2}\bigg)^{-1}. 
\end{split}
\end{align}
Note in particular that 
\begin{align}
\label{E:crit-2}
\begin{split}
\mfc^{\ver}_i = 0 \quad \text{ if and only if } \quad \int_\bbR \frac{\alpha(\dd a)}{(a-\inf \bfa_{i:\infty})^2} = \infty, \\ 
\mfc^{\hor}_j = \infty \quad \text{ if and only if } \quad \int_\bbR \frac{\beta(\dd b)}{(b-\inf \bfb_{j:\infty})^2} = \infty. 
\end{split}
\end{align}
For $r \in [0, \infty]$, define  
\begin{align}
\label{E:ShpMin}
\zeta^{\bfa, \bfb}_{(i, j)}(r) = 
\begin{cases}
-\inf \bfa_{i:\infty} \quad &\text{ if }r \le \mfc^{\ver}_i \\ 
(\rho^{\alpha, \beta})^{-1}(r) \quad &\text{ if } \mfc^{\ver}_i < r < \mfc^{\hor}_j \\ 
\inf \bfb_{j:\infty} \quad &\text{ if } r \ge \mfc^{\hor}_j, 
\end{cases}
\end{align}
which recovers \eqref{E:zeta} in the homogeneous case $\bfa \equiv 0$ and $\bfb \equiv 1$. 

The function $\zeta^{\bfa, \bfb}_{(i, j)}$ arises as the unique minimizer in a natural variational description of the shape function for the inhomogeneous LPP process $\Lp[\omega^{\bfa, \bfb}]$, with the initial point fixed at $(i, j)$. The shape function encodes the law of large numbers limit as the terminal point tends to infinity in a given direction. This includes the axis directions provided that both terminal coordinates become unbounded. If one of the coordinates is bounded, the corresponding limit is given instead by Proposition \ref{P:ThinLLN} ahead. The limits there agree with the boundary values of the shape function when $\inf \bfa_{i:\infty} = \min \bfa_{i:m}$ and $\inf \bfb_{j:\infty} = \min \bfb_{j:n}$. Note the appearance of $\zeta^{\bfa, \bfb}$ in Proposition \ref{P:ThinLLN} in these special cases. For further discussion, we refer to \cite[Section 3.3]{Emra_Janj_Sepp_21} and \cite[Section 2.4]{Emra_Janj_Sepp_25}. 

With \eqref{E:ShpMin} in place, the inhomogeneous extension of Proposition \ref{P:BusFn}\eqref{P:BusFn_a}--\eqref{P:BusFn_b} can be stated as follows. 
\begin{prop}[{\cite[Theorem 3.1(g), (l)]{Emra_Janj_Sepp_25}}]
\label{P:iBusFn}
Assume \eqref{A:Inf} and \eqref{A:VagConv}. Fix $r  \in [0, \infty]$. The following statements hold for each $\bfu = (i, j) \in \bbZ_{>0}^2$. 
\begin{enumerate}[\normalfont (a)]
\item There exist random reals $\sI_{\bfu}^r[\omega^{\bfa, \bfb}]$ and $\sJ_{\bfu}^r[\omega^{\bfa, \bfb}]$ such that, a.s.,  
\begin{align*}
\lim_{k \to \infty} \ul{\I}_{\bfu, \bfv_k}[\omega^{\bfa, \bfb}] = \sI^r_\bfu[\omega^{\bfa, \bfb}] \quad \text{ and } \quad \lim_{k \to \infty} \ul{\J}_{\bfu, \bfv_k}[\omega^{\bfa, \bfb}] = \sJ^r_\bfu[\omega^{\bfa, \bfb}]
\end{align*}
for any $\bfv_k = (m_k, n_k) \in \bbZ_{>0}^2$ for $k \in \bbZ_{>0}$ with $\min \{m_k, n_k\} \to \infty$ and $\dfrac{m_k}{n_k} \to r$ as $k \to \infty$. 
\item $\sI_{\bfu}^r[\omega^{\bfa, \bfb}] \sim \Exp\{a_i + \zeta_{\bfu}^{\bfa, \bfb}(r)\}$ and $\sJ_{\bfu}^r[\omega^{\bfa, \bfb}] \sim \Exp\{b_j-\zeta^{\bfa, \bfb}_\bfu(r)\}$. 
\end{enumerate}
\end{prop}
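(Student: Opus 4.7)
The plan is to adapt the classical stationary-boundary strategy that proves Proposition~\ref{P:BusFn} in the homogeneous case, replacing the single parameter $\zeta(r)$ by the family $\zeta^{\bfa, \bfb}_\bfu(r)$ determined by the variational formula for the inhomogeneous shape function. Fix $\bfu = (i, j)$. For each $z$ in the admissible range $(-\inf \bfa_{i:\infty}, \inf \bfb_{j:\infty})$, I would augment the restriction of $\omega^{\bfa, \bfb}$ to the quadrant north-east of $\bfu$ with an independent boundary column of rates $\Exp\{a_{i'} + z\}$ on $\{(i-1, j'): j' \ge j\}$ (or analogously at $\bfu$ itself) and an independent boundary row of rates $\Exp\{b_{j'} - z\}$ on $\{(i', j-1): i' \ge i\}$. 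The inhomogeneous Burke identity established in the queueing form of Lemma~\ref{lemma:QBurke}, which is the engine behind Propositions~\ref{P:ThinBuse} and \ref{P:TBuseInd}, shows that the last-passage increment process from $\bfu$ across any down-right path through $\bfu$ is a product of exponentials with the above rates; in particular, the boundary-augmented model is jointly stationary in a strong sense.

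\textbf{Matching the characteristic direction.} The shape function of the stationary model in direction $r$ is $\A^\alpha(z) + r\,\B^\beta(z)$, whose minimum in $z$ is attained where $(\A^\alpha)'(z) + r\,(\B^\beta)'(z) = 0$, i.e.\ at $z$ solving $\rho^{\alpha, \beta}(z) = r$. For $r \in (\mfc^\ver_i, \mfc^\hor_j)$ I would take $z = \zeta^{\bfa, \bfb}_\bfu(r)$. Standard competition-interface arguments then show that the last-passage geodesic in the stationary model stays in the interior of the quadrant as $\bfv_k \to \infty$ with $m_k/n_k \to r$, so the horizontal boundary weight at $(i, j)$ coincides with $\ul{\I}_{\bfu, \bfv_k}[\omega^{\bfa, \bfb}]$ up to a vanishing correction. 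By construction this boundary weight is $\Exp\{a_i + z\} = \Exp\{a_i + \zeta^{\bfa, \bfb}_\bfu(r)\}$, which is the marginal asserted in part~(b); the argument for $\sJ$ is symmetric. Part~(a), the existence of the almost-sure limit along any sequence with $m_k/n_k \to r$, then follows from a monotone sandwich between the stationary models with parameters $z - \varepsilon$ and $z + \varepsilon$ using Lemma~\ref{L:Comp}, together with the fact that $\zeta^{\bfa, \bfb}_\bfu$ is a continuous (in fact monotone) function of $r$.

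\textbf{Degenerate directions.} When $r \le \mfc^\ver_i$, the variational minimum is attained on the boundary $z = -\inf \bfa_{i:\infty}$ of the admissible interval, and the stationary construction degenerates. Here I would instead compare $\ul{\I}_{\bfu, \bfv_k}[\omega^{\bfa, \bfb}]$ to the thin Busemann function $\sI_\bfu^{\rec^\bfa(i:\infty), \uparrow}[\omega^{\bfa, \bfb}]$, whose existence and law are already supplied by Corollary~\ref{C:ThinBuse} and Proposition~\ref{P:ThinBuse}. The monotonicity in Lemma~\ref{L:Comp} sandwiches $\ul{\I}_{\bfu, \bfv_k}$ between two such thin functions, which forces the limit to equal the thin Busemann function in question, and the latter has marginal $\Exp\{a_i - \inf \bfa_{i:\infty}\} = \Exp\{a_i + \zeta^{\bfa, \bfb}_\bfu(r)\}$. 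The symmetric case $r \ge \mfc^\hor_j$ proceeds with $\sJ_\bfu^{\cdot, \rightarrow}[\omega^{\bfa, \bfb}]$ instead.

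\textbf{Main obstacle.} The hard part will be controlling the geodesic fluctuations in the stationary inhomogeneous model well enough to pin it to the interior in the non-degenerate regime, since the environment is no longer translation invariant and standard arguments for the Busemann limit (e.g.\ \cite{sepp-cgm-18} for the homogeneous case) lean on translation invariance at several points. The inhomogeneous shape-function asymptotics needed to justify the competition-interface step must therefore be proved independently, presumably by combining the queueing Burke identity with a careful large-deviation bound that is uniform across the columns and rows along which $\bfa$ and $\bfb$ are perturbed. A secondary subtlety is checking continuity at the critical directions $r = \mfc^\ver_i$ and $r = \mfc^\hor_j$, so that the three cases of the definition \eqref{E:ShpMin} give consistent limits; this reduces to continuity of $\zeta^{\bfa, \bfb}_\bfu$ in $r$, which is built into its definition but must be reconciled with the different modes of construction on either side of the critical direction.
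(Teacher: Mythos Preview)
The paper does not prove this proposition. It is quoted from \cite[Theorem~3.1(g),(l)]{Emra_Janj_Sepp_24} and stated in Section~\ref{S:iBusFn} as a background result to be used later; the surrounding text says explicitly ``We now recall from that work a few properties of the inhomogeneous Busemann functions.'' So there is no proof in the present paper to compare your attempt against.

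That said, your sketch is broadly aligned with how such results are established in the literature: building a one-parameter family of stationary inhomogeneous LPP models indexed by a boundary parameter $z$, identifying the characteristic direction via the variational formula for the shape function, and then sandwiching the increments $\ul{\I}_{\bfu,\bfv_k}$ between two nearby stationary models to force the a.s.\ limit and read off the exponential marginal. Your handling of the degenerate directions $r\le\mfc^{\ver}_i$ and $r\ge\mfc^{\hor}_j$ via thin Busemann functions is exactly the mechanism the present paper exploits (Propositions~\ref{P:ThinBuse}, \ref{P:iBusFlat} and Corollary~\ref{C:ThinBuse}), and is in fact what \cite{Emra_Janj_Sepp_24} does. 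The obstacle you flag---replacing translation invariance by something weaker in the geodesic-confinement step---is real, and in \cite{Emra_Janj_Sepp_24} it is handled not by a competition-interface argument per se but by exit-point estimates for the stationary inhomogeneous model combined with the shape-function LLN (the analogue of Proposition~\ref{P:ThinLLN} here). If you want to flesh out a self-contained proof, that is the place to invest effort; the rest of your outline is sound.
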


The next proposition records that the Busemann functions $\sI_{\bfu}^r$ and $\sJ_{\bfu}^r$ can be obtained as the limits of thin Busemann functions for each $\bfu = (i, j) \in \bbZ_{>0}^2$ and $r \in [0, \mfc_i^{\ver}] \cup [\mfc_j^{\hor}, \infty]$. As a particularly useful consequence for our argument, $\sI_{\bfu}^r$ and $\sJ_{\bfu}^r$ are both constant in $r$ in each of the intervals $[0, \mfc_i^{\ver}]$ and $[\mfc_j^{\hor}, \infty]$. 

\begin{prop}[{\cite[Theorem 3.1(i), (k)]{Emra_Janj_Sepp_25}}]
\label{P:iBusFlat}
Assume \eqref{A:Inf} and \eqref{A:VagConv}. The following statements hold for each $\bfu = (i, j) \in \bbZ_{>0}^2$ a.s.\  
\begin{enumerate}[\normalfont (a)]
\item For $r \in [0, \mfc_i^\ver]$, 
\begin{align*}
\sI_{\bfu}^{r}[\omega^{\bfa, \bfb}] &= \inf_{k \in \Z_{\ge i}} \sI_{\bfu}^{k, \uparrow}[\omega^{\bfa, \bfb}] = \lim_{k \to \infty} \sI_{\bfu}^{k, \uparrow}[\omega^{\bfa, \bfb}], \\ 
\sJ_{\bfu}^{r}[\omega^{\bfa, \bfb}] &= \sup_{k \in \Z_{\ge i}} \sJ_{\bfu}^{k, \uparrow}[\omega^{\bfa, \bfb}] = \lim_{k \to \infty} \sJ_{\bfu}^{k, \uparrow}[\omega^{\bfa, \bfb}]. 
\end{align*}
\item For $r \in [\mfc_j^\hor, \infty]$, 
\begin{align*}
\sI_{\bfu}^{r}[\omega^{\bfa, \bfb}] &= \sup_{\ell \in \Z_{\ge j}} \sI_{\bfu}^{\ell, \rightarrow}[\omega^{\bfa, \bfb}] = \lim_{\ell \to \infty} \sI_{\bfu}^{\ell, \rightarrow}[\omega^{\bfa, \bfb}], \\ 
\sJ_{\bfu}^{r}[\omega^{\bfa, \bfb}] &= \inf_{\ell \in \Z_{\ge j}} \sJ_{\bfu}^{\ell, \rightarrow}[\omega^{\bfa, \bfb}] = \lim_{\ell \to \infty} \sJ_{\bfu}^{\ell, \rightarrow}[\omega^{\bfa, \bfb}].
\end{align*}
\end{enumerate}
\end{prop}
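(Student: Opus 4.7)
The plan is to combine a deterministic monotonicity argument with a matching of marginal distributions. The key device is the elementary fact that if $Y \ge Z$ almost surely and $Y \isd Z$, then $Y = Z$ a.s.; this allows us to upgrade a one-sided inequality to equality once the distributional identity is in hand.

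I would focus on part (a) first; part (b) is obtained by swapping the roles of rows and columns. Within part (a), the arguments for $\sI$ and $\sJ$ are entirely parallel, so I sketch the one for $\sI$. First, Lemma~\ref{L:Comp}\eqref{L:Comp_a} shows that $\ul\I_{\bfu,(k,n)}[\omega^{\bfa,\bfb}]$ is nonincreasing in $k$ for each fixed $n$, and this monotonicity passes through the $n\to\infty$ limit in \eqref{E:ThinBuse}, giving that $k\mapsto\sI_\bfu^{k,\uparrow}[\omega^{\bfa,\bfb}]$ is nonincreasing. Hence $\inf_k = \lim_k$ for the $\sI$-family, and analogously Lemma~\ref{L:Comp}\eqref{L:Comp_c} yields that $k\mapsto\sJ_\bfu^{k,\uparrow}[\omega^{\bfa,\bfb}]$ is nondecreasing, so $\sup_k=\lim_k$ for the $\sJ$-family.

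For the one-sided inequality, fix $r\in[0,\mfc_i^\ver]$ and pick a sequence $\bfv_n = (m_n, n_n)$ with $\min\{m_n,n_n\}\to\infty$ and $m_n/n_n\to r$. For each fixed $k$, eventually $m_n\ge k$, so iterating Lemma~\ref{L:Comp}\eqref{L:Comp_a} gives
\[
\ul\I_{\bfu,(k,n_n)}[\omega^{\bfa,\bfb}] \;\ge\; \ul\I_{\bfu,\bfv_n}[\omega^{\bfa,\bfb}].
\]
Sending $n \to \infty$ (using \eqref{E:ThinBuse} on the left and Proposition~\ref{P:iBusFn}(a) on the right), then $k \to \infty$, yields $\lim_k \sI_\bfu^{k,\uparrow}[\omega^{\bfa,\bfb}] \ge \sI_\bfu^r[\omega^{\bfa,\bfb}]$. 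To close the gap I match marginals: Proposition~\ref{P:ThinBuse} gives $\sI_\bfu^{k,\uparrow}\sim\Exp\{a_i-\min\bfa_{i:k}\}$, and since $\min\bfa_{i:k}\searrow\inf\bfa_{i:\infty}$, the monotone limit has law $\Exp\{a_i-\inf\bfa_{i:\infty}\}$ (with the convention that rate $0$ corresponds to the value $+\infty$). Meanwhile, definition \eqref{E:ShpMin} shows $\zeta_\bfu^{\bfa,\bfb}(r) = -\inf\bfa_{i:\infty}$ throughout $[0,\mfc_i^\ver]$, so Proposition~\ref{P:iBusFn}(b) gives $\sI_\bfu^r\sim\Exp\{a_i-\inf\bfa_{i:\infty}\}$. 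The two laws coincide, and the ``$Y\ge Z,\;Y\isd Z\Rightarrow Y=Z$'' principle upgrades the one-sided bound to equality. The same scheme, using the opposite inequality $\lim_k \sJ_\bfu^{k,\uparrow} \le \sJ_\bfu^r$ from Lemma~\ref{L:Comp}\eqref{L:Comp_c} and the matching laws $\Exp\{b_j+\min\bfa_{i:k}\}$ and $\Exp\{b_j+\inf\bfa_{i:\infty}\}$, handles $\sJ$.

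The main (modest) obstacle is organizational rather than conceptual. One must handle the degenerate case in which the exponential rate $a_i-\inf\bfa_{i:\infty}$ equals zero, so that both random variables are $+\infty$ almost surely and the comparison is trivial but needs to be stated with care. One must also verify that the a.s.\ statements, which are indexed by $k$ and by the approximating sequence $(\bfv_n)$, can be consolidated into a single full-measure event; this is routine using countability. No new probabilistic input is required beyond the inhomogeneous results already recorded in Propositions \ref{P:ThinBuse} and \ref{P:iBusFn}, the increment comparison of Lemma \ref{L:Comp}, and the continuity/monotonicity of the rate function $\zeta_\bfu^{\bfa,\bfb}$ from \eqref{E:ShpMin}.
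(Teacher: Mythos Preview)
The paper does not give its own proof of this proposition; it is imported wholesale from \cite[Theorem 3.1(i),(k)]{Emra_Janj_Sepp_24} and used as a black box. Your argument is correct and gives a clean self-contained derivation from the other imported facts (Propositions~\ref{P:ThinBuse} and~\ref{P:iBusFn}) together with the deterministic comparison Lemma~\ref{L:Comp}, via the standard device of a one-sided almost-sure inequality plus matching of marginals to force equality.
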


\section{Proof of Theorem \ref{T:BusMar}}
\label{S:PfBuse}
We now begin to work towards Theorem \ref{T:BusMar}. Recall the sets $\rightset_{k,\ell}$ and $\upset_{k,\ell}$ from \eqref{E:rightset} and \eqref{E:upset}, and recall the definition of the point $\bfz_p = (k+p-1, \ell+d-p)$. Let $\bigl(x_{\bfu,p},y_{\bfv,p}: \bfu \in \rightset_{k,\ell},\bfv \in \upset_{k,\ell},p \in [d]\bigr)$ be an arbitrary collection of real numbers.
Let $\bfP$ denote the probability measure on a common sample space for the weights $\omega$ and $\eta$ considered below, as well as an additional i.i.d.\ sequence $\bfc = (c_i)_{i \in \Z_{>0}}$ of $\Exp\{1\}$ random variables. For convenience, we will take the $\omega$, $\eta$, and $\bfc$ weights to be independent. 
To obtain the theorem, it suffices to establish the identity
\begin{align}
\label{E:CDFId}
\begin{split}
&\bfP\{\sI^{r_p}_{\bfu}[\omega] > x_{\bfu, p},\; \sJ^{r_p}_{\bfv}[\omega] < y_{\bfv, p} \text{ for } \bfu \in \rightset_{k,\ell},\bfv \in \upset_{k,\ell}, p \in [d]\} \\
&=\bfP\{\ul{\I}_{\bfu, \bfz_p}[\eta] > x_{\bfu, p},\;  \ul{\J}_{\bfv, \bfz_p}[\eta] < y_{\bfv, p}  \text{ for } \bfu \in \rightset_{k,\ell},\bfv \in \upset_{k,\ell}, p \in [d]\}. 
\end{split}
\end{align}

Recalling that $\bfr = (r_p)_{p \in [d]} \in \bbR_{>0}^d$ denotes an increasing sequence of directions,  
pick $\bfk_n = (\bfk_n(p))_{p \in [d]} \in \bbZ_{\ge k}^d$ for $n \in \bbZ_{\ge \ell}$ such that $\bfk_n(p)/n \to r_p$ for $p \in [d]$ as $n \to \infty$. Analogously, for $m \in \bbZ_{\ge k}$, select a sequence $\bfs_m = (\bfs_m(p))_{p \in [d]} \in \bbZ^d_{\ge \ell}$ such that $\bfs_m(p)/m \to 1/r_p$ for $p \in [d]$ as $m \to \infty$. We will also make the choice 
\begin{align}
\label{E:kl0}
\begin{split}
\bfk_0 = (\bfk_0(p))_{p \in [d]} = (k+p-1)_{p \in [d]} \quad \text{ and } \quad \bfs_0 = (\bfs_0(p))_{p \in [d]} = (\ell+d-p)_{p \in [d]}. 
\end{split}
\end{align}
Proposition \ref{P:BusFn}\eqref{P:BusFn_a} implies that, almost surely, for $\bfu \in \rightset_{k,\ell},\bfv \in \upset_{k,\ell}$, and $p \in [d]$,  
\begin{align}
\label{E:BusLim}
\begin{split}
\sI_{\bfu}^{r_p}[\omega] &= \lim_{n \to \infty} \ul{\I}_{\bfu, (\bfk_n(p), n)}[\omega]= \lim_{m \to \infty} \ul{\I}_{\bfu, (m, \bfs_m(p))}[\omega],\quad\text{and}\\
\sJ_{\bfv}^{r_p}[\omega] &= \lim_{n \to \infty} \ul{\J}_{\bfv, (\bfk_n(p), n)}[\omega] = \lim_{m \to \infty} \ul{\J}_{\bfv, (m, \bfs_m(p))}[\omega].  
\end{split}
\end{align}
Consequently, since the limiting marginal distributions below have continuous distributions, the left-hand side of \eqref{E:CDFId} can be written as the following two limits: 
\begin{align}
\label{E:CDFLim}
\begin{split}
&\bfP\{\sI^{r_p}_{\bfu}[\omega] > x_{\bfu, p},\; \sJ^{r_p}_{\bfv}[\omega] < y_{\bfv, p}, \quad  \forall \;\bfu,\bfv,p\} \\
&= \lim_{n \to \infty} \bfP\{\ul{\I}_{\bfu, (\bfk_n(p), n)}[\omega] > x_{\bfu, p},\;\ul{\J}_{\bfv, (\bfk_n(p), n)}[\omega] < y_{\bfv, p}, \quad  \forall \; \bfu,\bfv,p \} \\ 
&= \lim_{m \to \infty} \bfP\{\ul{\I}_{\bfu, (m, \bfs_m(p))}[\omega] > x_{\bfu p},\; \ul{\J}_{\bfv, (m, \bfs_m(p))}[\omega] < y_{\bfv, p}, \quad  \forall \; \bfu,\bfv,p\}. 
\end{split}
\end{align}
 In various limits ahead, the continuity of the limiting marginals will be used again without explicit mention. In each of the events  above, we have written $\forall \; \bfu,\bfv,p$, which we will henceforth use as shorthand for for all $\bfu \in \rightset_{k,\ell}, \bfv \in \upset_{k,\ell},p \in [d]$.

 Since $\bfr$ is increasing, one may assume that there exists a fixed gap $L > 0$ such that, for each $m \in \bbZ_{\ge k}$, $n \in \bbZ_{\ge \ell}$ and $p \in [d-1]$,  
\begin{align}
\label{E:Gap}
\bfk_n(p+1) - \bfk_n(p) \ge L \quad \text{ and } \quad \bfs_m(p)-\bfs_m(p+1) \ge L. 
\end{align}
 In particular, $p\mapsto \bfk_n(p)$ is increasing and $p \mapsto \bfs_m(p)$ is decreasing. 

\subsection{Initial bounds}
Our argument will proceed by first bounding the prelimit probabilities in \eqref{E:CDFLim}. To express these bounds, we introduce some auxiliary weights, as follows. Recall the function $\zeta:\bbR_{>0} \to (0, 1)$ defined by \eqref{E:zeta}. For each increasing sequence $\bfk = (k_p)_{p \in [d]} \in \bbZ_{\ge k}^d$, let $\omega^{\col, \bfk} = \{\omega_{(i, j)}^{\col, \bfk}: i, j \in \bbZ_{>0}\}$ be independent weights such that 
\begin{align}
\label{E:wh}
\begin{split}
&\omega^{\col, \bfk}_{(i, j)} \sim 
\begin{cases}
\Exp\{1-\zeta(r_{p})\} &\text{ if } i = k_p \\ 
\Exp\{1\}&\text{ otherwise, } 
\end{cases}
\end{split}
\end{align}
where $\zeta$ is the function defined at \eqref{E:zeta}. To construct these weights on the same probability space as the i.i.d.\ $\Exp\{1\}$ weights $\omega$, we will set $\omega_{(i,j)}^{\col, \bfk} = (1 - \zeta(r_p))^{-1} \omega_{(i,j)}$  when $i = k_p$ and $\omega_{(i,j)}^{\col, \bfk} = \omega_{(i,j)}$ otherwise.  
Through definition \eqref{E:iw}, one can view $\omega^{\col, \bfk}$ as the inhomogeneous weights $\omega^{\bfa, \bfb}$ with 
parameters 
\begin{align}
\label{E:whPar}
\begin{split}
&a_i = 
\begin{cases}
-\zeta(r_{p}) \quad &\text{ if } i = k_p \\ 
0 \quad &\text{ otherwise} 
\end{cases}
\qquad \text{ and } \qquad b_j = 1 
\end{split}
\end{align}
for $i, j \in \bbZ_{>0}$. Similarly, for each decreasing sequence $\bfs = (s_p)_{p \in [d]} \in \bbZ_{\ge \ell}^d$, let $\omega^{\row, \bfs} = \{\omega^{\row, \bfs}_{(i, j)}:i, j \in \bbZ_{>0}\}$ be independent weights such that 
\begin{align}
\label{E:whor}
\begin{split}
&\omega^{\row, \bfs}_{(i, j)} \sim 
\begin{cases}
\Exp\{\zeta(r_{p})\} &\text{ if } j = s_p \\ 
\Exp\{1\} &\text{ otherwise. } 
\end{cases}
\end{split}
\end{align}
Similarly as before, we will construct these weights on our same probability space. One can again interpret $\omega^{\row, \bfs}$ as a specialization of $\omega^{\bfa, \bfb}$ by choosing  
\begin{align}
\label{E:wvPar}
\begin{split}
&a_i = 0 \quad \text{ and } \quad b_j = 
\begin{cases}
\zeta(r_{p}) \quad &\text{ if } j = s_p \\ 
1 \quad &\text{ otherwise} 
\end{cases}
\end{split}
\end{align}
for $i, j \in \bbZ_{>0}$. 

Next, introduce the sequences $\wt{\bfk}_n = (\wt{\bfk}_n(p))_{p \in [d]}$ and $\wt{\bfs}_m = (\wt{\bfs}_m(p))_{p \in [d]}$ by 
\begin{align}
\label{E:wtkl}
\begin{split}
\wt{\bfk}_n(p) \coloneqq \bfk_n(p) + p, \quad \text{ and } \quad \wt{\bfs}_m(p) \coloneqq \bfs_m(p) + d+1-p \quad \text{ for } p \in [d]. 
\end{split}
\end{align}
The following lemma bounds the prelimit probabilities in \eqref{E:CDFLim} with the corresponding probabilities computed with the weights $\omega^{\col, \wt{\bfk}_n}$ and $\omega^{\row, \wt{\bfs}_m}$. Figure \ref{F:CDFBd} in Section \ref{Ss:BMdiscuss} traces the proof in a small case. 
\begin{lem}
\label{L:CDFBd}
The following inequalities hold for $m \in \bbZ_{\ge k}$ and $n \in \bbZ_{\ge \ell}$. 
\begin{enumerate}[\normalfont (a)]
\item \label{L:CDFBd_a} $
\begin{aligned}[t]
&\bfP\{\ul{\I}_{\bfu, (\bfk_n(p), n)}[\omega] > x_{\bfu, p},\;\ul{\J}_{\bfv, (\bfk_n(p), n)}[\omega] < y_{\bfv, p}, \quad  \forall \; \bfu,\bfv,p\} \\ 
&\ge \bfP\{\ul{\I}_{\bfu, (\wt{\bfk}_n(p), n)}[\omega^{\col, \wt{\bfk}_n}] > x_{\bfu, p},\; \ul{\J}_{\bfv, (\wt{\bfk}_n(p), n)}[\omega^{\col, \wt{\bfk}_n}] < y_{\bfv, p}, \; \forall \;\bfu,\bfv,p\}. 
\end{aligned}
$
\vspace{0.1in}
\item \label{L:CDFBd_b} $
\begin{aligned}[t]
&\bfP\{\ul{\I}_{\bfu, (m, \bfs_m(p))}[\omega] > x_{\bfu, p},\; \ul{\J}_{\bfv, (m, \bfs_m(p))}[\omega] < y_{\bfv, p}, \quad  \forall \; \bfu,\bfv,p\} \\ 
&\le \bfP\{\ul{\I}_{\bfu, (m, \wt{\bfs}_m(p))}[\omega^{\row, \wt{\bfs}_m}] > x_{\bfu, p},\;\ul{\J}_{\bfv, (m, \wt{\bfs}_m(p))}[\omega^{\row, \wt{\bfs}_m}] < y_{\bfv, p}\quad, \forall \; \bfu,\bfv,p \}. 
\end{aligned}
$
\end{enumerate}
\end{lem}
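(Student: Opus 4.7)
I focus on part (a); part (b) follows by the symmetric argument, swapping the roles of rows and columns and replacing rightward shifts by upward shifts (which, by parts (b) and (d) of Lemma~\ref{L:Comp}, weakly \emph{enlarge} the events $\{\ul{\I} > x\}$ and $\{\ul{\J} < y\}$ and thus produce an upper bound rather than a lower bound).

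The plan is to build a chain of inequalities from the left-hand side of (a) to the right-hand side using three kinds of moves. The first is \emph{augmentation}: insert inhomogeneous modified columns at positions strictly to the right of every tracked terminal. Since these columns lie outside every relevant last-passage grid, the joint distribution of the tracked increments is unchanged. The second is \emph{shift}: translate a terminal point rightward; by Lemma~\ref{L:Comp}(a),(c) this weakly shrinks both $\{\ul{\I} > x\}$ and $\{\ul{\J} < y\}$ and hence weakly shrinks the joint probability. The third is \emph{transposition}: swap the rates of two columns via Theorem~\ref{T:LppInv}, which preserves the joint distribution provided every tracked endpoint pair satisfies the condition~\eqref{E:U1Swp} of Remark~\ref{R:1Swap}.

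Concretely, generalizing the four-step chain~\eqref{E:117}, I would set $M = \bfk_n(d)$ and first augment by replacing $\omega$ with $\omega^{\col, \bfm}$ for $\bfm = (M+1, \ldots, M+d)$, so that the $p$-th modification (with rate $1-\zeta(r_p)$) initially sits at column $M+p$. The choice $M = \bfk_n(d)$ is deliberate: it forces $M+d = \wt{\bfk}_n(d)$, so the modification for $p = d$ is already at its final site. Next, iterating from $p = d$ down to $p = 1$, I would combine a shift of the $p$-th terminal from $(\bfk_n(p), n)$ to $(\wt{\bfk}_n(p), n)$ with, for $p < d$, a transposition relocating the $p$-th modification from $M+p$ to $\wt{\bfk}_n(p)$. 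After $d$ passes, the environment is $\omega^{\col, \wt{\bfk}_n}$ and the terminals lie at $(\wt{\bfk}_n(p), n)$, matching the right-hand side of (a).

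The main obstacle is sequencing the shifts and transpositions so that each transposition is admissible at the moment it is applied, i.e.\ so that no currently active terminal column lies strictly between $\wt{\bfk}_n(p)$ and $M+p$ when we attempt to transpose them. For $d = 2$ this is immediate and already visible in~\eqref{E:117}; for $d \ge 3$, interleaving shifts and transpositions requires some care, and the large-gap assumption~\eqref{E:Gap} together with the freedom to take $n$ sufficiently large is what guarantees a valid ordering exists. Verifying admissibility at every step via Remark~\ref{R:1Swap} is the technical heart of the proof.
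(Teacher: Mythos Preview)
Your three tools (augmentation, shift via Lemma~\ref{L:Comp}, transposition via Theorem~\ref{T:LppInv}) and your direction of iteration (from $p=d$ down to $p=1$) match the paper exactly, and for $d=2$ your chain is precisely~\eqref{E:117}. The gap is in the implementation for $d\ge 3$: relocating each modification by a \emph{single} transposition from $M+p$ directly to $\wt{\bfk}_n(p)$ cannot be made admissible, no matter how you interleave.

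Take $d=3$ and look at your transposition $T_1$, which swaps columns $c_1=\wt{\bfk}_n(1)=\bfk_n(1)+1$ and $c_2=M+1=\bfk_n(3)+1$. By Remark~\ref{R:1Swap}, admissibility requires every active terminal column $y_1$ to satisfy $y_1<c_1$ or $y_1\ge c_2$. Terminal $2$ sits at $\bfk_n(2)$ before its shift and at $\wt{\bfk}_n(2)=\bfk_n(2)+2$ after; once the gap in~\eqref{E:Gap} is at least $2$, \emph{both} of these lie strictly in $(c_1,c_2)$. Thus $T_1$ is blocked at every stage of your iteration. Enlarging the gap or sending $n\to\infty$ only widens $(c_1,c_2)$ and pushes terminal $2$ deeper inside, so your appeal to~\eqref{E:Gap} and large $n$ works against you rather than for you (and in any case the lemma is asserted for \emph{all} $n\ge\ell$).

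The paper's fix is to move the surviving modifications in stages, as a block, rather than one at a time to their final sites. After shifting terminal $d{-}q{+}1$, it relocates \emph{every} remaining modification $p\in[d-q]$ from column $\bfk_n(d{-}q{+}1)+p$ to column $\bfk_n(d{-}q)+p$; this is the auxiliary sequence $\wt{\bfk}_n^q$ in~\eqref{E:22}. All columns touched by this permutation lie strictly between the consecutive terminal positions $\bfk_n^q(d{-}q)=\bfk_n(d{-}q)$ and $\bfk_n^q(d{-}q{+}1)=\wt{\bfk}_n(d{-}q{+}1)$, so condition~\eqref{E:U} holds automatically at each step. The price is that modification $1$ is transposed $d-1$ times instead of once, but this staging is exactly what clears the obstruction you identified as ``the technical heart of the proof.''
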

\begin{proof}  We prove \eqref{L:CDFBd_a}, and the proof of \eqref{L:CDFBd_b} is symmetric. 
Let $n \in \bbZ_{\ge \ell}$. For each $q \in [d] \cup \{0\}$, define
\begin{align}
\label{E:24}
\bfk_n^q(p) &\coloneqq \one_{\{p \le d-q\}} \cdot \bfk_n(p) + \one_{\{p > d-q\}} \cdot \wt{\bfk}_n(p) \quad \text{ for } p \in [d]. 
\end{align}
We immediately observe that, by monotonicity of $p \mapsto \bfk_n(p)$, assumption \eqref{E:Gap}, and the definition of $\wt \bfk_n$ \eqref{E:wtkl}, the sequence $p \mapsto \bfk_n^q(p)$ is strictly increasing for each $q \in [d] \cup \{0\}$. One can think of \eqref{E:24} as transforming the initial sequence $\bfk_n^{0} = \bfk_n$ into the final sequence $\bfk_n^{d} = \wt{\bfk}_n$ in $d$ steps. At step $q \in [d]$, one obtains the sequence $\bfk_n^{q}$ from $\bfk_n^{q-1}$ by replacing the $(d-q+1)$th term $\bfk_n^{q-1}(d-q+1) = \bfk_n(d-q+1)$ with $\wt{\bfk}_n(d-q+1) = \bfk_n(d-q+1)+d-q+1$. For each $q \in [d-1] \cup \{0\}$, we also define 
\begin{align}
\label{E:22}
\begin{split}
\wt{\bfk}_n^q(p) &\coloneqq \one_{\{p \le d-q\}} \cdot \{\bfk_n(d-q) + p\} + \one_{\{p > d-q\}} \cdot \wt{\bfk}_n(p) \quad \text{ for } p \in [d]. 
\end{split}
\end{align}
In particular, $\wt{\bfk}_n^0(p) = \bfk_n(d) + p$ for $p \in [d]$, and $\wt{\bfk}_n^{d-1} = \wt{\bfk}_n$. We can similarly see that $p \mapsto \wt k_n^q(p)$ is strictly increasing for each $q \in [d-1] \cup\{0\}$.  Definition \eqref{E:22} encodes the following dynamics. For each $q \in [d-1]$, the sequence $\wt{\bfk}_n^{q}$ is produced from the sequence $\wt{\bfk}_n^{q-1}$  by replacing the first $d-q$ terms of the latter with $\bfk_n(d-q)+1, \dotsc, \bfk_n(d-q)+d-q$. 

Since $\bfk_n^0 = \bfk_n$, $\wt{\bfk}_n^0(p) > \bfk_n(d)$ for all $p \in [d]$, and the event on the left-hand side in part \eqref{L:CDFBd_a} does not depend on the weights to the right of column $\bfk_n(d)$, we have 
\begin{align}
\label{E:23}
\begin{split}
&\bfP\{\ul{\I}_{\bfu, (\bfk_n(p), n)}[\omega] > x_{\bfu, p},\;\ul{\J}_{\bfv, (\bfk_n(p), n)}[\omega] < y_{\bfv, p}, \; \forall \; \bfu,\bfv,p\} \\ 
&= \bfP\{\ul{\I}_{\bfu, (\bfk_n^0(p), n)}[\omega^{\col, \wt{\bfk}_n^0}] > x_{\bfu p},\;  \ul{\J}_{\bfv, (\bfk_n^0(p), n)}[\omega^{\col, \wt{\bfk}_n^0}] < y_{\bfv, p}, \;  \forall \; \bfu,\bfv,p\}.  
\end{split}
\end{align}
To perform an inductive argument, we continue with two observations. First, for each $q \in [d]$, it follows from definitions \eqref{E:wtkl} and \eqref{E:24} along with Lemma \ref{L:Comp} that 

\begin{equation}\label{E:25}
\begin{aligned}
&\bfP\{\ul{\I}_{\bfu, (\bfk_n^{q-1}(p), n)}[\omega^{\col, \wt{\bfk}_n^{q-1}}] > x_{\bfu, p},\; \ul{\J}_{\bfv, (\bfk_n^{q-1}(p), n)}[\omega^{\col, \wt{\bfk}_n^{q-1}}] < y_{\bfv, p}, \;\forall \bfu,\bfv,p\}  \\ 
&= \bfP\bigl\{\ul{\I}_{\bfu, (\bfk_n^{q-1}(p), n)}[\omega^{\col, \wt{\bfk}_n^{q-1}}] > x_{\bfu, p},\;  \ul{\J}_{\bfv, (\bfk_n^{q-1}(p), n)}[\omega^{\col, \wt{\bfk}_n^{q-1}}] < y_{\bfv, p}, \;  \forall \bfu,\bfv,\; p\neq d-q + 1,  \\ 
&\qquad   \ \ul{\I}_{\bfu, (\bfk_n(d-q+1), n)}[\omega^{\col, \wt{\bfk}_n^{q-1}}] > x_{\bfu, d-q+1},   \ul{\J}_{\bfv, (\bfk_n(d-q+1), n)}[\omega^{\col, \wt{\bfk}_n^{q-1}}] < y_{\bfv, d-q+1},\;  \forall \bfu,\bfv \;\bigr\}  \\
&\ge \bfP\bigl\{\ul{\I}_{\bfu, (\bfk_n^{q-1}(p), n)}[\omega^{\col, \wt{\bfk}_n^{q-1}}] > x_{\bfu, p},\;  \ul{\J}_{\bfv, (\bfk_n^{q-1}(p), n)}[\omega^{\col, \wt{\bfk}_n^{q-1}}] < y_{\bfv, p}, \quad  \forall \bfu,\bfv,\; p\neq d-q + 1,  \\ 
&\qquad   \ \ul{\I}_{\bfu, (\bfk_n(d-q+1) + d-q + 1, n)}[\omega^{\col, \wt{\bfk}_n^{q-1}}] > x_{\bfu, d-q+1},\; \forall \bfu,\bfv,  \\
&\qquad \ \ul{\J}_{\bfv, (\bfk_n(d-q+1) + d - q + 1, n)}[\omega^{\col, \wt{\bfk}_n^{q-1}}] < y_{\bfv, d-q+1},\; \forall \bfu,\bfv \;\bigr\}   \\
&= \bfP\{\ul{\I}_{\bfu, (\bfk_n^{q}(p), n)}[\omega^{\col, \wt{\bfk}_n^{q-1}}] > x_{\bfu, p} \text{ and } \ul{\J}_{\bfv, (\bfk_n^{q}(p), n)}[\omega^{\col, \wt{\bfk}_n^{q-1}}] < y_{\bfv, p}, \;\forall \bfu,\bfv,p\}.  
\end{aligned}
\end{equation}
 Specifically, in the first equality, we split up the condition separately for $p = d-q + 1$ and replaced $\bfk_n^{q-1}(d-q + 1)$ with $\bfk_n(d-q + 1)$ using the definition \eqref{E:24} (with $q$ replaced by $q -1$), in the inequality, we replaced $\bfk_n(d - q + 1)$ with $\bfk_n(d - q + 1) + d - q + 1$ (using Lemma \ref{L:Comp}). We get the last equality because $\bfk_n(d - q + 1) + d - q + 1 = \wt{\bfk}_n(d-q + 1) = \bfk_n^q(p)$, and $\bfk_n^{q-1}(p) = \bfk_n^q(p)$ for all $p \neq d-q + 1$.

Second, recalling the definition of the weights $\omega^{\col, \bfk}_{(i, j)}$ in \eqref{E:wh}, for each $q \in [d-1]$, an application of Theorem \ref{T:LppInv}  (justified below) yields 
\begin{equation} \label{E:26}
\begin{aligned}
&\bfP\{\ul{\I}_{\bfu, (\bfk_n^{q}(p), n)}[\omega^{\col, \wt{\bfk}_n^{q-1}}] > x_{\bfu, p},\; \ul{\J}_{\bfv, (\bfk_n^{q}(p), n)}[\omega^{\col, \wt{\bfk}_n^{q-1}}] < y_{\bfv, p}, \;\forall \bfu,\bfv,p\}   \\
&= \bfP\{\ul{\I}_{\bfu, (\bfk_n^{q}(p), n)}[\omega^{\col, \wt{\bfk}_n^{q}}] > x_{\bfu, p},\; \ul{\J}_{\bfv, (\bfk_n^{q}(p), n)}[\omega^{\col, \wt{\bfk}_n^{q}}] < y_{\bfv, p}, \;\forall \bfu,\bfv,p\}. 
\end{aligned}
\end{equation}
The use of of Theorem \ref{T:LppInv} is justified as follows: Note from definitions \eqref{E:wh} and \eqref{E:22} that replacing the $\omega^{\col, \wt{\bfk}_n^{q-1}}$-weights with the $\omega^{\col, \wt{\bfk}_n^q}$-weights as in \eqref{E:26} amounts to permuting the column parameters via the permutation $\sigma \in \sS_m$ that swaps 
$\bfk_n^q(d-q)+p= $ $ \bfk_n(d-q) + p$ with $\wt{\bfk}_n^{q-1}(p) = \bfk_n(d-q+1) + p$ for $p \in [d-q]$ and fixes every other column index. The events in \eqref{E:26} can be expressed in terms of the last-passage times $\Lp_{\bfx, \bfy}$ where $\bfx \in [k] \times [\ell]$ and $\bfy \in \{(\bfk_n^q(p), n): p \in [d]\}$. Because the permuted columns are strictly between columns $\bfk_n^q(d-q)$ and $\bfk_n(d-q+1)+d-q+1 = \bfk_n^q(d-q+1)$, the monotonicity of $p \mapsto \bfk_n^q(p)$ implies that the condition \eqref{E:U} required for Theorem \ref{T:LppInv} holds. 

Combining \eqref{E:25} and \eqref{E:26}, we obtain that, for $q \in [d-1]$,
\begin{align}
 \label{E:27}
\begin{split}
&\bfP\{\ul{\I}_{\bfu, (\bfk_n^{q-1}(p), n)}[\omega^{\col, \wt{\bfk}_n^{q-1}}] > x_{\bfu, p},\; \ul{\J}_{\bfv, (\bfk_n^{q-1}(p), n)}[\omega^{\col, \wt{\bfk}_n^{q-1}}] < y_{\bfv, p}, \;\forall \bfu,\bfv,p\}  \\ 
&\ge \bfP\{\ul{\I}_{\bfu, (\bfk_n^{q}(p), n)}[\omega^{\col, \wt{\bfk}_n^{q}}] > x_{\bfu, p}, \;\ul{\J}_{\bfv, (\bfk_n^{q}(p), n)}[\omega^{\col, \wt{\bfk}_n^{q}}] < y_{\bfv, p}, \;\forall \bfu,\bfv,p\}.
\end{split}
\end{align}
Now, starting from \eqref{E:23}, iterating \eqref{E:27} for $q \in [d-1]$, and concluding with another appeal to \eqref{E:25} with $q = d$, we obtain 
\begin{align}
&\bfP\{\ul{\I}_{\bfu, (\bfk_n(p), n)}[\omega] > x_{\bfu, p},\;\ul{\J}_{\bfv, (\bfk_n(p), n)}[\omega] < y_{\bfv, p}, \; \forall \; \bfu,\bfv,p\} \nonumber \\
&\ge \bfP\{\ul{\I}_{\bfu, (\bfk_n^{d-1}(p), n)}[\omega^{\col, \wt{\bfk}_n^{d-1}}] > x_{\bfu, p},\;\ul{\J}_{\bfv, (\bfk_n^{d-1}(p), n)}[\omega^{\col, \wt{\bfk}_n^{d-1}}] < y_{\bfv, p}, \forall \bfu,\bfv,p \} \nonumber \\
&\ge \bfP\{\ul{\I}_{\bfu, (\bfk_n^{d}(p), n)}[\omega^{\col, \wt{\bfk}_n^{d-1}}] > x_{\bfu,p},\; \ul{\J}_{\bfv, (\bfk_n^{d}(p), n)}[\omega^{\col, \wt{\bfk}_n^{d-1}}] < y_{\bfv,p}, \; \forall \bfu,\bfv,p \} \nonumber \\
&= \bfP\{\ul{\I}_{\bfu, (\wt{\bfk}_n(p), n)}[\omega^{\col, \wt{\bfk}_n}] > x_{\bfu,p},\;\ul{\J}_{\bfv, (\wt{\bfk}_n(p), n)}[\omega^{\col, \wt{\bfk}_n}] < y_{\bfv,p},\;\forall \bfu,\bfv,p \}. \label{E:28}
\end{align}
For the equality in \eqref{E:28} recall from definitions \eqref{E:24} and \eqref{E:22}  that $\bfk_n^d = \wt{\bfk}_n^{d-1} = \wt{\bfk}_n$. 
\end{proof}

\subsection{Limiting bounds}

We next connect the subsequential limits of the bounds in Lemma \ref{L:CDFBd} to the thin Busemann functions from Section \ref{S:iBusFn}. 
For an illustration of the proof below in a smaller case, see Figure \ref{F:CDFLim} in Section \ref{Ss:BMdiscuss}. Recall the definition of $\bfk_0$ and $\bfs_0$ in \eqref{E:kl0}. 
\begin{lem}
\label{L:CDFLim}
Recall the definitions in \eqref{E:ThinBuse}. The following inequalities hold. 
\begin{enumerate}[\normalfont (a)]
\item \label{itm:ge1} $
\begin{aligned}[t]
&\varliminf_{n \to \infty }\bfP\{\ul{\I}_{\bfu, (\wt{\bfk}_n(p), n)}[\omega^{\col, \wt{\bfk}_n}] > x_{\bfu,p},\;  \ul{\J}_{\bfv, (\wt{\bfk}_n(p), n)}[\omega^{\col, \wt{\bfk}_n}] < y_{\bfv,p},\quad \forall \bfu,\bfv,p  \} \\ 
&\ge \bfP\{\sI_{\bfu}^{\bfk_0(p), \uparrow}[\omega^{\col, \bfk_0}] > x_{\bfu,p},\;\sJ_{\bfv}^{\bfk_0(p), \uparrow}[\omega^{\col, \bfk_0}] < y_{\bfv,p},\quad \forall \bfu,\bfv,p \}. 
\end{aligned}
$
\item \label{itm:le1} $
\begin{aligned}[t]
&\varlimsup_{m \to \infty}\bfP\{\ul{\I}_{\bfu, (m, \wt{\bfs}_m(p))}[\omega^{\row, \wt{\bfs}_m}] > x_{\bfu,p},\; \ul{\J}_{\bfv, (m, \wt{\bfs}_m(p))}[\omega^{\row, \wt{\bfs}_m}] < y_{\bfv,p},\quad \forall \bfu,\bfv,p \}\\ 
&\le \bfP\{\sI_{\bfu}^{\bfs_0(p), \rightarrow}[\omega^{\row, \bfs_0}] > x_{\bfu,p},\; \sJ_{\bfv}^{\bfs_0(p), \rightarrow}[\omega^{\row, \bfs_0}] < y_{\bfv,p},\quad\forall \bfu,\bfv,p \}. 
\end{aligned}
$
\end{enumerate}
\end{lem}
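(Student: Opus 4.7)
Parts (a) and (b) are symmetric via reflection across the diagonal, so I focus on (a). The plan is to process one direction at a time, in each step using Theorem \ref{T:LppInv} to relocate a column of special rate and then passing to a limit in $n$ to convert an LPP increment into a thin Busemann function. Define the interpolating sequence
\[
\bfk^{q, n}(p) = \begin{cases} k + p - 1 & p \le q, \\ \wt{\bfk}_n(p) & p > q, \end{cases}
\]
so that $\bfk^{0, n} = \wt{\bfk}_n$ and $\bfk^{d, n} = \bfk_0$. I will argue by induction on $q \in \{0, 1, \ldots, d\}$ that the left-hand liminf in (a) bounds from below $\varliminf_{n \to \infty} \bfP\{A_{q, n, \epsilon}\}$, where $A_{q, n, \epsilon}$ is the intersection of the events $\{\sI^{k+p-1, \uparrow}_{\bfu}[\omega^{\col, \bfk_0}] > x_{\bfu, p} + q\epsilon,\ \sJ^{k+p-1, \uparrow}_{\bfv}[\omega^{\col, \bfk_0}] < y_{\bfv, p} - q\epsilon\}$ over $p \le q$ together with $\{\ul{\I}_{\bfu, (\wt{\bfk}_n(p), n)}[\omega^{\col, \bfk^{q, n}}] > x_{\bfu, p},\ \ul{\J}_{\bfv, (\wt{\bfk}_n(p), n)}[\omega^{\col, \bfk^{q, n}}] < y_{\bfv, p}\}$ over $p > q$. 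Taking $q = d$ and sending $\epsilon \to 0$ then yields (a).

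The inductive step from $q$ to $q+1$ consists of two operations. First, apply Theorem \ref{T:LppInv} with the transposition swapping columns $k + q$ and $\wt{\bfk}_n(q+1)$; this moves the rate $1 - \zeta(r_{q+1})$ from $\wt{\bfk}_n(q+1)$ down to $k + q$, so that the environment $\omega^{\col, \bfk^{q, n}}$ is replaced by $\omega^{\col, \bfk^{q+1, n}}$. Condition \eqref{E:U1Swp} holds for every relevant endpoint pair: for $(\bfu, (\wt{\bfk}_n(p), n))$ with $p \ge q+1$ we have $x_1 \le k - 1 \le k + q$ and $y_1 = \wt{\bfk}_n(p) \ge \wt{\bfk}_n(q+1)$; for the thin Busemann functions coming from earlier steps, I express each as the monotone limit $\sI^{k+p-1, \uparrow}_\bfu = \lim_{M \to \infty} \ul{\I}_{\bfu, (k+p-1, M)}$ with $p \le q$, whose terminal first coordinate $k + p - 1 < k + q$ satisfies $y_1 < k + q$, and similarly for $\sJ^{k+p-1, \uparrow}_\bfv$. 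Hence the joint distribution of the collection is preserved by the swap.

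Second, pass to the $n \to \infty$ limit for the $(q+1)$-th increment. Since $\wt{\bfk}_n(q+1) < \wt{\bfk}_n(p)$ for $p > q+1$ and large $n$, the increment $\ul{\I}_{\bfu, (\wt{\bfk}_n(q+1), n)}[\omega^{\col, \bfk^{q+1, n}}]$ depends only on columns $\le \wt{\bfk}_n(q+1)$, where the environment agrees with the fixed infinite environment $\omega^*$ having special columns only at $k, k+1, \ldots, k + q$. By Proposition \ref{P:iBusFn}, this increment converges almost surely to $\sI^{r_{q+1}}_\bfu[\omega^*]$. A direct computation in \eqref{E:crit} with empirical limits $\alpha = \delta_0$ and $\beta = \delta_1$ yields $\mfc^\ver_i = r_{q+1}$ for every $i \in [k-1]$, placing $r_{q+1}$ exactly at the boundary of the flat region in $\omega^*$. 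Proposition \ref{P:iBusFlat} and Corollary \ref{C:ThinBuse} then give
\[
\sI^{r_{q+1}}_\bfu[\omega^*] = \sI^{k+q, \uparrow}_\bfu[\omega^*] = \sI^{k+q, \uparrow}_\bfu[\omega^{\col, \bfk_0}],
\]
where the last equality holds because both environments agree on columns $[k + q]$. An analogous identification holds for $\ul\J$ and $\sJ^{k+q, \uparrow}$. Converting almost-sure convergence into the desired liminf inequality with $\epsilon$-shifted thresholds is a standard Fatou argument via $\{\sI > x + \epsilon\} \subset \liminf_n \{\ul\I_n > x\}$ and its $\sJ$ analog.

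The main obstacle will be the bookkeeping in the first operation: each step in the induction requires verifying condition \eqref{E:U1Swp} for every endpoint pair, including those introduced as limits inside already-produced thin Busemann functions, so the collection to be tracked grows in complexity as $q$ increases. The second operation is structurally routine but depends on the sharp identification $\mfc^\ver_i = r_{q+1}$ in the effective environment $\omega^*$, which must be verified from the explicit columnar inhomogeneity and without interference from the not-yet-processed rates at $\wt{\bfk}_n(p)$ for $p > q+1$.
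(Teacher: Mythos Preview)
Your proposal is correct and follows essentially the same approach as the paper's proof: an induction on the number of processed directions, each step combining a column swap via Theorem \ref{T:LppInv} with a limit identification through Propositions \ref{P:iBusFn}--\ref{P:iBusFlat} and Corollary \ref{C:ThinBuse}. The only slip is the $q\epsilon$-shift in your definition of $A_{q,n,\epsilon}$: the union-bound step introduces exactly one $\epsilon$ for the newly processed direction and leaves the already-processed ones untouched, so the natural choice (and the paper's) is a single fixed $\epsilon$-shift per direction rather than a cumulative $q\epsilon$.
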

\begin{proof}
We prove \eqref{itm:ge1}, with \eqref{itm:le1} following a symmetric proof. Let $n \in \bbZ_{\ge \ell}$. For each $q \in [d] \cup \{0\}$, we define the sequence $\wh{\bfk}_n^{q} = (\wh{\bfk}_n^{q}(p))_{p \in [d]} \in \bbZ_{\ge k}^d$ by 
\begin{align}
\label{E:knq}
\begin{split}
\wh{\bfk}_n^{q}(p) &\coloneqq \one_{\{p \le q\}} \cdot \bfk_0(p) + \one_{\{p > q\}} \cdot \wt{\bfk}_n(p) \quad \text{ for } p \in [d].
\end{split}
\end{align}
One now begins with the sequence $\wh{\bfk}_n^{0} = \wt{\bfk}_n$ and ends with the sequence $\wh{\bfk}_n^{d} = \bfk_0$. For each $q \in [d]$, one can obtain $\wh{\bfk}_n^{q}$ from $\wh{\bfk}_n^{q-1}$ by replacing the $q$th term $\wh{\bfk}_n^{q-1}(q) = \wt{\bfk}_n(q)$ with $\bfk_0(q) = k+q-1$. 

For each $q \in [d] \cup \{0\}$ and $\epsilon > 0$, define the event
\begin{equation}
\label{E:EvtE}
\begin{aligned}
E^{q, \epsilon}_n &\coloneqq \Bigl\{\ul{\I}_{\bfu, (\wt{\bfk}_n(p), n)}[\omega^{\col, \wh{\bfk}_n^q}] > x_{\bfu,p},\; \ul{\J}_{\bfv, (\wt{\bfk}_n(p), n)}[\omega^{\col, \wh{\bfk}_n^q}] < y_{\bfv,p},\; \forall \bfu,\bfv,\; p \in [d] \setminus[q], \\ 
&\qquad \ \ \sI_{\bfu}^{\bfk_0(p), \uparrow}[\omega^{\col, \wh{\bfk}_n^q}] > x_{\bfu,p} + \epsilon,\; \sJ_{\bfv}^{\bfk_0(p), \uparrow}[\omega^{\col, \wh{\bfk}_n^q}] < y_{\bfv,p} - \epsilon,\;\forall \bfu,\bfv,\;p \in [q]\Bigr\}.
\end{aligned}
\end{equation}
Note that $E_n \coloneqq E_n^{0, \epsilon}$ is precisely the prelimit event in Part \eqref{itm:ge1}, and the event $E^{d, \epsilon}_n$ does not depend on $n$ because $\wh{\bfk}_n^d = \bfk_0$ by the definition \eqref{E:knq}. Thus, we abbreviate $E^\epsilon \coloneqq E^{d, \epsilon}_n$. The event $E^\epsilon$ is a prelimiting version of the event on the right-hand side of \eqref{itm:ge1}.

Our next goal is to compare the probabilities of the events $E^{q-1, \epsilon}_n$ and $E^{q, \epsilon}_n$ for each $q \in [d]$. First, by definition \eqref{E:EvtE} and as a consequence of Theorem \ref{T:LppInv} (justified below),
\begin{equation}\label{E:1}
\begin{aligned}
&\quad \, \bfP\{E_n^{q-1, \epsilon}\} \\ 
&= \bfP \Bigl\{\ul{\I}_{\bfu, (\wt{\bfk}_n(p), n)}[\omega^{\col, \wh{\bfk}_n^{q-1}}] > x_{\bfu,p},\;\ul{\J}_{\bfv, (\wt{\bfk}_n(p), n)}[\omega^{\col, \wh{\bfk}_n^{q-1}}] < y_{\bfv,p},\;\; \forall \bfu,\bfv,\; p \in [d]\setminus[q-1], \\ 
&\quad \quad \ \ \sI_{\bfu}^{\bfk_0(p), \uparrow}[\omega^{\col, \wh{\bfk}_n^{q-1}}] > x_{\bfu,p} + \epsilon,\; \sJ_{\bfv}^{\bfk_0(p), \uparrow}[\omega^{\col, \wh{\bfk}_n^{q-1}}] < y_{\bfv,p} - \epsilon,\;\forall \bfu,\bfv,\; p \in [q-1] \Bigr\}  \\ 
&= \bfP \Bigl\{\ul{\I}_{\bfu, (\wt{\bfk}_n(p), n)}[\omega^{\col, \wh{\bfk}_n^{q}}] > x_{\bfu,p},\;\ul{\J}_{\bfv, (\wt{\bfk}_n(p), n)}[\omega^{\col, \wh{\bfk}_n^{q}}] < y_{\bfv,p},\;\; \forall \bfu,\bfv,\; p \in [d]\setminus[q-1], \\ 
&\quad \quad \ \ \sI_{\bfu}^{\bfk_0(p), \uparrow}[\omega^{\col, \wh{\bfk}_n^{q}}] > x_{\bfu,p} + \epsilon,\; \sJ_{\bfv}^{\bfk_0(p), \uparrow}[\omega^{\col, \wh{\bfk}_n^{q}}] < y_{\bfv,p} - \epsilon,\;\forall \bfu,\bfv,\; p \in [q-1] \Bigr\}.  
\end{aligned}
\end{equation}
The application of Theorem \ref{T:LppInv} is justified as follows: Recall from \eqref{E:knq} that switching the weights above from $\omega^{\col, \wh{\bfk}_n^{q-1}}$ to $\omega^{\col, \wh{\bfk}_n^{q}}$ means interchanging the parameters of columns $\wt{\bfk}_n(q)$ and $\bfk_0(q)$. Let $\sigma \in \sS_m$ denote the corresponding permutation. Because the event in \eqref{E:1} can be written entirely in terms of last-passage times $\Lp_{\bfx, \bfy}$ and limits of their differences with \begin{align*}
\bfx \in [k] \times [\ell] \quad \text{ and } \quad \bfy \in ([\bfk_0(q-1)] \times \bbZ_{\ge \ell}) \cup \{(\wt{\bfk}_n(p), n): p \in [d] \smallsetminus [q-1]\}, 
\end{align*}
and these endpoints fulfill condition \eqref{E:U} with the permutations $\sigma$ and $\tau = \Id$, the preceding application of Theorem \ref{T:LppInv} is justified. 

To relate to the probability of the event $E_n^{q,\epsilon}$, we apply a union bound to the last probability in \eqref{E:1} to obtain  
\begin{align*}
\begin{split}
&\bfP\{E_n^{q-1, \epsilon}\} \\ 
&= \bfP\Bigl\{\ul{\I}_{\bfu, (\wt{\bfk}_n(p), n)}[\omega^{\col, \wh{\bfk}_n^{q}}] > x_{\bfu,p},\; \ul{\J}_{\bfv, (\wt{\bfk}_n(p), n)}[\omega^{\col, \wh{\bfk}_n^{q}}] < y_{\bfv,p},\;\forall \bfu,\bfv,\;p \in [d]\setminus[q],  \\
&\quad \quad \ \ \ul{\I}_{\bfu, (\wt{\bfk}_n(q), n)}[\omega^{\col, \wh{\bfk}_n^{q}}] > x_{\bfu,q},\; \ul{\J}_{\bfv, (\wt{\bfk}_n(q), n)}[\omega^{\col, \wh{\bfk}_n^{q}}] < y_{\bfv,q},\;\forall \bfu,\bfv, \\
&\quad \quad \ \ \sI_{\bfu}^{\bfk_0(p), \uparrow}[\omega^{\col, \wh{\bfk}_n^{q}}] > x_{\bfu,p} + \epsilon,\; \sJ_{\bfv}^{\bfk_0(p), \uparrow}[\omega^{\col, \wh{\bfk}_n^{q}}] < y_{\bfv,p} - \epsilon,\;\forall \bfu,\bfv,\; p \in [q-1]\Bigr\} \\ 
&\ge \bfP\Bigl\{\ul{\I}_{\bfu, (\wt{\bfk}_n(p), n)}[\omega^{\col, \wh{\bfk}_n^{q}}] > x_{\bfu,p},\; \ul{\J}_{\bfv, (\wt{\bfk}_n(p), n)}[\omega^{\col, \wh{\bfk}_n^{q}}] < y_{\bfv,p},\;\forall \bfu,\bfv,\;p \in [d]\setminus[q], \\ 
&\quad \quad \ \ \sI_{\bfu}^{\bfk_0(q), \uparrow}[\omega^{\col, \wh{\bfk}_n^{q}}] > x_{\bfu,q} + \epsilon,\; \sJ_{\bfv}^{\bfk_0(q), \uparrow}[\omega^{\col, \wh{\bfk}_n^{q}}] < y_{\bfv,q} - \epsilon,\;\forall \bfu,\bfv, \\
&\quad \quad \ \ \sI_{\bfu}^{\bfk_0(p), \uparrow}[\omega^{\col, \wh{\bfk}_n^{q}}] > x_{\bfu,p} + \epsilon,\; \sJ_{\bfv}^{\bfk_0(p), \uparrow}[\omega^{\col, \wh{\bfk}_n^{q}}] < y_{\bfv,p} - \epsilon,\;\forall \bfu,\bfv,\;p \in [q-1] \Bigr\} \\
&\qquad\qquad-  \sum_{\bfu \in \rightset_{k,\ell}} \bfP\Bigl\{\sI_{\bfu}^{\bfk_0(q), \uparrow}[\omega^{\col, \wh{\bfk}_n^{q}}] - \ul{\I}_{\bfu, (\wt{\bfk}_n(q), n)}[\omega^{\col, \wh{\bfk}_n^{q}}]> \epsilon\Bigr\} \\ 
&\qquad\qquad\qquad\qquad- \sum_{\bfv \in \upset_{k,\ell}}\bfP\Bigl\{\sJ_{\bfv}^{\bfk_0(q), \uparrow}[\omega^{\col, \wh{\bfk}_n^{q}}]-\ul{\J}_{\bfv, (\wt{\bfk}_n(q), n)}[\omega^{\col, \wh{\bfk}_n^{q}}] < -\epsilon\Bigr\} \\ 
&= \bfP\{E_n^{q, \epsilon}\} - \sum_{\bfu \in \rightset_{k,\ell}} \bfP\Bigl\{\sI_{\bfu}^{\bfk_0(q), \uparrow}[\omega^{\col, \wh{\bfk}_n^{q}}] - \ul{\I}_{\bfu, (\wt{\bfk}_n(q), n)}[\omega^{\col, \wh{\bfk}_n^{q}}]> \epsilon\Bigr\} \\ 
&\qquad\qquad\qquad\qquad- \sum_{\bfv \in \upset_{k,\ell}}\bfP\Bigl\{\sJ_{\bfv}^{\bfk_0(q), \uparrow}[\omega^{\col, \wh{\bfk}_n^{q}}]-\ul{\J}_{\bfv, (\wt{\bfk}_n(q), n)}[\omega^{\col, \wh{\bfk}_n^{q}}] < -\epsilon\Bigr\}. 
\end{split}
\end{align*}
Rearranged, this yields
\begin{equation} \label{E:2}
\begin{aligned}
   \bfP\{E_n^{q-1, \epsilon}\} -  \bfP\{E_n^{q, \epsilon}\} &\ge - \sum_{\bfu \in \rightset_{k,\ell}} \bfP\Bigl\{\sI_{\bfu}^{\bfk_0(q), \uparrow}[\omega^{\col, \wh{\bfk}_n^{q}}] - \ul{\I}_{\bfu, (\wt{\bfk}_n(q), n)}[\omega^{\col, \wh{\bfk}_n^{q}}]> \epsilon\Bigr\}. \\ 
&\qquad- \sum_{\bfv \in \upset_{k,\ell}}\bfP\Bigl\{\sJ_{\bfv}^{\bfk_0(q), \uparrow}[\omega^{\col, \wh{\bfk}_n^{q}}]-\ul{\J}_{\bfv, (\wt{\bfk}_n(q), n)}[\omega^{\col, \wh{\bfk}_n^{q}}] < -\epsilon\Bigr\}.
\end{aligned}
\end{equation}
Recall that we defined $E_n = E_n^{0,\epsilon}$  and $E^\epsilon = E_n^{d,\epsilon}$. Summing up the bound in \eqref{E:2} over $q \in [d]$ and noting a telescoping of terms then gives 
\begin{align}
\label{E:3}
\begin{split}
\bfP\{E_n\} &\ge \bfP\{E^{\epsilon}\}- \sum_{q \in [d],\bfu \in \rightset_{k,\ell}} \bfP\Bigl\{\sI_{\bfu}^{\bfk_0(q), \uparrow}[\omega^{\col, \wh{\bfk}_n^{q}}] - \ul{\I}_{\bfu, (\wt{\bfk}_n(q), n)}[\omega^{\col, \wh{\bfk}_n^{q}}]> \epsilon\Bigr\} \\ 
&\qquad\qquad\quad- \sum_{q \in [d],\bfv \in \upset_{k,\ell}}\bfP\Bigl\{\sJ_{\bfv}^{\bfk_0(q), \uparrow}[\omega^{\col, \wh{\bfk}_n^{q}}]-\ul{\J}_{\bfv, (\wt{\bfk}_n(q), n)}[\omega^{\col, \wh{\bfk}_n^{q}}] < -\epsilon\Bigr\}.
\end{split}
\end{align}

Fixing $q \in [d]$ for the moment, consider the inhomogeneous weights $\omega^{\bfa, \bfb}$ with parameters 
\begin{align}
\label{E:29}
a_i = 
\begin{cases}
-\zeta(r_p) \quad &\text{ if } i = \bfk_0(p) \text{ for some } p \in [q] \\ 
0 \quad &\text{ otherwise}
\end{cases}
\quad \text{ and } \quad b_j = 1 
\end{align}
for $i, j \in \bbZ_{>0}$. By \eqref{E:whPar} and definition \eqref{E:knq}, the $\omega^{\col, \wh{\bfk}_n^q}$-weights coincide in distribution with $\omega^{\bfa, \bfb}$ on $[\wt{\bfk}_n(q)] \times \bbZ_{>0}$. For the parameters in \eqref{E:29}, the limit measures in \eqref{A:VagConv} are 
\begin{align}
\label{E:30}
\alpha = \delta_0 \quad \text{ and } \quad \beta = \delta_1. 
\end{align}
Since $(\zeta(r_p))_{p \in [d]}$ is increasing, for each $(i, j) \in [k] \times [\ell]$, one also has 
\begin{align}
\label{E:31}
\inf \bfa_{i:\infty} = - \zeta(r_q) \quad \text{ and } \quad \inf \bfb_{j:\infty} = 1. 
\end{align}
Using \eqref{E:30} and \eqref{E:31}, one computes the critical directions 
in \eqref{E:crit} as 
\begin{align}
\label{E:32}
\mfc_i^{\ver} = \frac{\zeta(r_q)^2}{(1-\zeta(r_q))^2} = r_q \quad \text{ and } \quad \mfc_j^{\hor} = \infty
\end{align}
for $i \in [k]$ and $j \in \bbZ_{>0}$. Note also from definition \eqref{E:wtkl} that 
\begin{align}
\label{E:33}
\lim_{n \to \infty}\frac{\wt{\bfk}_n(q)}{n} = \lim_{n \to \infty}\frac{\bfk_n(q)}{n} = r_q. 
\end{align}
By applying Proposition \ref{P:iBusFn} and \eqref{E:33} in the first equality below, followed by  Proposition \ref{P:iBusFlat} and \eqref{E:32}, we obtain that, almost surely, 
\begin{align}
\label{E:34}
\begin{split}
&\lim_{n \to \infty} \ul{\I}_{\bfu, (\wt{\bfk}_n(q), n)}[\omega^{\bfa, \bfb}] = \sI^{r_q}_{(i, j)}[\omega^{\bfa, \bfb}] = \lim_{k \to \infty} \sI_{\bfu}^{\uparrow, k}[\omega^{\bfa, \bfb}].
\end{split}
\end{align}
Then, by \eqref{E:29}, the first part of \eqref{E:31}, and Corollary \ref{C:ThinBuse}, for all $k \ge \mathbf k_0(q)$, we have
\[
\sI_{\bfu}^{\uparrow, k}[\omega^{\bfa, \bfb}]= \sI_{\bfu}^{\uparrow, \bfk_0(q)}[\omega^{\bfa, \bfb}].
\]
Hence, \eqref{E:34} gives us the almost sure limit
\[
\lim_{n \to \infty} \ul{\I}_{\bfu, (\wt{\bfk}_n(q), n)}[\omega^{\bfa, \bfb}]  = \sI_{\bfu}^{\uparrow, \bfk_0(q)}[\omega^{\bfa, \bfb}].
\]
Consequently,  
\begin{align}
\label{E:35}
\lim_{n \to \infty} \bfP\{\sI_{\bfu}^{\bfk_0(q), \uparrow}[\omega^{\col, \wh{\bfk}_n^{q}}] - \ul{\I}_{\bfu, (\wt{\bfk}_n(q), n)}[\omega^{\col, \wh{\bfk}_n^{q}}]> \epsilon\}  = 0,\quad \forall \bfu \in \rightset_{k,\ell}.
\end{align}
Similarly, one finds that 
\begin{align}
\label{E:36}
\lim_{n \to \infty} \bfP\{\sJ_{\bfv}^{\bfk_0(q), \uparrow}[\omega^{\col, \wh{\bfk}_n^{q}}]-\ul{\J}_{\bfv, (\wt{\bfk}_n(q), n)}[\omega^{\col, \wh{\bfk}_n^{q}}] < -\epsilon\} = 0,\quad \forall \bfv \in \upset_{k,\ell}.
\end{align}

Now, returning to \eqref{E:3} and using \eqref{E:35}--\eqref{E:36} for each $q \in [d]$, one obtains that 
\begin{align}
\label{E:37}
\begin{split}
&\varliminf_{n \to \infty} \bfP\{E_n\} \ge \bfP\{E^\epsilon\} \\
&= \bfP\{\sI_{\bfu}^{\bfk_0(p), \uparrow}[\omega^{\col, \bfk_0}] > x_{\bfu,p} + \epsilon,\; \sJ_{\bfv}^{\bfk_0(p), \uparrow}[\omega^{\col, \bfk_0}] < y_{\bfv,p} - \epsilon,\;\forall\bfu,\bfv,p \}. 
\end{split}
\end{align}
Through monotone convergence, as $\epsilon \to 0$, the last probability tends to 
\begin{align*}
&\bfP\{\sI_{\bfu}^{\bfk_0(p), \uparrow}[\omega^{\col, \bfk_0}] > x_{\bfu,p},\; \sJ_{\bfv}^{\bfk_0(p), \uparrow}[\omega^{\col, \bfk_0}] < y_{\bfv,p},\;\forall \bfu,\bfv,p \}, 
\end{align*}
completing the proof of part \eqref{itm:ge1}. 
\end{proof}

\subsection{Expressing the upper and lower bounds in terms of LPP in a finite grid}

At this point of the argument, one has the following inequalities by combining the limits in \eqref{E:CDFLim} with Lemmas \ref{L:CDFBd} and \ref{L:CDFLim}: 
\begin{align}
\label{E:CDFBds}
\begin{split}
&\quad\;\bfP\{\sI_{\bfu}^{\bfk_0(p), \uparrow}[\omega^{\col, \bfk_0}] > x_{\bfu,p},\; \sJ_{\bfv}^{\bfk_0(p), \uparrow}[\omega^{\col, \bfk_0}] < y_{\bfv,p},\;\forall \bfu,\bfv,p \} \\ 
&\le \bfP\{\sI^{r_p}_{(i, j)}[\omega] > x_{\bfu,p},\; \sJ^{r_p}_{(i, j)}[\omega] < y_{\bfv,p},\;\forall \bfu,\bfv,p\} \\
&\le \bfP\{\sI_{\bfu}^{\bfs_0(p), \rightarrow}[\omega^{\row, \bfs_0}] > x_{\bfu,p},\; \sJ_{\bfv}^{\bfs_0(p), \rightarrow}[\omega^{\row, \bfs_0}] < y_{\bfv,p},\;\forall \bfu,\bfv,p \}.
\end{split}
\end{align}
Hence, to obtain Theorem \ref{T:BusMar}, it remains to show 
that the inequalities in \eqref{E:CDFBds} are equalities and identify their common value in terms of the $\eta$-weights given by \eqref{E:auxw}. To achieve this, we first record a few more auxiliary facts.   

For each $\bfy \le \bfz \in \Z^2$, define the map $\w \mapsto \wt \w^{\bfy,\bfz}$ as follows: Here, the input and image are both configurations of weights indexed by $\Z^2$, and for $\w = \{\w_\bfx:\bfx \in \Z^2\}$, the image $\wt \w^{\bfy,\bfz} = \{\wt \w_\bfx^{\bfy,\bfz}: \bfx \in \Z^2\}$ is defined by
\begin{align}
\label{E:wInd}
\wt{\w}_{\bfx}^{\bfy,\bfz} \coloneqq 
\begin{cases}
\w_\bfx \quad &\text{ if } \bfx \le \bfy-(1, 1) \\ 
\ul{\I}_{\bfx, \bfz}[\w] \quad &\text{ if } \bfx \cdot (1, 0) < \bfy \cdot (1, 0) \text{ and }  \bfx \cdot (0, 1) = \bfy \cdot (0, 1) \\ 
\ul{\J}_{\bfx, \bfz}[\w] \quad &\text{ if } \bfx \cdot (0, 1) < \bfy \cdot (0, 1) \text{ and }  \bfx \cdot (1, 0) = \bfy \cdot (1, 0) \\ 
0 \quad &\text{ otherwise. }
\end{cases}
\end{align}
In words, this map leaves the weights strictly to the southeast of $\bfy$ unchanged, sets weights to the left of $\bfy$ using horizontal increments of last-passage times to the point $\bfz$, and sets the weights below $\bfy$ to be vertical increments of last-passage times to the point $\bfz$. All other weights are set to $0$; in particular, $\wt \w_{\bfy}^{\bfy,\bfz} = 0$.

The last-passage times computed with the $\w$-weights and the induced weights are related as follows:  
\begin{align}
\label{E:IndLpp}
\Lp_{\bfx, \bfy}[\wt{\w}^{\bfy,\bfz}] = \Lp_{\bfx, \bfz}[\w] - \Lp_{\bfy, \bfz}[\w] \quad \text{ for } \bfx \le \bf y. 
\end{align}
For a proof of \eqref{E:IndLpp}, see \cite[Lemma A.1]{sepp-cgm-18} (there, the boundary weights are along a southwest boundary, while our boundary weights are along a northeast boundary; the two are equivalent by reflection). As an immediate consequence of \eqref{E:IndLpp}, one obtains the 
following increment identities for $\bfx \le \bfy$: 
\begin{align}
\label{E:IndInc}
\begin{split}
\ul{\I}_{\bfx, \bfy}[\wt{\w}^{\bfy,\bfz}] &= \ul{\I}_{\bfx, \bfz}[\w] \quad \text{ if } \bfx \cdot (1, 0) < \bfy \cdot (1, 0),\\ 
\ul{\J}_{\bfx, \bfy}[\wt{\w}^{\bfy,\bfz}] &= \ul{\J}_{\bfx, \bfz}[\w] \quad \text{ if } \bfx \cdot (0, 1) < \bfy \cdot (0, 1). 
\end{split}
\end{align}

We next revisit the setting of the inhomogeneous weights $\omega^{\bfa, \bfb}$ at the outset of Subsection \ref{S:iBusFn} and record the following law of large numbers, which is a corollary of \cite[Theorem 3.7]{Emra_Janj_Sepp_21}. The functions $\A$ and $\B$ in the statement were defined in \eqref{E:ABInt}. We also recall that $\alpha$ and $\beta$ are the limit measures appearing in assumption \eqref{A:VagConv}.  
\begin{prop}
\label{P:ThinLLN}
The following limits hold almost surely for each $i, j \in \bbZ_{>0}$.
\begin{enumerate}[\normalfont (a)]
\item If $m \in \bbZ_{\ge j}$ then 
\begin{align*}
\lim_{n \to \infty} \frac{1}{n} \cdot \Lp_{(i, j), (m, n)}[\omega^{\bfa, \bfb}] &= \B^\beta(-\min \bfa_{i:m}) = \int_\bbR \frac{\beta(\dd b)}{b+\min \bfa_{i:m}}.
\end{align*}
\item If $n \in \bbZ_{\ge i}$ then 
\begin{align*} 
\lim_{m \to \infty} \frac{1}{m} \cdot \Lp_{(i, j), (m, n)}[\omega^{\bfa, \bfb}] &= \A^\alpha(\min \bfb_{j:n}) = \int_\bbR \frac{\alpha(\dd a)}{a+\min \bfb_{j:n}}. 
\end{align*}
\end{enumerate}
\end{prop}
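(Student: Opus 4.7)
The plan is to prove part (a); part (b) follows by symmetry, since reflection across the main diagonal of $\bbZ_{>0}^2$ swaps rows with columns, the roles of $\bfa$ and $\bfb$, and the functions $\A^\alpha$ and $\B^\beta$. Fix $i, j, m \in \bbZ_{>0}$ with $i \le m$, set $c = \min \bfa_{i:m}$, and pick any $k^* \in \{i, \dotsc, m\}$ with $a_{k^*} = c$. The strategy is to sandwich $n^{-1} \Lp_{(i,j),(m,n)}[\omega^{\bfa,\bfb}]$ between matching lower and upper bounds of $\B^\beta(-c)$.

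For the lower bound, the specific path going from $(i, j)$ horizontally to $(k^*, j)$, then vertically to $(k^*, n)$, then horizontally to $(m, n)$ collects every weight in column $k^*$ between rows $j$ and $n$, plus an $O(1)$ boundary remainder $R$ that does not depend on $n$. Thus
\begin{align*}
\Lp_{(i,j),(m,n)}[\omega^{\bfa,\bfb}] \ge \sum_{\ell = j}^{n} \omega^{\bfa,\bfb}_{(k^*, \ell)} - R.
\end{align*}
The summands are independent exponentials with means $(c + b_\ell)^{-1}$, each at most $(c + \ul{\beta})^{-1}$, which is finite because $c + \ul{\beta} \ge \inf \bfa + \inf \bfb > 0$ by \eqref{A:Inf}. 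Combining the vague convergence in \eqref{A:VagConv} with Kolmogorov's strong law of large numbers gives
\begin{align*}
\frac{1}{n}\sum_{\ell = j}^n \omega^{\bfa,\bfb}_{(k^*, \ell)} \xrightarrow[n\to\infty]{\text{a.s.}} \int_\bbR \frac{\beta(\dd b)}{c + b} = \B^\beta(-c),
\end{align*}
yielding the desired liminf bound.

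The upper bound is the main obstacle, and the most efficient route is to invoke \cite[Theorem 3.7]{Emra_Janj_Sepp_21}: its shape function for inhomogeneous exponential LPP on $\bbZ_{>0}^2$, specialized to the degenerate vertical direction obtained by fixing the width $m - i + 1$ and sending the height to infinity, reduces via a direct calculation to $\B^\beta(-c)$. A self-contained alternative is to decompose any up-right path column-by-column, apply the monotonicity $(a_k + b)^{-1} \le (c + b)^{-1}$ to stochastically dominate each column's contribution by exponentials with the ``best'' row rate, take a union bound over the polynomially many choices of column transition rows, and apply standard concentration for sums of independent exponentials. Either way, the delicate step is ruling out the possibility that a geodesic gains a linear correction by spending a nonvanishing fraction of its length in columns with $a_k > c$, which would require exceedingly favorable row weights to compensate.
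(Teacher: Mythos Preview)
Your approach matches the paper's: the paper does not give an independent proof of this proposition but simply records it as a corollary of \cite[Theorem~3.7]{Emra_Janj_Sepp_21}, which is exactly the reference you invoke for the upper bound. Your explicit lower-bound argument via the column-$k^*$ path and Kolmogorov's SLLN is a correct and helpful addition that the paper omits; the minor imprecision about the ``boundary remainder $R$'' (the row-$n$ horizontal segment does depend on $n$, but it is nonnegative and hence can simply be dropped) does not affect the validity of the bound.
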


For $m, n \in \bbZ_{>0}$ and any real sequence $\bfc = (c_k)_{k \in \bbZ_{>0}}$, define the weights $\omega^{\bfa, \bfb, \col, m, \bfc}$ and $\omega^{\bfa, \bfb, \row, n, \bfc}$ on $\bbZ_{>0}^2$ by 
\begin{align}
\label{E:iw-2}
\omega^{\bfa, \bfb, \col, m, \bfc}_{(i, j)} = 
\begin{cases}
c_j \quad &\text{ if } i = m \\ 
\omega^{\bfa, \bfb}_{(i, j)} \quad &\text{ otherwise }
\end{cases}
\quad \text{ and } \quad 
\omega^{\bfa, \bfb, \row, n, \bfc}_{(i, j)} = 
\begin{cases}
c_i \quad &\text{ if } j = n \\ 
\omega^{\bfa, \bfb}_{(i, j)} \quad &\text{ otherwise. }
\end{cases}
\end{align} 
The next lemma is a slight variation 
of \cite[Lemma 4.23]{Emra_Janj_Sepp_25} and proved in the same manner.  
\begin{lem}
\label{L:ThinShp}
The following statements hold for any $k, \ell \in \bbZ_{>0}$. 
\begin{enumerate}[\normalfont (a)]
\item \label{it:thinA} Let $m \in \bbZ_{>k}$ and assume that $a_m < a_i$ for $i \in [m-1] \smallsetminus [k]$. Then, almost surely, there exists a {\rm(}random{\rm)} $N_0 \in \bbZ_{>\ell}$ such that 
\begin{align*}
\Lp_{(k, \ell), (m, n)}[\omega^{\bfa, \bfb, \row, n, \bfc}] = c_m + \Lp_{(k, \ell), (m, n-1)}[\omega^{\bfa, \bfb}] \quad \text{ for } n \ge N_0. 
\end{align*}
\item \label{it:thinB} Let $n \in \bbZ_{>k}$ and assume that $b_n < b_j$ for $j \in [n-1] \smallsetminus [\ell]$. Then, almost surely, there exists a {\rm(}random{\rm)} $M_0 \in \bbZ_{>\ell}$ such that 
\begin{align*}
\Lp_{(k, \ell), (m, n)}[\omega^{\bfa, \bfb, \col, m, \bfc}] = c_n + \Lp_{(k, \ell), (m-1, n)}[\omega^{\bfa, \bfb}] \quad \text{ for } m \ge M_0. 
\end{align*}
\end{enumerate}
\end{lem}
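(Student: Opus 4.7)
I plan to prove part \eqref{it:thinA}; part \eqref{it:thinB} will follow by an entirely analogous argument with the roles of rows and columns interchanged. The first step is to decompose the LPP in the modified environment by the column at which an optimal path exits into row $n$. Since the weights on row $n$ are the deterministic values $c_\bullet$, every up-right path from $(k,\ell)$ to $(m,n)$ enters row $n$ by a vertical step $(m', n-1) \to (m', n)$ for some $m' \in [k, m]$ and then traverses row $n$ rightward to $(m,n)$, which gives
\begin{align*}
\Lp_{(k,\ell),(m,n)}[\omega^{\bfa,\bfb,\row,n,\bfc}] = \max_{m' \in [k,m]} \Bigl\{\Lp_{(k,\ell),(m',n-1)}[\omega^{\bfa,\bfb}] + \sum_{i=m'}^{m} c_i\Bigr\}.
\end{align*}
The claimed identity is equivalent to this maximum being attained at $m' = m$ for all sufficiently large $n$. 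Since there are only finitely many $m' \in [k, m-1]$ to rule out and $\sum_{i=m'}^{m-1} c_i$ is a fixed finite constant, it suffices to verify, for each such $m'$, that
\begin{align*}
\Lp_{(k,\ell),(m,n-1)}[\omega^{\bfa,\bfb}] - \Lp_{(k,\ell),(m',n-1)}[\omega^{\bfa,\bfb}] \longrightarrow \infty \quad \text{a.s.~as } n \to \infty.
\end{align*}

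To establish this divergence I will invoke Proposition \ref{P:ThinLLN}\textup{(a)}: the two last-passage times above grow linearly with rates $\B^\beta(-\min \bfa_{k:m})$ and $\B^\beta(-\min \bfa_{k:m'})$, respectively, and $\B^\beta$ is strictly increasing on its domain. The hypothesis $a_m < a_i$ for $i \in [m-1] \smallsetminus [k]$ yields $\min \bfa_{k:m} = \min(a_k, a_m)$, and since $a_{k+1}, \ldots, a_{m'}$ all strictly exceed $a_m$, we also have $\min \bfa_{k:m'} \ge \min \bfa_{k:m}$. In the generic case $a_k > a_m$ this inequality is strict, so the LPP difference diverges linearly in $n$.

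The main obstacle is the remaining case $a_k \le a_m$, in which the two LLN rates coincide at $\B^\beta(-a_k)$ and the divergence is sub-linear. For this sub-case I will adapt the finer argument in the proof of \cite[Lemma 4.23]{Emra_Janj_Sepp_24}. The idea is to introduce an intermediate row $j_n$ with $j_n \to \infty$ and $j_n / n \to 0$, decompose any geodesic to $(m, n-1)$ at row $j_n$, and compare the bottom and top substrips separately. The bottom substrip $[k,m] \times [\ell, j_n]$ contributes identically to leading order, while in the top thin strip $[k, m] \times [j_n, n-1]$, the hypothesis that $a_m$ is the strict minimum among $a_{k+1}, \ldots, a_m$ ensures, via a Glynn--Whitt-type analysis (or equivalently the endpoint Busemann distributions from Proposition \ref{P:ThinBuse}), that paths ending at $(m, n-1)$ enjoy an asymptotic advantage of order $\sqrt{n - j_n}$ over any path ending at $(m', n-1)$ with $m' < m$. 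Sending $n \to \infty$ then delivers the required divergence. Part \eqref{it:thinB} is handled by the same scheme with rows and columns swapped, using Proposition \ref{P:ThinLLN}\textup{(b)} in place of \textup{(a)}.
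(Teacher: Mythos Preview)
Your decomposition by the column $m'$ at which the geodesic enters row $n$, together with the use of Proposition~\ref{P:ThinLLN} to obtain linear divergence when $a_m < a_k$, is exactly the paper's argument. The paper packages all the alternatives $m' \le m-1$ into the single term $\Lp_{(k,\ell),(m-1,n)}[\omega^{\bfa,\bfb,\row,n,\bfc}]$ and bounds its distance from $\Lp_{(k,\ell),(m-1,n-1)}[\omega^{\bfa,\bfb}]$ by $\sum_{i=k}^{m-1}|c_i|$, but this is the same content as your termwise comparison.

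The problem is your treatment of the case $a_k \le a_m$. The paper's proof simply asserts $\min \bfa_{k:m} = a_m$ and $a_m < \min \bfa_{k:m-1}$, both of which already presuppose $a_m < a_k$; the stated range $[m-1]\smallsetminus[k]$ is evidently a slip for $[m-1]\smallsetminus[k-1]$, and every application of the lemma in the paper (around \eqref{E:48}) satisfies the stronger condition. Your plan to rescue the sub-case $a_k < a_m$ by a $\sqrt{n}$-order argument cannot work, because in that regime the conclusion is \emph{false}. Take $k=1$, $m=2$, $a_1 < a_2$, any admissible $\bfb$: the difference
\[
D_n \coloneqq \Lp_{(1,\ell),(2,n)}[\omega^{\bfa,\bfb}] - \Lp_{(1,\ell),(1,n)}[\omega^{\bfa,\bfb}]
\]
satisfies the Lindley-type recursion $D_n = \omega^{\bfa,\bfb}_{(2,n)} + (D_{n-1} - \omega^{\bfa,\bfb}_{(1,n)})_+$ with strictly negative drift, so $(D_n)_n$ is tight and regenerates infinitely often; in particular $\liminf_n D_n < c_1$ almost surely whenever the constant $c_1$ is chosen large enough. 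On those $n$, the path entering row $n$ at column $1$ strictly beats the one entering at column $2$, so no $N_0$ with the claimed property exists. The correct response to this sub-case is not to devise a finer argument but to recognise that it lies outside the intended hypothesis.
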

\begin{proof} We prove \eqref{it:thinA}, with \eqref{it:thinB} following a symmetric proof.
Applying Proposition \ref{P:ThinLLN} twice, one obtains the a.s.\ limits 
\begin{align}
\label{E:78}
\begin{split}
\lim_{n \to \infty} \frac{1}{n} \cdot \Lp_{(k, \ell), (m, n-1)}[\omega^{\bfa, \bfb}] &= \int_{\bbR} \frac{\beta(\dd b)}{b + \min \bfa_{k: m}} = \int_{\bbR} \frac{\beta(\dd b)}{b + a_m}, \\ 
\lim_{n \to \infty} \frac{1}{n} \cdot \Lp_{(k, \ell), (m-1, n-1)}[\omega^{\bfa, \bfb}] &= \int_{\bbR} \frac{\beta(\dd b)}{b + \min \bfa_{k: m-1}}. 
\end{split}
\end{align}
Noting the bound
\begin{align}
\label{E:79}
|\Lp_{(k, \ell), (m-1, n)}[\omega^{\bfa, \bfb, \row, n, \bfc}]-\Lp_{(k, \ell), (m-1, n-1)}[\omega^{\bfa, \bfb}]| \le \sum_{i=k}^{m-1} |c_i|, 
\end{align}
one also has 
\begin{align}
\label{E:80}
\lim_{n \to \infty} \frac{1}{n} \cdot \Lp_{(k, \ell), (m-1, n)}[\omega^{\bfa, \bfb, \row, n, \bfc}] &= \int_{\bbR} \frac{\beta(\dd b)}{b + \min \bfa_{k: m-1}} \quad \text{ a.s. }
\end{align}
By our assumptions, $\beta \neq 0$ and $a_m < \min \bfa_{k: m-1}$. Therefore, the limit in \eqref{E:80} is strictly less than the first limit in \eqref{E:78}. This implies the conclusion in Part \eqref{it:thinA}. 
\end{proof}

We are now ready to carry out the last piece of our argument. Recall \eqref{E:kl0}, where we defined $\bfk_0(p) = k+p-1$ and $\bfs_0(p) = \ell+ d-p$ for $p \in [d]$. Observe that for $\bfz_p$ defined in Theorem \ref{T:BusMar}, we have $\bfz_p = (\bfk_0(p),\bfs_0(p))$. 
\begin{lem}
\label{L:BusDisId}
The following hold.  
\begin{enumerate}[\normalfont (a)]
\item \label{itm:eq1}$
\begin{aligned}[t]
&\quad \;\bfP\{\sI_{\bfu}^{\bfk_0(p), \uparrow}[\omega^{\col, \bfk_0}] > x_{\bfu,p},\; \sJ_{\bfv}^{\bfk_0(p), \uparrow}[\omega^{\col, \bfk_0}] < y_{\bfv,p},\;\forall \bfu,\bfv,p \} \\ 
&= \bfP\{\ul{\I}_{\bfu, (\bfk_0(p), \bfs_0(p))}[\eta] > x_{\bfu,p},\; \ul{\J}_{\bfv, (\bfk_0(p), \bfs_0(p))}[\eta] < y_{\bfv,p},\;\forall \bfu,\bfv,p \}. 
\end{aligned}
$
\vspace{0.1in}
\item \label{itm:eq2} $
\begin{aligned}[t]
&\quad\;\bfP\{\sI_{\bfu}^{\bfs_0(p), \rightarrow}[\omega^{\row, \bfs_0}] > x_{\bfu,p},\; \sJ_{\bfv}^{\bfs_0(p), \rightarrow}[\omega^{\row, \bfs_0}] < y_{\bfv,p},\;\forall \bfu,\bfv,p \} \\ 
&= \bfP\{\ul{\I}_{\bfu, (\bfk_0(p), \bfs_0(p))}[\eta] > x_{\bfu,p},\; \ul{\J}_{\bfv, (\bfk_0(p), \bfs_0(p))}[\eta] < y_{\bfv,p},\;\forall \bfu,\bfv,p \}.
\end{aligned}
$
\end{enumerate}
\end{lem}
%


Since the proof of Lemma \ref{L:BusDisId} is somewhat long and technical, we present it as a sequence of smaller lemmas. We also prove part \eqref{itm:eq1} only because part \eqref{itm:eq2} follows from a symmetric argument.  

Our first step is to express the thin Busemann functions in part (a) in terms of induced weights defined as follows. For each $p \in [d]$, let
\begin{align}
\label{E:39}
\wt{\omega}^p_{(i, j)} \coloneqq \one_{\{j < \ell\}} \cdot \omega^{\col, \bfk_0}_{(i, j)} + \one_{\{j = \ell\}} \cdot \one_{\{i < \bfk_0(p)\}} \cdot \sI_{(i, \ell)}^{\bfk_0(p), \uparrow}[\omega^{\col, \bfk_0}]
\end{align}
for $i \in [\bfk_0(p)]$ and $j \in [\ell]$. 

\begin{lem}
\label{L:BusDisId-1}
For each $p \in [d]$, 
\begin{align}
\label{E:40}
\begin{split}
\sI_{\bfu}^{\bfk_0(p), \uparrow}[\omega^{\col, \bfk_0}] &= \ul{\I}_{\bfu, (\bfk_0(p), \ell)}[\wt{\omega}^p] \quad \text{ for } \bfu \in \rightset_{k,\ell},\quad\text{and} \\
\sJ_{\bfv}^{\bfk_0(p), \uparrow}[\omega^{\col, \bfk_0}] &= \ul{\J}_{\bfv, (\bfk_0(p), \ell)}[\wt{\omega}^p] \quad \text{ for } \bfv \in \upset_{k,\ell}. 
\end{split}
\end{align}    
\end{lem}
\begin{proof}
For each $p \in [d]$ and $n \in \bbZ_{\ge \ell}$, consider the prelimit version of the $\wt{\omega}^p$-weights given by 
\begin{align*}
\wt{\omega}^{p, n}_{(i, j)} \coloneqq \one_{\{j < \ell\}} \cdot \omega^{\col, \bfk_0}_{(i, j)} + \one_{\{j = \ell\}} \cdot \one_{\{i < \bfk_0(p)\}} \cdot \ul{\I}_{(i, \ell), (\bfk_0(p), n)}[\omega^{\col, \bfk_0}]    
\end{align*}
for $i \in [\bfk_0(p)]$ and $j \in [\ell]$. 
Then it follows from \eqref{E:IndInc} that 
\begin{equation} \label{eq:40_prelim}
\begin{aligned}
    &\ul \I_{\bfu,(\bfk_0(p),n)}[\omega^{\col,\bfk_0}] = \ul \I_{\bfu,(\bfk_0(p),\ell)}[\wt \omega^{p,n}],\quad \bfu \in \rightset_{k,\ell},\quad\text{and} \\
    &\ul \J_{\bfv,(\bfk_0(p),n)}[\omega^{\col,\bfk_0}] = \ul \J_{\bfv,(\bfk_0(p),\ell)}[\wt \omega^{p,n}],\quad \bfv \in \upset_{k,\ell}.
    \end{aligned}
\end{equation}
As $n \to \infty$, the definition \eqref{E:ThinBuse}  implies that the $\omega^{p,n}$-weights converge vertex-wise to the $\wt{\omega}^p$-weights almost surely. Taking limits as $n \to\infty$ in \eqref{eq:40_prelim} and using continuity of last-passage times as a function of the weights, we obtain the claim. 
\end{proof}

As preparation for an inductive argument, for each $q \in [d]$ and $p \in [d-q+1]$, consider the $\eta^{p, q}$-weights on $[\bfk_0(p)] \times [\ell+q-1]$ given by  
\begin{align}
\label{E:38}
\begin{split}
\eta_{(i, j)}^{p, q} \coloneqq \one_{\{j < \ell+q-1\}} \cdot \eta_{(i, j)} + \one_{\{j = \ell+q-1\}} \cdot \one_{\{i < \bfk_0(p)\}} \cdot \sI_{(i, \ell+q-1)}^{\bfk_0(p), \uparrow}[\omega^{\col, \bfk_0}],
\end{split}
\end{align}
for $i \in [\bfk_0(p)]$ and $j \in [\ell+q-1]$. For $q = 1$ in particular, comparing definitions \eqref{E:auxw} and \eqref{E:wh} 
shows that 
\begin{align}
\label{E:41}
\begin{split}
\{\wt{\omega}_{(i, j)}^p: p \in [d], (i, j) \in [\bfk_0(p)] \times [\ell]\} \deq \{\eta_{(i, j)}^{p, 1}: p \in [d], (i, j) \in [\bfk_0(p)] \times [\ell]\}. 
\end{split}
\end{align}
Combining \eqref{E:41} with Lemma \ref{L:BusDisId-1}, one finds that the first probability in Lemma \ref{L:BusDisId}\eqref{itm:eq1} is 
\begin{align}
\label{E:42}
\begin{split}
&\quad \;\bfP\{\sI_{\bfu}^{\bfk_0(p), \uparrow}[\omega^{\col, \bfk_0}] > x_{\bfu,p},\; \sJ_{\bfv}^{\bfk_0(p), \uparrow}[\omega^{\col, \bfk_0}] < y_{\bfv,p},\;\forall \bfu,\bfv,p \} \\ 
&= \bfP\{\ul{\I}_{\bfu, (\bfk_0(p), \ell)}[\eta^{p, 1}] > x_{\bfu,p},\; \ul{\J}_{\bfv, (\bfk_0(p), \ell)}[\eta^{p, 1}] < y_{\bfv,p},\;\forall \bfu,\bfv,p\}, 
\end{split}
\end{align} 
which will be the starting point of our induction. 

For each $q \in [d+1]$, we next introduce the event
\begin{align}
\label{E:43}
\begin{split}
&E^q \coloneqq \\
&\quad \Bigl\{\ul{\I}_{\bfu, (\bfk_0(p), \ell+q-1)}[\eta^{p, q}] > x_{\bfu,p},\; \ul{\J}_{\bfv, (\bfk_0(p), \ell+q-1)}[\eta^{p, q}] < y_{\bfv,p},\;\forall \bfu,\bfv,\; p \in [d-q+1], \\ 
&\quad \ \ \ul{\I}_{\bfu, (\bfk_0(p), \bfs_0(p))}[\eta] > x_{\bfu,p},\; \ul{\J}_{\bfv, (\bfk_0(p), \bfs_0(p))}[\eta] < y_{\bfv,p},\;\forall \bfu,\bfv,\; p \in [d] \setminus [d-q+1]\Bigr\}.  
\end{split}
\end{align}
Note that $E^1$ is the event on the right-hand side of \eqref{E:42} while $E^{d+1}$ is the 
event on the right-hand side of 
Lemma \ref{L:BusDisId}\eqref{itm:eq1}. Thus, our aim is to prove that $\bfP\{E^1\} = \bfP\{E^{d+1}\}$. To this end, we will in fact show that $\bfP\{E^q\} = \bfP\{E^{q+1}\}$ for each $q \in [d]$. 

Recall that we defined a sequence $(c_i)_{i \in \Z_{>0}}$ of i.i.d.\ $\Exp\{1\}$  random variables  on our probability space, independent of both the i.i.d.\ environment $\omega$ and the inhomogeneous environment $\eta$ from \eqref{E:def_eta}. 
For $n \in \bbZ_{>\ell+q-1}$, $q \in [d]$,  $i \in [k+d-q]$ and  $\epsilon > 0$ sufficiently small so that
\begin{align}
\label{E:46}
\zeta(r_p) + \epsilon < \zeta(r_{p+1}) \quad \text{ for } p \in [d-1],
\end{align}
we define
\begin{equation} \label{ci_def}
c_i^{q,\epsilon} \coloneqq \begin{cases}
\bigl(\zeta(r_{d-q + 1}) + \epsilon\bigr)^{-1} c_i &\text{if }i< k \\
\bigl(\zeta(r_{d-q+1})+\epsilon-\zeta(r_{i-k+1})\bigr)^{-1} c_i &\text{if }i \ge k.
\end{cases}
\end{equation}
Now, for $q \in [d], n \ge 1$, and $\epsilon > 0$ satisfying \eqref{E:46}, consider the weights $\wh{\omega}^{q, n, \epsilon}$ defined on $[k+d-q] \times [n]$ such that, for $i \in [k+d-q]$, 
\begin{align}
\label{E:44}
\begin{split}
&\wh{\omega}^{q, n, \epsilon}_{(i, j)} \coloneqq 
\begin{cases} \eta_{(i, j)} \quad &\text{ if } j < \ell+q-1 \\ 
\omega_{(i, j)}^{\col, \bfk_0} \quad &\text { if } \ell + q -1 \le j < n  \\
c_i^{q,\epsilon} &\text{ if }j = n.
\end{cases} 
\end{split}
\end{align} 
 From the definition of $c_i^{q,\epsilon}$,
 \[
 \wh{\omega}^{q, n, \epsilon}_{(i, n)} \sim 
\begin{cases}
\Exp\{\zeta(r_{d-q+1})+\epsilon\} \quad &\text{ if } i < k, \\ 
\Exp\{\zeta(r_{d-q+1})+\epsilon-\zeta(r_{i-k+1})\} \quad &\text{ if } i \ge k.
\end{cases}
 \]
In particular, \eqref{E:44} places a large $\Exp\{\epsilon\}$-distributed weight at vertex $(k+d-q, n)$. One can interpret $\wh{\omega}^{q, n, \epsilon}$ as inhomogeneous weights $\omega^{\bfa, \bfb}$ on the grid $[k+d-q] \times [n]$ with parameters 
\begin{equation}\label{E:45}
\begin{aligned}
&a_i = 
\begin{cases}
0 \quad &\text{ if } i < k \\
-\zeta(r_{i-k+1}) \quad &\text{ if } i \ge k
\end{cases}
\quad \text{ and } \\
&b_j = 
\begin{cases}
\zeta(r_{\ell+d-j}) \quad &\text{ if } \ell \le j  < \ell+q-1 \\ 
\zeta(r_{d-q+1})+\epsilon \quad &\text{ if } j = n \\ 
1 \quad &\text{ otherwise }
\end{cases}
\end{aligned}
\end{equation}
for $i \in [k+d-q]$ and $j \in [n]$. 

Next, we introduce a prelimiting version of the $\eta^{p, q}$-weights in \eqref{E:38} by 
 \begin{align}
 \label{E:51}
 \wh{\eta}_{(i, j)}^{p, q, n, \epsilon} \coloneqq \one_{\{j < \ell+q-1\}} \cdot \eta_{(i, j)} + \one_{\{j = \ell+q-1\}} \cdot \one_{\{i < \bfk_0(p)\}} \cdot \ul{\I}_{(i, \ell+q-1), (\bfk_0(p), n)}[\wh{\omega}^{n, q, \epsilon}],
 \end{align}
 for $i \in [\bfk_0(p)]$ and $j \in [\ell+q-1]$. Likewise, consider the prelimiting  
 event
 \begin{align}
 \label{E:52}
 \begin{split}
 &\wh{E}^{q, n, \epsilon} \coloneqq \\ &\Bigl\{\ul{\I}_{\bfu, (\bfk_0(p), \ell+q-1)}[\wh{\eta}^{p, q, n, \epsilon}] > x_{\bfu,p},\;\ul{\J}_{\bfv, (\bfk_0(p), \ell+q-1)}[\wh{\eta}^{p, q, n, \epsilon}] < y_{\bfv,p},\;\forall \bfu,\bfv,\;p \in [d-q + 1], \\
& \ \ \ul{\I}_{\bfu, (\bfk_0(p), \bfs_0(p))}[\eta] > x_{\bfu,p},\; \ul{\J}_{\bfv, (\bfk_0(p), \bfs_0(p))}[\eta] < y_{\bfv,p},\;\forall \bfu,\bfv,\;p \in [d]\setminus[d-q+1]\Bigr\}.
 \end{split}
 \end{align}
 This is the analogue of the event $E^q$, with the $\eta^{p, q}$-weights replaced with $\wh \eta^{p, q, n, \epsilon}$. 

\begin{lem}
\label{L:BusDisId-2}    
$\lim \limits_{n \to \infty} \bfP\{\wh{E}^{q, n, \epsilon}\} = \bfP\{E^q\}$ for each $q \in [d]$. 
\end{lem}
\begin{proof}
Note that, for each $p \in [d-q+1]$ and $i \in [\bfk_0(p)]$, 
\begin{align}
\label{E:47}
\Lp_{(i, \ell+q-1), (\bfk_0(p), n-1)}[\omega^{\col, \bfk_0}] = \Lp_{(i, \ell+q-1), (\bfk_0(p), n-1)}[\wh{\omega}^{n, q, \epsilon}],
\end{align}
because the two collections of weights in \eqref{E:47} agree on $[\bfk_0(p)] \times ([n-1] \smallsetminus [\ell+q-2])$ by definition \eqref{E:44}.
Since column $\bfk_0(p)$ is more favorable than the earlier columns, Equation \eqref{E:47} and Lemma \ref{L:ThinShp}\eqref{it:thinA} imply the existence of a random $N_0 = N_0^\epsilon \in \bbZ_{\ge \ell+d}$ such that
\begin{align}
\label{E:48}
\Lp_{(i, \ell+q-1), (\bfk_0(p), n-1)}[\omega^{\col, \bfk_0}] + \wh{\omega}^{n, q, \epsilon}_{(\bfk_0(p), n)} = \Lp_{(i, \ell+q-1), (\bfk_0(p), n)}[\wh{\omega}^{n, q, \epsilon}] \quad \text{ for } n \ge N_0. 
\end{align}
It follows from \eqref{E:48} that, for $p \in [d-q+1]$, $i \in [\bfk_0(p)-1]$ and $n \ge N_0$, 
\begin{align}
\label{E:49}
\ul{\I}_{(i, \ell+q-1), (\bfk_0(p), n-1)}[\omega^{\col, \bfk_0}] = \ul{\I}_{(i, \ell+q-1), (\bfk_0(p), n)}[\wh{\omega}^{n, q, \epsilon}].
\end{align} 
 Passing to the limit as $n \to \infty$ and using the definition of the thin Busemann functions in \eqref{E:ThinBuse} yields
 \begin{align}
 \label{E:50}
 \lim_{n \to \infty } \ul{\I}_{(i, \ell+q-1), (\bfk_0(p), n)}[\wh{\omega}^{n, q, \epsilon}] = \sI_{(i, \ell+q-1)}^{\bfk_0(p), \uparrow}[\omega^{\col, \bfk_0}] \quad \text{ a.s. }
 \end{align}
 By \eqref{E:50} and definition of the weights $\eta^{p,q}$ \eqref{E:38} and $\wh \eta^{p,q,n,\epsilon}$ \eqref{E:51}, we see that the weights $\wh \eta_{(i,j)}^{p,q,n,\epsilon}$ converge almost surely to the weights $\eta_{(i,j)}^{p,q}$ as $n \to \infty$. Then, recalling the definition of the event $E^q$ from \eqref{E:43}, the continuity of last-passage increments as a function of the weights implies the claimed convergence. 
\end{proof}

Next introduce the weights $\wc{\omega}^{q, \epsilon}$  on $[k+d-q] \times \bbZ_{>0}$ by  
\begin{equation} \label{eq:etaqedef}
\wc{\omega}^{q,\epsilon}_{(i,\ell + q - 1)}
\coloneqq \begin{cases}
\f{\zeta(r_{d-q+1})}{\zeta(r_{d-q+1}) + \epsilon} \cdot \eta_{(i,\ell + q - 1)} &\text{if }1 \le i < k, \\ \\
\f{\zeta(r_{d-q+1}) - \zeta(r_{i- k + 1})}{\zeta(r_{d-q+1}) + \epsilon - \zeta(r_{i- k + 1})}\cdot \eta_{(i,\ell + q - 1)} &\text{if }k \le i < k + d -q, \\ \\
c_{k+d-q}^{q,\epsilon} &\text{if } i = k + d-q.
\end{cases}
\end{equation}
on row $\ell+q-1$, and by 
\begin{align}
\label{E:54}
\begin{split}
&\wc{\omega}^{q, \epsilon}_{(i, j)} \coloneqq 
\begin{cases} \eta_{(i, j)} \quad &\text{ if } j < \ell+q-1 \\  
\omega_{(i, j)}^{\col, \bfk_0} \quad &\text { if } j > \ell + q -1. 
\end{cases}
\end{split}
\end{align} 
on the remaining rows. By the distributions of the weights $\eta$ specified in \eqref{E:auxw} and definition of $c_{k+d - q}^{q,\epsilon}$ \eqref{ci_def}, we have, for $i \in [k+d-q]$ and $j \in \bbZ_{>0}$,
\begin{align} \label{eq:wcheck}
&\wc{\omega}^{q, \epsilon}_{(i, \ell+q-1)} \sim 
\begin{cases}
\Exp\{\zeta(r_{d-q+1})+\epsilon\} \quad &\text{ if } i < k \\ 
\Exp\{\zeta(r_{d-q+1})+\epsilon-\zeta(r_{i-k+1})\} \quad &\text{ if } i \ge k.
\end{cases}
\end{align}
From here, we can see that $\wc{\omega}^{q, \epsilon}$ is distributionally identical to the inhomogeneous weights $\omega^{\bfa, \bfb}$ on $[k+d-q] \times \bbZ_{>0}$ with parameters 
\begin{equation}\label{E:55}
\begin{aligned}
&a_i = 
\begin{cases}
0 \quad &\text{ if } i < k \\
-\zeta(r_{i-k+1}) \quad &\text{ if } i \ge k
\end{cases}
\quad \text{ and } \\
&b_j = 
\begin{cases}
\zeta(r_{\ell+d-j}) \quad &\text{ if } \ell \le j  < \ell+q-1 \\ 
\zeta(r_{d-q+1})+\epsilon \quad &\text{ if } j = \ell+q-1 \\ 
1 \quad &\text{ otherwise}
\end{cases}
\end{aligned}
\end{equation}
for $i \in [k+d-q]$ and $j \in \bbZ_{>0}$. Hence, the distribution of the $\wc{\omega}^{q, \epsilon}$-weights restricted to $[k+d-q] \times [n]$ is obtained from that of $\wh{\omega}^{q, n, \epsilon}$ by switching the row parameters $b_{\ell+q-1}$ and $b_n$. In particular, the large $\Exp\{\epsilon\}$-distributed weight has now been moved from the vertex $(k+d-q, n)$ to the vertex $(k+d-q, \ell+q-1)$. 

Define the analogue to the $\wh \eta^{p,q,n,\epsilon}$-weights \eqref{E:51} by 
 \begin{align}
 \label{E:56}
 \wc{\eta}_{(i, j)}^{p, q, n, \epsilon} \coloneqq \one_{\{j < \ell+q-1\}} \cdot \eta_{(i, j)} + \one_{\{j = \ell+q-1\}} \cdot \one_{\{i < \bfk_0(p)\}} \cdot \ul{\I}_{(i, \ell+q-1), (\bfk_0(p), n)}[\wc{\omega}^{q, \epsilon}],
 \end{align}
 for $i \in [\bfk_0(p)]$ and $j \in [\ell+q-1]$. Define the analogue to the event $\wh E^{q,n,\epsilon}$ \eqref{E:52} by   
 \begin{align}
 \label{E:57}
 \begin{split}
 &\wc{E}^{q, n, \epsilon} \\ &\coloneqq \Bigl\{\ul{\I}_{\bfu, (\bfk_0(p), \ell+q-1)}[\wc{\eta}^{p, q, n, \epsilon}] > x_{\bfu,p},\;  \ul{\J}_{\bfv, (\bfk_0(p), \ell+q-1)}[\wc{\eta}^{p, q, n, \epsilon}] < y_{\bfv,p},\;\forall \bfu,\bfv,\;p \in [d-q +1], \\ 
&\qquad \ \ \ul{\I}_{\bfu, (\bfk_0(p), \bfs_0(p))}[\eta] > x_{\bfu,p},\; \ul{\J}_{\bfv, (\bfk_0(p), \bfs_0(p))}[\eta] < y_{\bfv,p},\;\forall \bfu,\bfv,\;p \in [d] \setminus [d-q+1]\Bigr\}. 
 \end{split}
 \end{align}

\begin{lem}
\label{L:BusDisId-3}    
$\bfP\{\wh{E}^{q, n, \epsilon}\} = \bfP\{\wc{E}^{q, n, \epsilon}\}$.
\end{lem}
\begin{proof}
An application of Theorem \ref{T:LppInv} yields 
 \begin{align}
 \label{E:81}
 \begin{split}
& \{\ul{\I}_{(i, \ell+q-1), (\bfk_0(p), n)}[\wh{\omega}^{n, q, \epsilon}]: p \in [d-q+1], i \in [\bfk_0(p)-1]\} \\ 
 &\deq \{\ul{\I}_{(i, \ell+q-1), (\bfk_0(p), n)}[\wc{\omega}^{q, \epsilon}]: p \in [d-q+1], i \in [\bfk_0(p)-1]\}, 
 \end{split}
 \end{align}
 and from the independence of these quantities with the weights below row $\ell + q - 1$, the definitions of the weights $\wh \eta^{p,q,n,\epsilon}$ \eqref{E:51} and $\wc  \eta^{p,q,n,\epsilon}$ \eqref{E:56}, imply that these two collections of weights are equal in distribution. Therefore, the claim follows. 
\end{proof}

Using the weights given by 
\begin{align}
\label{E:60}
 \wc{\eta}_{(i, j)}^{p, q, \epsilon} \coloneqq \one_{\{j < \ell+q-1\}} \cdot \eta_{(i, j)} + \one_{\{j = \ell+q-1\}} \cdot \one_{\{i < \bfk_0(p)\}} \cdot \sI_{(i, \ell+q-1)}^{\bfk_0(p), \uparrow}[\wc{\omega}^{q, \epsilon}]
\end{align}
for $i \in [\bfk_0(p)]$ and $j \in [\ell+q-1]$, let  
\begin{align}
\label{E:61}
\begin{split}
 \wc{E}^{q, \epsilon} &\coloneqq\Bigl\{\ul{\I}_{\bfu, (\bfk_0(p), \ell+q-1)}[\wc{\eta}^{p, q, \epsilon}] > x_{\bfu,p},\;  \ul{\J}_{\bfv, (\bfk_0(p), \ell+q-1)}[\wc{\eta}^{p, q, \epsilon}] < y_{\bfv,p},\;\forall \bfu,\bfv,\;p \in[d-q +1], \\ 
&\qquad \ \ \ul{\I}_{\bfu, (\bfk_0(p), \bfs_0(p))}[\eta] > x_{\bfu,p},\; \ul{\J}_{\bfv, (\bfk_0(p), \bfs_0(p))}[\eta] < y_{\bfv,p},\;\forall \bfu,\bfv,\;p \in [d] \setminus [d-q+1]\Bigr\}
 \end{split}
\end{align}
for the limit version of the event in \eqref{E:57}. 

\begin{lem}
\label{L:BusDisId-4}    
$\lim \limits_{n \to \infty} \bfP\{\wc{E}^{q, n, \epsilon}\} = \bfP\{\wc{E}^{q, \epsilon}\}$. 
\end{lem}
\begin{proof}
From \eqref{E:ThinBuse}, one has 
\begin{align}
\label{E:59}
\begin{split}
\lim_{n \to \infty} \ul{\I}_{(i, \ell+q-1), (\bfk_0(p), n)}[\wc{\omega}^{q, \epsilon}] = \sI_{(i, \ell+q-1)}^{\bfk_0(p), \uparrow}[\wc{\omega}^{q, \epsilon}] \quad \text{ for } i \in [\bfk_0(p)-1]. 
\end{split}
\end{align}
Therefore, as $n \to \infty$, the $\wc{\eta}^{p, q, n, \epsilon}$-weights in \eqref{E:56} converge vertex-wise to $\wc{\eta}^{p, q, \epsilon}$. Consequently, one obtains the claim via 
the bounded convergence theorem. 
\end{proof}

\begin{lem}
\label{L:BusDisId-5}    
$\bfP\{E^q\} = \bfP\{\wc{E}^{q, \epsilon}\}$.
\end{lem}
\begin{proof}
Combining Lemmas \ref{L:BusDisId-2}, \ref{L:BusDisId-3}, and \ref{L:BusDisId-4} yields the claim. 
\end{proof}

\begin{rem}
Note that there is no $\epsilon$-dependence on the left-hand side, which can also be seen 
as follows. Proposition \ref{P:ThinBuse} and the parameter structure in \eqref{E:55} imply that the marginals of these weights are in fact given by 
\begin{align}
\label{E:64}
\wc{\eta}_{(i, \ell+q-1)}^{p, q, \epsilon} = \sI_{(i, \ell+q-1)}^{\bfk_0(p), \uparrow}[\wc{\omega}^{q, \epsilon}] \sim 
\begin{cases}
\Exp\{\zeta(r_{p})\} \quad &\text{ if } i < k \\ 
\Exp\{\zeta(r_p) - \zeta(r_{i-k+1})\} \quad &\text{ if } i \ge k
\end{cases}
\end{align}
for $i \in [\bfk_0(p)-1]$.    
\end{rem}

We define the $\wc{\omega}^{q, 0}$-weights on $[k+d-q] \times \bbZ_{>0}$ by \eqref{eq:etaqedef}-\eqref{E:54} with $\epsilon = 0$. Note 
that 
$\wc{\omega}^{q,  \epsilon}$ converges vertex-wise almost surely, as $\epsilon \to 0$, to 
$\wc{\omega}^{q, 0}$. Along row $j = \ell + q - 1$, the limiting weights satisfy the properties in \eqref{eq:wcheck} with $\epsilon = 0$. In particular, an infinite weight appears on row $\ell+q-1$, that is, $\wc{\omega}_{(k+d-q, \ell+q-1)}^{q, 0} = \infty$. 
Define a variation of the $\wc{\omega}^{q, 0}$-weights by redefining the infinite weight as $0$: precisely, for $i \in [k+d-q]$ and $j \in \bbZ_{>0}$,  
\begin{align}
\label{E:66}
\wc{\omega}_{(i, j)}^q \coloneqq 
\begin{cases}
0 \quad &\text{ if } (i,j) = (k+d-q, \ell+q-1) \\
\wc{\omega}_{(i, j)}^{q, 0} \quad &\text{ otherwise.}
\end{cases} 
\end{align}
As can be seen from \eqref{E:auxw} and \eqref{eq:etaqedef} (with $\epsilon = 0$), we have
\begin{align}
\label{E:73}
\begin{split}
&\wc{\omega}^{q}_{(i, j)} = 
\begin{cases} \eta_{(i, j)} \quad &\text{ if } j \le \ell+q-1 \\ 
\omega_{(i, j)}^{\col, \bfk_0} \quad &\text { if } j > \ell + q -1
\end{cases}
\end{split}
\end{align}  
for $i \in [k+d-q]$ and $j \in \bbZ_{>0}$. Then define 
\begin{align}
\label{E:69}
\begin{split}
 \wc{\eta}_{(i, j)}^{p, q} &= \one_{\{j < \ell+q-1\}} \cdot \eta_{(i, j)} \\ 
& + \one_{\{j = \ell+q-1\}} \cdot \one_{\{i < \bfk_0(p)\}} \cdot \{ \one_{\{p < d-q+1\}} \cdot \sI_{(i, \ell+q-1)}^{\bfk_0(p), \uparrow}[\wc{\omega}^{q}]+ \one_{\{p = d-q+1\}} \cdot \eta_{(i, \ell+q-1)} \}
\end{split}
\end{align}
for $i \in [\bfk_0(p)]$ and $j \in [\ell+q-1]$. Let $\wc{E}^q$ denote the limiting analogue of the event $\wc{E}^{q, \epsilon}$ from \eqref{E:61}: 
\begin{align}
\label{E:71}
\begin{split}
 \wc{E}^{q} &\coloneqq \Bigl\{\ul{\I}_{\bfu, (\bfk_0(p), \ell+q-1)}[\wc{\eta}^{p, q}] > x_{\bfu,p},\; \ul{\J}_{\bfv, (\bfk_0(p), \ell+q-1)}[\wc{\eta}^{p, q}] < y_{\bfv,p},\;\forall \bfu,\bfv,\;p \in  [d-q+1], \\ 
&\qquad \ \ \ul{\I}_{\bfu, (\bfk_0(p), \bfs_0(p))}[\eta] > x_{\bfu,p},\; \ul{\J}_{\bfv, (\bfk_0(p), \bfs_0(p))}[\eta] < y_{\bfv,p},\;\forall \bfu,\bfv,\;p \in [d]\setminus[d-q+1]\Bigr\}. 
 \end{split}
\end{align}


\begin{lem}
\label{L:BusDisId-6} 
$\bfP\{E^q\} = \bfP\{\wc{E}^q\}$ for $q \in [d]$. 
\end{lem}
\begin{proof}
Let $p \in [d-q+1]$. The increment recursions in \eqref{E:IncRec} give us that, for $i \in [\bfk_0(p) - 1]$,
\begin{equation} \label{I1}
\begin{aligned}
&\quad \,\ul\I_{(i,\ell + q - 1),(\bfk_0(p),n)}[\wc \omega^{q,\epsilon}]  \\
&= \wc \omega^{q,\epsilon}_{(i,\ell + q-1)} + \max\bigl\{\ul\I_{(i,\ell + q),(\bfk_0(p),n)}[\wc \omega^{q,\epsilon}] -\ul\J_{(i+1 ,\ell + q-1),(\bfk_0(p),n)}[\wc \omega^{q,\epsilon}]  ,0\bigr \},
\end{aligned}  
\end{equation}
and 
\begin{equation} \label{J1}
\begin{aligned}
&\quad \, \ul \J_{(i,\ell + q - 1),(\bfk_0(p),n)}[\wc \omega^{q,\epsilon}]  \\
&= \wc \omega^{q,\epsilon}_{(i,\ell + q-1)} + \max\bigl\{\ul\J_{(i+1 ,\ell + q-1),(\bfk_0(p),n)}[\wc \omega^{q,\epsilon}]  -\ul\I_{(i,\ell + q),(\bfk_0(p),n)}[\wc \omega^{q,\epsilon}] ,0\bigr \}.
\end{aligned}
\end{equation}
Starting from the observation that 
\[
\ul\J_{(\bfk_0(p) ,\ell + q-1),(\bfk_0(p),n)}[\wc \omega^{q,\epsilon}] = \wc \omega^{q,\epsilon}_{(\bfk_0(p),\ell + q - 1)},
\]
we can see from \eqref{I1}, \eqref{J1}, and induction that the collection
\[
\bigl\{\ul \I_{(i,\ell + q - 1),(\bfk_0(p),n)}[\wc \omega^{q,\epsilon}]: i \in [\bfk_0(p) -1]\bigr\}
\]
can be obtained as a fixed continuous function of
\[
\bigl\{\ul \I_{(i',\ell + q ,(\bfk_0(p),n)}[\wc \omega^{q,\epsilon}] : i' \in [\bfk_0(p) -1]\bigr\} \cup\bigl\{ \wc \omega^{q,\epsilon}_{(i',\ell + q-1)}: i' \in [\bfk_0(p)] \bigr\}.
\]
Taking limits as $n \to \infty$, we see that the collection of thin Busemann functions on row $\ell + q-1$, namely
\[
\Bigl\{\sI_{(i, \ell+q-1)}^{\bfk_0(p), \uparrow}[\wc{\omega}^{q, \epsilon}]: i \in [\bfk_0(p)-1] \Bigr\},
\]
can be written as a continuous function of the weights $\{\wc{\omega}^{q, \epsilon}_{(i', \ell+q-1)}: i' \in [\bfk_0(p)]\}$ on row $\ell+q-1$ and the thin Busemann functions on row $\ell + q$: 
\begin{align}
\label{E:65}
\{\sI_{(i', \ell+q)}^{\bfk_0(p), \uparrow}[\wc{\omega}^{q, \epsilon}]: i' \in [\bfk_0(p)-1]\} = \{\sI_{(i', \ell+q)}^{\bfk_0(p), \uparrow}[\omega^{\col, \bfk_0}]: i' \in [\bfk_0(p)-1]\},
\end{align}
where the equality follows by definition of the $\wc \omega^{q,\epsilon}$ weights \eqref{E:54}. In particular, the collection in \eqref{E:65} does not depend on $\epsilon$, so it follows from the convergence of the $\wc{\omega}^{q, \epsilon}$-weights that, if $p < d-q+1$ (or, equivalently, $\bfk_0(p) < k+d-q$), then 
\begin{align}
\label{E:67}
\lim_{\epsilon \to 0} \sI_{(i, \ell+q-1)}^{\bfk_0(p), \uparrow}[\wc{\omega}^{q, \epsilon}] = \sI_{(i, \ell+q-1)}^{\bfk_0(p), \uparrow}[\wc{\omega}^{q, 0}] = \sI_{(i, \ell+q-1)}^{\bfk_0(p), \uparrow}[\wc{\omega}^{q}] \quad \text{ for } i \in [\bfk_0(p)-1]. 
\end{align}
In the remaining case $p = d-q+1$, the increment recursions \eqref{E:IncRec} (after passing to the thin Busemann limits) give us that, for $i  \in [k+d - q -1]$
\begin{equation} \label{eq:rec_lim}
\begin{aligned}
\sI_{(i, \ell+q-1)}^{k+d-q, \uparrow}[\wc{\omega}^{q, \epsilon}] &= \wc{\omega}^{q, \epsilon}_{(i,\ell + q-1)} + \max\bigl\{\sI_{(i, \ell+q)}^{k+d-q, \uparrow}[\wc{\omega}^{q, \epsilon}] - \sJ_{(i+1, \ell+q-1)}^{k+d-q, \uparrow}[\wc{\omega}^{q, \epsilon}],0 \bigr\} \\
&=\wc{\omega}^{q, \epsilon}_{(i,\ell + q-1)} + \max\bigl\{\sI_{(i, \ell+q)}^{k+d-q, \uparrow}[\omega^{\col,\bfk_0}] - \sJ_{(i+1, \ell+q-1)}^{k+d-q, \uparrow}[\wc{\omega}^{q, \epsilon}],0 \bigr\},\quad\text{and} \\
\sJ_{(i, \ell+q-1)}^{k+d-q, \uparrow}[\wc{\omega}^{q, \epsilon}] &= \wc{\omega}^{q, \epsilon}_{(i,\ell + q-1)} + \max\bigl\{\sJ_{(i+1, \ell+q-1)}^{k+d-q, \uparrow}[\wc{\omega}^{q, \epsilon}] - \sI_{(i, \ell+q)}^{k+d-q, \uparrow}[\omega^{\col,\bfk_0}],0 \bigr\},
\end{aligned}
\end{equation}
where the second equality follows by \eqref{E:54}. Furthermore, we have 
\[
\sJ_{k+d-q,\ell + q-1}^{k+d-q,\uparrow }[\wc \omega^{q,\epsilon}] = \wc \omega^{q,\epsilon}_{(k+d-q,\ell + q-1)} \to \infty \text{ as }\epsilon \to 0.
\]
Then, an inductive argument  using \eqref{eq:rec_lim} (starting from $i = k+d-q-1$ and going backwards) gives us $\sJ_{(i, \ell+q-1)}^{k+d-q, \uparrow}[\wc{\omega}^{q, \epsilon}] \to \infty$ as $\epsilon \to 0$ for $i \in [k+d-q-1]$,  and
\begin{align}
\label{E:68}
\lim_{\epsilon \to 0} \sI_{(i, \ell+q-1)}^{k+d-q, \uparrow}[\wc{\omega}^{q, \epsilon}] = \wc{\omega}^{q, 0}_{(i, \ell+q-1)} = \wc{\omega}^{q}_{(i, \ell+q-1)} = \eta_{(i, \ell+q-1)} \quad \text{ for } i \in [k+d-q-1]. 
\end{align}
On account of \eqref{E:67} and \eqref{E:68}, the $\wc{\eta}^{p, q, \epsilon}$-weights in \eqref{E:60} converge vertex-wise, almost surely, as $\epsilon \to 0$, to the limiting weights $\wc{\eta}^{p, q}$. Therefore, sending $\epsilon \to 0$ in Lemma \ref{L:BusDisId-5} and using definition \eqref{E:61} and bounded convergence again completes the proof.
\end{proof}

We next observe that the event $\wc{E}^q$ is identical to $E^{q+1}$. 
\begin{lem}
\label{L:BusDisId-7} 
$\wc{E}^q = E^{q+1}$ for $q \in [d]$. 
\end{lem}
\begin{proof}
First, for $p = d-q+1$, \eqref{E:69} shows that 
\begin{align}
\label{E:74}
\begin{split}
\ul{\I}_{\bfu, (\bfk_0(p), \ell+q-1)}[\wc{\eta}^{p, q}] &= \ul{\I}_{\bfu, (\bfk_0(p), \ell+q-1)}[\eta] = \ul{\I}_{\bfu, (\bfk_0(p), \bfs_0(p))}[\eta] \quad \text{ for } \bfu \in \rightset_{k,\ell},\quad\text{and} \\
\ul{\J}_{\bfv, (\bfk_0(p), \ell+q-1)}[\wc{\eta}^{p, q}] &= \ul{\J}_{\bfv, (\bfk_0(p), \ell+q-1)}[\eta] = \ul{\J}_{\bfv, (\bfk_0(p), \bfs_0(p))}[\eta] \quad \text{ for } \bfv \in \upset_{k,\ell}. 
\end{split}
\end{align}
Second, for $p < d-q+1$, it follows from \eqref{E:73}, \eqref{E:69} and the increment identities in \eqref{E:IndInc} that 
\begin{align}
\label{E:75}
\begin{split}
\ul{\I}_{\bfu, (\bfk_0(p), \ell+q-1)}[\wc{\eta}^{p, q}] &= \sI_{\bfu}^{\bfk_0(p), \uparrow}[\wc{\omega}^q] = \ul{\I}_{\bfu, (\bfk_0(p), \ell+q)}[\gr{\omega}^{p, q}] \quad \text{ for } \bfu \in \rightset_{k,\ell}, \\
\ul{\J}_{\bfv, (\bfk_0(p), \ell+q-1)}[\wc{\eta}^{p, q}] &= \sJ_{\bfv}^{\bfk_0(p), \uparrow}[\wc{\omega}^q] = \ul{\J}_{\bfv, (\bfk_0(p), \ell+q)}[\gr{\omega}^{p, q}] \quad \text{ for } \bfv \in \upset_{k,\ell},
\end{split}
\end{align}
where $\gr{\omega}^{p, q}$ is the temporary name for the induced weights given by 
\begin{align}
\label{E:76}
\begin{split}
\gr{\omega}^{p, q}_{(i, j)} 
&= \one_{\{j < \ell+q\}} \cdot \wc{\omega}^q_{(i, j)} + \one_{\{j = \ell+q\}} \cdot \one_{\{i < \bfk_0(p)\}} \cdot \sI_{(i, \ell+q)}^{\bfk_0(p), \uparrow}[\wc{\omega}^q] \\
&= \one_{\{j < \ell+q\}} \cdot \eta_{(i, j)} + \one_{\{j = \ell+q\}} \cdot \one_{\{i < \bfk_0(p)\}} \cdot \sI_{(i, \ell+q)}^{\bfk_0(p), \uparrow}[\omega^{\col, \bfk_0}] \\ 
&= \eta^{p, q+1}_{(i, j)}
\end{split}
\end{align}
for $i \in [\bfk_0(p)]$ and $j \in [\ell+q]$. The second equality in \eqref{E:76} comes from \eqref{E:73}, and the last equality is due to definition \eqref{E:38}. Using \eqref{E:74} and \eqref{E:76}, one obtains from \eqref{E:43} and \eqref{E:71} that 
\begin{align}
\label{E:77}
\begin{split}
 \wc{E}^{q} &= \Bigl\{\ul{\I}_{\bfu, (\bfk_0(p), \ell+q)}[\eta^{p, q+1}] > x_{\bfu,p},\; \ul{\J}_{\bfv, (\bfk_0(p), \ell+q)}[\eta^{p, q+1}] < y_{\bfv,p},\;\forall \bfu,\bfv;\;p \in [d-q], \\ 
&\qquad \ \ \ul{\I}_{\bfu, (\bfk_0(p), \bfs_0(p))}[\eta] > x_{\bfu,p},\; \ul{\J}_{\bfv, (\bfk_0(p), \bfs_0(p))}[\eta] < y_{\bfv,p},\;\forall \bfu,\bfv;\;p \in [d]\setminus[d-q]\Bigr\}.
 \end{split}
\end{align}
From here, we recognize that $\wc{E}^{q}$ is exactly the event $E^{q+1}$ \eqref{E:43}.
\end{proof}

\begin{proof}[Proof of Lemma \ref{L:BusDisId}]
Lemmas \ref{L:BusDisId-6} and \ref{L:BusDisId-7} give $\bfP\{E^q\} = \bfP\{\wc{E}^q\} = \bfP\{E^{q+1}\}$ for each $q \in [d]$. Consequently, $\bfP\{E^1\} = \bfP\{E^{d+1}\}$, which completes the proof of part \eqref{itm:eq1}. As mentioned, part \eqref{itm:eq2} follows from a symmetric argument. 
\end{proof}

\begin{proof}[Proof of Theorem \ref{T:BusMar}]
The result is now immediate from \eqref{E:CDFBds} and Lemma \ref{L:BusDisId}. 
\end{proof}


\bibliographystyle{References/habbrv}
\bibliography{References/Refs,References/growthrefs,References/references,References/erikbib, References/interchangeability}
\end{document}